\documentclass[a4paper]{amsart}

\usepackage{amssymb,enumitem}
\usepackage[all]{xy}
\usepackage{hyperref,aliascnt}
\usepackage{mathtools,array}

\numberwithin{equation}{section}

\newtheorem{lma}{Lemma}[section]

\newaliascnt{thmCt}{lma}
\newtheorem{thm}[thmCt]{Theorem}
\aliascntresetthe{thmCt}

\newaliascnt{corCt}{lma}
\newtheorem{cor}[corCt]{Corollary}
\aliascntresetthe{corCt}

\newaliascnt{prpCt}{lma}
\newtheorem{prp}[prpCt]{Proposition}
\aliascntresetthe{prpCt}

\newtheorem*{thm*}{Theorem}
\newtheorem*{rmk*}{Remark}

\theoremstyle{definition}

\newaliascnt{pgrCt}{lma}
\newtheorem{pgr}[pgrCt]{}
\aliascntresetthe{pgrCt}

\newaliascnt{dfnCt}{lma}
\newtheorem{dfn}[dfnCt]{Definition}
\aliascntresetthe{dfnCt}

\newaliascnt{rmkCt}{lma}
\newtheorem{rmk}[rmkCt]{Remark}
\aliascntresetthe{rmkCt}

\newaliascnt{qstCt}{lma}
\newtheorem{qst}[qstCt]{Question}
\aliascntresetthe{qstCt}

\newaliascnt{exaCt}{lma}
\newtheorem{exa}[exaCt]{Example}
\aliascntresetthe{exaCt}

\newaliascnt{ntnCt}{lma}
\newtheorem{ntn}[ntnCt]{Notation}
\aliascntresetthe{ntnCt}

\newcommand{\vect}[1]{\textbf{#1}}
\newcommand{\NN}{\mathbb{N}}
\newcommand{\RR}{\mathbb{R}}
\newcommand{\CC}{\mathbb{C}}
\newcommand{\KK}{\mathcal{K}}


\newcommand{\pom}{positively ordered monoid}
\newcommand{\ca}{$C^*$-al\-ge\-bra}
\newcommand{\axiomO}[1]{(O#1)}
\newcommand{\axiomW}[1]{(W#1)}

\DeclareMathOperator{\Her}{Her}
\DeclareMathOperator{\Cu}{Cu}
\DeclareMathOperator{\supp}{supp}

\newcommand{\freeVar}{\_\,}

\usepackage{stmaryrd}
\newcommand{\ihom}[1]{\llbracket #1 \rrbracket}
\newcommand{\forget}{\mathcal{F}}


\newcommand{\CatPom}{\mathrm{PoM}}

\newcommand{\CatPomProd}{\CatPom\text{-}\!\prod}

\newcommand{\CatPominvLim}{\CatPom\text{-}\!\varprojlim}

\newcommand{\CatCa}{C^*}

\newcommand{\CatCu}{\ensuremath{\mathrm{Cu}}}
\newcommand{\CatCuMor}{\CatCu}

\newcommand{\CatCuLim}{\CatCu\text{-}\!\varinjlim}
\newcommand{\CatCuProd}{\prod}

\newcommand{\CatCuCoprod}{\bigoplus}
\newcommand{\CatCuinvLim}{\CatCu\text{-}\!\varprojlim}

\newcommand{\CatCuScaled}{\ensuremath{\mathrm{Cu}_{\mathrm{sc}}}}
\newcommand{\CatCuScaledMor}{\CatCuScaled}

\newcommand{\CatC}{\mathcal{C}}

\newcommand{\CatD}{\mathcal{D}}

\newcommand{\CatQ}{\mathcal{Q}}
\newcommand{\CatQMor}{\CatQ}
\newcommand{\CatQProd}{\CatQ\text{-}\!\prod}
\newcommand{\CatQinvLim}{\CatQ\text{-}\!\varprojlim}

\newcommand{\CatW}{\mathrm{W}}
\newcommand{\CatWMor}{\CatW}

\newcommand{\CatWLim}{\CatW\text{-}\!\varinjlim}
\newcommand{\CatWSum}{\CatW\text{-}\!\bigoplus}

\newcommand{\NNbar}{\overline{\mathbb{N}}}

\newcommand{\andSep}{\,\,\,\text{ and }\,\,\,}
\newcommand{\stHom}{$^*$-ho\-mo\-morphism}
\newcommand{\CuSgp}{$\CatCu$-sem\-i\-group}

\newcommand{\CuMor}{$\CatCu$-mor\-phism}
\newcommand{\WSgp}{$\CatW$-semigroup}
\newcommand{\WMor}{$\CatW$-morphism}
\newcommand{\QSgp}{$\CatQ$-semigroup}
\newcommand{\QMor}{$\CatQ$-morphism}
\newcommand{\Paths}{\mathcal{P}}
\newcommand{\llpw}{\ll_{\mathrm{pw}}}
\newcommand{\filter}{\mathcal{U}}
\newcommand{\Cech}{\v{C}ech}

\newcommand{\cc}{\mathrm{c}}

\begin{document}

\title{Cuntz semigroups of ultraproduct \texorpdfstring{$C^*$-algebras}{C*-algebras}}

\author{Ramon Antoine}
\author{Francesc Perera}
\author{Hannes Thiel}

\date{\today}

\address{
R.~Antoine \& F. Perera, Departament de Matem\`{a}tiques,
Universitat Aut\`{o}noma de Barcelona,
08193 Bellaterra, Barcelona, Spain, and \\
Barcelona Graduate School of Mathematics (BGSMath)}
\email[]{ramon@mat.uab.cat \& perera@mat.uab.cat}


\address{
H.~Thiel,
Mathematisches Institut,
Universit\"at M\"unster,
Einsteinstrasse 62, 48149 M\"unster, Germany}
\email[]{hannes.thiel@uni-muenster.de}

\subjclass[2010]
{Primary
06B35, 
06F05, 
46L05, 
46M07.  
Secondary
03C20, 
06B30, 
18A30, 
18A35, 
18B35, 
19K14, 
46M15, 
46M40. 
}

\keywords{Cuntz semigroup, continuous poset, $C^*$-algebra, ultraproducts}


\begin{abstract}
We prove that the category of abstract Cuntz semigroups is bicomplete.
As a consequence, the category admits products and ultraproducts.
We further show that the scaled Cuntz semigroup of the (ultra)product of a family of \ca{s} agrees with the (ultra)product of the scaled  Cuntz semigroups of the involved \ca{s}.

As applications of our results, we compute the non-stable K-Theory of general (ultra)products of \ca{s} and we characterize when ultraproducts are simple. We also give criteria that determine order properties of these objects, such as almost unperforation.
\end{abstract}

\maketitle

\section{Introduction}
\label{sec:intro}

Product and ultraproduct constructions are important tools in mathematics. For \ca{s}, they are technical devices in which one can suitably encompass notions that have come to the forefront of the theory.
Most notably, ultrapoducts of \ca{s} appear in aspects of the Toms-Winter conjecture;
see, for example, \cite{BBSTWW19}.
They also play an important role in detecting tensorial absorption by the Jiang-Su algebra for \ca{s} with finite nuclear dimension;
see \cite{Kir06CentralSeqPI} (and also \cite{RobTik17NucDimNonSimple}).

The Cuntz semigroup is an invariant for \ca{s}, first introduced in \cite{Cun78DimFct} with the aim of proving that simple, stably finite, \ca{s} admit quasitraces.
Due to the fact that it contains a great deal of information about the \ca{}, its uses have been proved to be far-reaching in subsequent works.
For example, the Cuntz semigroup has become key to distinguishing \ca{s} that fail to satisfy regularity conditions and thus are not amenable to classification using the Elliott invariant (see, for example, \cite{Ror03FinInfProj} and \cite{Tom08ClassificationNuclear}).
Further, for the class of simple, separable, nuclear \ca{s} that absorb the Jiang-Su algebra tensorially, it was proved in \cite{AntDadPerSan14RecoverElliott} that the Cuntz semigroup after tensoring with the circle is functorially equivalent to the Elliott invariant.

The categorical framework needed to analyse the Cuntz semigroup as an invariant was developed in the pioneering work by Coward, Elliott and Ivanescu in \cite{CowEllIva08CuInv}.
They defined the category $\CatCu$ of abstract Cuntz semigroups (also called \CuSgp{s}) and showed that the process of assigning to each \ca{} $A$ the (concrete) Cuntz semigroup of its stabilisation is a sequentially continuous functor.
The properties of the category $\Cu$ and its relation with the category of \ca{s} was further developed in \cite{AntPerThi18TensorProdCu}, \cite{AntPerThi17arX:AbsBivariantCu}, and \cite{AntPerThi120AbsBivarII}.
It is therefore interesting to seek efficient methods to compute the Cuntz semigroup of products and ultraproducts.
This is one of the main objectives of this paper.

Buiding on previous work (as explained below), we summarize the main structural features of the category $\CatCu$ as follows:

\begin{thm*}
The category $\CatCu$ of abstract Cuntz semigroups is a closed, symmetric, monoidal, bicomplete category.
\end{thm*}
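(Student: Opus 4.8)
The plan is to treat the closed symmetric monoidal structure and the (co)completeness separately, since the former is essentially already available. First I would recall that the symmetric monoidal structure on $\CatCu$ — the tensor product $\otimes$, its unit, and the symmetry and associativity constraints — was constructed in \cite{AntPerThi18TensorProdCu}, while the internal hom $\ihom{T,R}$ together with the tensor--hom adjunction $\Hom(S \otimes T, R) \cong \Hom(S, \ihom{T,R})$, which makes $\CatCu$ closed, was established in the bivariant theory of \cite{AntPerThi17arX:AbsBivariantCu} and \cite{AntPerThi120AbsBivarII}. Granting these, the entire content of the theorem reduces to proving that $\CatCu$ is bicomplete.

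For bicompleteness I would use the standard reduction: a category is complete as soon as it has all small products and all equalizers, and cocomplete as soon as it has all small coproducts and all coequalizers. The organizing principle is to pass through the larger category $\CatW$ of \WSgp{s} — positively ordered monoids equipped with an abstract auxiliary relation subject to \axiomW{1}--\axiomW{4} — together with the Cu-completion functor $\tau \colon \CatW \to \CatCu$, left adjoint to the forgetful functor $\forget \colon \CatCu \to \CatW$. Since this exhibits $\CatCu$ as a reflective subcategory, colimits in $\CatCu$ are obtained by forming the (comparatively transparent) colimit in $\CatW$ and applying $\tau$; coproducts then arise as completions of the corresponding sums and coequalizers as quotients by $\CatCu$-congruences, while sequential inductive limits recover the classical construction of \cite{CowEllIva08CuInv}. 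This disposes of cocompleteness.

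The substantial part is completeness, and within it the construction of arbitrary products $\CatCuProd_i S_i$ is the main obstacle. One takes the Cartesian product with componentwise order and addition as the starting point, so the whole difficulty concentrates in the auxiliary relation. Here two naive candidates both fail: the intrinsic way-below relation of the Cartesian product forces finite support (uniform domination), which is too fine for the canonical cone to be $\ll$-preserving, whereas the purely componentwise relation is incompatible with the formation of suprema and so violates the completeness axioms \axiomO{1}--\axiomO{4}. The plan is therefore to equip the Cartesian product with the componentwise relation as an abstract auxiliary relation, regard it as an object of $\CatW$, and take its completion under $\tau$; I would then verify directly that the resulting \CuSgp{} has the projections as $\CatCu$-morphisms and that every cone $(f_i \colon T \to S_i)$ factors through it uniquely. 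Checking \axiomW{1}--\axiomW{4} for this auxiliary structure, and reconciling the componentwise relation with suprema, is exactly the delicate point. Equalizers, by contrast, I expect to be straightforward: the equalizer of $f,g \colon S \to T$ is carried by $\{\, s \in S : f(s) = g(s) \,\}$ with the inherited operations and way-below relation, which one checks is a sub-\CuSgp{}.

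Assembling these, products together with equalizers give completeness and coproducts together with coequalizers give cocompleteness, so $\CatCu$ is bicomplete; combined with the recalled closed symmetric monoidal structure this yields the theorem. I would finally record the concrete description of the product as $\tau$ of the componentwise structure, since that explicit form — and not the mere abstract existence — is what the later computation of the Cuntz semigroups of (ultra)products will rely on.
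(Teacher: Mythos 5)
Your reduction of the theorem to bicompleteness, and your treatment of cocompleteness, match the paper: $\CatCu$ is reflective in $\CatW$ via the completion functor (called $\gamma$ in the paper, with unit $S\to\gamma(S)$), $\CatW$ has coproducts and coequalizers and hence is cocomplete, and colimits in $\CatCu$ are obtained by reflecting colimits of $\CatW$. The gap is in your construction of products. You propose to equip the Cartesian product $P=\CatPomProd_{i}S_i$ with the componentwise auxiliary relation, view it as an object of $\CatW$, and apply the same completion functor. This fails for two reasons. First, the componentwise relation does not satisfy \axiomW{1} on an infinite product: for $S_i=\NNbar$ with $i\in\NN$ and $x=(\infty,\infty,\dots)$, any sequence $(x^{(n)})_n$ of elements $\llpw x$ is defeated by the diagonal element $y=(x^{(i)}_i+1)_i$, which satisfies $y\llpw x$ but is dominated by no $x^{(n)}$; so $P$ is not a \WSgp{}, and the ``delicate point'' you flag is not delicate but false. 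Second, and more structurally, the completion is a \emph{left} adjoint to the inclusion; its universal property $\CatCuMor(\gamma(P),T)\cong\CatWMor(P,T)$ classifies morphisms \emph{out of} the completion, which is what transports colimits, not limits. A cone $(f_i\colon T\to S_i)$ yields a map $T\to P$ \emph{into} the Cartesian product, and there is no canonical way to turn this into a map $T\to\gamma(P)$; applying a reflector to a limit of the ambient category does not in general produce a limit.

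The paper circumvents both problems by introducing a second ambient category, $\CatQ$, whose objects are \pom{s} satisfying \axiomO{1} and \axiomO{4} together with an arbitrary additive auxiliary relation --- crucially, with no density axiom of type \axiomW{1}/\axiomO{2}. The pair $(P,\llpw)$ is then a legitimate $\CatQ$-object and is the limit in $\CatQ$. Inside $\CatQ$ the subcategory $\CatCu$ is \emph{coreflective}: the right adjoint to the inclusion is the path construction $\tau$, built from $\llpw$-increasing paths $(-\infty,0]\to P$, whose counit is the endpoint map $\tau(P)\to P$ and whose universal property $\CatCuMor(T,\tau(P))\cong\CatQMor(T,P)$ points the right way for limits. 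Completeness of $\CatCu$ then follows from completeness of $\CatQ$ because coreflective subcategories inherit limits, and the product is $\tau\big(\CatPomProd_{i}S_i,\llpw\big)$ --- the explicit description you wanted, but obtained with the coreflector, not the reflector. (Your claim that the equalizer of $f,g$ is carried by $\{s:f(s)=g(s)\}$ with the inherited way-below relation is likewise unjustified, since \axiomO{2} can fail for such a subobject; this becomes moot once all limits are produced uniformly through $\CatQ$.)
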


That $\CatCu$ is symmetric and monoidal was established in \cite{AntPerThi18TensorProdCu};
on the other hand, it was shown in \cite{AntPerThi17arX:AbsBivariantCu} that $\CatCu$ is closed.
We prove in this paper that $\CatCu$ is bicomplete, that is, complete and cocomplete (see Theorems~\ref{prp:CuComplete} and~\ref{prp:CuCocomplete} below). This means that $\CatCu$ admits arbitrary small limits and colimits, and thus arbitrary products and inductive limits. (The latter was already proved in \cite{AntPerThi18TensorProdCu}.) 

In order to show that $\CatCu$ is complete, we take advantage of the fact that it sits coreflectively in a bigger category, termed $\CatQ$ in \cite{AntPerThi17arX:AbsBivariantCu}.
It follows that the product of a family of $\CatCu$-semigroups is not just their set-theoretic product, but rather a completion of the latter by means of the $\tau$-construction (see \autoref{pgr:tauconstr} for more details).
Roughly speaking, the $\tau$-construction consists of considering certain equivalence classes of continuous paths indexed on the interval $(-\infty, 0]$.

We further apply our results to \ca{s}. To this end, we first study to what extent the Cuntz semigroup functor preserves these constructions. The fact that the Cuntz semigroup functor is stable by definition introduces a difficulty when dealing with products and ultraproducts of arbitrary infinite families of \ca{s}, since these are not stable even if the algebras are. In order to circumvent this difficulty, we need a convenient substitute for stability. This is found in property~(S), as introduced by Hjelmborg and R{\o}rdam in \cite{HjeRor98Stability}. 

Another important ingredient in our discussion is the notion of scale. For a \ca{} $A$, the natural scale of $\Cu(A)$ captures the information of equivalence classes coming from positive elements in $A$.
This was first considered in \cite{PerTomWhiWin14CuStabilityCloseCa}. We define in \autoref{sec:scaled} scales in general abstract $\CatCu$-semigroups and introduce the category $\CatCuScaled$, in which $\CatCu$ sits reflexively. This is the natural categorical framework in which Cuntz semigroups of products and ultraproducts of arbitrary families of \ca{s} can be computed.

Ultraproducts over an ultrafilter $\filter$ are introduced categorically in \autoref{pgr:ultraprCat} as inductive limits of products over sets in $\filter$ following, for example, \cite{Ekl77UltraprodAlg}. This becomes relevant because, as we show later, the scaled Cuntz semigroup functor preserves arbitrary inductive limits and products. We prove:

\begin{thm*}[{Theorems~\ref{prp:CuScaledComplete},~\ref{thm:CuProd} and~\ref{thm:CuUltraprod}}]
The category $\CatCuScaled$ is bicomplete, and the scaled Cuntz semigroup functor preserves arbitrary products and ultraproducts of \ca{s}.
\end{thm*}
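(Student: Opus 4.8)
The plan is to establish the three assertions in sequence, each bootstrapping from the previous one and from the bicompleteness of $\CatCu$ proved above. Throughout, write $\CatCuScaled(\freeVar)$ for the scaled Cuntz semigroup functor, sending a \ca{} $A$ to $(\Cu(A),\Sigma_A)$ with $\Sigma_A$ the classes of elements of $A_+$. For the \textbf{bicompleteness of $\CatCuScaled$}, I would build limits and colimits on top of those in $\CatCu$ via the forgetful functor $\forget \colon \CatCuScaled \to \CatCu$. The inclusion $\CatCu \hookrightarrow \CatCuScaled$, $S \mapsto (S,S)$, admits $\forget$ as a left adjoint—this is the reflective embedding alluded to in the Introduction—so $\forget$ preserves colimits. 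Concretely, for a diagram $(S_i,\Sigma_i)_{i \in I}$ I form $S = \varprojlim_i S_i$ in $\CatCu$, with projections $\pi_i$, and set $\Sigma = \{ s \in S : \pi_i(s) \in \Sigma_i \text{ for all } i \}$; for colimits I take $S = \varinjlim_i S_i$, with structure maps $\iota_i$, and let $\Sigma$ be the smallest scale containing $\bigcup_i \iota_i(\Sigma_i)$. The routine verifications are that these subsets are scales (downward hereditary and closed under suprema of increasing sequences, using that $\CatCu$-morphisms preserve both) and that $(S,\Sigma)$ has the required universal property; the latter reduces through $\forget$ to the universal property in $\CatCu$ plus a check that the mediating morphism is scale-preserving.

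\textbf{Products of \ca{s}.} The coordinate projections $\prod_i A_i \to A_j$ are scale-preserving, so the universal property of the product in $\CatCuScaled$ supplies a canonical comparison morphism
\[
  \Phi \colon \CatCuScaled\bigl(\textstyle\prod_i A_i\bigr) \longrightarrow \prod_i \CatCuScaled(A_i),
\]
and the point is to prove $\Phi$ an isomorphism of scaled $\CatCu$-semigroups. The genuine difficulty is that $\Cu$ is stable by definition while $(\prod_i A_i)\otimes\KK \neq \prod_i (A_i \otimes \KK)$. I would argue by approximation: every class in $\Cu(\prod_i A_i)$ is a supremum of an increasing sequence of classes represented in $(\prod_i A_i)\otimes M_n = \prod_i (A_i \otimes M_n)$, and each approximant is an honest bounded family, hence determines a compatible element of the $\tau$-product. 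Matching these increasing approximations with the path data of the $\tau$-construction shows $\Phi$ is an order-embedding and that $\Sigma_{\prod_i A_i}$ corresponds to the families lying in each $\Sigma_{A_i}$; surjectivity then amounts to realizing an arbitrary compatible family (a $\tau$-path) by a single positive element of $(\prod_i A_i)\otimes\KK$, which is exactly where property~(S) is invoked to assemble the local data into one global element while keeping norms controlled.

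\textbf{Ultraproducts of \ca{s}.} The analytic ultraproduct $\prod^\filter A_i$ coincides with the \ca{}-inductive limit $\varinjlim_{F \in \filter} \prod_{i \in F} A_i$ along $\filter$ (directed by reverse inclusion, connecting maps the coordinate projections): the inductive-limit norm of a class $[(a_i)_{i \in F}]$ equals $\inf_{H \in \filter} \sup_{i \in H} \|a_i\| = \lim_\filter \|a_i\|$, the ultraproduct norm, so completing kills precisely the elements with $\lim_\filter \|a_i\| = 0$. Granting the product preservation above together with preservation of arbitrary inductive limits by $\CatCuScaled(\freeVar)$—the remaining input, upgrading the classical sequential continuity of $\Cu$ to the scaled setting—one computes
\[
  \CatCuScaled\Bigl( \textstyle\prod^\filter A_i \Bigr)
  \;\cong\; \CatCuScaled\Bigl( \varinjlim_{F \in \filter} \textstyle\prod_{i \in F} A_i \Bigr)
  \;\cong\; \varinjlim_{F \in \filter} \CatCuScaled\Bigl( \textstyle\prod_{i \in F} A_i \Bigr)
  \;\cong\; \varinjlim_{F \in \filter} \prod_{i \in F} \CatCuScaled(A_i),
\]
whose right-hand side is by definition the categorical ultraproduct of the $\CatCuScaled(A_i)$.

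\textbf{Main obstacle.} The crux is the product statement, specifically surjectivity of $\Phi$: because of the stabilisation mismatch a positive element of $(\prod_i A_i)\otimes\KK$ is not literally a bounded family of positive elements, so reconstructing a global element from a compatible family of local Cuntz classes needs the uniform room furnished by property~(S) in tandem with the path formalism of the $\tau$-construction. Once this realisation step is secured, the ultraproduct case is essentially formal.
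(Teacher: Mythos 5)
Your overall architecture matches the paper's: bicompleteness of $\CatCuScaled$ is bootstrapped from that of $\CatCu$, products of \ca{s} are handled through property~(S) and the stabilisation mismatch, and ultraproducts are reduced to inductive limits of products over the sets of the filter. However, there is a genuine gap in your construction of limits in $\CatCuScaled$, and it propagates to the product statement. You take the underlying semigroup of the limit to be the full limit $S=\varprojlim_i S_i$ in $\CatCu$ and equip it with $\Sigma=\{s\in S : \pi_i(s)\in\Sigma_i \text{ for all } i\}$. But a scale is required to \emph{generate} $S$ as an ideal, and your $\Sigma$ generally does not, so the pair $(S,\Sigma)$ is not an object of $\CatCuScaled$ at all. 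The correct limit replaces $S$ by the ideal $\langle\Sigma\rangle$ generated by $\Sigma$ inside the $\CatCu$-limit, and this cut-down is not cosmetic: it is exactly what makes the product formula true. For $A_j=\CC$ one has $\prod_j A_j\cong\ell^\infty(\NN)$, and $\Cu(\ell^\infty(\NN))$ is a \emph{proper} ideal of $\CatCuProd_j\NNbar$ (the compact element corresponding to $j\mapsto j$ is not dominated by any multiple of the image of the unit), so $\Phi$ cannot be surjective onto the full $\CatCu$-product with the induced scale; it is an order-embedding whose image is precisely the ideal generated by the scale. In short, the forgetful functor $\forget$ preserves colimits (being a left adjoint, as you note) but emphatically does not preserve limits, and your limit construction implicitly assumes it does.

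A secondary imprecision: in the surjectivity step, property~(S) is not what assembles the local data into a global element. Its role is to identify $\Cu(\prod_j(A_j\otimes\KK))$ with the Cuntz classes of positive elements of the product itself (via separable stable subalgebras), after which $\Cu(\prod_j A_j)$ sits inside it as the ideal coming from a hereditary subalgebra; the uniform norm control needed to realise a path by a single bounded family $(b_j)_j$ is obtained by a separate rescaling argument replacing each representative $a_j$ by a contraction $b_j$ with $[b_j]=[a_j]$ and the relevant cut-down still dominating the given class. With the generated-ideal correction above and this realisation step made precise, the ultraproduct case is, as you say, formal.
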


More precisely, given a family $(A_j)_{j\in J}$ of \ca{s} and an ultrafilter $\filter$ over~$J$, there is a natural commutative diagram
\[
\xymatrix@R-10pt{
\Cu(\prod_{j\in J}A_j) \ar[r]^{\Phi} \ar@{->>}[d]
& \CatCuProd_{j\in J} \Cu(A_j) \ar@{->>}[d] \\
\Cu(\prod_\filter A_j) \ar[r]^{\Phi_\filter} & \prod_\filter\Cu(A_j)
}.
\]
where the horizontal maps are order-embeddings whose images are ideals.
These maps induce order-isomorphisms between scaled Cuntz semigroups.
In the unital setting, the said isomorphisms can be viewed as
\[
\Cu(\prod_j A_j) \cong \big\{ x\in \prod_j\Cu (A_j)\mid x\leq \infty u \big\},
\]
where $u$ is the image of the class of the unit via the map $\Phi$ and $\infty u=\sup_n nu$.
A similar description is obtained for the ultraproduct.

Since the categorical picture of an ultraproduct is not the most commonly used in the category of \ca{s}, we make the connection explicit in \autoref{pgr:ultraprCAlg} to also view an ultraproduct of \ca{s} as the quotient of their product modulo the ideal of strings that converge to zero along the ultrafilter.
This is also a convenient way to compute its Cuntz semigroup, as a quotient of the Cuntz semigroup of the product of the algebras modulo a suitable ideal, and it becomes a useful way to deal with ultraproducts in concrete examples (see \autoref{prp:CuUltraprodConcrete} and \autoref{prp:prodE0}).

As applications of our results, we compute the monoid of Murray-von Neumann classes of projections of a product and ultraproduct of a family of stably finite \ca{s}, recovering and completing previous work of Li (\cite{Li05Ultraproducts}). We also characterize in \autoref{prp:charSimpleUltraProdCa} when an ultraproduct of \ca{s} over a countably incomplete ultrafilter is simple; this unifies previous results due to Kirchberg (see \cite{Ror02Classification}) and also results from \cite{FarHarLupRobTikVigWin06arX:ModelThy}.

\section*{Acknowledgements}
Part of this research was conducted during stays at the CRM (Barcelona), and the ICMAT (Madrid).
The authors would like to thank both institutions for financial support and for providing inspiring working conditions.

The authors would also like to thank Leonel Robert and G\'abor Szab\'o for valuable conversations and feedback.

The two first named authors were partially supported by MINECO (grant No.\ MTM2017-83487-P), and by the Comissionat per Universitats i Recerca de la Generalitat de Catalunya (grant No.\ 2017-SGR-1725).~The third named author was partially supported by the Deutsche Forschungsgemeinschaft (DFG, German Research Foundation) under the SFB 878 (Groups, Geometry \& Actions) and under Germany's Excellence Strategy EXC 2044-390685587 (Mathematics M\"{u}nster: Dynamics-Geometry-Structure).

We wish to thank the referee for numerous comments that have helped to improve the manuscript.

\section{Preliminaries: Categorical Framework}
\label{sec:basics}

\subsection{The category \texorpdfstring{$\CatCu$}{Cu} of abstract Cuntz semigroups}

In this subsection, we recall the definition of the category $\CatCu$ as introduced in \cite{CowEllIva08CuInv}.

\begin{pgr}
A \emph{\pom} is a commutative monoid $M$, written additively with neutral element $0$, equipped with a translation invariant partial order $\leq$ for which $0$ is the smallest element.
The category of \pom{s} will be denoted by $\CatPom$, where the morphisms are maps that preserve addition, order, and the zero element.
See \cite[Appendix~B.2]{AntPerThi18TensorProdCu} for details.
\end{pgr}

\begin{pgr}
\label{pgr:auxiliary}
Let $(X,\leq)$ be a partially ordered set.
A binary relation $\prec$ on $X$ is called an \emph{auxiliary relation} (see \cite[Definition~I-1.11, p.57]{GieHof+03Domains}) if:
\begin{itemize}
\item[{\rm (i)}]
If $x\prec y$ then $x\leq y$, for all $x,y\in X$.
\item[{\rm (ii)}]
If $w\leq x\prec y\leq z$ then $w\prec z$, for all $w,x,y,z\in X$.
\end{itemize}
If, further, $X$ is a monoid, then an auxiliary relation $\prec$ is said to be \emph{additive} if it is compatible with addition and $0\prec x$ for every $x\in X$. Observe that an auxiliary relation is automatically transitive: if $x\prec y$ and $y\prec z$, then by condition (i) $x\leq y$ and applying condition (ii) to $x\leq y\prec z\leq z$, we obtain $x\prec z$.

Of particular importance for us is the (sequential version of the) \emph{compact containment relation}, also called \emph{way-below relation}.
Given $x,y\in X$, we say that $x$ is \emph{compactly contained} in $y$, or that $x$ is \emph{way-below} $y$, in symbols $x\ll y$, provided that for every increasing sequence $(y_n)_n$ such that the supremum $\sup_n y_n$ exists and satisfies $y\leq\sup_n y_n$, there exists $k\in\NN$ such that $x\leq y_k$.
\end{pgr}

\begin{dfn}[\cite{CowEllIva08CuInv}]
\label{dfn:CatCu}
A \emph{\CuSgp}, also called an \emph{abstract Cuntz semigroup}, is a \pom{} $S$ that satisfies the following axioms:
\begin{itemize}
\item[\axiomO{1}]
Every increasing sequence $(x_n)_n$ in $S$ has a supremum $\sup_n x_n\in S$.
\item[\axiomO{2}]
Every element of $S$ is the supremum of a $\ll$-increasing sequence.
\item[\axiomO{3}]
If $x'\ll x$ and $y'\ll y$ then $x'+y'\ll x+y$, for all $x',y',x,y\in S$.
\item[\axiomO{4}]
If $(x_n)_n$ and $(y_n)_n$ are increasing sequences in $S$, then $\sup_n(x_n+y_n)=\sup_n x_n+\sup_n y_n$.
\end{itemize}
A \emph{\CuMor} is a $\CatPom$-morphism that preserves the way-below relation and suprema of increasing sequences.
Given \CuSgp{s} $S$ and $T$, the set of all \CuMor{s} from $S$ to $T$ is denoted by $\CatCuMor(S,T)$.

An \emph{ideal} in a \CuSgp{} $S$ is a submonoid $J\subseteq S$ that is downward hereditary (if $x,y\in S$ satisfy $x\leq y\in J$, then $x\in J$) and closed under passing to suprema of increasing sequences.
\end{dfn}

\begin{pgr}
\label{pgr:CuA}
Let $A$ be a \ca{}, and let $a, b$ be positive elements in the stabilization $A\otimes\KK$.
Recall that $a$ is \emph{Cuntz subequivalent} to $b$, in symbols $a\precsim b$, provided that there is a sequence $(x_n)_n$ in $A\otimes\KK$ such that $x_nbx_n^*$ converges to $a$ in norm.
In case that $a\precsim b$ and $b\precsim a$, then $a$ and $b$ are said to be \emph{Cuntz equivalent}, and we write $a\sim b$.
Then $\sim$ is an equivalence relation on $(A\otimes\KK)_+$.
The \emph{Cuntz semigroup} of $A$ is the quotient set
\[
\Cu(A) :=(A\otimes\mathcal K)_+/\!\!\sim.
\]
The equivalence class of an element $a\in(A\otimes\KK)_+$ in $\Cu(A)$ is denoted by $[a]$.
Addition on $\Cu(A)$ is defined by $[a]+[b]=[\left(\begin{smallmatrix} a & 0 \\ 0 & b \end{smallmatrix}\right)]$, for $a,b\in (A\otimes\KK)_+$ (using an isomorphism $M_2(\KK)\cong\KK$).
The neutral element for this operation is the class of the zero element in $A\otimes\KK$.
Finally, we define a partial order on $\Cu(A)$ by setting $[a]\leq [b]$ provided $a\precsim b$.
In this way $\Cu(A)$ becomes a \pom{}.

It was proved in \cite{CowEllIva08CuInv} that $\Cu(A)$ is a $\CatCu$-semigroup.
Moreover, a \stHom{} $\varphi\colon A\to B$ induces a $\CatCu$-morphism $\Cu(\varphi)\colon\Cu(A)\to\Cu(B)$ by $\Cu(\varphi)([a])=[(\varphi\otimes\mathrm{id}_{\KK})(a)]$, for $a\in(A\otimes\KK)_+$.
This defines a functor $\Cu(\freeVar)$, from the category $\CatCa$ of \ca{s} with \stHom{s} to the category $\CatCu$.

It was also shown in \cite{CowEllIva08CuInv} that the category $\CatCu$ admits sequential inductive limits and that the functor $\Cu(\freeVar)$ is sequentially continuous, that is, it preserves sequential inductive limits.
This was extended to inductive limits over arbitrary directed sets in \cite[Corollary~3.2.9]{AntPerThi18TensorProdCu}.
In \autoref{prp:CuCocomplete} we show that $\CatCu$ is even a cocomplete category, that is, it admits arbitrary (small) colimits. 

We will often use the following result, which is called `R{\o}rdam's lemma' (see \cite[Proposition~2.4]{Ror92StructureUHF2} or \cite[Theorem~2.30]{Thi17:CuLectureNotes}):
Let $a,b\in A_+$.
Then the following are equivalent:
\begin{enumerate}
\item
$a\precsim b$.
\item
For every $\varepsilon>0$ there exists $\delta>0$ such that $(a-\varepsilon)_+\precsim(b-\delta)_+$.
\item
For every $\varepsilon>0$ there exist $\delta>0$ and $s\in A$ such that $(a-\varepsilon)_+=ss^*$ and $s^*s\in\Her((b-\delta)_+)$.
\end{enumerate}
\end{pgr}

\subsection{The category \texorpdfstring{$\CatW$}{W}}

In this subsection, we recall the definition of the category $\CatW$ from \cite{AntPerThi18TensorProdCu}, with a slight deviation;
see \autoref{rmk:CatWDifference}.

\begin{dfn}
\label{dfn:CatW}
A \emph{\WSgp} is a commutative monoid $S$ together with a transitive binary relation $\prec$ satisfying $0\prec x$ for every $x\in S$, and such that the following conditions are satisfied:
\begin{enumerate}
\item[\axiomW{1}]
For each $x\in S$, there is a $\prec$-increasing sequence $(x_n)_{n}$ with $x_n\prec x$ for each $n$, and such that for any $y\prec x$ there is $n\in\NN$ satisfying $y\prec x_n$.
\item[\axiomW{3}]
If $x'\prec x$ and $y'\prec y$, then $x'+y'\prec x+y$, for all $x',y',x,y\in S$.
\item[\axiomW{4}]
If $x,y,z\in S$ satisfy $x\prec y+z$, then there are $y',z'\in S$ such that $y'\prec y$, $z'\prec z$, and $x\prec y'+z'$.
\end{enumerate}
Given \WSgp{s} $S$ and $T$, a \emph{\WMor} is a map $f\colon S\to T$ that preserves addition, the relation $\prec$, the zero element, and that is continuous in the sense that for every $x\in S$ and $y\in T$ satisfying $y\prec f(x)$ there exists $x'\in S$ such that $x'\prec x$ and $y\prec f(x')$.
The set of all \WMor{s} from $S$ to $T$ is denoted by $\CatWMor(S,T)$.
\end{dfn}

\begin{rmk}
\label{rmk:CatWDifference}
In \cite[Definition~2.1.1]{AntPerThi18TensorProdCu}, we gave a slightly different definition of a \WSgp.
To clarify the change from \autoref{dfn:CatW}, let us denote the category defined in \cite{AntPerThi18TensorProdCu} by $\CatW^+$.
Thus, a $\CatW^+$-semigroup is a \pom~$S$ together with an auxiliary relation $\prec$ satisfying $0\prec x$ for every $x\in S$, satisfying \axiomW{1}, \axiomW{3} and \axiomW{4} as in \autoref{dfn:CatW}, and satisfying \axiomW{2} which requires that each $x$ in $S$ is the supremum of $x^\prec:=\{x':x'\prec x\}$.
Further, a $\CatW^+$-morphism is an order-preserving monoid homomorphism that satisfies the continuity condition from \autoref{dfn:CatW}.

If $S$ is a $\CatW^+$-semigroup, then by forgetting the partial order on $S$ we obtain a \WSgp.
Further, every $\CatW^+$-morphism naturally induces a \WMor{}.
This assignment defines a `forgetful' functor $F\colon\CatW^+\to\CatW$.
	
Conversely, let $(S,\prec)$ be a \WSgp.
We define a binary relation on $S$ by setting $x\leq y$ if and only if $x^{\prec}\subseteq y^{\prec}$.
Then $\leq$ is a translation-invariant pre-order.
Now, for $x,y\in S$, we set $x\prec_+ y$ if and only if there exists $y'\in S$ such that $x\leq y'$ and $y'\prec y$.
Given $w,x,y,z\in S$, we have that $x\prec y$ implies $x\prec_+ y$, also that $x\prec_+ y$ implies $x\leq y$, and finally we have that $w\leq x\prec_+ y\leq z$ implies $w\prec_+ z$.
Let us check the latter.
If $x\prec_+ y$, then there is $y'\in S$ with $x\leq y'\prec y$, and it follows that $w\leq y'\prec y$.
Since $y'\prec y$ and $y\leq z$, we have $y'\prec z$, and therefore $w\prec_+ z$.
It is easy to check that $(S,\prec_+)$ satisfies \axiomW{1}, \axiomW{3}, and \axiomW{4}.
	
Given $x\in S$, let us verify $x=\sup x^{\prec_+}$.
Clearly, $x$ is an upper bound for $x^{\prec_+}$.
To show that $x$ is the smallest upper bound, let $y\in S$ be another upper bound for $x^{\prec_+}$.
Given $w\in S$ with $w\prec x$, we can apply \axiomW{1} to find $w'\in S$ such that $w\prec w'\prec x$, and in particular $w'\in x^{\prec_+}$.
By assumption we have $w'\leq y$ and, since $w\prec w'$, we obtain $w\prec y$.
This implies $x\leq y$.

Thus, $(S,\leq,\prec_+)$ is a $\CatW^+$-semigroup if and only if the pre-order $\leq$ is a \emph{partial order}.
By antisymmetrization applied to the preordered set $(S,\leq)$,
one can naturally associate a partially ordered set $(S',\leq')$, and
one can show that $\prec_+$ induces an auxiliary relation $\prec_+'$ on $S'$ in such a way that $(S',\leq',\prec_+')$ is a $\CatW^+$-semigroup. This defines a functor $G\colon\CatW\to\CatW^+$ such that $G\circ F\cong\mathrm{id}$.
However, $F\circ G\ncong\mathrm{id}$.
For example, take $S=\NN$, with $x\prec y$ if and only if $x=0$.
Then $F(G(S))\cong\{0\}$, which is not isomorphic to $S$.

	
In the proofs below we work exclusively in the category $\CatW$ as given by \autoref{dfn:CatW}.
Thus, we do not require that $\prec$ induces a partial order.
Moreover, by dropping this requirement, we may forget about the (partial) order altogether.
Furher, we will see below that many results from \cite{AntPerThi18TensorProdCu} remain valid for the more relaxed definition of the category $\CatW$ used here.

\end{rmk}

\begin{pgr}
\label{pgr:CuInW}
Given a \CuSgp{} $S$, the monoid $S$ together with the relation $\ll$ is a \WSgp.
Moreover, given two \CuSgp{s} $S$ and $T$, a map $\varphi\colon S\to T$ is a \CuMor{} if and only if $\varphi$, considered as a map from the \WSgp{} $(S,\ll)$ to the \WSgp{} $(T,\ll)$, is a \WMor.
Thus, we obtain a functor $\iota\colon\CatCu\to\CatW$ that embeds $\CatCu$ as a full subcategory of $\CatW$.
\end{pgr}

\begin{pgr}
\label{pgr:gamma}
Given a \WSgp{} $S$, let $\bar{S}$ denote the set of all $\prec$-increasing sequences in $S$.
We equip $\bar{S}$ with componentwise addition, giving it the structure of a commutative monoid.
For sequences $\textbf{x}:=(x_n)_n$ and $\textbf{y}:=(y_n)_n$ in $\bar{S}$, we write $\textbf{x}\subseteq \textbf{y}$ if for every $k\in\NN$, there is $l\in\NN$ such that $x_k\prec y_l$.
Then $\subseteq$ is a preorder on $\bar{S}$.
Set $\textbf{x}\sim\textbf{y}$ if and only if $\textbf{x}\subseteq\textbf{y}$ and $\textbf{y}\subseteq\textbf{x}$.
Then $\sim$ is an equivalence relation on $\bar{S}$.
Define
\[
\gamma(S) := \bar{S}/\!\sim.
\]
Given $\textbf{x}\in \bar{S}$, we use use $[\textbf{x}]$ to denote its class in $\gamma(S)$.
The relation $\subseteq$ induces a partial order on $\gamma(S)$, by setting $[\textbf{x}]\leq [\textbf{y}]$ if and only if $\textbf{x}\subseteq\textbf{y}$.

The proof of \cite[Proposition~3.1.6]{AntPerThi18TensorProdCu} shows that $\gamma(S)$ is a $\CatCu$-semigroup, which we call the \emph{$\CatCu$-completion} of $S$.
Further, given a \WMor{} $\varphi\colon S\to T$, we define $\gamma(\varphi)\colon\gamma(S)\to\gamma(T)$ by $\gamma(\varphi)([(x_n)_n]):=[(\varphi(x_n))_n]$, for $(x_n)_n\in\bar{S}$.
One can show that $\gamma(\varphi)$ is a \CuMor.
This defines a covariant functor $\gamma\colon\CatW\to\CatCu$.

Let $S$ be a \WSgp{}.
We define $\alpha_S\colon S\to\gamma(S)$ by $\alpha_S(x):=[(x_n)_n]$, for $x\in S$, where $(x_n)_n$ is a $\prec$-increasing sequence obtained by applying \axiomW{1} to $x$.
One verifies that $\alpha_S$ is a well-defined \WMor.
Adapting the proof of \cite[Theorem~3.1.10]{AntPerThi18TensorProdCu}, we obtain the following result.
\end{pgr}

Recall that a full subcategory is said to be \emph{reflective} if the inclusion functor has a left adjoint. 

\begin{thm}
\label{prp:CuReflW}
The category $\CatCu$ is a full, reflective subcategory of $\CatW$.
The functor $\gamma\colon\CatW\to\CatCu$ is a left adjoint to the inclusion functor $\iota\colon\CatCu\to\CatW$.
Moreover, given a \WSgp{} $S$, the map $\alpha_S\colon S\to\gamma(S)$ is a universal \WMor:
For every \CuSgp{} $T$, there is a natural bijection
\[
\CatCuMor\big( \gamma(S), T \big) \cong \CatWMor\big( S, \iota(T) \big),
\]
implemented by sending a \CuMor{} $\varphi\colon\gamma(S)\to T$ to $\varphi\circ\alpha_S$.
\end{thm}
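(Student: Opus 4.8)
The fullness of the inclusion $\iota$ has already been recorded in \autoref{pgr:CuInW}, so the heart of the matter is to establish the asserted natural bijection; reflectivity and the adjunction $\gamma\dashv\iota$ then follow by standard category theory, with $\alpha_S$ the universal arrow (the unit) and $\gamma$ the induced left adjoint. The plan is therefore to fix a \WSgp{} $S$ and a \CuSgp{} $T$, to consider the map
\[
\Phi_{S,T}\colon \CatCuMor(\gamma(S),T)\to\CatWMor(S,\iota(T)),\qquad \varphi\mapsto \iota(\varphi)\circ\alpha_S,
\]
which is well defined because it is a composite of \WMor{s} (a \CuMor{} is in particular a \WMor{} by \autoref{pgr:CuInW}), and to produce an explicit two-sided inverse $\psi\mapsto\bar\psi$.

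Given $\psi\in\CatWMor(S,\iota(T))$ I would define $\bar\psi\colon\gamma(S)\to T$ by $\bar\psi([(x_n)_n]):=\sup_n\psi(x_n)$. This makes sense: since $\psi$ preserves $\prec$ and the relation on $\iota(T)$ is $\ll$, the sequence $(\psi(x_n))_n$ is $\ll$-increasing, so its supremum exists in $T$ by \axiomO{1}. Well-definedness and order-preservation come together: if $(x_n)_n\subseteq(y_m)_m$, then for each $n$ there is $m$ with $x_n\prec y_m$, whence $\psi(x_n)\ll\psi(y_m)\le\sup_m\psi(y_m)$, and taking suprema yields $\sup_n\psi(x_n)\le\sup_m\psi(y_m)$; reading this in both directions settles independence of the representative. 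Additivity follows from the componentwise addition on $\bar{S}$ together with \axiomO{4}, and $\bar\psi(0)=0$ is clear, so $\bar\psi$ is a $\CatPom$-morphism. I would next record the identity $\bar\psi\circ\alpha_S=\psi$, i.e.\ $\Phi_{S,T}(\bar\psi)=\psi$: for $x\in S$ with \axiomW{1}-sequence $(x_n)_n$, the inequality $\sup_n\psi(x_n)\le\psi(x)$ is immediate from $x_n\prec x$, while for the reverse one takes any $t\ll\psi(x)$, uses continuity of the \WMor{} $\psi$ to find $x'\prec x$ with $t\prec\psi(x')$, then \axiomW{1} to find $n$ with $x'\prec x_n$, so that $t\le\psi(x')\le\psi(x_n)\le\sup_n\psi(x_n)$; since $\psi(x)=\sup\{t:t\ll\psi(x)\}$ by \axiomO{2}, equality follows.

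The linchpin for the remaining verifications is the identity $[(x_n)_n]=\sup_n\alpha_S(x_n)$ in $\gamma(S)$, which I would prove directly: each $\alpha_S(x_n)\le[(x_m)_m]$ because $x_n\prec x_{n+1}$, and if $[(y_m)_m]$ is any upper bound then, writing $\alpha_S(x_{k+1})$ via a \axiomW{1}-sequence $(w_j)_j$ for $x_{k+1}$, the relation $x_k\prec x_{k+1}$ gives $j$ with $x_k\prec w_j$, while $\alpha_S(x_{k+1})\le[(y_m)_m]$ gives $l$ with $w_j\prec y_l$, so transitivity of $\prec$ yields $x_k\prec y_l$ and hence $[(x_n)_n]\le[(y_m)_m]$. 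Granting this, preservation of $\ll$ is quick: if $a\ll b=[(y_n)_n]=\sup_n\alpha_S(y_n)$, then $a\le\alpha_S(y_N)$ for some $N$, so $\bar\psi(a)\le\psi(y_N)\ll\psi(y_{N+1})\le\bar\psi(b)$, whence $\bar\psi(a)\ll\bar\psi(b)$ by condition (ii) of \autoref{pgr:auxiliary}. Preservation of suprema of increasing sequences I would obtain from a diagonalisation: representing an increasing sequence in $\gamma(S)$ and its supremum by a common diagonal $\prec$-increasing sequence, as in the construction of suprema underlying \autoref{pgr:gamma}, and then interchanging the resulting iterated suprema of the $\psi$-images by means of \axiomO{1} and \axiomO{4}. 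This diagonal bookkeeping is the one genuinely delicate point and parallels the proof of \cite[Theorem~3.1.10]{AntPerThi18TensorProdCu}; everything else is formal.

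Finally I would check the opposite composite. For $\varphi\in\CatCuMor(\gamma(S),T)$ with $\psi=\iota(\varphi)\circ\alpha_S$, applying $\varphi$ to the displayed identity and using that $\varphi$ preserves suprema of increasing sequences gives $\varphi([(x_n)_n])=\sup_n\varphi(\alpha_S(x_n))=\sup_n\psi(x_n)=\bar\psi([(x_n)_n])$, so $\overline{\Phi_{S,T}(\varphi)}=\varphi$ (this computation simultaneously establishes that $\bar\psi$ is the \emph{unique} \CuMor{} with $\iota(\bar\psi)\circ\alpha_S=\psi$). Thus $\Phi_{S,T}$ is a bijection with inverse $\psi\mapsto\bar\psi$. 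Naturality of this bijection in both $S$ and $T$ is then routine to check from the definitions, and the universal property of $\alpha_S$ expressed by it is exactly the assertion that $\gamma$ is left adjoint to $\iota$ and that $\CatCu$ is a full, reflective subcategory of $\CatW$.
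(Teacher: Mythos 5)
Your argument is correct and is essentially the paper's: the paper simply defers to an adaptation of \cite[Theorem~3.1.10]{AntPerThi18TensorProdCu}, and what you write out --- the inverse $\psi\mapsto\bar\psi$ given by $\bar\psi([(x_n)_n])=\sup_n\psi(x_n)$, the key identity $[(x_n)_n]=\sup_n\alpha_S(x_n)$, and the diagonal bookkeeping for preservation of suprema --- is exactly that adaptation carried out. It is worth noting that your verification uses only the transitive relation $\prec$ on $S$ and never a partial order on $S$, which is precisely what makes the argument go through for the relaxed definition of a \WSgp{} in \autoref{dfn:CatW} (cf.\ \autoref{rmk:CatWDifference}).
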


\subsection{The category \texorpdfstring{$\CatQ$}{Q}}

In this subsection, we recall the definition of the category $\CatQ$ and of the $\tau$-construction, as introduced in \cite{AntPerThi17arX:AbsBivariantCu}.
Since we will apply the $\tau$-construction only to $\CatQ$-semigroups, we use a simplified definition of path that is different from that used in \cite{AntPerThi17arX:AbsBivariantCu}.
We remark that the $\tau$-construction can be applied to a much more general class of semigroups equipped with an auxiliary relation by using a more general concept of paths;
see \cite[Definition~3.3]{AntPerThi17arX:AbsBivariantCu}.

\begin{dfn}
\label{dfn:CatQ}
A \emph{\QSgp} is a \pom{} satisfying axioms~\axiomO{1} and \axiomO{4} from \autoref{dfn:CatCu}, together with an additive auxiliary relation $\prec$.
A \emph{\QMor} is a $\CatPom$-morphism that preserves the auxiliary relation and suprema of increasing sequences.
Given \QSgp{s} $S$ and $T$, the set of all \QMor{s} from $S$ to $T$ is denoted by $\CatQMor(S,T)$.
\end{dfn}

\begin{pgr}
\label{pgr:CuInQ}
Given a \CuSgp{} $S$, then $S$ together with the relation $\ll$ is a \QSgp.
Moreover, given two \CuSgp{s} $S$ and $T$, a map $\varphi\colon S\to T$ is a \CuMor{} if and only if it is a \QMor{} when considered as a map from $(S,\ll)$ to $(T,\ll)$.
We obtain a functor $\iota\colon\CatCu\to\CatQ$ that sends a \CuSgp{} $S$ to the \QSgp{} $(S,\ll)$, and embeds $\CatCu$ as a full subcategory of $\CatQ$.
\end{pgr}

\begin{pgr}
\label{pgr:tauconstr}
Let $S=(S,\prec)$ be a $\CatQ$-semigroup.
A \emph{path} in $S$ is an order-preserving map $f\colon (-\infty,0]\to S$ such that $f(t)=\sup_{t'<t}f(t')$ for all $t\in(-\infty,0]$, and such that $f(t')\prec f(t)$ whenever $t'<t$.
We denote the set of paths in $S$ by $\Paths(S)$.

For $f,g\in \Paths(S)$ we define $f+g$ by $(f+g)(t)=f(t)+g(t)$. Together with the constant zero path, this gives $\Paths(S)$ the structure of a commutative monoid.

Given $f,g\in \Paths(S)$, we write $f\precsim g$ if, for every $t<0$, there is $t'<0$ such that $f(t)\prec g(t')$.
Set $f\sim g$ if $f\precsim g$ and $g\precsim f$.
It follows from \cite[Lemma~3.4]{AntPerThi17arX:AbsBivariantCu} that the relation $\precsim$ is reflexive, transitive, and compatible with the addition of paths.
We set
\[
\tau(S) := \Paths(S)/\!\sim.
\]
Given $f\in \Paths(S)$, its equivalence class in $\tau(S)$ is denoted by $[f]$.
Equip $\tau(S)$ with an addition and order by setting $[f]+[g]:=[f+g]$ and $[f]\leq [g]$ provided $f\precsim g$.
It is proved in \cite[Theorem~3.15]{AntPerThi17arX:AbsBivariantCu} that $\tau(S)$ is a \CuSgp.

We define $\varphi_S\colon\tau(S)\to S$ by $\varphi_S([f]) := f(0)$, for $f\in\Paths(S)$.
One verifies that $\varphi_S$ is a well-defined \QMor.
Given a path $f\in\Paths(S)$, we think of $f(0)$ as the endpoint of $f$.
Accordingly, we call $\varphi_S$ the \emph{endpoint map}.

The construction just outlined will be referred to as the \emph{$\tau$-construction}.
Given a $\CatQ$-morphism $\varphi\colon S\to T$, we define $\tau(\varphi)\colon\tau(S)\to\tau(T)$ by $\tau(\varphi)([f]):=[\varphi\circ f]$, for $f\in \Paths(S)$.
One can show that $\tau(\varphi)$ is a \CuMor.
This defines a covariant functor $\tau\colon\CatQ\to\CatCu$.
\end{pgr}

\begin{thm}[{\cite[Theorem~4.12]{AntPerThi17arX:AbsBivariantCu}}]
\label{prp:CuCoreflQ}
The category $\CatCu$ is a full, coreflective subcategory of $\CatQ$.
The functor $\tau\colon\CatQ\to\CatCu$ is a right adjoint to the inclusion functor $\iota\colon\CatCu\to\CatQ$.
Moreover, given a \QSgp{} $S$, the endpoint map $\varphi_S\colon\tau(S)\to S$ is a universal \QMor:
For every \CuSgp{} $T$, there is a natural bijection
\[
\CatCuMor\big( T,\tau(S) \big) \cong \CatQMor\big( \iota(T),S \big),
\]
implemented by sending a \CuMor{} $\psi\colon T\to\tau(S)$ to $\varphi_S\circ\psi$.
\end{thm}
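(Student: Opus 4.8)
The plan is to reduce everything to the single statement that the endpoint map $\varphi_S$ is a \emph{couniversal arrow} from $\iota$ to $S$. Concretely, I would prove that for every \CuSgp{} $T$, every \QSgp{} $S$, and every \QMor{} $g\colon\iota(T)\to S$ there is a \emph{unique} \CuMor{} $\psi\colon T\to\tau(S)$ with $\varphi_S\circ\psi=g$. Granting this, $\psi\mapsto\varphi_S\circ\psi$ is a bijection $\CatCuMor(T,\tau(S))\to\CatQMor(\iota(T),S)$ with inverse $g\mapsto\psi$, and naturality in $T$ and $S$ is routine (precomposition in $T$, and postcomposition in $S$ using that $\varphi$ is a natural transformation $\iota\tau\Rightarrow\mathrm{id}_{\CatQ}$); thus $\iota\dashv\tau$ with counit $\varphi$. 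Since $\iota$ is fully faithful (\autoref{pgr:CuInQ}), the unit $\mathrm{id}_{\CatCu}\Rightarrow\tau\iota$ is then automatically a natural isomorphism, confirming that $\CatCu$ is a full, coreflective subcategory. Throughout I use that $\tau(S)$ is a \CuSgp{} and $\varphi_S$ a \QMor{} (\autoref{pgr:tauconstr}), that in any \CuSgp{} the relation $\ll$ interpolates (a standard consequence of \axiomO{2}) so that every $x\in T$ is the endpoint $\hat x(0)$ of some path $\hat x\in\Paths(\iota(T))$, and the description of the way-below relation in $\tau(S)$ from \cite{AntPerThi17arX:AbsBivariantCu}: for paths $e,h$ one has $[e]\ll[h]$ if and only if $e(0)\le h(t_0)$ for some $t_0<0$.

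\emph{Construction of $\psi$.} For $x\in T$ I choose a path $\hat x$ in $\iota(T)=(T,\ll)$ with $\hat x(0)=x$ and set $\psi(x):=[g\circ\hat x]$. The composite $g\circ\hat x$ is a path in $S$: it is order preserving since $g$ is; it satisfies $(g\circ\hat x)(t')\prec(g\circ\hat x)(t)$ for $t'<t$ since $g$ carries $\ll$ to $\prec$; and it is left-continuous since $\hat x$ is and $g$ preserves suprema of increasing sequences (realizing $\hat x(t)=\sup_{t'<t}\hat x(t')$ along a sequence $t'\uparrow t$). That $[g\circ\hat x]$ is independent of the chosen path rests on two facts: any two paths in a \CuSgp{} with the same endpoint are equivalent (a short argument from \axiomO{2} and the definition of $\ll$), and $g$ sends $\precsim$-comparable paths to $\precsim$-comparable paths. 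Finally $\varphi_S(\psi(x))=(g\circ\hat x)(0)=g(x)$, so $\varphi_S\circ\psi=g$.

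\emph{$\psi$ is a \CuMor{}.} Preservation of $0$ is witnessed by the constant zero path, and additivity follows because $\hat x+\hat y$ is a path with endpoint $x+y$ (using \axiomO{3} and \axiomO{4}). Order preservation and preservation of $\ll$ are direct: if $x\le y$ then for $t<0$ we have $\hat x(t)\ll x\le y$, so $\hat x(t)\le\hat y(s)\ll\hat y(t')$ for suitable $s<t'<0$, giving $g(\hat x(t))\prec g(\hat y(t'))$ and hence $g\circ\hat x\precsim g\circ\hat y$; and if $x\ll y$ then $x\le\hat y(t_0)$ for some $t_0<0$, so $g(x)\le(g\circ\hat y)(t_0)$ and the way-below description yields $\psi(x)\ll\psi(y)$. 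The one genuinely delicate point, which I expect to be the \textbf{main obstacle}, is preservation of suprema of increasing sequences. For $x=\sup_k x_k$ the inequality $\sup_k\psi(x_k)\le\psi(x)$ is immediate from order preservation. For the reverse I would take $w=[e]\ll\psi(x)=[g\circ\hat x]$, use the way-below description to get $e(0)\le g(\hat x(t_0))$, and then interpolate $t_0<t_1<0$: from $\hat x(t_0)\ll\hat x(t_1)\ll x=\sup_k x_k$ one obtains $\hat x(t_1)\le x_k$ and hence $\hat x(t_0)\ll x_k$ for some $k$, which lets one push $e(0)\le g(\hat x(t_0))\le g(\hat{x_k}(s))$ onto an \emph{interior} value $s<0$ of a path for $x_k$; thus $w\le\psi(x_k)$, and since $\psi(x)=\sup\{w:w\ll\psi(x)\}$ by \axiomO{2} we conclude $\psi(x)\le\sup_k\psi(x_k)$. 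The heart of the matter is exactly this passage from an endpoint estimate to an interior estimate, which only the interpolation of $\ll$ makes available.

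\emph{Uniqueness.} Let $h\colon T\to\tau(S)$ be any \CuMor{} with $\varphi_S\circ h=g$; I would show $h=\psi$. Fix $x$, a path $\hat x$ with $\hat x(0)=x$, and $t_n\uparrow0$; then both $h$ and $\psi$ send $x=\sup_n\hat x(t_n)$ to the supremum of their values along the $\ll$-increasing sequence $(\hat x(t_n))_n$. For $h(x)\le\psi(x)$, I use that $\varphi_S(h(\hat x(t_n)))=g(\hat x(t_n))$ is the endpoint of any representative $H_n$ of $h(\hat x(t_n))$, so $H_n(t)\prec g(\hat x(t_n))\prec g(\hat x(t'))$ for $t'\in(t_n,0)$, whence $h(\hat x(t_n))\le[g\circ\hat x]=\psi(x)$. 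For $\psi(x)\le h(x)$, I apply the way-below description to $h(\hat x(t_n))\ll h(\hat x(t_{n+1}))$ to place the endpoint $g(\hat x(t_n))$ below an interior value of a representative of $h(\hat x(t_{n+1}))$, which gives $\psi(\hat x(t_n))\le h(\hat x(t_{n+1}))\le h(x)$; taking suprema in $n$ yields $h=\psi$. With existence and uniqueness established, the natural bijection, the universal property of $\varphi_S$, and full coreflectivity follow as in the first paragraph.
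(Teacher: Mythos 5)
Your argument is correct, but note that the paper itself offers no proof of this statement: it is imported verbatim as \cite[Theorem~4.12]{AntPerThi17arX:AbsBivariantCu}, so there is no internal proof to compare against. What you have written is a sound, essentially self-contained reconstruction of the expected argument, organized around verifying that the endpoint map $\varphi_S$ is a couniversal arrow from $\iota$ to $S$ (existence and uniqueness of the lift $\psi$ with $\varphi_S\circ\psi=g$), after which the adjunction, the natural bijection, and full coreflectivity follow formally. The two load-bearing inputs you invoke without proof are precisely the nontrivial results of the cited reference: (i) that every element of a \CuSgp{} is the endpoint of a path in $(T,\ll)$ — this is the surjectivity half of $\tau\circ\iota\cong\mathrm{id}$, i.e.\ \cite[Proposition~4.10]{AntPerThi17arX:AbsBivariantCu}, and it requires a genuine dyadic interpolation construction, not just the interpolation property of $\ll$; and (ii) the characterization of $\ll$ in $\tau(S)$ via an endpoint-versus-interior-value estimate, which is \cite[Lemma~3.16]{AntPerThi17arX:AbsBivariantCu}. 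Granting those, every step you outline — well-definedness of $\psi$ via the equivalence of paths with a common endpoint, additivity via $\hat x+\hat y$, preservation of $\ll$ and of suprema via the interior-value trick, and the two inequalities in the uniqueness argument — checks out. Your identification of sup-preservation as the delicate point, and the mechanism you use there (interpolating $t_0<t_1<0$ to pass from an endpoint bound to an interior bound on a path for some $x_k$), is exactly right.
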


\section{Bicompleteness of \texorpdfstring{$\CatCu$}{Cu}}
\label{sec:productsandinverselims}

In this section we show that the category $\CatCu$ is bicomplete, that is, complete and cocomplete.
To achieve this, we use that $\CatCu$ is a full, coreflective subcategory of $\CatQ$ and also a full, reflective subcategory of $\CatW$;
see Theorems~\ref{prp:CuCoreflQ} and~\ref{prp:CuReflW}.

We begin by briefly recalling the notions of small (co)limits and (co)completeness in arbitrary categories;
see, for example, \cite[Section~2.6]{Bor94HandbookCat1}.

\begin{pgr}[Small limits and colimits]
\label{pgr:smallcomplete}
Let $I$ be a small category, and let $F\colon I\to\CatC$ be a functor to a category $\CatC$.
Recall that a \emph{cone} to $F$ is a pair $(L,\varphi)$, where $L$ is an object in $\CatC$ and $\varphi=(\varphi_i)_{i\in I}$ is a collection of $\CatC$-morphisms $\varphi_i\colon L\to F(i)$ with the property that, whenever $f\colon i\to j$ is a morphism in $I$, we have $F(f)\circ\varphi_i=\varphi_j$.
This means that the left diagram below commutes.

By definition, a \emph{limit} of $F$ is a cone $(L,\varphi)$ that is universal in the sense that every cone $(L',\varphi')$ factors through $(L,\varphi)$, that is, there exists a unique $\CatC$-morphism $\alpha\colon L'\to L$ such that $\varphi_i'=\varphi_i\circ\alpha$ for every $i$.
This means that the right diagram below commutes.
\[
\xymatrix@R-20pt{
& F(i) \ar[dd]^{F(f)}
& &  &  & F(i) \ar[dd]^{F(f)} \\
L \ar[ur]^{\varphi_i} \ar[dr]_{\varphi_j}
&  & & L' \ar[r]^\alpha \ar@/^0.8pc/[urr]^{\varphi_i'} \ar@/_0.8pc/[drr]_{\varphi_j'}
& L \ar[ur]^{\varphi_i} \ar[dr]_{\varphi_j}  &   \\
& F(j) & & & & F(j) \\
}
\]

If a limit of $F$ exists then it is unique up to natural isomorphism and we shall denote it by  $(\CatC\text{-}\!\varprojlim F,\pi)$ or just by $\CatC\text{-}\!\varprojlim F$.

The notion of \emph{cocone} to a functor $F\colon I\to \mathcal C$ is  defined dually, as a pair $(M,\psi)$ with $M$ an object of $\CatC$ and $\psi=(\psi_i)_{i\in I}$ a collection of $\CatC$-morphisms $\psi_i\colon F(i)\to M$ with dual compatibility relations.
A \emph{colimit} of $F$ is then defined as a universal cocone. 
If a colimit of $F$ exists then it is unique up to natural isomorphism and we shall denote it by $(\CatC\text{-}\varinjlim F,\sigma)$, or simply by $\CatC\text{-}\varinjlim F$.

The category $\CatC$ is termed \emph{complete} (\emph{cocomplete}) if all small limits (colimits) exist, that is, if every functor from a small category to $\CatC$ has a limit (colimit).
\end{pgr}

\begin{pgr}
Let $F\colon I\to\CatC$ be a functor from a small category $I$ to a category $\CatC$.
Recall that, if $I$ has no morphisms (except the identities at each object), then $F$ corresponds to a selection of a family $(X_i)_{i\in I}$ of objects in $\CatC$. 
In this case, a limit of $F$ is called a \emph{product} of the family $(X_i)_{i\in I}$ and is denoted by $\prod_{i\in I}X_i$. 
Dually we define a \emph{coproduct} which we denote by $\coprod_{\in I}X_i$.

In case $I$ is an upward directed, partially ordered set, then the functor $F$ corresponds to a family $(X_i)_{i\in I}$ of objects in $\CatC$ together with compatible $\CatC$-morphisms $\varphi_{j,i}\colon X_i\to X_j$ whenever $i\leq j$.
A colimit of $F$ is called an \emph{inductive limit} or \emph{direct limit}.
Inverse limits are defined similarly.

Finally, if $I=\{0,1\}$ is the category with two objects and two morphisms $0\to 1$, then $F\colon I\to\CatC$ corresponds to two $\CatC$-morphisms $\varphi,\psi\colon X\to Y$. 
A colimit of $F$ is then called a \emph{coequalizer}.
Thus, a coequalizer for $(\varphi,\psi)$ is a universal pair $(C,\eta)$, where $C$ is an object of $\CatC$ and $\eta\colon Y\to C$ is a $\CatC$-morphism such that $\eta\circ\varphi=\eta\circ\psi$.
\end{pgr}

\subsection{Completeness}

In this subsection we prove that $\CatCu$ is a complete category;
see \autoref{prp:CuComplete}.
In particular, $\CatCu$ has products over infinite index sets. 
We will prove in the sequel that the scaled Cuntz semigroup functor preserves (ultra)products of \ca{s}; 
see \autoref{thm:CuProd} and \autoref{thm:CuUltraprod}.

\begin{pgr}
\label{pgr:prodinvlim}
Let $I$ be a set.
Let $(S_i)_{i\in I}$ be a family of \pom{s}.
Define
\[
\CatPomProd_{i\in I}S_i
:= \big\{ (s_i)_{i\in I} : s_i \in S_i\text{ for all }i\in I \big\}\,.
\]
Equipped with componentwise order and addition, $\CatPomProd_{i\in I}S_i$ is a \pom{}.
Given $i\in I$, we let $\pi_i\colon\CatPomProd_i S_i\to S_i$ be the projection map given by $\pi_i\big( (s_j)_{j\in I} \big) := s_i$.
It is easy to verify that $\left(\CatPomProd_{i\in I}S_i,(\pi_i)_i\right)$ is the product of the family $(S_i)_i$ in $\CatPom$.

Next, let $I$ be a small category and let $F\colon I\to\CatPom$ be a functor.
Set
\begin{align}
\label{pgr:prodinvlim:eq2}
S:= \left\{ (s_i)_{i\in I}\in\CatPomProd_{i\in I} F(i) :  F(f)(s_i)=s_j \text{ for all } f\colon i\to j \text{ in }I \right\}.
\end{align}
It is straightforward to verify that $0\in S$ and that $S$ is closed under addition in $\CatPomProd_i F(i)$.
Thus, $S$ inherits the structure of a \pom{}. 

For each $i\in I$, the projection map $\pi_i\colon\CatPomProd_j F(j)\to F(i)$ restricts to a $\CatPom$-morphism $\pi_i\colon S\to F(i)$.
It is straightforward to verify that $(S,\pi)$, where $\pi=(\pi_i)_{i\in I}$, is the limit of $F$ in $\CatPom$.
(Observe that $\CatPom\text{-}\!\varprojlim F$ agrees, as a set, with the limit of the functor $F$ taking as target category the category of sets.)
\end{pgr}

\begin{lma}
\label{prp:limitPomO1O4}
Let $I$ be a small category, let $F\colon I\to\CatPom$ be a functor, and let $(S,\pi)$ be the limit of $F$ in $\CatPom$.
Assume that $F(i)$ satisfies \axiomO{1} and \axiomO{4}, for each $i\in I$, and assume that $F(f)\colon F(i)\to F(j)$ preserves suprema of increasing sequences for each morphism $f\colon i\to j$ in $I$.
Then $S$ satisfies \axiomO{1} and \axiomO{4}, and for each $i\in I$ the projection map $\pi_i\colon S\to F(i)$ preserves suprema of increasing sequences.
\end{lma}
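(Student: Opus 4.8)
The plan is to compute everything componentwise, using the explicit description of the limit $(S,\pi)$ from \autoref{pgr:prodinvlim}: the underlying set of $S$ consists of the compatible tuples $(s_i)_{i\in I}$ in $\CatPomProd_{i\in I}F(i)$, and $S$ carries the componentwise order and addition. The heart of the argument is to show that suprema of increasing sequences in $S$ can be formed coordinatewise; once this is established, both \axiomO{1} and \axiomO{4} for $S$, as well as the preservation of suprema by each $\pi_i$, follow immediately from the corresponding properties of the factors $F(i)$.

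First I would verify \axiomO{1}. Given an increasing sequence $(x^{(n)})_n$ in $S$, write $x^{(n)}=(x^{(n)}_i)_{i\in I}$. Since the order is componentwise, $(x^{(n)}_i)_n$ is increasing in $F(i)$ for each $i$, so by \axiomO{1} in $F(i)$ the supremum $s_i:=\sup_n x^{(n)}_i$ exists. The candidate supremum is $s:=(s_i)_{i\in I}$, and the key point---which I expect to be the main obstacle---is to check that $s$ again lies in $S$, i.e.\ that $s$ is a compatible tuple. This is exactly where the hypothesis on the transition maps is used: for a morphism $f\colon i\to j$ in $I$, compatibility of each $x^{(n)}$ gives $F(f)(x^{(n)}_i)=x^{(n)}_j$, and since $F(f)$ preserves suprema of increasing sequences we obtain
\[
F(f)(s_i)=F(f)\big(\sup\nolimits_n x^{(n)}_i\big)=\sup\nolimits_n F(f)(x^{(n)}_i)=\sup\nolimits_n x^{(n)}_j=s_j.
\]
Hence $s\in S$. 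That $s$ is the supremum of $(x^{(n)})_n$ in $S$ is then routine, since an upper bound in $S$ is precisely a componentwise upper bound and $s_i$ is the least upper bound in each coordinate. The same computation shows $\pi_i(\sup_n x^{(n)})=s_i=\sup_n\pi_i(x^{(n)})$, so each $\pi_i$ preserves suprema of increasing sequences.

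Finally, \axiomO{4} is a direct consequence. Given increasing sequences $(x^{(n)})_n$ and $(y^{(n)})_n$ in $S$, addition is componentwise and, by the previous paragraph, so are the relevant suprema; hence for each coordinate $i$, using \axiomO{4} in $F(i)$,
\[
\big(\sup\nolimits_n(x^{(n)}+y^{(n)})\big)_i
=\sup\nolimits_n\big(x^{(n)}_i+y^{(n)}_i\big)
=\sup\nolimits_n x^{(n)}_i+\sup\nolimits_n y^{(n)}_i
=\big(\sup\nolimits_n x^{(n)}+\sup\nolimits_n y^{(n)}\big)_i.
\]
Since this holds in every coordinate, the identity holds in $S$. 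I expect the only genuinely substantive step to be the verification that componentwise suprema remain inside $S$; the rest reduces to the coordinatewise nature of the structure on the limit.
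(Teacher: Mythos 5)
Your proposal is correct and follows essentially the same route as the paper's proof: form suprema coordinatewise, use the hypothesis that the transition maps $F(f)$ preserve suprema of increasing sequences to check that the coordinatewise supremum is again a compatible tuple, and then deduce \axiomO{4} and the continuity of the $\pi_i$ from the componentwise nature of order and addition. The only difference is that you spell out the \axiomO{4} computation explicitly where the paper leaves it as an immediate consequence.
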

\begin{proof}
We may assume that $S$ is defined as in \eqref{pgr:prodinvlim:eq2}.
To verify \axiomO{1} for $S$, let $(s^{(n)})_{n\in\NN}$ be an increasing sequence in $S$.
For each $n$, we have $s^{(n)}=(s_i^{(n)})_{i\in I}$.
Since the order in $S$ is defined componentwise and $F(i)$ satisfies \axiomO{1} for each $i$, we may set $s_i:=\sup_n s_i^{(n)}$.
To verify that $s:=(s_i)_i$ belongs to $S$, let $f\colon i\to j$ be a morphism in $I$.
Since $F(f)\colon F(i)\to F(j)$ preserves suprema of increasing sequences, we deduce that
\[
F(f)(s_i)
= F(f)(\sup_n s_i^{(n)})
= \sup_n F(f)(s_i^{(n)})
= \sup_n s_j^{(n)}
= s_j.
\]
Thus, $s$ belongs to $S$, and it is easy to verify that $s$ is the supremum of $(s^{(n)})_n$ in~$S$.
This shows that $S$ satisfies \axiomO{1}.

The first part of the proof shows immediately that the projection maps $\pi_i$ preserve suprema of increasing sequences.
Moreover, since addition in $S$ is also taken componentwise, we obtain that $S$ satisfies \axiomO{4}.
\end{proof}

\begin{dfn}
\label{dfn:auxPomLimit}
Let $I$ be a small category, and let $F\colon I\to\CatQ$ be a functor.
Let $(S,\pi)$ be the limit of $F$ in $\CatPom$ with $S$ defined as in \eqref{pgr:prodinvlim:eq2}.
Define a binary relation $\prec_{\mathrm{pw}}$ on $S$ by setting $s\prec_{\mathrm{pw}} t$ for $s=(s_i)_{i\in I}$ and $t=(t_i)_{i\in I}$ in $S$ if and only if $s_i\prec t_i$ in $F(i)$ for each $i\in I$. 
\end{dfn}

\begin{prp}
\label{prp:Qcomplete}
The category $\CatQ$ is complete.
More precisely, given a small category $I$, and a functor $F\colon I\to\CatQ$, let $(S,(\pi_i)_i)$ be the limit of $F$ in $\CatPom$, with $S$ defined as in \eqref{pgr:prodinvlim:eq2}.
Then $(S,\prec_{\mathrm{pw}})$ is a $\CatQ$-semigroup and $\pi_i\colon S\to F(i)$ is a $\CatQ$-morphism for each $i\in I$.
Moreover, $(S,\prec_{\mathrm{pw}})$ with $(\pi_i)_i$ is a limit of $F$ in $\CatQ$.
\end{prp}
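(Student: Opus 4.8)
The plan is to verify the defining data of a $\CatQ$-semigroup for $(S,\prec_{\mathrm{pw}})$ one coordinate at a time, leaning on \autoref{prp:limitPomO1O4} for the purely order-theoretic axioms, and then to check the universal property by upgrading the factorization that already exists at the level of $\CatPom$.

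First I would dispose of axioms \axiomO{1} and \axiomO{4}. Composing $F$ with the forgetful functor $\CatQ\to\CatPom$, and using that every $\CatQ$-semigroup satisfies \axiomO{1} and \axiomO{4} while every $\CatQ$-morphism preserves suprema of increasing sequences, the hypotheses of \autoref{prp:limitPomO1O4} are met. That lemma then gives at once that $S$ satisfies \axiomO{1} and \axiomO{4} and that each projection $\pi_i$ preserves suprema of increasing sequences. It remains to treat the auxiliary relation. Since $\leq$ and $+$ on $S$ are computed componentwise, each defining property of an additive auxiliary relation transfers from the $F(i)$ to $\prec_{\mathrm{pw}}$ by checking it in every coordinate: from $s_i\prec t_i$ one reads off $s_i\leq t_i$, giving property~(i); the interpolation property~(ii), that $r\leq s\prec_{\mathrm{pw}} t\leq u$ implies $r\prec_{\mathrm{pw}} u$, follows from the corresponding property of $\prec$ in each $F(i)$; and additivity together with $0\prec_{\mathrm{pw}} s$ hold coordinatewise as well. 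Thus $(S,\prec_{\mathrm{pw}})$ is a $\CatQ$-semigroup. Since $\pi_i$ preserves suprema and, by the very definition of $\prec_{\mathrm{pw}}$, satisfies $\pi_i(s)\prec\pi_i(t)$ whenever $s\prec_{\mathrm{pw}} t$, each $\pi_i$ is a $\CatQ$-morphism; the cone identities $F(f)\circ\pi_i=\pi_j$ already hold since they define membership in $S$.

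For the universal property I would start from an arbitrary cone $(T,(\psi_i)_i)$ in $\CatQ$ and recall that at the level of $\CatPom$ there is a unique factorization $\alpha\colon T\to S$, namely $\alpha(t)=(\psi_i(t))_{i\in I}$: the cone compatibility $F(f)(\psi_i(t))=\psi_j(t)$ is exactly what places $\alpha(t)$ in $S$, while $\pi_i\circ\alpha=\psi_i$ and the uniqueness of $\alpha$ are inherited from the $\CatPom$ limit. It then suffices to verify that this $\alpha$ is a $\CatQ$-morphism. Preservation of $\prec$ is immediate: if $s\prec t$ in $T$, then $\psi_i(s)\prec\psi_i(t)$ for every $i$ since each $\psi_i$ preserves $\prec$, whence $\alpha(s)\prec_{\mathrm{pw}}\alpha(t)$.

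The one step demanding a little care, and the place I expect the only real (if mild) work to be, is that $\alpha$ preserves suprema of increasing sequences. Given an increasing sequence $(t^{(n)})_n$ in $T$ with $t=\sup_n t^{(n)}$, I would form $\sigma:=\sup_n\alpha(t^{(n)})\in S$, which exists by \axiomO{1}, and compute for each $i$, using that both $\pi_i$ and $\psi_i$ preserve suprema,
\[
\pi_i(\sigma)=\sup_n\pi_i(\alpha(t^{(n)}))=\sup_n\psi_i(t^{(n)})=\psi_i(t)=\pi_i(\alpha(t)).
\]
Since an element of $S$ is determined by the family of its coordinates $(\pi_i)_i$, this forces $\sigma=\alpha(t)$, so $\alpha$ preserves suprema. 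This completes the verification that $\alpha$ is a $\CatQ$-morphism, and hence that $(S,\prec_{\mathrm{pw}})$ with $(\pi_i)_i$ is a limit of $F$ in $\CatQ$. As $I$ and $F$ were arbitrary, $\CatQ$ is complete.
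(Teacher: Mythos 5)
Your proposal is correct and follows essentially the same route as the paper: invoke \autoref{prp:limitPomO1O4} for \axiomO{1} and \axiomO{4}, verify that $\prec_{\mathrm{pw}}$ is an additive auxiliary relation coordinatewise, and upgrade the unique $\CatPom$-factorization to a $\CatQ$-morphism by checking preservation of $\prec$ and of suprema componentwise. The paper's verification of supremum-preservation is a direct componentwise computation rather than your comparison of coordinates of $\sup_n\alpha(t^{(n)})$ and $\alpha(t)$, but the two arguments are the same in substance.
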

\begin{proof}
By \autoref{prp:limitPomO1O4}, $S$ satisfies \axiomO{1} and \axiomO{4}.
It is straightforward to check that $\prec_{\mathrm{pw}}$ defines an additive auxiliary relation on $S$.
Thus, $(S,\prec_{\mathrm{pw}})$ is a $\CatQ$-semigroup.
Given $i\in I$, the projection map $\pi_i\colon S\to F(i)$ clearly preserves the auxiliary relation.
By \autoref{prp:limitPomO1O4}, $\pi_i$ is also a $\CatPom$-morphism that preserves suprema of increasing sequences, and consequently $\pi_i$ is a $\CatQ$-morphism.
To simplify, we denote the $\CatQ$-semigroup $(S,\prec_{\mathrm{pw}})$ by $S$.
It follows that $(S,\pi)$ is a cone to $F$.

To show that $(S,\pi)$ is a limit of $F$ in $\CatQ$, let $(T,\sigma)$ be a cone to $F$.
By the universal property of the limit in $\CatPom$, there is a (unique) $\CatPom$-morphism $\sigma\colon T\to S$ such that $\sigma_i=\pi_i\circ\sigma$ for all $i\in I$.
Note that, $\sigma(t)=(\sigma_i(t))_{i\in I}$, for $t\in T$.
We have to show that $\sigma$ is a $\CatQ$-morphism.

To verify that $\sigma$ preserves the auxiliary relation, let $t',t\in T$ satisfy $t'\prec t$.
Since $\sigma_i$ preserves the auxiliary relation, we obtain that $\sigma_i(t')\prec\sigma_i (t)$, for each $i\in I$.
By definition, we then have $\sigma(t')\prec_{\mathrm{pw}}\sigma(t)$ in $S$, as desired.

To show that $\sigma$ preserves suprema of increasing sequences, let $(t_n)_n$ be an increasing sequence in $T$.
Set $t=\sup_n t_n$.
Since $\sigma_i$ preserves suprema of increasing sequences, we have $\sigma_i(t)=\sup_n\sigma_i(t_n)$ for each $i\in I$.
We deduce that
\[
\sigma(t)
= (\sigma_i(t))_i
= (\sup_n \sigma_i(t_n))_i
= \sup_n (\sigma_i(t_n))_i
= \sup_n \sigma(t_n). \qedhere
\]
\end{proof}

\begin{rmk}
It follows from \autoref{prp:Qcomplete} that $\CatQ$ has products.
The product in $\CatQ$ of a family $(S_i)_{i\in I}$ of $\CatQ$-semigroups is  precisely $\CatQProd_{i}S_i = \big( \CatPomProd_{i}S_i, \prec_{\mathrm{pw}} \big)$
\end{rmk}

\begin{thm}
\label{prp:CuComplete}
The category $\CatCu$ is complete.
More precisely, given a small category~$I$ and a functor $F\colon I\to\CatCu$, let $(S,(\pi_i)_i)$ be the limit of $F$ in $\CatQ$.
Let $\tau(S)$ be the $\tau$-completion of $S$.
Setting $S_i:=\tau(F(i))$ and identifying it with $F(i)$, we set $\psi_i:=\tau(\pi_i)\colon\tau(S)\to\tau(S_i)\cong S_i$.
Then $\tau(S)$ together with $(\psi_i)_i$ is the limit of $F$ in $\CatCu$, that is:
\[
\CatCuinvLim F 
= \tau\left( \CatQinvLim F \right)
= \tau\left( \CatPominvLim F, \llpw \right)
\]
\end{thm}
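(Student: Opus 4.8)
The plan is to exploit that $\CatCu$ sits inside $\CatQ$ as a full, coreflective subcategory, via the inclusion $\iota$ with right adjoint $\tau$ (\autoref{prp:CuCoreflQ}), together with the fact that $\CatQ$ is already known to be complete (\autoref{prp:Qcomplete}). The general principle I will instantiate is that a coreflector, being a right adjoint to a fully faithful inclusion, computes the limits of the subcategory as the $\tau$-image of the corresponding limits in the ambient category. Concretely, I will show that applying $\tau$ to the $\CatQ$-limit of $\iota\circ F$ yields a limit of $F$ in $\CatCu$.

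First I would fix the data. By \autoref{prp:Qcomplete}, the functor $\iota\circ F\colon I\to\CatQ$ has a limit $(S,\pi)$ in $\CatQ$, with $S=(\CatPominvLim F,\llpw)$ and projections $\pi_i\colon S\to\iota F(i)$. Applying $\tau$, and using that the unit of the adjunction $\iota\dashv\tau$ is a natural isomorphism $\mathrm{id}_{\CatCu}\cong\tau\circ\iota$ (which holds precisely because $\iota$ is fully faithful, as recorded in \autoref{pgr:CuInQ}), I obtain $\CatCu$-morphisms $\psi_i:=\tau(\pi_i)\colon\tau(S)\to\tau(\iota F(i))\cong F(i)$. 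Functoriality of $\tau$ together with the naturality of this identification shows that $(\tau(S),(\psi_i)_i)$ is a cone to $F$ in $\CatCu$: for a morphism $f\colon i\to j$ in $I$ one has $F(f)\circ\psi_i=\psi_j$, since $(S,\pi)$ is a cone in $\CatQ$ and $\tau(\iota F(f))$ is carried to $F(f)$ under the identification $\tau\iota\cong\mathrm{id}$.

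The core of the argument is the universal property, which I would establish through the chain of natural bijections, for an arbitrary $T\in\CatCu$,
\[
\CatCuMor\big(T,\tau(S)\big)
\;\cong\; \CatQMor\big(\iota T,S\big)
\;\cong\; \varprojlim_i \CatQMor\big(\iota T,\iota F(i)\big)
\;\cong\; \varprojlim_i \CatCuMor\big(T,F(i)\big).
\]
Here the first bijection is the adjunction of \autoref{prp:CuCoreflQ}, the second is the universal property of $(S,\pi)$ as a limit in $\CatQ$, and the third is fullness and faithfulness of $\iota$. The rightmost inverse limit is exactly the set of cones from $T$ to $F$, so $\tau(S)$ represents the cone functor and is therefore a limit of $F$. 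To finish I would check that the universal cone extracted from this representation is $(\psi_i)_i$: unwinding a cone $(T,\sigma)$, its image $(\iota\sigma_i)_i$ determines by the $\CatQ$-limit a unique $h\colon\iota T\to S$ with $\pi_i\circ h=\iota\sigma_i$, and the adjoint transpose $\alpha\colon T\to\tau(S)$ of $h$ then satisfies $\psi_i\circ\alpha=\sigma_i$ by the triangle identities, with uniqueness of $\alpha$ following from that of $h$ and the bijectivity of the adjunction. Since all three bijections are natural in $T$, this yields the asserted description $\CatCuinvLim F=\tau(\CatQinvLim F)=\tau(\CatPominvLim F,\llpw)$.

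The step I expect to require the most care is the bookkeeping of the identification $\tau(\iota F(i))\cong F(i)$ and its naturality, that is, making sure the abstract representing object $\tau(S)$ comes equipped with precisely the cone $(\psi_i)_i=(\tau(\pi_i))_i$ rather than merely an isomorphic one, and that the compatibility $F(f)\circ\psi_i=\psi_j$ holds on the nose. Everything else, namely that $\tau$ is a right adjoint, that $\iota$ is fully faithful, and that $\CatQ$ is complete, is already supplied by the cited results, so no genuinely new analytic input about paths or the $\tau$-construction is needed beyond what \autoref{prp:CuCoreflQ} encodes.
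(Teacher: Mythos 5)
Your proposal is correct and takes essentially the same route as the paper: the paper's proof consists of citing \autoref{prp:CuCoreflQ}, \autoref{prp:Qcomplete}, and the general fact that completeness passes to coreflective subcategories (referring to Borceux, Proposition~3.5.3), whereas you additionally spell out the standard proof of that general fact via the chain of natural bijections coming from the adjunction $\iota\dashv\tau$, the completeness of $\CatQ$, and the full faithfulness of $\iota$. There is no gap; your explicit verification of the universal property is exactly the content of the cited categorical result.
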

\begin{proof}
By \autoref{prp:CuCoreflQ}, $\CatCu$ is a coreflective subcategory of $\CatQ$, which in turn is complete by \autoref{prp:Qcomplete}.
It follows that $\CatCu$ is complete, since completeness passes to coreflective subcategories;
see, for example, \cite[Proposition~3.5.3]{Bor94HandbookCat1}.
\end{proof}

\begin{cor}
\label{cor:Cuproductinverselimitspullbacks}
The category $\CatCu$ has arbitrary products, arbitrary inverse limits, and finite pullbacks.
In particular:
\begin{enumerate}
\item
Let $(S_i)_{i\in I}$ be a family of $\CatCu$-semigroups.
Then
\[
\CatCuProd\limits_{i\in I} S_i
=\tau\left( \CatQProd\limits_{i\in I} S_i \right)
=\tau\left( \CatPomProd\limits_{i\in I} S_i, \llpw \right).
\]
\item
Let $((S_i)_{i\in I}, (\varphi_{ij})_{i,j\in I, i\geq j})$ be an inverse system of $\CatCu$-semigroups.
Then
\[
\CatCuinvLim_{i\in I} S_i = \tau\left( \CatQinvLim\limits_{i\in I} S_i \right)\,.
\]
\end{enumerate}
\end{cor}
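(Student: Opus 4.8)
The plan is to deduce the whole statement from the completeness of $\CatCu$ proved in \autoref{prp:CuComplete}, since products, inverse limits, and pullbacks are all particular instances of small limits. First I would recall the appropriate index categories: a product of a family $(S_i)_{i\in I}$ is the limit of the functor out of the discrete category on $I$; an inverse limit is the limit of the functor attached to the (small) poset underlying the inverse system; and a pullback is the limit of a functor from the three-object cospan category $\{a\to c\leftarrow b\}$. Each of these index categories is small, so \autoref{prp:CuComplete} immediately guarantees that the corresponding limits exist in $\CatCu$. This already establishes the three existence assertions.

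For the explicit formula in~(1), I would specialize the identity $\CatCuinvLim F=\tau(\CatQinvLim F)$ of \autoref{prp:CuComplete} to the discrete category on $I$. In that situation the $\CatQ$-limit is exactly the $\CatQ$-product, which by the Remark following \autoref{prp:Qcomplete} equals $\big(\CatPomProd_{i} S_i,\llpw\big)$. Applying the functor $\tau$ then yields
\[
\CatCuProd\limits_{i\in I}S_i=\tau\Big(\CatQProd\limits_{i\in I}S_i\Big)=\tau\Big(\CatPomProd\limits_{i\in I}S_i,\llpw\Big).
\]
For~(2), taking $I$ to be the poset indexing the inverse system, the same identity specializes directly to $\CatCuinvLim_{i\in I} S_i=\tau\big(\CatQinvLim_{i} S_i\big)$.

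There is no genuine obstacle here, as the corollary is essentially a direct reading of \autoref{prp:CuComplete} against the standard dictionary between limits and the index categories above. The only point that deserves a moment of care is the bookkeeping around the identification $\tau(F(i))\cong F(i)$ used in \autoref{prp:CuComplete}: one should check that, under this identification, the structure maps $\psi_i=\tau(\pi_i)$ of the limit are precisely the projections in the product case and the order-compatible maps in the inverse-limit case. Once this is verified, both displayed formulas hold verbatim, and the pullback assertion requires nothing beyond the existence already granted by completeness.
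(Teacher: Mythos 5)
Your proposal is correct and matches the paper's (implicit) argument exactly: the corollary is obtained by specializing \autoref{prp:CuComplete} and the identity $\CatCuinvLim F=\tau(\CatQinvLim F)$ to the discrete, poset-shaped, and cospan index categories, with the $\CatQ$-product identified as $\big(\CatPomProd_{i}S_i,\llpw\big)$ via the remark following \autoref{prp:Qcomplete}. Your extra remark about checking the structure maps under the identification $\tau(F(i))\cong F(i)$ is the only bookkeeping the paper leaves tacit, and it is handled correctly.
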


The proof of the following result is straightforward, using that the $\tau$-construction applied to $\CatCu$-semigroups is vacuous;
see \cite[Proposition~4.10]{AntPerThi17arX:AbsBivariantCu}.

\begin{prp}
\label{prp:FinProdInCu}
Let $(S_j)_{j\in J}$ be a \emph{finite} family of \CuSgp{s}.
Then $\CatCuProd_{j\in J}S_j$ is naturally isomorphic to $\CatPomProd_j S_j$, the (set-theoretic) product of the $S_j$ equipped with componentwise order and addition.
\end{prp}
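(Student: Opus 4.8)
The plan is to show that for a finite family of \CuSgp{s}, the set-theoretic product $P:=\CatPomProd_{j\in J}S_j$ (with componentwise structure) is already a \CuSgp{} and that the way-below relation on $P$ coincides with $\llpw$. Granting this, \autoref{cor:Cuproductinverselimitspullbacks}(1) expresses $\CatCuProd_j S_j$ as $\tau(P,\llpw)$, and since $(P,\llpw)$ is then a \CuSgp{} viewed in $\CatQ$ via \autoref{pgr:CuInQ}, the cited \cite[Proposition~4.10]{AntPerThi17arX:AbsBivariantCu} says the $\tau$-construction on a \CuSgp{} returns it (up to natural isomorphism). Hence $\CatCuProd_j S_j\cong (P,\llpw)=\CatPomProd_j S_j$, which is the claim.

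First I would verify that $P$ satisfies \axiomO{1}--\axiomO{4}. Axioms \axiomO{1} and \axiomO{4} follow directly from \autoref{prp:limitPomO1O4} (applied to the discrete category $J$, where the hypotheses on $F(f)$ are vacuous). For the remaining axioms, the key input is the finiteness of $J$: I would show that componentwise, $x\llpw y$ in $P$ is equivalent to $x\ll y$ in $P$. The easy direction is that $x\ll y$ implies $x_j\ll y_j$ for each $j$, since each projection $\pi_j$ is a \CuMor{} (it preserves suprema of increasing sequences and $\ll$). For the converse, suppose $x_j\ll y_j$ for all $j$; given an increasing sequence $(z^{(n)})_n$ in $P$ with $y\leq\sup_n z^{(n)}$, componentwise I get $y_j\leq\sup_n z^{(n)}_j$, so for each $j$ there is $k_j$ with $x_j\leq z^{(k_j)}_{j}$, and setting $k:=\max_j k_j$ (here finiteness is essential) yields $x\leq z^{(k)}$. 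Thus $\ll$ and $\llpw$ agree on $P$.

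With $\ll\,=\,\llpw$ in hand, \axiomO{3} for $P$ follows from \axiomO{3} in each $S_j$ componentwise, and \axiomO{2} follows because each $y_j$ is a supremum of a $\ll$-increasing sequence $(y_j^{(n)})_n$ in $S_j$ and, again using finiteness to reindex all coordinates simultaneously, the tuples $(y_j^{(n)})_j$ form a $\llpw$-increasing (hence $\ll$-increasing) sequence with supremum $y$. Therefore $(P,\leq)$ is a \CuSgp{} whose way-below relation is exactly $\llpw$.

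The main obstacle, and the only place where the finiteness hypothesis is genuinely used, is the verification that $\llpw$ coincides with the actual compact-containment relation $\ll$ on $P$, specifically the passage from coordinatewise approximation to global approximation via $k=\max_j k_j$. For an infinite index set this argument collapses, which is precisely why the general product in $\CatCu$ requires the nontrivial $\tau$-completion rather than reducing to the set-theoretic product. Once this coincidence is established, invoking \cite[Proposition~4.10]{AntPerThi17arX:AbsBivariantCu} to see that $\tau$ is vacuous on the \CuSgp{} $(P,\llpw)$ is immediate, and the naturality of the isomorphism follows since all identifications are through the canonical projections.
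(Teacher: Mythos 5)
Your proposal is correct and follows essentially the same route the paper intends: it verifies that for finite $J$ the componentwise product is already a \CuSgp{} whose way-below relation is $\llpw$, and then invokes \cite[Proposition~4.10]{AntPerThi17arX:AbsBivariantCu} to conclude that the $\tau$-completion is vacuous, which is exactly the argument the paper sketches before the statement. The only cosmetic quibble is that the ``easy direction'' $x\ll y\Rightarrow x_j\ll y_j$ is better checked directly (by varying one coordinate of an increasing sequence while freezing the others) than by appealing to $\pi_j$ being a \CuMor{}, since that is part of what is being established; the claim itself is correct either way.
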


\subsection{Cocompleteness} 

In this subsection we prove that $\CatCu$ is a cocomplete category;
see \autoref{prp:CuCocomplete}.
In particular, it has coproducts and pushouts.

\begin{pgr}
\label{pgr:WfinCoproduct}
Let $\CatC$ be a category that has finite coproducts and inductive limits. Let $(X_j)_{j\in J}$ be a family of objects in $\CatC$. Denote $F\Subset J$ to mean that $F$ is a finite subset of $J$. The collection of such subsets is upward directed with respect to inclusion, and we have 
\[
\coprod_{j\in J}X_j\cong\varinjlim\limits_{F\Subset J}\coprod_{j\in F} X_j.
\]
Thus, $\CatC$ has coproducts; see, for example, \cite[Theorem~IX.1.1, p.212]{MacLan71Categories}.

Now assume that $\CatC$ also has finite products and a zero object.
Using universality we obtain, for $F\Subset J$, a natural morphism $\coprod_{j\in F}X_j\to\prod_{j\in F}X_j$.
Given $F\subseteq G\Subset J$, the existence of a zero object allows us to construct a canonical morphism $\prod_{j\in F}X_j\to\prod_{j\in G}X_j$.
In this way, one obtains an inductive system.
Its inductive limit is termed the \emph{direct sum} of the family $(X_j)_{j\in J}$ and is denoted by $\bigoplus_{j\in J}X_j$.
It follows from our discussion that there is a natural map
\[
\coprod_{j\in J}X_j \cong\varinjlim\limits_{F\Subset J}\coprod_{j\in F} X_j 
\rightarrow \varinjlim_{F\Subset J}\prod_{j\in F}X_j=\bigoplus_{j\in J} X_j.
\]
In the particular case that finite products and finite coproducts are isomorphic, it follows that this morphism is in fact an isomorphism and hence we may identify the coproduct of a family of objects in the category with their direct sum.
Therefore, it should not be confusing that we use the symbol `$\bigoplus$' to denote both.
As we shall see below, this is the case for the category $\CatCu$. 

Next, suppose that $\CatC$ has arbitrary products.
Since there are maps $\bigoplus_{j\in J}X_j\to X_i$ for each $i\in J$, we obtain by the universal property of the product a natural map $\bigoplus_{j\in J}X_j\to\prod_{j\in J}X_j$ and a commutative diagram
\[
\xymatrix@R-10pt{
\coprod_{j\in J}X_j \ar[r] \ar[dr]+<-2.3em,1.2ex>
& \prod_{j\in J}X_j \\
& \bigoplus_{j\in J}X_j. \ar[u]
}
\]

Now, let $(S_j)_{j\in J}$ be a finite family of $\CatW$-semigroups.
Let $\bigoplus_{j}S_j$, together with maps $\psi_i\colon S_i\to \bigoplus_{j\in J}S_j$ for $i\in J$, be the coproduct of the underlying abelian monoids.
Note that $\bigoplus_{j}S_j$ is a $\CatW$-semigroup when endowed with the componentwise auxiliary relation.
Moreover, this turns each $\psi_i$ into a $\CatW$-morphism.
It follows easily that this is a coproduct of $(S_j)_{j\in J}$ in the category $\CatW$. It is straightforward to verify that it also agrees with the finite product of $(S_j)_{j\in J}$ in $\CatW$. Thus, according to the above discussion, we shall denote this coproduct by $\CatWSum_{j}S_j$.
\end{pgr}

\begin{lma}
\label{prp:Wcoproduct}
The category $\CatW$ has coproducts.
\end{lma}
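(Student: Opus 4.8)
The plan is to reduce the statement to the existence of inductive limits in $\CatW$. Indeed, in \autoref{pgr:WfinCoproduct} we already verified that $\CatW$ has finite coproducts (given by the finite direct sums $\CatWSum_{j\in F}S_j$ with componentwise relation), and we recalled there the general categorical fact that a category with finite coproducts and inductive limits has arbitrary coproducts, via $\coprod_{j\in J}S_j\cong\varinjlim_{F\Subset J}\coprod_{j\in F}S_j$. Thus it suffices to show that $\CatW$ admits inductive limits over upward directed sets, and the crux of the proof is this construction. It parallels the construction for the category $\CatW^+$ in \cite{AntPerThi18TensorProdCu}, but must be carried out without reference to any partial order, in keeping with \autoref{rmk:CatWDifference}.

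First I would construct the inductive limit of a directed system $(S_i,\varphi_{ji})_{i\le j}$ of $\CatW$-semigroups. Let $S_\infty$ denote the colimit of the underlying commutative monoids, with canonical monoid morphisms $\varphi_{\infty i}\colon S_i\to S_\infty$; recall that every element of $S_\infty$ has the form $\varphi_{\infty i}(x)$ for some $i$ and $x\in S_i$, and that $\varphi_{\infty i}(x)=\varphi_{\infty j}(y)$ precisely when $\varphi_{ki}(x)=\varphi_{kj}(y)$ for some $k\ge i,j$. I equip $S_\infty$ with the relation defined by declaring $\bar x\prec\bar y$ if and only if there exist an index $i$ and elements $x,y\in S_i$ with $\varphi_{\infty i}(x)=\bar x$, $\varphi_{\infty i}(y)=\bar y$ and $x\prec y$ in $S_i$. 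By construction each $\varphi_{\infty i}$ preserves $\prec$, and $0\prec\bar x$ holds for every $\bar x$.

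The bulk of the work is to verify that $(S_\infty,\prec)$ is a $\CatW$-semigroup enjoying the universal property. Transitivity of $\prec$, together with axioms \axiomW{3} and \axiomW{4}, follow by lifting the relevant data to a common index $k$ (using directedness, and the defining property of the monoid colimit to make distinct lifts of a shared element agree after a further transition map) and then applying the corresponding axiom inside $S_k$; for \axiomW{4} one must first rewrite a lift of $\bar y+\bar z$ at a common stage as an honest sum $Y+Z$ of lifts of $\bar y$ and $\bar z$. The delicate axiom is \axiomW{1}: given $\bar x=\varphi_{\infty i}(x)$, I would take a $\prec$-increasing sequence $(x_n)_n$ for $x$ provided by \axiomW{1} in $S_i$ and push it forward to $(\varphi_{\infty i}(x_n))_n$. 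Verifying the cofinality clause, that any $\bar y\prec\bar x$ satisfies $\bar y\prec\varphi_{\infty i}(x_n)$ for some $n$, is where the continuity condition built into $\CatW$-morphisms (\autoref{dfn:CatW}) becomes essential: after lifting $\bar y\prec\bar x$ to a relation $b\prec\varphi_{ki}(x)$ in some $S_k$, I apply continuity of $\varphi_{ki}$ to produce $x'\prec x$ in $S_i$ with $b\prec\varphi_{ki}(x')$, and then the cofinality of $(x_n)_n$ inside $S_i$ yields $x'\prec x_n$ for some $n$, whence $b\prec\varphi_{ki}(x_n)$. This same continuity argument, applied to the comparison morphisms $\psi_i\colon S_i\to T$ of a competing cocone, shows that the monoid morphism $S_\infty\to T$ induced by the universal property of the monoid colimit is continuous, hence a $\CatW$-morphism; preservation of $\prec$ and uniqueness are immediate. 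This establishes that $(S_\infty,(\varphi_{\infty i})_i)$ is the inductive limit in $\CatW$.

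I expect the main obstacle to be precisely the cofinality clause of \axiomW{1}, since it is the one verification that cannot be handled by merely transporting a relation to a common index and invoking an axiom of $S_k$; it genuinely requires the interplay between the continuity of the transition morphisms and the cofinality built into the \axiomW{1}-sequences in each $S_i$. Once inductive limits are in hand, the conclusion that $\CatW$ has arbitrary coproducts is immediate from \autoref{pgr:WfinCoproduct}, and the resulting coproduct can be described concretely as the $\CatW$-semigroup of finitely supported tuples $(x_j)_{j\in J}$ with $x_j\in S_j$, equipped with componentwise addition and componentwise relation $\prec$.
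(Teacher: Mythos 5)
Your proposal is correct and follows exactly the paper's route: the paper's proof consists of invoking the finite coproducts from \autoref{pgr:WfinCoproduct} together with the existence of inductive limits in $\CatW$ (cited as ``an argument analogous to'' \cite[Theorem~2.1.10]{AntPerThi18TensorProdCu}), and then applying the general reduction $\coprod_{j\in J}S_j\cong\varinjlim_{F\Subset J}\coprod_{j\in F}S_j$. What you have written out in detail -- the colimit of monoids with the relation defined by lifting to a common stage, and the use of continuity of the transition maps to verify the cofinality clause of \axiomW{1} -- is precisely the content of that cited analogous argument, adapted correctly to the order-free setting of \autoref{dfn:CatW}.
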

\begin{proof}
By \autoref{pgr:WfinCoproduct}, $\CatW$ has finite coproducts.
An argument analogous to \cite[Theorem~2.1.10, p.15]{AntPerThi18TensorProdCu} shows that $\CatW$ has inductive limits.
This implies that $\CatW$ has coproducts, as discussed in \autoref{pgr:WfinCoproduct}.
\end{proof}

\begin{lma}
\label{lma:prepforcoeqs}
Let $S$, $T$ be $\CatW$-semigroups, and let $\varphi,\psi\colon S\to T$ be $\CatW$-morphisms.
For $x,y\in T$, set $x\sim_0 y$ provided there are $z\in T$, and $a,b\in S$ such that
\[
x=z+\varphi(a)+\psi(b) \andSep y=z+\varphi(b)+\psi(a).
\]
Let $\sim$ be the transitive closure of the relation $\sim_0$.
Then $\sim$ is a congruence on $T$.

For $x,y\in T$, set $x\precsim_0 y$ provided there are $a,b\in T$ such that
\[
x \sim a \prec b \sim y.
\]
Let $\precsim$ be the transitive closure of $\precsim_0$.
Then $\precsim$ is a transitive, additive relation.

Given $u,x,y\in T$ satisfying $u\precsim x\sim y$ there exists $y'\in T$ such that $u\precsim y'\prec y$.
\end{lma}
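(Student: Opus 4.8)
The plan is to peel off the last $\precsim_0$-step of the relation $u\precsim x$ and to absorb the equivalence $x\sim y$ into a single strict comparison, thereby reducing everything to a one-step interpolation lemma.

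First, since $\precsim$ is the transitive closure of $\precsim_0$, I would write $u\precsim x$ as $u\precsim t\precsim_0 x$, where $t\precsim_0 x$ is the final link (if the witnessing chain has length one, take $t=u$). By definition of $\precsim_0$ there are $a,b\in T$ with $t\sim a\prec b\sim x$. Using $x\sim y$ and transitivity of $\sim$ we get $b\sim y$. Thus it suffices to produce $y'\prec y$ with $a\precsim y'$: once this is available, the relation $t\sim a\precsim y'$ lets us replace $a$ by $t$ in the first link of the chain witnessing $a\precsim y'$ (a $\precsim_0$-step only constrains its left endpoint through $\sim$), giving $t\precsim y'$ and hence $u\precsim t\precsim y'$, that is, $u\precsim y'\prec y$.

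The heart is therefore the following interpolation statement, which I would prove by induction along a chain $b=b_0\sim_0 b_1\sim_0\cdots\sim_0 b_m=y$: if $p\prec q$ and $q\sim_0 r$, then there is $r'\prec r$ with $p\precsim_0 r'$. Writing $q=z+\varphi(c)+\psi(d)$ and $r=z+\varphi(d)+\psi(c)$, I would apply \axiomW{4} repeatedly to $p\prec z+\varphi(c)+\psi(d)$ to obtain $z'\prec z$, $e\prec\varphi(c)$ and $f\prec\psi(d)$ with $p\prec z'+e+f$; then continuity of the \WMor{s} $\varphi$ and $\psi$ yields $c'\prec c$ and $d'\prec d$ with $e\prec\varphi(c')$ and $f\prec\psi(d')$. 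Setting $q':=z'+\varphi(c')+\psi(d')$ and $r':=z'+\varphi(d')+\psi(c')$, one has $p\prec q'$, while \axiomW{3} gives $r'\prec r$; since $q'\sim_0 r'$ by construction, the chain $p\sim p\prec q'\sim r'$ shows $p\precsim_0 r'$, as wanted. Chaining this single-step lemma along $b_0,\ldots,b_m$ (starting from $a\prec b_0$ and carrying the hypothesis ``$a\precsim b_j'$ with $b_j'\prec b_j$'') produces $y':=b_m'\prec y$ with $a\precsim y'$; the degenerate case $b=y$ is handled directly by \axiomW{1}, which interpolates $a\prec y'\prec y$.

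The main obstacle is the single-step lemma, specifically the fact that in the category $\CatW$ the relation $\prec$ need not be reflexive (see \autoref{rmk:CatWDifference}), so one cannot simply combine $p\prec z'+e+f$ with $e\prec\varphi(c')$ and $f\prec\psi(d')$ termwise. I would circumvent this by the ``shrink then re-expand'' device: whenever I need to pass from $p\prec g+h$ and $h\prec h''$ to $p\prec g+h''$, I first apply \axiomW{4} to split $p\prec g'+h'$ with $g'\prec g$ and $h'\prec h$, note $h'\prec h\prec h''$ by transitivity, and then use \axiomW{3} to get $g'+h'\prec g+h''$, so that $p\prec g+h''$. Keeping track of this bookkeeping at each of the finitely many applications of \axiomW{4} is the only genuinely delicate point; the remaining verifications (that $q'\sim_0 r'$, additivity and transitivity of $\precsim$, and the absorption of $\sim$ into the first $\precsim_0$-link) are routine.
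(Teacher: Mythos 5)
Your proposal is correct and follows essentially the same route as the paper: your single-step interpolation lemma ($p\prec q\sim_0 r$ yields $r'\prec r$ with $p\precsim_0 r'$) is the paper's Claim~1, iterating it along a $\sim_0$-chain is its Claim~2, and peeling off the final $\precsim_0$-link (which you do at the outset rather than at the end) corresponds to its Claim~3 and concluding paragraph. Your explicit ``shrink then re-expand'' device for coping with the non-reflexivity of $\prec$ matches the paper's choice of an extra element $z''\prec z'\prec z$ before invoking \axiomW{3} and transitivity, so there is no substantive difference.
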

\begin{proof}
It is straightforward to check that $\sim_0$ is additive.
It follows that $\sim$ is a congruence, and that $\precsim$ is additive and transitive.

Claim~1:
Let $u,x,y\in T$ satisfy $u\prec x\sim_0 y$.
Then there exists $y'\in T$ such that $u\precsim y'\prec y$.
To prove the claim, choose $z\in T$ and $a,b\in S$ such that
\[
x = z+\varphi(a)+\psi(b), \andSep y = z+\varphi(b)+\psi(a).
\]
Apply \axiomW{4} to choose $z',z'',v,w\in T$ such that
\[
z''\prec z'\prec z, \andSep  v\prec\varphi(a), \andSep w\prec\psi(b), \andSep u\prec z''+v+w.
\]
Now, as $\varphi$ and $\psi$ are continuous, there are $a', b'\in S$ such that $a'\prec a$, $b'\prec b$,  $v\prec\varphi(a')$, and $w\prec\psi(b')$.
Set $x':=z'+\varphi(b')+\psi(c')$, and $y':=z'+\varphi(c')+\psi(b')$.
Then
\[
u \prec z''+v+w \prec z'+\varphi(a')+\psi(b') = x'.
\]
We have $x'\sim_0 y'$, and therefore $u\precsim y'$.
Further, we have $y'\prec y$ since both $\varphi$ and $\psi$ preserve the auxiliary relation.

Claim~2:
Let $u,x,y\in T$ satisfy $u\prec x\sim y$.
Then there exists $y'\in T$ such that $u\precsim y'\prec y$.
To prove the claim, we use that $\sim$ is the transitive closure of $\sim_0$ to choose $n\in\NN$ and elements $x_0,\dots,x_n\in T$ such that
\[
x=x_0 \sim_0 x_1 \sim_0 \ldots \sim_0 x_n=y.
\]
Applying claim~1 to $u\prec x_0\sim_0 x_1$, we obtain $x_1'\in T$ with $x_0\precsim x_1'\prec x_1$.
Applying claim~1 repeatedly, we obtain $x_k'$ for $k=1,\ldots,n$ with $x_{k-1}'\precsim x_k'\prec x_k$.
Then $u\precsim x_n'\prec x_n$, which shows that $y':=x_n'$ has the desired properties.

Claim~3:
Let $u,x,y\in T$ satisfy $u\precsim_0 x\sim y$.
Then there exists $y'\in T$ such that $u\precsim y'\prec y$.
To prove the claim, choose $a,b\in T$ with $u\sim a\prec b\sim x$.
Then $a\prec b\sim y$.
Applying claim~2, we obtain $y'$ such that $a\precsim y'\prec y$.
It follows that $u\precsim y'\prec y$, as desired.

Finally, to prove the last statement of the lemma, let $u,x,y\in T$ satisfy $u\precsim x\sim y$.
Since $\precsim$ is the transitive closure of $\precsim_0$, we can choose $x'\in T$ such that
\[
u \precsim x' \precsim_0 x \sim y.
\]
By claim~3, we obtain $y'\in T$ with $x'\precsim y'\prec y$.
Then $u\precsim y'\prec y$, as desired.
\end{proof}

\begin{lma}
\label{prp:Wcoeqs}
The category $\CatW$ has coequalizers.
\end{lma}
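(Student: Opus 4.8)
The plan is to realize the coequalizer as the quotient of $T$ by the congruence $\sim$, equipped with the relation induced by $\precsim$, both of which are supplied by \autoref{lma:prepforcoeqs}. Concretely, I set $C := T/\!\sim$, let $\eta\colon T\to C$, $\eta(x)=[x]$, be the quotient map, and define a relation $\prec_C$ on $C$ by declaring $[x]\prec_C[y]$ whenever $x\precsim y$. Since \autoref{lma:prepforcoeqs} guarantees that $\sim$ is a congruence, $C$ is a commutative monoid and $\eta$ is a monoid homomorphism. The first point to check is that $\prec_C$ is well defined on $\sim$-classes; this amounts to showing that $\precsim$ is compatible with $\sim$ on both sides, which follows by inspecting the single-step relation $\precsim_0$ (if $x'\sim x\precsim_0 y\sim y'$ then $x'\precsim_0 y'$) and passing to the transitive closure.

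Next I would verify that $(C,\prec_C)$ is a \WSgp{}. Transitivity of $\prec_C$ is immediate from transitivity of $\precsim$, and $[0]\prec_C[x]$ holds because $0\prec x$ in $T$ gives $0\precsim_0 x$. Axiom \axiomW{3} is a direct consequence of the additivity of $\precsim$. The substance of the argument lies in \axiomW{1} and \axiomW{4}, and both reduce to the last statement of \autoref{lma:prepforcoeqs} (``given $u\precsim x\sim y$ there is $y'$ with $u\precsim y'\prec y$'') combined with the corresponding axiom in $T$. For \axiomW{1}, given $x\in T$ I take a $\prec$-increasing sequence $(x_n)_n$ witnessing \axiomW{1} for $x$ in $T$ and push it to the $\prec_C$-increasing sequence $([x_n])_n$; if $[y]\prec_C[x]$, that is $y\precsim x$, the cited statement produces $x'\in T$ with $y\precsim x'\prec x$, whereupon \axiomW{1} in $T$ yields some $n$ with $x'\prec x_n$, and hence $[y]\prec_C[x_n]$. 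For \axiomW{4}, given $[x]\prec_C[y]+[z]$, i.e.\ $x\precsim y+z$, the same statement produces $w\in T$ with $x\precsim w\prec y+z$; applying \axiomW{4} in $T$ to $w\prec y+z$ splits $w\prec y'+z'$ with $y'\prec y$ and $z'\prec z$, and then $[y']\prec_C[y]$, $[z']\prec_C[z]$, and $[x]\prec_C[y']+[z']$, as required.

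I would then check that $\eta$ is a \WMor{} coequalizing $\varphi$ and $\psi$. Preservation of addition, of $0$, and of the relation (if $x\prec y$ then $x\precsim_0 y$, so $[x]\prec_C[y]$) is routine, while continuity of $\eta$ is once more the last statement of \autoref{lma:prepforcoeqs}: from $[y]\prec_C[x]$ one extracts $x'\prec x$ in $T$ with $y\precsim x'$, so $[y]\prec_C[x']$. That $\eta\circ\varphi=\eta\circ\psi$ follows because, taking $z=0$ and $b=0$ in the definition of $\sim_0$ and using $\varphi(0)=\psi(0)=0$, one obtains $\varphi(a)\sim_0\psi(a)$ for every $a\in S$.

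Finally, for the universal property, let $\theta\colon T\to D$ be a \WMor{} into a \WSgp{} $D$ with $\theta\circ\varphi=\theta\circ\psi$. Additivity of $\theta$ together with $\theta\circ\varphi=\theta\circ\psi$ forces $\theta(x)=\theta(y)$ whenever $x\sim_0 y$, hence whenever $x\sim y$, so $\theta$ factors uniquely as $\theta=\bar\theta\circ\eta$ for a well-defined monoid homomorphism $\bar\theta\colon C\to D$; uniqueness of $\bar\theta$ is immediate since $\eta$ is surjective. It remains to see that $\bar\theta$ is a \WMor{}: preservation of $\prec_C$ reduces to the single-step case $x\precsim_0 y$ (where $\theta(x)=\theta(a)\prec\theta(b)=\theta(y)$) followed by transitivity of $\prec$ in $D$, and continuity of $\bar\theta$ is inherited directly from that of $\theta$. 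I expect the main obstacle to be the verification of \axiomW{1} and \axiomW{4} for $(C,\prec_C)$, since these are precisely the points where the lack of an \emph{a priori} interpolation for the transitive closure $\precsim$ must be overcome, and it is exactly here that the final interpolation statement of \autoref{lma:prepforcoeqs} is indispensable.
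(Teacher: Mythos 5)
Your proposal is correct and follows essentially the same route as the paper: the coequalizer is the quotient of $T$ by the congruence $\sim$ from \autoref{lma:prepforcoeqs}, with the induced relation coming from the transitive closure $\precsim$, and the verifications of \axiomW{1}, \axiomW{4}, continuity of the quotient map, and the universal property all hinge on the final interpolation statement of that lemma, exactly as in the paper. No gaps.
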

\begin{proof}
Let $S$ and $T$ be $\CatW$-semigroups, and let $\varphi,\psi\colon S\to T$ be $\CatW$-morphisms.
For ease of notation, we shall denote the auxiliary relations of both $S$ and $T$ by $\prec$.
We have to construct a $\CatW$-semigroup $L$ and a universal $\CatW$-morphism $\eta\colon T\to L$ such that $\eta\circ\varphi=\eta\circ\psi$.

Let $\sim$ be the congruence on $T$ defined as in \autoref{lma:prepforcoeqs}.
Notice that, with this definition, we have $\varphi(d)\sim\psi(d)$ for any $d\in S$.
Set $L:=T/\sim$, and let $\eta\colon T\to L$ denote the quotient map.
Given $x\in T$, we let $[x]$ denote the class of $x$ in $L$.
Thus, we have $\eta(x)=[x]$, for $x\in T$.

Let $\precsim$ be the transitive relation on $T$ defined as in \autoref{lma:prepforcoeqs}.
This induces a binary relation $\prec$ on $L$ by setting $[x]\prec [y]$ if and only if $x\precsim y$, for $x,y\in T$.
It follows from \autoref{lma:prepforcoeqs} that $\prec$ is a well-defined, transitive relation on $L$.

We claim that $(L,\prec)$ is a $\CatW$-semigroup.
In order to establish this claim, we only need to verify axioms \axiomW{1} and \axiomW{4}, as \axiomW{3} is trivially satisfied.

To verify \axiomW{1}, let $x\in T$.
Using that $T$ satisfies \axiomW{1}, we can choose a $\prec$-increasing sequence $(x_n)_n$ that is cofinal in $x^\prec:=\{x'\in T : x'\prec x\}$.
It is clear that $x_n\prec x_{n+1}$ implies $[x_n]\prec[x_{n+1}]$.
Thus, $([x_n])_n$ is a $\prec$-increasing sequence in $[x]^\prec$.
To show that this sequence is cofinal in $[x]^\prec$, let $a\in T$ satisfy $[a]\prec[x]$.
By definition, we have $a\precsim x$.
By \autoref{lma:prepforcoeqs}, there exists $x'\in T$ with $a\precsim x'\prec x$.
Choose $n$ such that $x'\prec x_n$.
Then $[a]\prec[x_n]$, as desired.

To verify \axiomW{4}, let $a,x,y\in T$ satisfy $[a]\prec[x]+[y]=[x+y]$.
By definition, we have $a\precsim x+y$.
By \autoref{lma:prepforcoeqs}, there exists $u\in T$ with $a\precsim u\prec x+y$.
Using that $T$ satisfies \axiomW{4}, we can choose $x',y'\in T$ such that $u\prec x'+y'$, $x'\prec x$, and $y'\prec y$.
Then the element $[u]$ in $L$ satisfies $[a]\prec[u]$, $[u]\prec[x']+[y']$, $[x']\prec[x]$, and $[y']\prec[y]$, as desired.

We have verified that $L$ is a $\CatW$-semigroup.
Moreover, the map $\eta\colon T\to L$ is a monoid morphism and preserves the auxiliary relation.
To verify that $\eta$ is continuous, let $a,b\in T$ satisfy $[a]\prec\eta(b)$.
Then $a\precsim b$.
By \autoref{lma:prepforcoeqs}, we can choose $b'\in T$ such that $a\precsim b'\prec b$.
Then $b'$ has the desired properties.
It is clear by the construction that $\eta\circ\varphi=\eta\circ\psi$.

We now claim that $(L,\eta)$ is a coequalizer of the pair $(\varphi,\psi)$.
Thus, suppose that $R$ is a $\CatW$-semigroup and $h\colon T\to R$ is a $\CatW$-morphism such that $h\circ\varphi=h\circ\psi$.

Define $\tilde{h}\colon L\to R$ by $\tilde{h}([x]):=h(x)$, for $x\in T$.
If we prove that $\tilde{h}$ is $\CatW$-morphism, then clearly it will be be the unique $\CatW$-morphism satisfying $\tilde{h}\circ\eta=h$.

In order to see that $\tilde{h}$ is well defined, let $x,y\in T$ satisfy $[x]=[y]$, that is, $x\sim y$.
We need to show that $h(x)=h(y)$.
Since $\sim$ is the transitive closure of $\sim_0$, we may assume that $x\sim_0 y$.
By definition, we can choose $a,b\in S$ and $z\in T$ such that
\[
x=z+\varphi(a)+\psi(b) \andSep y=z+\varphi(b)+\psi(a)\,.
\]
Using at the second step that $h\circ \varphi=h\circ\psi$, we obtain
\[
h(x)
= h(z) + h(\varphi(a)) + h(\psi(b))
= h(z) + h(\psi(a)) + h(\varphi(b))
= h(y).
\]

Using that $h$ is additive, it follows that $\tilde{h}$ is additive.
To show that $\tilde{h}$ preserves the auxiliary relation, let $x,y\in T$ satisfy $[x]\prec [y]$.
Then, $x\precsim y$.
Since $\precsim$ is the transitive closure of $\precsim_0$, we may assume that $x\precsim_0 y$.
By definition, we can choose $a,b\in T$ such that
\[
x\sim a \prec b \sim y.
\]
Using at the third step that $h$ preserves the auxiliary relation, we deduce
\[
\tilde{h}([x]) = h(x) = h(a) \prec h(b) = h(y) = \tilde{h}([y]).
\]

Finally, to show that $\tilde{h}$ is continuous, let $x\in T$ and $z\in R$ satisfy $z\prec \tilde{h}([x])$.
This means that $z\prec h(x)$, and since $h$ is continuous, there is $x'\in T$ with $x'\prec y$ and $z\leq h(x')$.
We have $[x']\prec [x]$ in $L$, and clearly $z\leq \tilde{h}([x'])$ in $R$, which shows that $[x']$ has the desired properties.
\end{proof}

\begin{prp}
\label{prp:WCocomplete}
The category $\CatW$ is cocomplete.
\end{prp}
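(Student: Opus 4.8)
The plan is to invoke the standard categorical criterion that a category is cocomplete as soon as it has all coproducts and all coequalizers; see, for example, \cite[Theorem~V.2.1, p.113]{MacLan71Categories}. Both ingredients have already been established for $\CatW$: coproducts were provided by \autoref{prp:Wcoproduct}, and coequalizers by \autoref{prp:Wcoeqs}. Thus the entire content of the proof is an appeal to this standard reduction, and essentially no fresh work is required.

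More precisely, I would argue as follows. Given a small category $I$ and a functor $F\colon I\to\CatW$, one constructs $\CatW\text{-}\!\varinjlim F$ as the coequalizer of a suitable pair of morphisms between coproducts. Concretely, form the coproducts $\CatWSum_{f\colon i\to j} F(i)$ (indexed over all morphisms of $I$) and $\CatWSum_{i} F(i)$ (indexed over all objects), and consider the two canonical $\CatW$-morphisms between them: one assembled from the coproduct inclusions $F(i)\hookrightarrow\CatWSum_{i}F(i)$ at the domain $i$ of each $f$, and the other from the composites $F(f)\colon F(i)\to F(j)$ followed by the inclusion at the codomain $j$. The coequalizer of this pair, which exists by \autoref{prp:Wcoeqs}, is the desired colimit. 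Since all the coproducts involved are again indexed by small sets, they exist by \autoref{prp:Wcoproduct}, and the universal property of the coequalizer translates directly into the universal property of the colimit of $F$.

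The honest statement, then, is that there is no main obstacle at this stage: the two prior lemmas have already absorbed all the real difficulty. The genuinely delicate work was the construction of coequalizers in \autoref{prp:Wcoeqs}, where one had to verify that the quotient relation $\sim$ and the induced auxiliary relation $\prec$ satisfy axioms \axiomW{1} and \axiomW{4} (this is where \autoref{lma:prepforcoeqs}, and in particular its final interpolation statement, did the heavy lifting). Once coequalizers and coproducts are in hand, cocompleteness is a formal consequence.

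If one prefers a self-contained argument rather than citing the reduction, I would spell out that any functor $F\colon I\to\CatW$ from a small category decomposes: its colimit is computed objectwise by first taking the coproduct over the objects of $I$ and then forcing the compatibility relations $F(f)(x)$ versus the inclusion of $x$ by passing to a coequalizer. The verification that this coequalizer satisfies the universal property of $\CatW\text{-}\!\varinjlim F$ is the usual diagram chase and carries over without modification from the set-theoretic or $\CatPom$-level construction, since the forgetful considerations were already handled in \autoref{prp:Wcoproduct} and \autoref{prp:Wcoeqs}.
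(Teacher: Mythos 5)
Your proof is correct and is essentially identical to the paper's: both reduce cocompleteness to the existence of small coproducts (\autoref{prp:Wcoproduct}) and coequalizers (\autoref{prp:Wcoeqs}) via the standard criterion from MacLane. The extra sketch of the coproduct--coequalizer construction of a general colimit is just the proof of that standard criterion and adds nothing beyond the citation.
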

\begin{proof}
By Lemmas~\ref{prp:Wcoproduct} and \ref{prp:Wcoeqs}, $\CatW$ has small coproducts and coequalizers.
This implies that $\CatW$ is cocomplete;
see for example \cite[Corollary~V.2.2, p.113]{MacLan71Categories} or (the dual statement of) \cite[Theorem~2.8.1]{Bor94HandbookCat1}.
\end{proof}

\begin{thm}
\label{prp:CuCocomplete}
The category $\CatCu$ is cocomplete.
More precisely, if $I$ is a small category and $F\colon I\to \CatCu$ is a functor, then $\CatCuLim F=\gamma(\CatWLim F)$.
\end{thm}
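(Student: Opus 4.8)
The plan is to exploit the reflective embedding of $\CatCu$ into $\CatW$ and to invoke the dual of the categorical principle already used in the proof of \autoref{prp:CuComplete}. By \autoref{prp:CuReflW}, the category $\CatCu$ is a full, reflective subcategory of $\CatW$, with reflector $\gamma\colon\CatW\to\CatCu$ left adjoint to the inclusion $\iota\colon\CatCu\to\CatW$; and by \autoref{prp:WCocomplete}, the category $\CatW$ is cocomplete. Since cocompleteness passes to reflective subcategories, as this is the dual of \cite[Proposition~3.5.3]{Bor94HandbookCat1}, it follows at once that $\CatCu$ is cocomplete. This settles the first assertion.

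For the explicit formula, I would verify directly that applying $\gamma$ to the colimit computed in $\CatW$ yields the colimit in $\CatCu$. Given $F\colon I\to\CatCu$, form the composite $\iota\circ F\colon I\to\CatW$ and let $\CatWLim F$ denote its colimit in $\CatW$, which exists by \autoref{prp:WCocomplete}. The cleanest route is to exhibit, for every \CuSgp{} $T$, a chain of natural bijections
\[
\CatCuMor\big(\gamma(\CatWLim F),T\big)
\cong \CatWMor\big(\CatWLim F,\iota(T)\big)
\cong \varprojlim_i\CatWMor\big(\iota(F(i)),\iota(T)\big)
\cong \varprojlim_i\CatCuMor\big(F(i),T\big),
\]
where the first bijection is the adjunction of \autoref{prp:CuReflW}, the second is the universal property of the colimit $\CatWLim F$ in $\CatW$, and the third uses that $\iota$ is fully faithful. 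Since these bijections are natural in $T$, the Yoneda lemma identifies $\gamma(\CatWLim F)$ as the object representing cocones to $F$ in $\CatCu$, that is, as $\CatCuLim F$; tracing the structure maps through the bijections shows that the associated colimit cocone is the family obtained by applying $\gamma$ to the colimit cocone of $\iota\circ F$ and using the natural isomorphism $\gamma\circ\iota\cong\mathrm{id}_{\CatCu}$ to identify $\gamma(\iota(F(i)))$ with $F(i)$.

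The argument is entirely formal: all the genuine content has already been supplied in establishing the reflective structure (\autoref{prp:CuReflW}) and, above all, the cocompleteness of $\CatW$ (\autoref{prp:WCocomplete}, which itself rests on Lemmas~\ref{prp:Wcoproduct} and~\ref{prp:Wcoeqs}). I therefore anticipate no real obstacle; the only point meriting care is the bookkeeping in the last step needed to identify the structure cocone of $\gamma(\CatWLim F)$ explicitly, which follows mechanically from the triangle identities for the adjunction $\gamma\dashv\iota$.
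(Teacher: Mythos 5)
Your proposal is correct and follows essentially the same route as the paper: both deduce cocompleteness of $\CatCu$ from the cocompleteness of $\CatW$ (\autoref{prp:WCocomplete}) together with the reflectivity of $\CatCu$ in $\CatW$ (\autoref{prp:CuReflW}), citing \cite[Proposition~3.5.3]{Bor94HandbookCat1}, and both identify the colimit as the reflection $\gamma(\CatWLim F)$. Your Yoneda-style chain of bijections merely unpacks the content of that cited proposition, so no further comparison is needed.
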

\begin{proof}
By \autoref{prp:WCocomplete}, $\CatW$ is cocomplete.
By \autoref{prp:CuReflW}, $\CatCu$ is a full, reflective subcategory of $\CatW$, with reflector given by $\gamma\colon\CatW\to\CatCu$.
It follows that $\CatCu$ is cocomplete, with the colimit of a functor $F$ given by the reflection of the colimit of $F$ in $\CatW$;
see \cite[Proposition~3.5.3]{Bor94HandbookCat1}.
\end{proof}

\begin{cor}
\label{cor:Cucoproductpushouts}
The category $\CatCu$ has coproducts and pushouts.
In particular, the coproduct of a family $(S_j)_{j\in J}$ of \CuSgp{s} is $\CatCuCoprod_{j} S_j = \gamma \big( \CatWSum_{j} S_j \big)$.
\end{cor}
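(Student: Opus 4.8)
The plan is to deduce both assertions from the cocompleteness of $\CatCu$ established in \autoref{prp:CuCocomplete}, together with the explicit formula $\CatCuLim F=\gamma(\CatWLim F)$ proved there. Since coproducts and pushouts are particular instances of small colimits (the former being colimits over discrete index categories, the latter colimits over the span category with three objects and two non-identity morphisms), the existence of both is immediate from the fact that $\CatCu$ is cocomplete. Thus the first sentence of the statement requires no further argument.

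For the concrete description of the coproduct, I would regard a family $(S_j)_{j\in J}$ of $\CatCu$-semigroups as a functor $F\colon J\to\CatCu$ from the discrete category with object set $J$, that is, the category whose only morphisms are the identities. By definition, the colimit of such a functor is precisely the coproduct $\CatCuCoprod_j S_j$. Applying the formula from \autoref{prp:CuCocomplete} then gives $\CatCuCoprod_j S_j=\gamma(\CatWLim F)$, so that the only remaining task is to identify $\CatWLim F$, the colimit of the discrete diagram $F$ in $\CatW$.

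At this point I would invoke \autoref{prp:Wcoproduct}, which ensures that $\CatW$ has coproducts, together with the construction recalled in \autoref{pgr:WfinCoproduct}: the colimit of a discrete diagram is by definition the coproduct in $\CatW$, and in $\CatW$ this coproduct is the direct sum $\CatWSum_j S_j$ (each $S_j$ being viewed as a $\CatW$-semigroup through the embedding $\iota\colon\CatCu\to\CatW$ of \autoref{pgr:CuInW}). Substituting $\CatWLim F=\CatWSum_j S_j$ into the previous identity yields $\CatCuCoprod_j S_j=\gamma(\CatWSum_j S_j)$, as claimed.

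Once \autoref{prp:CuCocomplete} is available, the argument is essentially bookkeeping and I do not expect any serious obstacle. The only point requiring attention is notational: one must match the generic colimit notation of \autoref{prp:CuCocomplete} with the direct-sum notation $\CatWSum$ introduced in \autoref{pgr:WfinCoproduct} for the discrete case, and keep track of the fact that each $\CatCu$-semigroup $S_j$ enters the $\CatW$-coproduct via $\iota$, so that $\gamma$ is being applied to $\CatWSum_j\iota(S_j)$.
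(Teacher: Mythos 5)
Your proposal is correct and follows exactly the derivation the paper intends: the corollary is stated without proof precisely because it is the specialization of \autoref{prp:CuCocomplete} (existence of colimits and the formula $\CatCuLim F=\gamma(\CatWLim F)$) to discrete and span-shaped index categories, combined with the identification of the discrete colimit in $\CatW$ with the coproduct $\CatWSum_{j}S_j$ from \autoref{pgr:WfinCoproduct} and \autoref{prp:Wcoproduct}. Your closing remark about tracking the embedding $\iota\colon\CatCu\to\CatW$ is the right bookkeeping point, and nothing further is needed.
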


The following result will be used in \autoref{sec:dirsum}.
Its proof, as the one of \autoref{prp:FinProdInCu}, is also straightforward, using that the $\gamma$-completion applied to $\CatCu$-semigroups is vacuous.

\begin{prp}
\label{prp:FinCoprodInCu}
Let $(S_j)_{j\in J}$ be a \emph{finite} family of \CuSgp{s}.
Then $\CatCuCoprod_{j}S_j$ is naturally isomorphic to $\CatCuProd_{j\in J}S_j$.
\end{prp}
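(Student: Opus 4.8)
The plan is to collapse both the coproduct and the product of the finite family onto the plain set-theoretic product $\CatPomProd_j S_j$, using that the $\gamma$-completion is vacuous on \CuSgp{s}. By \autoref{cor:Cucoproductpushouts} we have $\CatCuCoprod_j S_j = \gamma\big(\CatWSum_j S_j\big)$, and by \autoref{prp:FinProdInCu} we have $\CatCuProd_{j\in J} S_j \cong \CatPomProd_j S_j$, the set-theoretic product with componentwise order and addition. Hence it suffices to exhibit a natural isomorphism $\gamma\big(\CatWSum_j S_j\big)\cong\CatPomProd_j S_j$.

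First I would unwind the \WSgp{} $\CatWSum_j S_j$ as described in \autoref{pgr:WfinCoproduct}. As $J$ is finite, its underlying monoid is the direct sum of the $S_j$, which coincides with the set-theoretic product $\CatPomProd_j S_j$, and its auxiliary relation is the coordinatewise relation assembled from the way-below relations $\ll$ of the individual \CuSgp{s}.

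The only step with real content is to check that this coordinatewise relation agrees with the genuine way-below relation $\ll$ of the \CuSgp{} $\CatPomProd_j S_j$ (which is indeed a \CuSgp{} by \autoref{prp:FinProdInCu}). Since suprema of increasing sequences in a finite product are formed coordinatewise, both implications go through: if $x_j\ll y_j$ for every $j$, then, given an increasing sequence whose supremum dominates $(y_j)_j$, for each of the finitely many coordinates one picks an index $n_j$ witnessing $x_j\ll y_j$ and passes to $N:=\max_j n_j$; conversely, if $(x_j)_j\ll(y_j)_j$, then fixing a coordinate $j_0$ and inserting an increasing sequence in $S_{j_0}$ that is constant in the other coordinates forces $x_{j_0}\ll y_{j_0}$. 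Finiteness of $J$ enters essentially here, via the maximum over coordinates. Thus $\CatWSum_j S_j$ is exactly $\iota\big(\CatPomProd_j S_j\big)$, the image of the \CuSgp{} $\CatPomProd_j S_j$ under the embedding $\iota\colon\CatCu\to\CatW$ of \autoref{pgr:CuInW}.

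It then remains to apply $\gamma$. Since $\iota$ is fully faithful (\autoref{pgr:CuInW}) with left adjoint $\gamma$ (\autoref{prp:CuReflW}), we have $\gamma\circ\iota\cong\mathrm{id}_{\CatCu}$; concretely, the universal morphism $\alpha_{\iota(T)}\colon\iota(T)\to\gamma(\iota(T))$ is an isomorphism for every \CuSgp{} $T$, which is precisely the statement that the $\gamma$-completion does nothing to a \CuSgp{}. Taking $T=\CatPomProd_j S_j$ gives $\gamma\big(\CatWSum_j S_j\big)=\gamma\big(\iota(\CatPomProd_j S_j)\big)\cong\CatPomProd_j S_j$, and combining this with \autoref{prp:FinProdInCu} produces the desired isomorphism $\CatCuCoprod_j S_j\cong\CatCuProd_{j\in J} S_j$. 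Naturality is inherited from the naturality of the three isomorphisms used. I expect the coincidence of the two way-below relations on the finite product to be the sole genuine point, and even that reduces to taking a maximum once finiteness is in hand.
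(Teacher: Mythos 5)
Your argument is correct and is precisely the elaboration the paper has in mind: the paper dispatches this proposition by remarking that, as for \autoref{prp:FinProdInCu}, it is straightforward once one knows the $\gamma$-completion is vacuous on \CuSgp{s}, and your proof supplies exactly the missing details (identifying $\CatWSum_j S_j$ with $\iota\big(\CatPomProd_j S_j\big)$ by matching the componentwise auxiliary relation with $\ll$ on the finite product, then applying $\gamma\circ\iota\cong\mathrm{id}$). The verification that the two relations coincide, with finiteness entering via the maximum over coordinates, is the genuine content and you carry it out correctly.
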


\begin{pgr}
\label{pgr:absbivar}
It was shown in \cite{AntPerThi17arX:AbsBivariantCu} that $\CatCu$ is a closed, symmetric, monoidal category.
The tensor product construction in the category $\CatCu$ was introduced and developed in \cite[Chapter~6]{AntPerThi18TensorProdCu}.
It was proved in \cite[Proposition~6.4.1]{AntPerThi18TensorProdCu} that the bifunctor $\freeVar\otimes\freeVar\colon\CatCu\times\CatCu\to\CatCu$ is continuous in both variables.
This result is recovered below.
On the other hand, \cite[Theorem~5.11]{AntPerThi17arX:AbsBivariantCu} shows that, for any $\CatCu$-semigroup $T$, the functor $\freeVar\otimes_\CatCu T$ has a right adjoint, denoted by $\ihom{T,\freeVar}$.
The bifunctor $\ihom{\freeVar,\freeVar}\colon\CatCu\times\CatCu\to\CatCu$ is referred to as the \emph{internal-hom bifunctor}.
\end{pgr}

\begin{thm}
\label{thm:inthominvlim}
Let $T$ be a $\CatCu$-semigroup, let $((S_i)_{i\in I}, (\varphi_{j,i})_{i\leq j})$ be an inductive system in $\CatCu$, and let $((R_i)_{i\in I}, (\psi_{j,i})_{i\leq j})$ be an inverse system in $\CatCu$.
Then:
\begin{enumerate}
\item
We have an inductive system $((S_i\otimes T)_{i\in I}, (\varphi_{j,i}\otimes\mathrm{id}_T)_{i\leq j})$ in $\CatCu$, and
\[
(\CatCuLim\limits_{i\in I} S_i)\otimes T\cong\CatCuLim\limits_{i\in I} (S_i\otimes T).
\]
\item
We have an inverse system $((\ihom{T,R_i})_{i\in I}, ((\psi_{j,i})_*)_{i\leq j})$ in $\CatCu$, and
\[
\ihom{T,\CatCuinvLim\limits_{i\in I} R_i}\cong\CatCuinvLim\limits_{i\in I}\ihom{T, R_i}.
\]
\item
We have an inverse system $((\ihom{S_i,T})_{i\in I}, ((\varphi_{j,i}^*)_{i\leq j})$ in $\CatCu$, and
\[
\ihom{\CatCuLim\limits_{i\in I}S_i, T}\cong\CatCuinvLim\limits_{i\in I}\ihom{S_i,T}.
\]
\end{enumerate}
\end{thm}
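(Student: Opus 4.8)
The plan is to deduce all three statements formally from the closed symmetric monoidal structure recalled in \autoref{pgr:absbivar} together with the bicompleteness of $\CatCu$ from Theorems~\ref{prp:CuComplete} and~\ref{prp:CuCocomplete}, using the principle that left adjoints preserve colimits while right adjoints preserve limits. In each part the asserted inductive or inverse system is simply the image of the given system under the relevant co- or contravariant functor, so its existence is immediate from functoriality; in~(2) the transition maps are $(\psi_{j,i})_*=\ihom{T,\psi_{j,i}}$ and in~(3) they are $\varphi_{j,i}^*=\ihom{\varphi_{j,i},T}$. For~(1), the functor $\freeVar\otimes T$ has right adjoint $\ihom{T,\freeVar}$ by \autoref{pgr:absbivar}, hence is a left adjoint and preserves all colimits; as $\CatCu$ is cocomplete, the inductive limit exists and $(\CatCuLim_i S_i)\otimes T\cong\CatCuLim_i(S_i\otimes T)$. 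For~(2), dually, $\ihom{T,\freeVar}$ is a right adjoint, hence preserves all limits; as $\CatCu$ is complete this yields $\ihom{T,\CatCuinvLim_i R_i}\cong\CatCuinvLim_i\ihom{T,R_i}$.

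The only statement with genuine content is~(3), where the functor $\ihom{\freeVar,T}$ is contravariant in the first variable. Here the plan is to promote the tensor--hom adjunction to a contravariant self-duality: combining the adjunction with the symmetry isomorphism $A\otimes B\cong B\otimes A$ gives, naturally in $A$ and $B$,
\[
\CatCuMor(A,\ihom{B,T})\cong\CatCuMor(A\otimes B,T)\cong\CatCuMor(B\otimes A,T)\cong\CatCuMor(B,\ihom{A,T}).
\]
This exhibits $\ihom{\freeVar,T}$, regarded as a functor $\CatCu\to\CatCu^{\mathrm{op}}$, as a left adjoint of the corresponding functor $\CatCu^{\mathrm{op}}\to\CatCu$; consequently it preserves colimits. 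Since a colimit in $\CatCu^{\mathrm{op}}$ is precisely a limit in $\CatCu$, applying the functor to $\CatCuLim_i S_i$ turns it into $\CatCuinvLim_i\ihom{S_i,T}$, which is the assertion. Alternatively, I would verify~(3) by a Yoneda argument: for every $\CatCu$-semigroup $X$ one rewrites $\CatCuMor(X,\ihom{\CatCuLim_i S_i,T})$ as $\CatCuMor((\CatCuLim_i S_i)\otimes X,T)$, uses part~(1) and symmetry to bring the colimit outside, then uses that a contravariant hom sends colimits to limits of sets and that the covariant hom preserves limits, arriving at $\CatCuMor(X,\CatCuinvLim_i\ihom{S_i,T})$ naturally in $X$.

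I expect the main obstacle to be the careful handling of the contravariance in~(3): one must check that the self-duality isomorphism above is natural in both arguments, so that it constitutes a genuine adjunction (equivalently, so that Yoneda applies), and one must keep track of the fact that colimits pass to limits under the passage to $\CatCu^{\mathrm{op}}$. Everything else reduces to the formal properties of adjoints together with the completeness and cocompleteness of $\CatCu$.
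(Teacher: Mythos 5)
Your proposal is correct. Parts (1) and (2) are handled exactly as in the paper: the adjunction $\freeVar\otimes T\dashv\ihom{T,\freeVar}$ from \autoref{pgr:absbivar} plus the general facts that left adjoints preserve colimits and right adjoints preserve limits, with bicompleteness guaranteeing existence. For part (3), however, you take a genuinely different route. The paper argues by representability, but routes the computation through the auxiliary categories: it uses the coreflection $\tau\colon\CatQ\to\CatCu$ to identify $\CatCuMor\big(R,\CatCuinvLim\ihom{S_i,T}\big)$ with $\CatQMor\big(R,\CatQinvLim\ihom{S_i,T}\big)$, then invokes the concrete set-theoretic descriptions of $\CatQ$-limits and $\CatW$-colimits of hom-sets before applying the tensor--hom adjunction. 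Your first argument instead packages everything into the standard contravariant self-adjunction
\[
\CatCuMor\big(A,\ihom{B,T}\big)\cong\CatCuMor\big(B,\ihom{A,T}\big)
\]
available in any closed symmetric monoidal category, so that $\ihom{\freeVar,T}\colon\CatCu\to\CatCu^{\mathrm{op}}$ is a left adjoint and sends the colimit $\CatCuLim_i S_i$ to a colimit in $\CatCu^{\mathrm{op}}$, i.e.\ to $\CatCuinvLim_i\ihom{S_i,T}$; your alternative Yoneda argument stays entirely inside $\CatCu$ and uses part (1) together with the bare universal properties of (co)limits, which in fact render the paper's detour through $\CatQ$ and $\CatW$ unnecessary. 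What your approach buys is a shorter, purely formal proof whose only nontrivial input is the naturality in both variables of the adjunction and symmetry isomorphisms (which you correctly flag, and which holds since $\CatCu$ is closed symmetric monoidal with $\ihom{\freeVar,\freeVar}$ a bifunctor); what the paper's approach buys is an explicit description of the morphism sets involved, in keeping with its concrete constructions of limits and colimits via $\tau$ and $\gamma$.
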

\begin{proof}
The functors $F_T:=\freeVar\otimes T$ and $G_T:=\ihom{T,\freeVar}$ form an adjoint pair by \cite[Theorem~5.10]{AntPerThi17arX:AbsBivariantCu}. Then (1) and (2) follow from general category theory.

Let us prove statement~(3). Using the notation in \cite[Paragraph~5.5]{AntPerThi17arX:AbsBivariantCu}, we have
that $((\ihom{S_i,T})_{i\in I}, (\varphi_{i,j}^*)_{i,j\in I})$ is an inverse system of $\CatCu$-semigroups.
Let $R$ be any other $\CatCu$-semigroup. Since $\tau\colon\CatQ\to\CatCu$ is a coreflection (\autoref{prp:CuCoreflQ}), we obtain natural bijections:
\[
\CatCuMor\big( R,\CatCuinvLim\ihom{S_i,T} \big)
\cong \CatCuMor\big( R,\tau(\CatQinvLim \ihom{S_i,T}) \big)
\cong \CatQMor\big( R,\CatQinvLim \ihom{S_i,T} \big).
\]

Given an inverse system of $\CatQ$-semigroups $(Y_i)_i$, the set $\CatQMor(R,\CatQinvLim_i Y_i)$ is naturally bijective to the (set-theoretic) inverse limit $\varprojlim_i\CatQMor(R, Y_i)$.
Similarly, given an inductive system of $\CatW$-semigroups $(X_i)_i$, the set $\CatWMor(\CatWLim X_i, T)$ is naturally bijective to $\varprojlim\CatWMor(X_i,T)$. Using these observations, and the adjunction between tensor product and internal-hom, the result follows.
\end{proof}

\section{Scaled Cuntz semigroups}
\label{sec:scaled}

In this section, we introduce the category $\CatCuScaled$ of scaled \CuSgp{s} (see \cite{PerTomWhiWin14CuStabilityCloseCa}), which we show to be complete and cocomplete.

\begin{dfn}
Let $S$ be a \CuSgp.
A \emph{scale} for $S$ is a downward hereditary subset $\Sigma\subseteq S$ that is closed under passing to suprema of increasing sequences in $S$, and that generates $S$ as an ideal.
We call $(S,\Sigma)$ a \emph{scaled \CuSgp}.

Given scaled \CuSgp{s} $(S,\Sigma)$ and $(T,\Theta)$, a \emph{scaled \CuMor} is a \CuMor{} $\alpha\colon S\to T$ satisfying $\alpha(\Sigma)\subseteq\Theta$.
We let $\CatCuScaled$ denote the category of scaled \CuSgp{s} and scaled \CuMor{s}.
\end{dfn}

\begin{pgr}
\label{pgr:CuScaled}
Let $A$ be a \ca{}.
Then
\[
\Sigma_A := \big\{ x\in\Cu(A) : \text{for every } x'\ll x \text{ there exists } a\in A_+ \text{ such that } x'\ll [a] \big\}
\]
is a scale for $\Cu(A)$.
We call $\CatCuScaled(A):=(\Cu(A),\Sigma_A)$ the \emph{scaled Cuntz semigroup} of $A$.
Given a \stHom{} $\varphi\colon A\to B$ between \ca{s}, the induced \CuMor{} $\Cu(\varphi)\colon\Cu(A)\to\Cu(B)$ (see \autoref{pgr:CuA}) maps $\Sigma_A$ into $\Sigma_B$.
Hence, we obtain a functor $\CatCa\to\CatCuScaled$.
In Theorems~\ref{prp:CuScaledIndLim}, \ref{thm:CuProd} and~\ref{thm:CuUltraprod}, we show that this functor preserves inductive limits and (ultra)products.
\end{pgr}

The following result is straightforward to prove.
We omit the details.

\begin{prp}
\label{pgr:scalesIdealsQuotients}
Let $A$ be a \ca, let $J\subseteq A$ be an ideal, and let $\pi\colon A\to A/J$ denote the quotient map.
Then
\[
\Sigma_J = \Cu(J)\cap\Sigma_A,\andSep
\Sigma_{A/J} = \Cu(\pi)(\Sigma_A).
\]
\end{prp}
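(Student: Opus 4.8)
The plan is to establish the two identities separately, relying on the concrete description of the scale $\Sigma_A$ from \autoref{pgr:CuScaled} together with standard facts about the Cuntz semigroups of ideals and quotients. Recall that for an ideal $J\subseteq A$, the inclusion $J\hookrightarrow A$ induces an order-embedding $\Cu(J)\hookrightarrow\Cu(A)$ whose image is the ideal of $\Cu(A)$ consisting of classes of positive elements in $J\otimes\KK$, and the quotient map $\pi\colon A\to A/J$ induces a surjective \CuMor{} $\Cu(\pi)\colon\Cu(A)\to\Cu(A/J)$. Throughout I would identify $\Cu(J)$ with its image in $\Cu(A)$.

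For the first identity, I would prove the two inclusions. For $\Sigma_J\subseteq\Cu(J)\cap\Sigma_A$, take $x\in\Sigma_J$. Then $x\in\Cu(J)$ by definition, and given $x'\ll x$ in $\Cu(A)$, one has $x'\ll x$ in $\Cu(J)$ as well (the embedding preserves and reflects $\ll$ between elements of $\Cu(J)$, since it preserves suprema of increasing sequences and its image is an ideal); the witness $a\in J_+\subseteq A_+$ with $x'\ll[a]$ shows $x\in\Sigma_A$. Conversely, if $x\in\Cu(J)\cap\Sigma_A$, then for every $x'\ll x$ there is $a\in A_+$ with $x'\ll[a]$; using axiom \axiomO{2} to write $x=\sup_n x_n$ with $x_n\ll x$ and the fact that $x\in\Cu(J)$, I would arrange the approximating element to lie in $J$, invoking R{\o}rdam's lemma from \autoref{pgr:CuA} to replace $a$ by a small cut-down that is Cuntz-below an element of $J_+$, thereby verifying $x\in\Sigma_J$.

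For the second identity, the inclusion $\Cu(\pi)(\Sigma_A)\subseteq\Sigma_{A/J}$ follows from the functoriality already noted in \autoref{pgr:CuScaled}, namely that $\Cu(\pi)$ maps $\Sigma_A$ into $\Sigma_{A/J}$. For the reverse inclusion, take $\bar{x}\in\Sigma_{A/J}$; writing $\bar{x}=\sup_n \bar{x}_n$ with $\bar{x}_n\ll\bar{x}$, each $\bar{x}_n$ is compactly contained in the class of some positive element of $A/J$, and since $\Cu(\pi)$ is surjective I would lift each $\bar{x}_n$ to an element $x_n\in\Sigma_A$ (lifting the dominating positive element through the surjection $\pi$ and using that $\Sigma_A$ is downward hereditary); arranging the lifts to form an increasing sequence whose supremum $x$ lies in $\Sigma_A$ (using that $\Sigma_A$ is closed under suprema of increasing sequences) gives $\Cu(\pi)(x)=\bar{x}$.

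The main obstacle is the reverse inclusion in each identity, where one must produce a genuine preimage or witness inside $J$ (respectively inside $\Sigma_A$) rather than merely in $A$. Both hinge on carefully combining \axiomO{2} with R{\o}rdam's lemma to pass from an arbitrary dominating positive element to one that respects the ideal structure, and on the interplay between the surjectivity of $\Cu(\pi)$ and the downward-hereditary, sup-closed nature of the scales; the bookkeeping to guarantee that the chosen lifts remain increasing and stay within the scale is the delicate part, though each step is routine once set up.
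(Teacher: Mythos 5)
The paper records this proposition without proof (``straightforward\dots we omit the details''), so there is no argument of the authors to compare against; judged on its own, your outline has the right shape (two inclusions per identity, the forward directions being formal and the reverse directions resting on R{\o}rdam's lemma), but both reverse inclusions are where the content lies, and one of them has a real gap. A smaller issue first: in the inclusion $\Cu(J)\cap\Sigma_A\subseteq\Sigma_J$, the phrase ``replace $a$ by a small cut-down that is Cuntz-below an element of $J_+$'' does not work as written, since for $a\in A_+$ the cut-down $(a-\varepsilon)_+$ need not be Cuntz-below anything in $J$ (take $a=1_A$ and $J$ a proper ideal). What must be cut down is a representative $c\in(J\otimes\KK)_+$ of an interpolated element $x''$ with $x'\ll x''\ll x$; R{\o}rdam's lemma applied to $c\precsim a$ gives $(c-\delta)_+=ss^*$ with $s^*s\in\Her(a)\cap(J\otimes\KK)$, and since $\Her(a)\subseteq A\otimes e_{11}$ this lands in $J\otimes e_{11}\cong J$, producing the required witness in $J_+$. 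This is a repair rather than a different argument, but it is not what you wrote.

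The genuine gap is in $\Sigma_{A/J}\subseteq\Cu(\pi)(\Sigma_A)$, precisely at the step you call ``arranging the lifts to form an increasing sequence''. If $x_n,x_{n+1}\in\Sigma_A$ are exact lifts of $\bar{x}_n\leq\bar{x}_{n+1}$, the only available information is that $x_n'\leq x_{n+1}+w$ for $x_n'\ll x_n$ and some $w\in\Cu(J)$, and correcting $x_{n+1}$ by such a $w$ destroys membership in $\Sigma_A$, because scales are not closed under addition (in $\Cu(\CC)\cong\NNbar$ one has $1\in\Sigma_{\CC}$ but $1+1\notin\Sigma_{\CC}$). There is no elementary device that turns a family of exact lifts into an increasing sequence, so the increasingness must be built upstairs from the start. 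Concretely, one either (i) shows that $\bar{x}$ is the class of a single positive element of $A/J$ --- for instance by passing to a separable sub-\ca{} of $A/J$ containing a $\ll$-increasing sequence $[\bar{c}_k]$ with $\bar{c}_k\in(A/J)_+$ and the R{\o}rdam witnesses, and invoking the realization of elements dominated by the class of a strictly positive element by single positive elements, as in \cite{BroCiu09IsoHilbModSF} and \cite{CiuRobSan10CuIdealsQuot} --- and then lifts that one positive element along $\pi$; or (ii) runs a recursive nested-hereditary-subalgebra construction, lifting each R{\o}rdam witness into the hereditary sub-\ca{} generated by the previously chosen lift and using $\pi(\Her(a))=\Her(\pi(a))$. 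Either route is standard, but it is the actual content of the statement and is absent from your sketch, which underestimates this step as ``routine bookkeeping''.
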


\begin{pgr}
Given a \CuSgp{} $S$, the whole of $S$ is a scale for $S$, and we say that $(S,S)$ is \emph{trivially scaled}.
A map between \CuSgp{s} is a \CuMor{} if and only if it is a scaled \CuMor{} when both \CuSgp{s} are equipped with trivial scales.
Hence, $\CatCu$ is isomorphic to the full subcategory of trivially scaled \CuSgp{s} in $\CatCuScaled$.
In the other direction, we have the forgetful functor $\forget\colon\CatCuScaled\to\CatCu$, that maps $(S,\Sigma)$ to $S$.
For every scaled \CuSgp{} $(S,\Sigma)$ and every \CuSgp{} $T$, there is a natural bijection
\[
\CatCuMor\big( S, T \big) \cong \CatCuScaledMor\big( (S,\Sigma), (T,T) \big),
\]
showing that $\forget$ is a left adjoint to the inclusion $\CatCu\to\CatCuScaled$, and thus $\CatCu$ is a full, reflective subcategory of $\CatCuScaled$.
\end{pgr}

\begin{pgr}
Let $I$ be a small category, and let $F\colon I\to\CatCuScaled$, $i\mapsto F(i)=(S_i,\Sigma_i)$ be a functor.
Consider the underlying functor $I\to\CatPom$, $i\mapsto S_i$, and let $(S,(\pi_i)_i)$ be the limit of $F$ in $\CatPom$ (see \autoref{pgr:prodinvlim}) with $S$ defined as in \eqref{pgr:prodinvlim:eq2}.
Set $\Sigma_0 := S \cap \prod_i \Sigma_i$.
Note that
\[
\Sigma_0
= \left\{ (s_i)_{i\in I}\in\prod_{i\in I} \Sigma_i :  F(f)(s_i)=s_j \text{ for all } f\colon i\to j \text{ in }I \right\}.
\]
Then $\Sigma_0$ is a downward hereditary subset of $S$ satisfying $\pi_i(\Sigma_0)\subseteq \Sigma_i$ for each $i$.

The limit of the underlying functor $\forget\circ F\colon I\to\CatCu$, $i\mapsto S_i$, is given as
$\tau( S, \llpw )$ together with maps $\psi_i:=\tau(\pi_i)\colon\tau(S,\llpw)\to\tau(S_i,\ll)\cong S_i$;
see \autoref{prp:CuComplete}.
Set
\[
\Sigma := \big\{ [(\vect{x}_t)_{t\leq 0}] \in \tau( S, \llpw ) : \vect{x}_t \in \Sigma_0 \text{ for all } t<0 \big\}.
\]
Then $\Sigma$ is a downward hereditary subset of $\tau( S, \llpw )$ that is closed under passing to suprema of increasing sequences.
Let $\langle \Sigma \rangle$ denote the ideal of $\tau( S, \llpw )$ generated by $\Sigma$.
Then $(\langle\Sigma\rangle,\Sigma)$ is a scaled \CuSgp{}.
Moreover, for each $i\in I$ we have $\psi_i(\Sigma)\subseteq\Sigma_i$, which shows that $\psi_i\colon(\langle\Sigma\rangle,\Sigma)\to(S_i,\Sigma_i)$ is a scaled \CuMor.
One can show that this defines a limit for $F$ in $\CatCuScaled$.
We omit the details.
\end{pgr}

\begin{thm}
\label{prp:CuScaledComplete}
The category $\CatCuScaled$ is bicomplete.
\end{thm}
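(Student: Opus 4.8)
The plan is to prove bicompleteness of $\CatCuScaled$ by establishing completeness and cocompleteness separately, in each case leveraging the structural relationship between $\CatCuScaled$ and the already-understood category $\CatCu$. The paragraph immediately preceding this theorem has essentially constructed limits in $\CatCuScaled$ explicitly: given a functor $F\colon I\to\CatCuScaled$, one takes the limit $\tau(S,\llpw)$ of the underlying $\CatCu$-valued functor (which exists by \autoref{prp:CuComplete}), forms the hereditary subset $\Sigma$ of endpoint paths lying in $\Sigma_0$, and passes to the ideal $\langle\Sigma\rangle$ it generates. So for completeness I would simply verify the universal property that was deferred (``we omit the details''): namely that $(\langle\Sigma\rangle,\Sigma)$ together with the scaled morphisms $\psi_i$ is genuinely a limit cone, which amounts to checking that any competing scaled cone $(T,\Theta)$ factors uniquely through it.

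For completeness the key steps are as follows. First, I would recall that the forgetful functor $\forget\colon\CatCuScaled\to\CatCu$ has a right adjoint (the inclusion sending $S$ to the trivially scaled $(S,S)$ is a \emph{right} adjoint to $\forget$, since $\forget$ is a left adjoint as noted in the excerpt). Given a scaled cone $(T,\Theta)$ over $F$, the underlying cone over $\forget\circ F$ factors uniquely through $\tau(S,\llpw)$ by the universal property of the $\CatCu$-limit, yielding a unique $\CatCu$-morphism $\alpha\colon T\to\tau(S,\llpw)$. The crux is then to show that $\alpha$ restricts to a scaled morphism $(T,\Theta)\to(\langle\Sigma\rangle,\Sigma)$, i.e.\ that $\alpha(\Theta)\subseteq\langle\Sigma\rangle$ and in fact that $\alpha$ lands inside the ideal $\langle\Sigma\rangle$ rather than all of $\tau(S,\llpw)$. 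For this I would take $x\in\Theta$, write it as a supremum of a $\ll$-increasing sequence, and use that each $\psi_i\alpha(x)=\sigma_i(x)\in\Sigma_i$ to show the component paths can be chosen in $\Sigma_0$, hence $\alpha(x)\in\Sigma$. Since $\Theta$ generates $T$ as an ideal and $\langle\Sigma\rangle$ is an ideal, it follows that $\alpha(T)\subseteq\langle\Sigma\rangle$, giving the required factorization; uniqueness is inherited from the $\CatCu$-level.

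For cocompleteness I would exploit reflectivity: the excerpt establishes that $\CatCu$ is a full \emph{reflective} subcategory of $\CatCuScaled$ with reflector $\forget$. However, to get colimits in $\CatCuScaled$ it is cleaner to build them directly from colimits in $\CatCu$, which exist by \autoref{prp:CuCocomplete}. Given $F\colon I\to\CatCuScaled$, I would form $L:=\CatCuLim(\forget\circ F)$ with structure maps $\sigma_i\colon S_i\to L$, and then equip $L$ with the scale $\Theta:=\langle\bigcup_i\sigma_i(\Sigma_i)\rangle$, the ideal generated by the images of the individual scales. Each $\sigma_i$ is then automatically a scaled morphism, and I would verify the universal cocone property: any scaled cocone $(T,\Theta')$ induces a $\CatCu$-morphism $L\to T$ by the $\CatCu$-colimit property, which respects scales precisely because $\Theta$ is \emph{generated} by the $\sigma_i(\Sigma_i)$ and each of these is sent into $\Theta'$.

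The main obstacle I anticipate is the scale-verification in the completeness argument—specifically, confirming that the factorizing morphism $\alpha$ lands in the correct ideal $\langle\Sigma\rangle$ rather than in the larger semigroup $\tau(S,\llpw)$. The subtlety is that $\Sigma$ is defined path-wise (classes of paths whose values lie in $\Sigma_0$ for all $t<0$), so showing $\alpha(x)\in\Sigma$ for $x\in\Theta$ requires carefully reconciling the endpoint/path description of $\tau$ from \autoref{pgr:tauconstr} with the hereditary-and-sup-closed structure of the scales $\Sigma_i$; one must produce, from the data $\sigma_i(x)\in\Sigma_i$, an actual representing path in $S$ whose truncations lie in $\Sigma_0$. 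By contrast, the cocompleteness direction should be essentially formal once the scale is taken to be ideal-generated, so I would expect that half to be routine.
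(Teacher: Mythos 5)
Your completeness half is sound and fills in exactly the verification the paper defers: the key point, that the factorizing morphism $\alpha$ sends $\Theta$ into $\Sigma$, follows because for $x\in\Theta$ one has $\psi_i(\alpha(x))=\sigma_i(x)\in\Sigma_i$, and since $\psi_i(\alpha(x))$ is the supremum of the $i$-th components $\pi_i(\vect{x}_t)$ of any representing path of $\alpha(x)$, downward heredity of $\Sigma_i$ forces $\vect{x}_t\in\Sigma_0$ for all $t$; the ``obstacle'' you anticipate is therefore harmless. The genuine gap is in the cocompleteness half, precisely where you expect routine formality. You equip the $\CatCu$-colimit $L$ with the \emph{ideal} $\Theta=\langle\bigcup_i\sigma_i(\Sigma_i)\rangle$. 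But a scale is only required to be downward hereditary and closed under suprema of increasing sequences --- it is \emph{not} closed under addition (for instance $\Sigma_\CC=\{0,1\}\subseteq\NNbar=\Cu(\CC)$). Consequently the induced morphism $\beta\colon L\to T$ need not map your $\Theta$ into $\Theta'$: it does send each $\sigma_i(\Sigma_i)$ into $\Theta'$, hence the downward hereditary, sup-closed hull of their union into $\Theta'$, but sums such as $\beta(\sigma_i(x)+\sigma_j(y))=\tau_i(x)+\tau_j(y)$ have no reason to lie in $\Theta'$. Concretely, already for the one-object identity diagram at $(\NNbar,\{0,1\})$ your recipe outputs $(\NNbar,\NNbar)$, and the identity cocone on $(\NNbar,\{0,1\})$ cannot factor through that by a scaled morphism; so the object you construct is not the colimit.

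The fix is what the paper actually does: take the scale on $L$ to be the \emph{smallest downward hereditary, sup-closed} subset containing $\bigcup_i\sigma_i(\Sigma_i)$ (explicitly, those $x\in L$ such that every $x'\ll x$ lies below some element of $\bigcup_i\sigma_i(\Sigma_i)$), not the generated ideal. With that choice, $\beta(\Sigma)\subseteq\Theta'$ follows because $\beta$ preserves order, $\ll$, and suprema of increasing sequences while $\Theta'$ is hereditary and sup-closed. One then has to check separately that this smaller $\Sigma$ still generates $L$ as an ideal, which holds because each $\Sigma_i$ generates $S_i$ as an ideal and the images $\sigma_i(S_i)$ generate $L$.
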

\begin{proof}
It remains to show that $\CatCuScaled$ is cocomplete.
Let $I$ be a small category, and let $F\colon I\to\CatCuScaled$, $i\mapsto F(i)=(S_i,\Sigma_i)$ be a functor.
By \autoref{prp:CuCocomplete}, $\CatCu$ is cocomplete, and so there is a \CuSgp{} $S$ and \CuMor{s} $\sigma_i\colon S_i\to S$ that are compatible with $F(f)\colon S_i\to S_j$ for each $f\colon i\to j$ in $I$, and such that $(S,(\sigma_i)_i)$ is universal with these properties.
Let $\Sigma$ be the smallest subset of $S$ that contains $\bigcup_i\sigma_i(\Sigma_i)$, and that is downward hereditary and closed under suprema of increasing sequences in $S$, that is,
\[
\Sigma = \big\{ x\in S : \text{ for every } x'\ll x\ \text{ there exists } y\in \bigcup_{i\in I}\sigma_i(\Sigma_i) \text{ with } x'\leq y \big\}.
\]
Then $(S,\Sigma)$ is a scaled \CuSgp, and each $\sigma_i$ is a scaled \CuMor.
To show that this gives a limit for $F$ in $\CatCuScaled$, let $(T,\Theta)$ be a scaled \CuSgp{} together with compatible scaled \CuMor{s} $\tau_i\colon (S_i,\Sigma_i)\to(T,\Theta)$.
Using the universal property of $(S,(\sigma_i)_i)$, we obtain a (unique) \CuMor{} $\beta\colon S\to T$ such that $\tau_i=\beta\circ\sigma_i\colon S_i\to T$.
Since each $\tau_i$ is a scaled \CuMor{}, we have
\[
\beta\big( \bigcup_{i\in I}\sigma_i(\Sigma_i) \big)
= \bigcup_{i\in I}(\beta\circ\sigma_i)(\Sigma_i)
= \bigcup_{i\in I}\tau_i(\Sigma_i)
\subseteq \Theta,
\]
which implies that $\beta(\Sigma)\subseteq\Theta$, and thus $\beta$ is a scaled \CuMor.
\end{proof}

\begin{thm}
\label{prp:CuScaledIndLim}
The scaled Cuntz semigroup functor preserves inductive limits:
Given an inductive system $((A_j)_{j\in J},(\varphi_{k,j})_{j\leq k})$ of \ca{s}, we have
\[
\CatCuScaled\big( \varinjlim_{j}A_j \big) \cong \varinjlim_{j} \CatCuScaled(A_j).
\]
\end{thm}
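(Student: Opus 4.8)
The plan is to compare directly the two scaled $\CatCu$-semigroups that the statement asserts to be isomorphic. Write $A:=\varinjlim_j A_j$ with canonical \stHom{s} $\varphi_j\colon A_j\to A$. On the level of underlying $\CatCu$-semigroups there is nothing to do: the Cuntz semigroup functor already preserves inductive limits over directed sets (\autoref{pgr:CuA}), so $\Cu(A)\cong\varinjlim_j\Cu(A_j)$ via the maps $\Cu(\varphi_j)$. Moreover, since the forgetful functor $\forget\colon\CatCuScaled\to\CatCu$ is a left adjoint it preserves colimits, so the underlying $\CatCu$-semigroup of $\varinjlim_j\CatCuScaled(A_j)$ is exactly this $\Cu(A)$. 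By the explicit description of colimits in $\CatCuScaled$ from the proof of \autoref{prp:CuScaledComplete}, the scale of $\varinjlim_j\CatCuScaled(A_j)$ is
\[
\Sigma_{\mathrm c}=\Big\{\,x\in\Cu(A): \text{for all } x'\ll x \text{ there is } y\in\bigcup_{j}\Cu(\varphi_j)(\Sigma_{A_j}) \text{ with } x'\leq y\,\Big\}.
\]
Thus the whole theorem reduces to the set-theoretic identity $\Sigma_{\mathrm c}=\Sigma_A$, where $\Sigma_A$ is the natural scale on $\Cu(A)$ from \autoref{pgr:CuScaled}; the isomorphism is then witnessed by the identity map on $\Cu(A)$.

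The inclusion $\Sigma_{\mathrm c}\subseteq\Sigma_A$ is the routine half. Functoriality of the scaled Cuntz semigroup (\autoref{pgr:CuScaled}) gives $\Cu(\varphi_j)(\Sigma_{A_j})\subseteq\Sigma_A$ for each $j$, so $\bigcup_j\Cu(\varphi_j)(\Sigma_{A_j})\subseteq\Sigma_A$. As $\Sigma_A$ is a scale it is downward hereditary and closed under suprema of increasing sequences, and $\Sigma_{\mathrm c}$ is by construction the \emph{smallest} such subset containing that union; minimality yields the inclusion.

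The real content is the reverse inclusion $\Sigma_A\subseteq\Sigma_{\mathrm c}$, and this is where the $C^*$-algebraic input enters. I would fix $x\in\Sigma_A$ and $x'\ll x$, and produce $j$ together with $z\in\Sigma_{A_j}$ satisfying $x'\leq\Cu(\varphi_j)(z)$. By definition of $\Sigma_A$ there is $a\in A_+$ with $x'\ll[a]$. Since $[a]=\sup_n[(a-\tfrac1n)_+]$ is an increasing supremum, $x'\ll[a]$ forces $x'\leq[(a-\tfrac1n)_+]$ for some $n$. Because $A=\varinjlim_j A_j$ has dense union $\bigcup_j\varphi_j(A_j)$, I can approximate $a$ by a self-adjoint element of some $\varphi_j(A_j)$ and replace it by its positive part to obtain $j$ and $b\in(A_j)_+$ with $\|\varphi_j(b)-a\|<\tfrac1n$. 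The standard norm-to-Cuntz-subequivalence estimate then gives $(a-\tfrac1n)_+\precsim\varphi_j(b)$, so that
\[
x'\leq\big[(a-\tfrac1n)_+\big]\leq[\varphi_j(b)]=\Cu(\varphi_j)([b]).
\]
Finally $[b]\in\Sigma_{A_j}$, since for any $b\in(A_j)_+$ and any $x''\ll[b]$ the element $b$ itself is a witness, so classes of honest positive elements always lie in the scale. This exhibits the required $y=\Cu(\varphi_j)([b])$ and shows $x\in\Sigma_{\mathrm c}$.

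Combining the two inclusions gives $\Sigma_A=\Sigma_{\mathrm c}$, and together with $\Cu(A)\cong\varinjlim_j\Cu(A_j)$ this produces the claimed isomorphism of scaled $\CatCu$-semigroups. I expect the main obstacle to be precisely this reverse inclusion: one must descend from the purely order-theoretic membership condition defining $\Sigma_A$ to a finite stage $A_j$, and the three ingredients that make this work — density of $\bigcup_j\varphi_j(A_j)$ in $A$, liftability of positive elements through $\varphi_j$, and the estimate $\|c-a\|<\varepsilon\Rightarrow(a-\varepsilon)_+\precsim c$ — rely on the concrete structure of the \ca{s} rather than on the abstract categorical machinery.
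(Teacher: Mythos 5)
Your argument is correct and follows essentially the same route as the paper's proof: reduce to the identification of underlying $\CatCu$-semigroups via preservation of inductive limits, observe that one inclusion of scales follows from functoriality together with minimality of the colimit scale, and prove the reverse inclusion by approximating a positive element $a\in A_+$ witnessing $x'\ll[a]$ by a positive element from some finite stage $A_j$ and invoking the estimate $(a-\varepsilon)_+\precsim c$ for $\|c-a\|<\varepsilon$. The only cosmetic differences are that the paper phrases the comparison through an explicit scaled $\CatCu$-morphism $\psi\colon\varinjlim_j\CatCuScaled(A_j)\to\CatCuScaled(A)$ rather than an equality of subsets, and uses a cut-down by $\varepsilon$ instead of $1/n$.
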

\begin{proof}
Set $A:=\varinjlim_{j}A_j$ and let $\varphi_{\infty,j}\colon A_j\to A$ denote the maps to the inductive limit.
They induce scaled \CuMor{s} $\psi_{\infty,j}\colon\CatCuScaled(A_j)\to\CatCuScaled(A)$, which in turn induce a scaled \CuMor{}
\[
\psi\colon \varinjlim_{j}\CatCuScaled(A_j) \to \CatCuScaled(A).
\]
It follows from \cite[Corollary~3.2.9, p.29]{AntPerThi18TensorProdCu} that $\psi$ is an order-isomorphism.
Let $\Sigma$ denote the scale  of $\varinjlim_{j}\CatCuScaled(A_j)$.
We have $\psi(\Sigma)\subseteq\Sigma_A$, and it remains to show that $\psi$ maps $\Sigma$ onto $\Sigma_A$.
Let $x\in\Sigma_A$, and let $x'\ll x$.
It is enough to verify $x'\in\psi(\Sigma)$.
By definition of $\Sigma_A$, there exists $a\in A_+$ with $x'\ll[a]$.
Thus, there is $\varepsilon>0$ with $x'\ll[(a-\varepsilon)_+]$.
Choose $j$ and $b\in (A_j)_+$ such that $\|a-\varphi_{\infty,j}(b)\|<\varepsilon$.
Set $y:=[b]\in\Sigma_{A_j}$, and let $\tilde{y}$ be the image of $y$ in $\varinjlim_{j}\Cu(A_j)$.
Then $\tilde{y}\in\Sigma$, and 
\[
x' 
\ll [(a-\varepsilon)_+] 
\leq [\varphi_{\infty,j}(b)]
= \psi_{\infty,j}(y)
= \psi(\tilde{y}).
\]
It follows that $\psi^{-1}(x')\in\Sigma$, and thus $x'\in\psi(\Sigma)$.
\end{proof}

\section{Cuntz semigroups of products}
\label{sec:prod}

In this section, we show that the scaled Cuntz semigroup functor preserves products;
see \autoref{thm:CuProd}.

\begin{pgr}
\label{pgr:CuProductSetup}
Let $(A_j)_{j\in J}$ be a family of \ca{s}.
For each $i\in J$, the natural projection $\pi_i\colon\prod_j A_j\to A_i$ induces a \CuMor{} $\tilde{\pi}_i\colon\Cu(\prod_jA_j)\to \Cu(A_i)$. 
Then, by the universal property of the product, there is a unique \CuMor{}
\[
\Phi\colon \Cu(\prod_{j\in J}A_j)\to \CatCuProd_{j\in J} \Cu(A_j)
\]
such that $\tilde{\pi}_i=\sigma_i\circ\Phi$ for all $i\in I$, where $\sigma_i\colon\CatCuProd_j\Cu(A_j)\to\Cu(A_j)$ denotes the natural \CuMor{} associated to the product in the category $\CatCu$.

Let us describe the map $\Phi$ more concretely.
Given a positive element $\underline{a}=(a_j)_{j\in J}$ in $\prod_jA_j$, and $t\in(-\infty,0]$, set
\[
\varphi_t(\underline{a}) := ([(a_j+t)_+])_{j\in J}.
\]
Then $t\mapsto\varphi_t(\underline{a})$ is a path in $(\CatPomProd_j \Cu(A_j),\llpw)$, and we have
\[
\Phi([\underline{a}])=[(\varphi_t(\underline{a}))_{t\in(-\infty,0]}].
\]
\end{pgr}

Next, we need to deal with the stabilization that occurs in the construction of $\Cu(\prod_j A_j)$. 
We note that even if each $A_j$ is stable, the product $\prod_j A_j$ is \emph{not} stable (unless $J$ is finite);
see \autoref{rmk:stableUltrapr}.
Nevertheless, we will see that $\prod_j A_j$ satisfies property~(S) of Hjelmborg-R{\o}rdam, which is enough for our purposes.

\begin{dfn}[{Hjelmborg, R{\o}rdam, \cite{HjeRor98Stability}}]
A \ca{} $A$ is said to have property~(S) if for every $a\in A_+$ and every $\varepsilon>0$ there exist $b\in A_+$ and $x\in A$ such that $a=x^*x$, $b=xx^*$ and $\|ab\|\leq\varepsilon$.
\end{dfn}

\begin{thm}[{\cite{HjeRor98Stability}}]
\label{prp:propS}
Let $A$ be a \ca{}.
If $A$ is stable, then it has property~(S).
The converse holds if $A$ is $\sigma$-unital (in particular, if $A$ is separable).
\end{thm}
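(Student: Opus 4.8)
I would prove the two implications separately, as the forward direction is elementary while the converse is the substantial one.

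For \emph{stable $\Rightarrow$ property~(S)}, the plan is to work inside $A\cong A\otimes\KK$ and exploit the matrix units of $\KK$ to manufacture an orthogonal copy of a given element. Given $a\in A_+$ and $\varepsilon>0$, I would first approximate $a$ by an element $a_0\in A\otimes M_n\subseteq A\otimes\KK$ that is finitely supported in the compact directions, with $\|a-a_0\|<\delta$ for a $\delta$ to be fixed at the end. Let $V\in M(A\otimes\KK)$ be the isometry induced by the shift $e_k\mapsto e_{k+n}$ on $\ell^2(\NN)$, so that $V^*V=1$ while the range projection $VV^*$ is orthogonal to the support block $p_n$ of $a_0$ (i.e. $a_0\,VV^*=0$). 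Setting $x:=Va^{1/2}\in A$, one gets $x^*x=a^{1/2}(V^*V)a^{1/2}=a$ and $b:=xx^*=VaV^*\in A_+$. Since $a_0(Va_0V^*)=0$, a routine two-term estimate bounds $\|ab\|=\|a\,VaV^*\|$ by a multiple of $\|a-a_0\|$, and hence by $\varepsilon$ once $\delta$ is small enough. This establishes property~(S).

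For the converse, I would first rephrase property~(S) usefully: it says that for every $a\in A_+$ and $\varepsilon>0$ there is $x\in A$ with $a=x^*x$ and $b=xx^*$ Murray--von Neumann equivalent to $a$ (via $x$) and $\|ab\|\le\varepsilon$; that is, property~(S) furnishes, one at a time, a nearly orthogonal equivalent copy of any positive element. Using $\sigma$-unitality, fix a strictly positive $e\in A_+$ and a dense sequence $(d_k)_k$ in $A$. The plan is to run an exhaustion argument: inductively apply property~(S) to peel off from the remaining part of $e$ a sequence of positive elements $f_1,f_2,\ldots$, each equipped with an element $x_n$ implementing an equivalence, arranged so that (i) the $f_n$ are mutually orthogonal up to a summable error, (ii) the partial sums $\sum_{n\le N}f_n$ continue to capture the $d_k$ within the hereditary subalgebra they generate, and (iii) $\sum_n f_n$ is again strictly positive in $A$. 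From such a system one assembles a sequence of isometries $(s_n)_n$ in $M(A)$ with $s_n^*s_m=\delta_{nm}$ and $\sum_n s_ns_n^*=1$ strictly, i.e. a unital copy of $B(\ell^2)$ in $M(A)$ compatible with $A$, which yields the isomorphism $A\cong A\otimes\KK$.

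The main obstacle is precisely this inductive construction. Property~(S) produces only \emph{one} orthogonal copy at a time, and only approximately, so the errors must be controlled (choosing the parameters $\varepsilon_n$ summable) while \emph{simultaneously} guaranteeing that the accumulated orthogonal copies exhaust a strictly positive element; otherwise one merely builds a hereditary subalgebra isomorphic to $A\otimes\KK$ rather than all of $A$. Managing orthogonality, equivalence, and exhaustion together in a single diagonal bookkeeping is the technical heart of the Hjelmborg--R{\o}rdam argument. The $\sigma$-unital hypothesis is exactly what makes the exhaustion feasible and is genuinely necessary: without it, property~(S) need not force stability.
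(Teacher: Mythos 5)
Note first that the paper offers no proof of this statement at all: it is quoted directly from Hjelmborg--R{\o}rdam \cite{HjeRor98Stability}, so the only meaningful comparison is with that source. Your forward direction is complete and correct, and it is essentially the standard argument: identifying $A$ with $A\otimes\KK$, cutting $a$ down to $a_0\in A\otimes M_n$ with $\|a-a_0\|<\delta$, and conjugating by the shift isometry $V=1\otimes S^n$ does produce $x=Va^{1/2}\in A$ with $x^*x=a$, $b=xx^*=VaV^*$, and since $VaV^*=(1\otimes q_n)VaV^*$ with $a_0(1\otimes q_n)=0$ one even gets the one-line estimate $\|a\,VaV^*\|=\|(a-a_0)VaV^*\|\le\delta\|a\|$; your two-term version works just as well.

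The converse, however, is a roadmap rather than a proof, and the part you defer is the entire mathematical content of the theorem. You correctly identify the Hjelmborg--R{\o}rdam strategy (fix a strictly positive $e$, iterate property~(S) with summable tolerances $\varepsilon_n$ to peel off almost-orthogonal equivalent copies, and assemble isometries $s_n\in M(A)$ with $\sum_n s_ns_n^*=1$ strictly), and you correctly locate where $\sigma$-unitality enters. But the passage from the one-at-a-time, approximate output of property~(S) to genuinely orthogonal, mutually equivalent positive elements whose sum is strictly positive --- and from there to the strictly convergent family of isometries --- is exactly the inductive perturbation argument that constitutes \cite{HjeRor98Stability}; naming it as ``the main obstacle'' does not discharge it. Two specific points you would need to address to close the gap: (i) property~(S) gives $\|ab\|\le\varepsilon$, not $ab=0$, so each stage requires a functional-calculus correction (replacing $b$ by something like $(b-\varepsilon')_+$ and controlling the resulting loss of equivalence and of fullness in $\overline{eAe}$); and (ii) one must verify that the accumulated copies exhaust a strictly positive element rather than merely generating a proper hereditary subalgebra --- your own caveat --- which is where the dense sequence $(d_k)_k$ has to be woven into the induction. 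As it stands, the converse direction should be regarded as an accurate summary of the cited proof, not an independent one.
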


\begin{lma}
\label{prp:PropSProduct}
Property~(S) passes to products.
\end{lma}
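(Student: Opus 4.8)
The plan is to verify property~(S) directly in the product \ca{} $A:=\prod_{j\in J}A_j$, working coordinatewise and using that each factor $A_j$ has property~(S). The key observation is that property~(S) is a purely local, $\varepsilon$-approximate condition: it asks, for a single positive element and a single $\varepsilon$, to produce a partial isometry-like factorization $a=x^*x$, $b=xx^*$ with $\|ab\|\le\varepsilon$. Since norms, products, adjoints, and positivity in $\prod_j A_j$ are all computed coordinatewise (with the sup-norm), I expect that assembling coordinatewise witnesses will work provided the bounds can be made uniform in $j$.

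First I would fix $\underline{a}=(a_j)_j\in A_+$ and $\varepsilon>0$. For each $j\in J$, apply property~(S) of $A_j$ to the element $a_j\in (A_j)_+$ and the \emph{same} $\varepsilon$, obtaining $b_j\in (A_j)_+$ and $x_j\in A_j$ with $a_j=x_j^*x_j$, $b_j=x_jx_j^*$ and $\|a_jb_j\|\le\varepsilon$. Set $\underline{b}:=(b_j)_j$ and $\underline{x}:=(x_j)_j$. The main point I would check is that $\underline{x}$ genuinely defines an element of $\prod_j A_j$, i.e. that $\sup_j\|x_j\|<\infty$: this follows because $\|x_j\|^2=\|x_j^*x_j\|=\|a_j\|\le\|\underline{a}\|$, so $(x_j)_j$ is bounded by $\|\underline{a}\|^{1/2}$ and hence lies in the product. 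Likewise $\|b_j\|=\|x_jx_j^*\|=\|x_j^*x_j\|=\|a_j\|\le\|\underline{a}\|$ shows $\underline{b}$ is a bona fide element of $A_+$.

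With the candidates in hand, the verification is immediate from coordinatewise computation: $\underline{x}^*\underline{x}=(x_j^*x_j)_j=(a_j)_j=\underline{a}$ and $\underline{x}\,\underline{x}^*=(x_jx_j^*)_j=(b_j)_j=\underline{b}$, while
\[
\|\underline{a}\,\underline{b}\| = \sup_{j\in J}\|a_jb_j\| \le \varepsilon.
\]
This exhibits the required factorization for $\underline{a}$ and $\varepsilon$, so $A$ has property~(S).

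The step I expect to be the only genuine (if minor) obstacle is precisely the uniformity of the bound: nothing in the definition of property~(S) a priori controls $\|x_j\|$ or $\|b_j\|$ independently of $a_j$, so one must extract these bounds from the identities $a_j=x_j^*x_j$ and $b_j=x_jx_j^*$ rather than assume them. Once one notes that these identities force $\|x_j\|^2=\|b_j\|=\|a_j\|\le\|\underline{a}\|$, uniformity is automatic and there is no choice to be coordinated across the (possibly uncountable) index set $J$, so no further subtlety arises.
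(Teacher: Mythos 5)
Your proof is correct and is essentially identical to the paper's: apply property~(S) coordinatewise with the same $\varepsilon$, observe that $\|x_j\|^2=\|b_j\|=\|a_j\|\le\|\underline{a}\|$ gives the uniform bound needed for $(x_j)_j$ and $(b_j)_j$ to lie in the product, and conclude by coordinatewise computation of the sup-norm. No gaps.
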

\begin{proof}
Let $(A_j)_{j\in J}$ be a family of \ca{s} with property~(S).
To verify that $A:=\prod_{j\in J}A_j$ has property~(S), let $a=(a_j)_j\in A_+$ and $\varepsilon>0$. 
Given $j\in J$, use that $A_j$ satisfies~(S), to choose $b_j\in (A_j)_+$ and $x_j\in A_j$ such that
\[
a_j=x_j^*x_j, \andSep b_j=x_jx_j^*, \andSep \|a_jb_j\|\leq\varepsilon.
\]
Set $b:=(b_j)_j$ and $x:=(x_j)_j$.
For each $j$, we have $\|x_j\|=\|a_j\|^{1/2} \leq \|a\|^{1/2}$ and therefore $\|x\|=\sup_i\|x_i\|<\infty$.
Similarly, we have $\|b\|<\infty$.
Hence, $b$ and $x$ belong to $A$, and we have $a=x^*x$, $b=xx^*$ and $\|ab\|\leq\varepsilon$, as desired.
\end{proof}

We remark that it is straightforward to verify that property~(S) passes to quotients, and hence to ultraproducts.
The following result is well-known (although we couldn't locate it in the literature).
It can be proved using Blackadar's technique of constructing separable sub-\ca{s} with prescribed properties (\cite[Section~II.8.5, p.176ff]{Bla06OpAlgs}), which is the \ca{ic} version of the downward L\"{o}wenheim-Skolem theorem from model theory.

\begin{lma}
\label{prp:PropStoSepSubalg}
Let $A$ be a \ca{} with property~(S), and let $A_0\subseteq A$ be a separable sub-\ca{}.
Then there exists a separable sub-\ca{} $B\subseteq A$ with property~(S) (and hence $B$ is stable by \autoref{prp:propS}), and such that $A_0\subseteq B$.
\end{lma}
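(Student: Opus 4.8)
The plan is to construct the desired separable sub-\ca{} $B$ as the union of an increasing chain of separable sub-\ca{s} $A_0\subseteq A_1\subseteq A_2\subseteq\cdots$, where each $A_{n+1}$ contains enough elements to witness property~(S) for the elements already present in $A_n$. The key observation is that property~(S) is an approximately existential condition: for each $a\in A_+$ and each $\varepsilon>0$, there exist $b\in A_+$ and $x\in A$ with $a=x^*x$, $b=xx^*$ and $\|ab\|\le\varepsilon$. To build $B$, I would first fix a countable dense subset of $(A_0)_+$, or more robustly a countable set that is dense in the positive part of $A_0$ together with a sequence $\varepsilon_k\to 0$, and for each such pair $(a,\varepsilon_k)$ choose witnesses $b,x\in A$ given by property~(S) of $A$. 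Adjoining all these countably many witnesses to $A_0$ and taking the generated sub-\ca{} produces a separable $A_1\supseteq A_0$.

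The heart of the argument is an iteration. Having built $A_n$, I would choose a countable dense subset $D_n\subseteq (A_n)_+$, and for every $a\in D_n$ and every $k\in\NN$ select, using property~(S) of $A$, elements $b_{a,k},x_{a,k}\in A$ witnessing the condition for $a$ and $\varepsilon=1/k$. Then I set $A_{n+1}$ to be the sub-\ca{} of $A$ generated by $A_n$ together with the countable collection $\{b_{a,k},x_{a,k} : a\in D_n,\ k\in\NN\}$; this is again separable. Finally I put $B:=\overline{\bigcup_n A_n}$, which is a separable sub-\ca{} of $A$ containing $A_0$.

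It remains to verify that $B$ has property~(S). Given $a\in B_+$ and $\varepsilon>0$, I would first approximate: since $\bigcup_n A_n$ is dense in $B$ and $a$ is positive, there is some $n$ and some $a'\in (A_n)_+$ with $\|a-a'\|$ as small as desired; a standard functional-calculus argument (replacing $a'$ by an element close to it in $D_n$) lets me assume $a'\in D_n$. By construction, $A_{n+1}$, and hence $B$, contains witnesses $b',x'$ for $a'$ with $\|a'b'\|\le 1/k$ for every $k$, so in particular arbitrarily small. The main obstacle, and the only genuinely nontrivial point, is that the witnesses obtained for the nearby element $a'$ must be converted into witnesses for $a$ itself with controlled error; this is a perturbation step. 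Here one uses the polar-decomposition-type stability of the relations $a=x^*x$, $b=xx^*$: if $a'$ is close to $a$ then $x'$ can be adjusted (for instance via $x'':=x'(a')^{-1/2}a^{1/2}$ on suitable spectral subspaces, or by a Rørdam-type cutting argument as recalled in \autoref{pgr:CuA}) to obtain genuine witnesses $x,b\in B$ for $a$ with $\|ab\|\le\varepsilon$. Once this perturbation estimate is in place, property~(S) of $B$ follows, and \autoref{prp:propS} then gives that $B$ is stable.
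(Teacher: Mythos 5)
Your overall strategy is exactly the one the paper has in mind: the paper does not actually prove this lemma, it only points to Blackadar's separable-subalgebra technique (the $C^*$-analogue of downward L\"owenheim--Skolem), and your increasing chain $A_0\subseteq A_1\subseteq A_2\subseteq\cdots$ with $B=\overline{\bigcup_n A_n}$ is the standard implementation of that technique. The construction of the $A_n$, the separability of $B$, and the reduction to a countable dense set $D_n\subseteq (A_n)_+$ are all fine.

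The gap is the perturbation step, and it is a genuine one, precisely because property~(S) demands the \emph{exact} identity $a=x^*x$ for the given element $a$, whereas every standard perturbation tool only produces such an identity for a cut-down $(a-\eta)_+$. Concretely, $(a')^{-1/2}$ does not exist when $a'$ is not invertible, and the regularized element $x'':=x'(a'+\eta)^{-1/2}a^{1/2}$ only satisfies $(x'')^*x''=a^{1/2}\,a'(a'+\eta)^{-1}\,a^{1/2}$, which is close to $a$ but not equal to it; upgrading an approximate witness to an exact one is the same difficulty all over again. (Separately, multiplying by $(a'+\eta)^{-1/2}$, whose norm is $\eta^{-1/2}$, can destroy the smallness of $\|a'\,x'(x')^*\|$, so even the estimate $\|a\,x''(x'')^*\|\leq\varepsilon$ does not come for free.) The clean repair is to avoid perturbing the exact relation altogether: property~(S) admits an equivalent formulation (see \cite{HjeRor98Stability}) requiring only that for every $a\in A_+$ and $\varepsilon>0$ there is $b\in A_+$ with $ab=0$ (or $\|ab\|$ small) and $(a-\varepsilon)_+\precsim b$. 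This version involves only cut-downs and Cuntz subequivalence, so it is stable under small perturbations of $a$ by R{\o}rdam's lemma (\autoref{pgr:CuA}): one adjoins to $A_{n+1}$ the elements $b$ together with the implementing elements $s$ satisfying $(a'-\varepsilon)_+=ss^*$ and $s^*s\in\Her((b-\delta)_+)$, and the verification for an arbitrary $a\in B_+$ then reduces to routine norm estimates. With that substitution your chain argument closes up.
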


\begin{lma}
\label{prp:unstableCuPropS}
Let $A$ be a \ca{} with property~(S), and let $p\in\KK$ be a rank one projection.
Then the map $\iota\colon A\to A\otimes\KK$, given by $\iota(a)=a\otimes p$, induces a natural bijection between Cuntz equivalence classes in $A_+$ and $(A\otimes\KK)_+$. 
Moreover, $a,b\in A_+$ satisfy $\iota(a)\precsim\iota(b)$ in $A\otimes\KK$ if and only if $a\precsim b$ in $A$.
\end{lma}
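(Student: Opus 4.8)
The plan is to prove the two assertions in turn: first the ``Moreover'' (order) statement, from which well-definedness and injectivity of the induced map on Cuntz classes follow immediately, and then surjectivity, which is the only place where property~(S) is genuinely needed. Throughout I identify $\KK=\KK(\ell^2)$ with matrix units $(e_{ij})_{i,j}$ and assume $p=e_{11}$, so that $\iota$ identifies $A$ with the corner $A\otimes p=e(A\otimes\KK)e$, where $e:=1_{M(A)}\otimes p$ is a projection in $M(A\otimes\KK)$; in particular $A\otimes p$ is a hereditary sub-\ca{} of $A\otimes\KK$. The easy direction of the order statement is direct: if $a\precsim b$ in $A$ with $x_nbx_n^*\to a$, then $(x_n\otimes p)(b\otimes p)(x_n\otimes p)^*=x_nbx_n^*\otimes p\to a\otimes p$, so $\iota(a)\precsim\iota(b)$.

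For the converse, suppose $a\otimes p\precsim b\otimes p$ in $A\otimes\KK$. By R\o rdam's lemma (\autoref{pgr:CuA}), for each $\varepsilon>0$ there are $\delta>0$ and $s\in A\otimes\KK$ with $(a\otimes p-\varepsilon)_+=ss^*$ and $s^*s\in\Her((b\otimes p-\delta)_+)$. Since $p$ is a projection one has $(a\otimes p-\varepsilon)_+=(a-\varepsilon)_+\otimes p$ and $\Her((b\otimes p-\delta)_+)=\Her((b-\delta)_+)\otimes p$, so both $ss^*$ and $s^*s$ lie in the corner $A\otimes p$. A short computation then forces $s\in A\otimes p$: from $ss^*=e(ss^*)e$ we get $(1-e)s=0$, and from $s^*s=e(s^*s)e$ we get $s(1-e)=0$, whence $s=ese=s_0\otimes p$ for a unique $s_0\in A$. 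Thus $(a-\varepsilon)_+=s_0s_0^*$ and $s_0^*s_0\in\Her((b-\delta)_+)$, so $(a-\varepsilon)_+\precsim b$ in $A$ for every $\varepsilon>0$, giving $a\precsim b$. (Alternatively, one may simply invoke the general fact that Cuntz subequivalence of positive elements of a hereditary subalgebra is the same computed inside it or in the ambient algebra.) Applying this in both directions shows $\iota(a)\sim\iota(b)$ if and only if $a\sim b$, so the assignment $[a]\mapsto[\iota(a)]$ is a well-defined, injective map on Cuntz classes; naturality with respect to \stHom{s} is routine.

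For surjectivity, let $c\in(A\otimes\KK)_+$, and let $A_0\subseteq A$ be the separable sub-\ca{} generated by the (countably many) matrix coefficients of $c$, so that $c\in A_0\otimes\KK$. By \autoref{prp:PropStoSepSubalg} there is a separable sub-\ca{} $B$ with $A_0\subseteq B\subseteq A$ enjoying property~(S); since $B$ is $\sigma$-unital, it is stable by \autoref{prp:propS}. By the Hjelmborg--R\o rdam characterization of stability (\cite{HjeRor98Stability}) there is a sequence of isometries $(t_k)_k$ in $M(B)$ with pairwise orthogonal ranges and $\sum_k t_kt_k^*=1$ strictly. These assemble into a partial isometry $W:=\sum_k t_k^*\otimes e_{k1}\in M(B\otimes\KK)$ (strict sum) satisfying $WW^*=1$ and $W^*W=1\otimes p$. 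Then $W^*cW$ lies in the corner $(1\otimes p)(B\otimes\KK)(1\otimes p)=B\otimes p$, say $W^*cW=a\otimes p$ with $a\in B_+\subseteq A_+$. Setting $s:=c^{1/2}W\in B\otimes\KK$ (well defined since $W$ is a multiplier), we compute $ss^*=c^{1/2}(WW^*)c^{1/2}=c$ and $s^*s=W^*cW=a\otimes p$, so that $c=ss^*\sim s^*s=a\otimes p=\iota(a)$. Hence every Cuntz class in $(A\otimes\KK)_+$ is represented by some $\iota(a)$ with $a\in A_+$, completing the bijection.

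The main obstacle is surjectivity, and more precisely the subtlety that property~(S) alone does not make $A$ stable unless $A$ is $\sigma$-unital: for a general element $c$ one cannot expect to ``compress'' it into the corner $A\otimes p$ directly, because such a compression requires the multiplier isometries furnished by stability. This is exactly why the reduction to a separable (hence $\sigma$-unital) stable subalgebra via \autoref{prp:PropStoSepSubalg} is the crucial move; once stability is available, the isometry $W$ produces a single representative in $A_+$ in one stroke. By contrast, the order statement is comparatively routine, resting only on the corner $A\otimes p$ being hereditary together with R\o rdam's lemma.
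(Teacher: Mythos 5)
Your proof is correct and follows essentially the same route as the paper: the order-embedding comes from the fact that $A\otimes p$ is hereditary in $A\otimes\KK$ (which you verify by hand via R{\o}rdam's lemma, where the paper simply cites it), and surjectivity is obtained exactly as in the paper by placing the given element in $B\otimes\KK$ for a separable stable $B\subseteq A$ from \autoref{prp:PropStoSepSubalg} and then moving it into the corner $B\otimes p$ by a multiplier element (your partial isometry $W$ versus the paper's unitary $u$, a cosmetic difference).
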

\begin{proof}
In general, if $D\subseteq B$ is a hereditary sub-\ca{}, and given $x,y\in D_+$, then $x$ is Cuntz (sub)equivalent to $y$ in $D$ if and only if $x$ is Cuntz (sub)equivalent to $y$ in $B$;
see e.g.\ \cite[Proposition~2.18]{Thi17:CuLectureNotes}.
Since $\iota$ identifies $A$ with a hereditary sub-\ca{} of $A\otimes\KK$, it follows that the map $\tilde{\iota}\colon (A_+)_{/\sim} \to\Cu(A)$, given by $\tilde{\iota}([a])=[a\otimes p]$, is a well-defined order-embedding.

To show that $\tilde{\iota}$ is surjective, let $a\in(A\otimes\KK)_+$.
Choose a separable sub-\ca{} $A_0\subseteq A$ such that $a$ belongs to $A_0\otimes\KK\subseteq A\otimes\KK$.
(For instance, let $(e_{kl})_{k,l\geq 1}$ denote matrix units for $\KK$ such that $p=e_{11}$, and let $A_0$ be generated by $\iota^{-1}(e_{1k}ae_{l1})$ for $k,l\geq 1$.)
Apply \autoref{prp:PropStoSepSubalg} to obtain a separable, stable sub-\ca{} $B\subseteq A$ with $A_0\subseteq B$.
Note that $a$ belongs to $B\otimes\KK$.
Since $B$ is stable, there exists a unitary in $M(B\otimes\KK)$ such that $uxu^*\in B\otimes p$ for each $x\in B\otimes\KK$.
Then, $uau^*$ belongs to $B\otimes p$, and hence to the image of $\iota$.
Further, $uau^*$ is Cuntz equivalent to $a$ in $M(B\otimes\KK)$, and consequently in the hereditary sub-\ca{} $B\otimes\KK$, and therefore also in $A\otimes\KK$. 
\end{proof}

The following technical lemmas will be used to compute the Cuntz semigroups of product \ca{s}.

\begin{lma}
\label{prp:PhiInjective}
Let $(A_j)_{j\in J}$ be a family of \ca{s}, and let $\underline{a}=(a_j)_{j\in J}$ and $\underline{b}=(b_j)_{j\in J}$ be positive elements in $\prod_jA_j$.
Then the following are equivalent:
\begin{enumerate}
\item
We have $\underline{a}\precsim\underline{b}$ in $\prod_jA_j$.
\item 
For every $\varepsilon>0$ there exists $\delta>0$ such that $(a_j-\varepsilon)_+\precsim(b_j-\delta)_+$ in $A_j$ for every $j\in J$.
\item
We have $\Phi([\underline{a}])\leq\Phi([\underline{b}])$ in $\CatCuProd_j\Cu(A_j)$.
\end{enumerate}
\end{lma}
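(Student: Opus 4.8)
The plan is to prove the equivalences cyclically, establishing $(1)\Rightarrow(2)\Rightarrow(3)\Rightarrow(1)$, with the implication $(2)\Leftrightarrow(1)$ being essentially a uniform version of R\o rdam's lemma and the connection to $(3)$ coming directly from the description of $\Phi$ via paths in \autoref{pgr:CuProductSetup}.

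\medskip

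For $(1)\Rightarrow(2)$, I would apply the characterization of Cuntz subequivalence from R\o rdam's lemma (stated in \autoref{pgr:CuA}) to the single element relation $\underline a\precsim\underline b$ in the \ca{} $A:=\prod_j A_j$. Given $\varepsilon>0$, there exists $\delta>0$ and $s=(s_j)_j\in A$ with $(\underline a-\varepsilon)_+ = ss^*$ and $s^*s\in\Her((\underline b-\delta)_+)$. Reading this off componentwise, for each $j$ we get $(a_j-\varepsilon)_+ = s_js_j^*$ and $s_j^*s_j\in\Her((b_j-\delta)_+)$, so that $(a_j-\varepsilon)_+\precsim(b_j-\delta)_+$ in $A_j$, with the \emph{same} $\delta$ for all $j$. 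The point is that the witnessing data in the product decomposes coordinatewise, yielding the required uniformity.

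\medskip

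For $(2)\Rightarrow(1)$, I would conversely use the uniform subequivalences to build, for each $\varepsilon>0$, a single element of $A$ witnessing $(\underline a-\varepsilon)_+\precsim(\underline b-\delta)_+$, and then invoke the equivalence of conditions (1) and (2) in R\o rdam's lemma applied to $A$. Here one must be slightly careful: from $(a_j-\varepsilon)_+\precsim(b_j-\delta)_+$ in each $A_j$ one obtains elements $s_j\in A_j$ as in condition~(3) of R\o rdam's lemma, but to assemble $s=(s_j)_j\in A$ one needs uniform control on the norms $\|s_j\|$. This is where the main obstacle lies. The norms are controlled because $(a_j-\varepsilon)_+=s_js_j^*$ forces $\|s_j\|^2=\|(a_j-\varepsilon)_+\|\leq\|\underline a\|$, so $\sup_j\|s_j\|<\infty$ and $s$ genuinely lies in the product; the hereditary condition $s_j^*s_j\in\Her((b_j-\delta)_+)$ then passes to $s^*s\in\Her((\underline b-\delta)_+)$ since the hereditary subalgebra of the product is the product of the hereditary subalgebras. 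Thus $(\underline a-\varepsilon)_+\precsim(\underline b-\delta)_+$ in $A$ for every $\varepsilon$, which by R\o rdam's lemma gives $\underline a\precsim\underline b$.

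\medskip

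For the equivalence with $(3)$, I would use the concrete path description $\varphi_t(\underline a)=([(a_j+t)_+])_{j\in J}$ and the definition of the order on $\tau(\CatPomProd_j\Cu(A_j),\llpw)$ via the relation $\precsim$ on paths from \autoref{pgr:tauconstr}. By definition, $\Phi([\underline a])\leq\Phi([\underline b])$ means that for every $t<0$ there is $t'<0$ with $\varphi_t(\underline a)\llpw\varphi_{t'}(\underline b)$, i.e.\ $[(a_j+t)_+]\ll[(b_j+t')_+]$ in $\Cu(A_j)$ for every $j$ simultaneously. Translating $\ll$ into Cuntz subequivalence of slightly cut-down elements, this is readily seen to match condition $(2)$ after relabelling $\varepsilon=-t$ and $\delta=-t'$: the existence of a single $t'$ (equivalently $\delta$) working for all $j$ is precisely the uniformity in $(2)$. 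I expect the bookkeeping translating between the strictly-less-than relation $\ll$ on each $\Cu(A_j)$ and the cut-down subequivalences $(a_j-\varepsilon)_+\precsim(b_j-\delta)_+$ to be the most delicate part of this direction, but it is routine given the standard fact that $[(a-\varepsilon)_+]\ll[a]$ and that $\ll$ can be detected by such cut-downs.
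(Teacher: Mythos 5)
Your overall strategy coincides with the paper's: (1)$\Leftrightarrow$(2) via R{\o}rdam's lemma applied coordinatewise, and (2)$\Leftrightarrow$(3) by unwinding the order on $\tau\big(\CatPomProd_j\Cu(A_j),\llpw\big)$ and interpolating between $\ll$ and $\leq$ by means of further cut-downs. The implications (1)$\Rightarrow$(2) and (2)$\Leftrightarrow$(3) are fine as you sketch them. The genuine gap is in (2)$\Rightarrow$(1), in the sentence asserting that $s_j^*s_j\in\Her((b_j-\delta)_+)$ for all $j$ passes to $\underline{s}^*\underline{s}\in\Her((\underline{b}-\delta)_+)$ ``since the hereditary subalgebra of the product is the product of the hereditary subalgebras.'' That principle is false: membership in $\Her(c)$ requires \emph{norm} approximation by elements of $cAc$, and coordinatewise approximants need not be uniformly bounded. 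For instance, with $A_j=C_0((0,1])$, $c_j=\mathrm{id}$ and $x_j(t)=\min(jt,1)$, each $x_j$ lies in $\Her(c_j)=A_j$, yet $(x_j)_j\notin\Her((c_j)_j)$, since $\|c_jy_jc_j-x_j\|<\tfrac12$ forces $|y_j(1/j)|>j^2/2$. Worse, if your principle were true, your argument would show that coordinatewise subequivalence $a_j\precsim b_j$ (with $j$-dependent $\delta_j$) already implies $\underline{a}\precsim\underline{b}$, which is false: taking $b_j=\mathrm{id}$ and $a_j$ a bump function centred at $1/j$ in $C_0((0,1])$ gives $a_j\precsim b_j$ for every $j$ but violates condition (2), hence $\underline{a}\not\precsim\underline{b}$. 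The uniformity of $\delta$ is the whole content of the implication, and your justification bypasses it.

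The step you want is nevertheless correct, and the repair is exactly where the uniform $\delta$ must be used. Since the same $\delta$ works for every $j$, one continuous function $f_\delta\colon\RR\to[0,1]$ vanishing on $(-\infty,0]$ and equal to $1$ on $[\delta,\infty)$ satisfies $f_\delta(b_j)s_j^*s_jf_\delta(b_j)=s_j^*s_j$ for all $j$ simultaneously, because $f_\delta(b_j)$ acts as a unit on $\Her((b_j-\delta)_+)$. Hence $f_\delta(\underline{b})\,\underline{s}^*\underline{s}\,f_\delta(\underline{b})=\underline{s}^*\underline{s}$, so $\underline{s}^*\underline{s}\in\Her(\underline{b})$, which suffices to conclude $(\underline{a}-\varepsilon)_+\precsim\underline{b}$ for every $\varepsilon>0$ and hence (1). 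This is precisely the paper's argument. (A minor bookkeeping point you should also address: R{\o}rdam's lemma applied to $(a_j-\varepsilon)_+\precsim(b_j-\delta)_+$ only yields $\big((a_j-\varepsilon)_+-\varepsilon'\big)_+=s_js_j^*$ for a further cut-down $\varepsilon'$; the paper starts from $(a_j-\tfrac{\varepsilon}{2})_+\precsim(b_j-\delta)_+$ and cuts down by another $\tfrac{\varepsilon}{2}$ so as to land exactly on $(a_j-\varepsilon)_+$. Your norm estimate $\|s_j\|\leq\|\underline{a}\|^{1/2}$ is correct and matches the paper.)
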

\begin{proof}
As in \autoref{pgr:CuProductSetup}, set
\[
\varphi_t(\underline{a}) = ([(a_j+t)_+])_j,\andSep
\varphi_t(\underline{b}) = ([(b_j+t)_+])_j,
\]
for each $t\in(-\infty,0]$.
Then
\[
\Phi([\underline{a}]) = [ \varphi_t(\underline{a})_{t\in(-\infty,0]} ], \andSep
\Phi([\underline{b}]) = [ \varphi_t(\underline{b})_{t\in(-\infty,0]} ].
\]
By definition of the order in $\CatCuProd_j \Cu(A_j)=\tau(\CatPomProd_j\Cu(A_j),\llpw)$, we have $\Phi([\underline{a}])\leq\Phi([\underline{b}])$ if and only if for every $t<0$ there exists $t'<0$ such that $\varphi_t(\underline{a}) \llpw \varphi_{t'}(\underline{b})$.
Equivalently, for every $\varepsilon>0$ there exists $\delta>0$ such that $\varphi_{-\varepsilon}(\underline{a}) \llpw \varphi_{-\delta}(\underline{b})$.
We deduce that~(2) and~(3) are equivalent.

It follows directly from R{\o}rdam's Lemma (\autoref{pgr:CuA}) that~(1) implies~(2).
Conversely, assume~(2), and let us verify~(1) with R{\o}rdam's Lemma.
Let $\varepsilon>0$.
We need to find $\underline{s}$ in $\prod_j A_j$ such that $\underline{s}\underline{s}^*=(\underline{a}-\varepsilon)_+$ and $\underline{s}^*\underline{s}\in\Her(\underline{b})$.
By assumption, for each $j\in J$ there is $\delta>0$ such that $(a_j-\tfrac{\varepsilon}{2})_+\precsim (b_j-\delta)_+$ in $A_j$.
Apply R{\o}rdam's Lemma for $(a_j-\tfrac{\varepsilon}{2})_+\precsim (b_j-\delta)_+$ and $\tfrac{\varepsilon}{2}$ to obtain $s_j\in A_j$ such that
\[
(a_j-\varepsilon)_+ 
= \left( (a_j-\tfrac{\varepsilon}{2})_+  -\tfrac{\varepsilon}{2}\right)_+
= s_js_j^*, \andSep 
s_j^*s_j \in \Her((b_j-\delta)_+).
\]

Observe that $\| s_j \| \leq \| a_j \|^{1/2}$ for all $j\in J$, and hence $(\| s_j \|)_j$ is bounded.
Set $\underline{s}:=(s_j)_{j\in J}$.
Then $\underline{s}$ is an element in $\prod_j A_j$ satisfying $\underline{s}\underline{s}^*=(\underline{a}-\epsilon)_+$.
Let $f_\delta\colon\RR\to[0,1]$ be a continuous function satisfying $f(t)=0$ for $t\leq 0$ and $f(t)=1$ for $t\geq\delta$.
Then $f_\delta(b_j)s_j^*s_jf_\delta(b_j)=s_j^*s_j$ for each $j$, and therefore $f_\delta(\underline{b})\underline{s}^*\underline{s}f_\delta(\underline{b})=\underline{s}^*\underline{s}$.
This implies that $\underline{s}^*\underline{s}\in\Her(\underline{b})$, as desired.
\end{proof}

\begin{lma}
\label{prp:PhiDense}
Let $A$ be a \ca{}, let $a\in A_+$, and let $x\in\Cu(A)$ with $x\ll[a]$.
Then there exists a positive, contractive $b\in A$ such that $x\ll[(b-\tfrac{1}{2})_+]$ and $[b]=[a]$.
\end{lma}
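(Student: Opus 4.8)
The plan is to produce $b$ as a continuous function of $a$, engineered so that $(b-\tfrac12)_+$ is Cuntz equivalent to a cut-down $(a-\varepsilon)_+$ that still lies strictly above $x$, while $b$ itself remains Cuntz equivalent to $a$. First I would turn $x\ll[a]$ into a quantitative statement. The elements $(a-\tfrac1n)_+$ form an increasing sequence in $A_+$ with $\sup_n[(a-\tfrac1n)_+]=[a]$ in $\Cu(A)$, so $x\ll[a]$ provides some $n\in\NN$ with $x\le[(a-\tfrac1n)_+]$; enlarging $n$ if necessary, we may assume $\varepsilon:=\tfrac{1}{2n}<\|a\|$. To upgrade this to a strict relation I would invoke the standard fact that $[(c-\delta)_+]\ll[c]$ for every positive $c$ and every $\delta>0$ (this underlies axiom \axiomO{2} for $\Cu(A)$): applied to $c=(a-\varepsilon)_+$ and $\delta=\varepsilon$, together with the identity $((a-\varepsilon)_+-\varepsilon)_+=(a-\tfrac1n)_+$, it gives $[(a-\tfrac1n)_+]\ll[(a-\varepsilon)_+]$. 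Since $x\le[(a-\tfrac1n)_+]\ll[(a-\varepsilon)_+]$ and $\ll$ enjoys the auxiliary-relation property (see \autoref{pgr:auxiliary}) that $x\le y\ll z$ implies $x\ll z$, I conclude $x\ll[(a-\varepsilon)_+]$.

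Next I would choose any continuous, strictly increasing function $g\colon[0,\|a\|]\to[0,1]$ with $g(0)=0$ and $g(\varepsilon)=\tfrac12$, and set $b:=g(a)$. Then $b$ is positive and contractive because $0\le g\le1$, and $b\in A$ because $g(0)=0$. As $g(0)=0$ and $g(t)>0$ for every $t>0$, the positive elements $a$ and $g(a)$ have the same zero set on $\sigma(a)$, hence generate the same hereditary sub-\ca{}, so $[b]=[g(a)]=[a]$. Because $g$ is strictly increasing with $g(\varepsilon)=\tfrac12$, the element $(b-\tfrac12)_+=(g(a)-\tfrac12)_+$ is a function of $a$ whose support is exactly $\{t\in\sigma(a):t>\varepsilon\}$, coinciding with the support of $(a-\varepsilon)_+$; therefore $(b-\tfrac12)_+\sim(a-\varepsilon)_+$, that is, $[(b-\tfrac12)_+]=[(a-\varepsilon)_+]$. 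Combining with the first step yields $x\ll[(a-\varepsilon)_+]=[(b-\tfrac12)_+]$, as required.

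The only genuinely delicate point is the passage from $x\le[(a-\tfrac1n)_+]$ to the strict relation $x\ll[(a-\varepsilon)_+]$, which rests on the basic fact $[(c-\delta)_+]\ll[c]$; everything else is the routine dictionary between positive elements, their continuous functional calculus, and Cuntz (sub)equivalence --- in particular the principle that two positive functions of $a$ are Cuntz equivalent exactly when they share the same zero set on $\sigma(a)$, which is what simultaneously forces $[b]=[a]$ and $[(b-\tfrac12)_+]=[(a-\varepsilon)_+]$. Beyond this, I expect only routine bookkeeping.
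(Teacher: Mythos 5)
Your proposal is correct and follows essentially the same route as the paper's proof: fix $\varepsilon>0$ with $x\ll[(a-\varepsilon)_+]$, set $b:=g(a)$ for a continuous $[0,1]$-valued $g$ vanishing exactly at $0$ and equal to $\tfrac12$ at $\varepsilon$, and compare supports to get $[b]=[a]$ and $[(b-\tfrac12)_+]=[(a-\varepsilon)_+]$. The paper uses a piecewise-linear (not strictly increasing) $g$ and obtains $x\ll[(a-\delta)_+]$ in one step, but these are only cosmetic differences.
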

\begin{proof}
Choose $\delta>0$ such that $x\ll [(a-\delta)_+]$.
We may assume that $\delta<1/2$.
Let $h\colon\RR\to[0,1]$ be defined by
\begin{align*}
h(t)=
\begin{cases}
0, &\text{if } t\leq 0\\
\frac{1}{2\delta}t, &\text{if }0\leq t\leq 2\delta\\
1, &\text{if }2\delta\leq t
\end{cases}.
\end{align*}
Set $b:=h(a)$.
Then $[a]=[b]$.
Consider the functions $\RR\to\RR$, given by $t\mapsto (h(t)-\tfrac{1}{2})_+$ and $t\mapsto (t-\delta)_+$.
They have the same support (namely $(\delta,\infty)$), which implies that $(h(a)-1/2)_+$ and $(a-\delta)_+$ are Cuntz equivalent (first in the commutative \ca{} $C^*(a)\cong C(\sigma(a))$, and thus also in $A$).
Hence,
\[
x \ll [(a-\delta)_+] = [(h(a)-\frac{1}{2})_+] = [(b-\frac{1}{2})_+].\qedhere
\]
\end{proof}

\begin{prp}
\label{prp:CuProdStable}
Let $(A_j)_{j\in J}$ be a family of stable \ca{s}.
Then, the map $\Phi\colon \Cu(\prod_{j}A_j)\to \CatCuProd_{j} \Cu(A_j)$ from \autoref{pgr:CuProductSetup} is an isomorphism.
\end{prp}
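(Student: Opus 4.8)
The plan is to show that $\Phi$ is a surjective order-embedding. Since $\Phi$ is a \CuMor{}, this will make it an order-isomorphism, and because the way-below relation and suprema of increasing sequences are determined by the order while $\Phi$ is additive with additive inverse, it follows that $\Phi$ is a \CuMor{} whose inverse is also a \CuMor, i.e.\ an isomorphism. The order-embedding half is already available: by \autoref{prp:PhiInjective} (equivalence of (1) and (3)) we have $[\underline{a}]\leq[\underline{b}]$ in $\Cu(\prod_j A_j)$ if and only if $\Phi([\underline{a}])\leq\Phi([\underline{b}])$. So the entire content is the surjectivity of $\Phi$.

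First I would fix representatives. Each $A_j$ is stable, hence has property~(S) by \autoref{prp:propS}, and therefore $\prod_j A_j$ has property~(S) by \autoref{prp:PropSProduct}. By \autoref{prp:unstableCuPropS}, every class in $\Cu(\prod_j A_j)$ has the form $[\underline{a}]$ with $\underline{a}=(a_j)_j\in(\prod_j A_j)_+$, and then $\Phi([\underline{a}])=[(\varphi_t(\underline{a}))_{t}]$ with $\varphi_t(\underline{a})=([(a_j+t)_+])_j$ (see \autoref{pgr:CuProductSetup}). Likewise, since $A_j\cong A_j\otimes\KK$, every element of $\Cu(A_j)$ is represented by an element of $(A_j)_+$. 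An element of the codomain $\CatCuProd_j\Cu(A_j)=\tau(\CatPomProd_j\Cu(A_j),\llpw)$ is the class of a path $f$ with $f(t)=(f_j(t))_j$, each $f_j$ a path in $\Cu(A_j)$. Thus surjectivity amounts to: given such $f$, produce a bounded family $\underline{a}\in(\prod_j A_j)_+$ with $\Phi([\underline{a}])\sim f$.

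The construction proceeds componentwise but with a single, $j$-\emph{independent} parametrization, which is the crucial point. For each $j$ put $x_j^{(n)}:=f_j(-1/n)$, a $\ll$-increasing sequence in $\Cu(A_j)$ with $\sup_n x_j^{(n)}=f_j(0)$. Using stability of $A_j$ I would realize this sequence by a positive contraction $a_j\in(A_j)_+$ satisfying the sandwich
\[
[(a_j-\tfrac{1}{n})_+]\leq x_j^{(n)}\leq[(a_j-\tfrac{1}{n+1})_+]\quad\text{for all }n.
\]
This per-component realization of a rapidly increasing sequence by a single positive element, with cutoffs at the prescribed values $1/n$, is the technical heart; it is obtained by a standard induction using R{\o}rdam's Lemma (\autoref{pgr:CuA}) and \autoref{prp:PhiDense}, where stability of $A_j$ provides the orthogonal room needed to stack the successive pieces. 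Since the $a_j$ are contractions, $\underline{a}:=(a_j)_j$ lies in $(\prod_j A_j)_+$.

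Finally I would verify $\Phi([\underline{a}])\sim f$, where uniformity across $j$ is automatic because the cutoffs $1/n$ do not depend on $j$. For $(\varphi_t(\underline a))_t\precsim f$: given $t<0$ choose $n$ with $t\leq-1/n$; then for every $j$ we have $[(a_j+t)_+]\leq[(a_j-\tfrac{1}{n})_+]\leq x_j^{(n)}=f_j(-1/n)\ll f_j(-1/(n+1))$, so $\varphi_t(\underline{a})\llpw f(-1/(n+1))$. For $f\precsim(\varphi_t(\underline a))_t$: given $t<0$ choose $n$ with $t<-1/n$; then for every $j$ we have $f_j(t)\ll f_j(-1/n)=x_j^{(n)}\leq[(a_j-\tfrac{1}{n+1})_+]=[(a_j+t')_+]$ with $t'=-1/(n+1)$, so $f(t)\llpw\varphi_{t'}(\underline{a})$. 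Hence $[(\varphi_t(\underline{a}))_t]=[f]$, that is $\Phi([\underline{a}])=[f]$, proving surjectivity. The main obstacle, as indicated, is the simultaneous realization of the sequences $(f_j(-1/n))_n$ by positive contractions $a_j$ with one and the same cutoff sequence for all $j$; this is precisely where stability of the $A_j$ enters, and it is what makes the reparametrization $t\mapsto -1/(n+1)$ in the two $\precsim$-checks uniform in $j$.
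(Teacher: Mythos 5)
Your overall architecture is sound, and you have correctly isolated the real issue: surjectivity reduces to realizing, for each $j$, the component path $f_j$ by a single positive contraction $a_j$ whose cut-down levels are \emph{uniform in $j$}. Your final verification that $\Phi([\underline{a}])=[f]$ is also correct, \emph{granted} the sandwich $[(a_j-\tfrac{1}{n})_+]\leq f_j(-1/n)\leq[(a_j-\tfrac{1}{n+1})_+]$ for all $n$. The gap is precisely that sandwich. It is substantially stronger than anything R{\o}rdam's lemma (\autoref{pgr:CuA}) or \autoref{prp:PhiDense} provides: \autoref{prp:PhiDense} arranges a \emph{single} prescribed cut-down level (namely $\tfrac{1}{2}$) for a \emph{single} element below $[a_j]$, and iterating it does not obviously preserve the infinitely many previously arranged levels, because Cuntz comparison of cut-downs survives the norm perturbations of such an induction only approximately. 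The mechanism you invoke --- stability giving ``orthogonal room to stack successive pieces'' --- also fails as stated: if $a_j$ is an orthogonal sum $\sum_n e_n$ with $\|e_n\|$ chosen so that $e_n$ only affects cut-downs below level $\tfrac{1}{n+1}$, then $[(a_j-\tfrac{1}{n+1})_+]$ is the \emph{sum} of the classes of the pieces already placed, and since $\Cu(A_j)$ has no cancellation or complementation you cannot choose $[e_n]$ to make this sum land between $f_j(-1/n)$ and $f_j(-1/(n+1))$; it can overshoot $f_j(-1/(n+1))$. So the step you call the technical heart is left unproved, and the indicated route to it does not work.

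The paper avoids this entirely. Since $\Phi$ is a sup-preserving order-embedding (by \autoref{prp:PhiInjective}), it suffices to show that its image is \emph{order-dense}: for $x\ll y$ in $\CatCuProd_j\Cu(A_j)$ one only needs some $\underline{b}$ with $x\ll\Phi([\underline{b}])\ll y$, and then every element of the codomain, being a supremum of a $\ll$-increasing sequence interlaced with elements of the image, lies in the image. Producing such a $\underline{b}$ requires only a two-term interleaving, i.e.\ one application of \autoref{prp:PhiDense} per component with the single uniform cutoff $\tfrac{1}{2}$ --- no infinite realization and no limiting argument. To salvage your route you would have to prove the sandwich lemma carefully (likely by working inside hereditary subalgebras of representatives of $f_j(-1/(n+1))$ rather than by orthogonal stacking, controlling all levels simultaneously); the simpler fix is to switch to the order-density reduction.
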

\begin{proof}
Set $A:=\prod_j A_j$.
By \autoref{prp:PropSProduct}, $A$ has property~(S).
Hence, by \autoref{prp:unstableCuPropS} there is a natural identification of $\Cu(A)$ with the Cuntz equivalence classes in $A_+$.
Thus, it remains to show that $\Phi$ is an order-isomorphism when restricted to Cuntz classes of elements in $A_+$.

It follows from \autoref{prp:PhiInjective} that $\Phi$ is an order-embedding.
It remains to show that $\Phi$ is surjective.
Since $\Phi$ is an order-embedding, and since $\Cu(A)$ has suprema of increasing sequences that moreover are preserved by $\Phi$, it is enough to verify that the image of $\Phi$ is order-dense.

Let $x,y\in \CatCuProd_j\Cu(A_j)$ satisfy $x\ll y$.
We will find $\underline{b}\in A_+$ such that $x\ll\Phi([\underline{b}])\ll y$.
Choose $\llpw$-increasing paths $(\vect{x}_t)_{t\in(-\infty,0]}$ and $(\vect{y}_t)_{t\in(-\infty,0]}$ in $\CatPomProd_j\Cu(A_j)$ representing $x$ and $y$, respectively.
It follows from \cite[Lemma~3.16]{AntPerThi17arX:AbsBivariantCu} that there exists $t<0$ such that $\vect{x}_0\llpw\vect{y}_t$.

Choose elements $x_{0,j}\in\Cu(A_j)$ such that $\vect{x}_0=(x_{0,j})_j$,
and choose positive elements $a_j\in A_j$ such that $\vect{y}_t = ([a_j])_j$.
Then, for each $j\in J$, we have $x_{0,j}\ll[a_j]$.
Applying \autoref{prp:PhiDense}, we obtain a positive, contractive $b_j\in A_j$ such that
\[
x_{0,j} \ll [(b_j-\frac{1}{2})_+],\andSep
[b_j]=[a_j].
\]
Set $\underline{b}:=(b_j)_j$, which is a positive, contractive element in $\prod_j A_j$.
We have
\[
\vect{x}_0
\llpw ([(b_j-\frac{1}{2})_+])_j
= \varphi_{-1/2}(\underline{b}),\andSep
\varphi_0(\underline{b})
= ([b_j])_j
= ([a_j])_j
= \vect{y}_t
\llpw \vect{y}_{t/2},
\]
and therefore
\[
x
=[(\vect{x}_t)_{t\leq 0}]
\ll [(\varphi_t(\underline{b}))_{t\leq 0}]
= \Phi([\underline{b}])
\ll [(\vect{y}_t)_{t\leq 0}] = y.
\]
This shows that $\underline{b}$ has the claimed properties.
\end{proof}

\begin{rmk}
More generally, \autoref{prp:CuProdStable} (and consequently \autoref{prp:CuUltraprodStable}) holds for every family of \ca{s} that satisfy property~(S).
However, \autoref{exa:CuProd} shows that it does not hold for every family of \ca{s}.
\end{rmk}

The proof of the following result is standard.
We therefore omit it.

\begin{lma}
\label{prp:HerSubalg}
Let $A\subseteq B$ be a hereditary sub-\ca{}.
Then the inclusion $\iota\colon A\to B$ induces an order-embedding $\Cu(\iota)\colon\Cu(A)\to\Cu(B)$ that identifies $\Cu(A)$ with an ideal of $\Cu(B)$.
\end{lma}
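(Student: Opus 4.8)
The plan is to reduce everything to the basic fact that Cuntz comparison inside a hereditary sub-\ca{} agrees with Cuntz comparison in the ambient algebra (the result \cite[Proposition~2.18]{Thi17:CuLectureNotes} already invoked in the proof of \autoref{prp:unstableCuPropS}). First I would record that, since $A\subseteq B$ is hereditary, so is $A\otimes\KK\subseteq B\otimes\KK$; this is standard, as heredity is preserved under tensoring with $\KK$ (for instance, writing $A=L\cap L^*$ for a closed left ideal $L\subseteq B$, one has $A\otimes\KK=(L\otimes\KK)\cap(L\otimes\KK)^*$ with $L\otimes\KK$ a closed left ideal of $B\otimes\KK$). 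Since $\Cu(\iota)$ is defined by $[a]\mapsto[(\iota\otimes\mathrm{id}_\KK)(a)]$, it suffices throughout to work with positive elements of $A\otimes\KK$ regarded inside $B\otimes\KK$.

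For the order-embedding statement, let $a,b\in(A\otimes\KK)_+$. Then $\Cu(\iota)([a])\leq\Cu(\iota)([b])$ means $a\precsim b$ in $B\otimes\KK$, whereas $[a]\leq[b]$ in $\Cu(A)$ means $a\precsim b$ in $A\otimes\KK$. As $A\otimes\KK$ is hereditary in $B\otimes\KK$, these two subequivalences coincide by the cited comparison result, so $\Cu(\iota)$ is an order-embedding (in particular injective).

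It then remains to identify the image $\Cu(\iota)(\Cu(A))$ with an ideal of $\Cu(B)$. It is a submonoid because $\Cu(\iota)$ is a \CuMor. To check downward heredity, let $x\leq\Cu(\iota)([a])$ in $\Cu(B)$ with $a\in(A\otimes\KK)_+$, and write $x=[c]$ for some $c\in(B\otimes\KK)_+$, so that $c\precsim a$. The crucial step is R{\o}rdam's lemma (\autoref{pgr:CuA}) applied in $B\otimes\KK$: for each $\varepsilon>0$ there are $\delta>0$ and $s$ with $(c-\varepsilon)_+=ss^*$ and $s^*s\in\Her((a-\delta)_+)\subseteq A\otimes\KK$, where the inclusion uses that $(a-\delta)_+\in A\otimes\KK$ together with heredity. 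Hence $[(c-\varepsilon)_+]=[s^*s]$ lies in the image of $\Cu(\iota)$. Now $[c]=\sup_n[(c-\tfrac{1}{n})_+]$ by \axiomO{1}, and since $\Cu(\iota)$ is an order-embedding, the preimages of the terms $[(c-\tfrac{1}{n})_+]$ form an increasing sequence in $\Cu(A)$; its supremum exists by \axiomO{1} and is mapped by $\Cu(\iota)$ onto $[c]$, because $\Cu(\iota)$ preserves suprema of increasing sequences. Thus $x=[c]$ lies in the image. The very same supremum argument shows the image is closed under suprema of increasing sequences, so it is an ideal.

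I expect the main obstacle to be this downward-heredity step: one cannot in general push $c$ itself into $A\otimes\KK$, so the argument must first use R{\o}rdam's lemma to place each cut-down $(c-\varepsilon)_+$ (up to Cuntz equivalence) inside the hereditary subalgebra $A\otimes\KK$, and only afterwards recover $[c]$ as a supremum, crucially invoking that $\Cu(\iota)$ is an order-embedding preserving suprema to guarantee the limit again lies in the image.
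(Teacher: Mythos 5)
The paper omits this proof as ``standard'', and your argument is exactly the standard one it has in mind: heredity of Cuntz comparison in hereditary subalgebras (the same \cite[Proposition~2.18]{Thi17:CuLectureNotes} the paper invokes for \autoref{prp:unstableCuPropS}) gives the order-embedding, and R{\o}rdam's lemma places each cut-down $(c-\varepsilon)_+$ inside $A\otimes\KK$, after which downward heredity and closure under suprema follow because the order-embedding preserves and reflects suprema of increasing sequences. Your proof is correct; the only cosmetic remark is that heredity of $A\otimes\KK$ in $B\otimes\KK$ is seen most directly from the characterization that $D\subseteq B$ is hereditary if and only if $DBD\subseteq D$, applied to elementary tensors, rather than via the left-ideal description.
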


\begin{pgr}
\label{pgr:scaledProd}
Let $(S_j,\Sigma_j)_{j\in J}$ be a family of scaled \CuSgp{s}.
Let $(S,\Sigma)$ be their product in $\CatCuScaled$, which we also call their \emph{scaled product}. 

Let us describe $(S,\Sigma)$ more concretely.
The set-theoretic product $\prod_j\Sigma_j$ is a downward hereditary subset of $\CatPomProd_j S_j$.
Then
\[
\Sigma = \big\{ [(\vect{x}_t)_{t\leq 0}] \in \CatCuProd_{j\in J} S_j : \vect{x}_t \in \prod_{j \in J}\Sigma_j \text{ for every } t<0 \big\},
\]
and $S$ is the ideal of $\CatCuProd_{j} S_j$ generated by $\Sigma$.
Given $[(\vect{x}_t)_{t\leq 0}] \in \CatCuProd_{j} S_j$, we have $[(\vect{x}_t)_{t\leq 0}]\in S$ if and only if for every $t<0$ there exist $\sigma^{(1)},\ldots,\sigma^{(N)}\in\prod_j \Sigma_j$ such that $\vect{x}_t\llpw\sigma^{(1)}+\ldots+\sigma^{(N)}$.
\end{pgr}

\begin{thm}
\label{thm:CuProd}
The scaled Cuntz semigroup functor preserves products.

More concretely, let $(A_j)_{j\in J}$ be a family of \ca{s}, and let $(S,\Sigma)$ be the scaled product of $(\CatCuScaled(A_j))_{j}$ with $\Sigma\subseteq S\subseteq \prod_j\Cu(A_j)$ as in \autoref{pgr:scaledProd}.
Then, the map $\Phi\colon \Cu(\prod_{j}A_j)\to \CatCuProd_{j} \Cu(A_j)$ from \autoref{pgr:CuProductSetup} is an order-embedding with image $S$ that moreover identifies the scale of $\Cu(\prod_{j\in J}A_j)$ with $\Sigma$:
\[
\CatCuScaled(\prod_{j\in J} A_J) = (\Cu(\prod_{j\in J}A_j), \Sigma_{\prod_j A_j}) \cong \CatCuProd_{j\in J} (\Cu(A_j),\Sigma_{A_j}) = (S,\Sigma).
\]
\end{thm}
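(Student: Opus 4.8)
The plan is to reduce to the stable case already settled in \autoref{prp:CuProdStable}. Put $B_j:=A_j\otimes\KK$, so each $B_j$ is stable and $\Cu(B_j)\cong\Cu(A_j)$ canonically. Fixing a rank-one projection $p\in\KK$, the corner embeddings $a\mapsto a\otimes p$ assemble into a \stHom{} $\iota\colon\prod_jA_j\to\prod_jB_j$ that identifies $\prod_jA_j$ with the hereditary sub-\ca{} $\prod_j(A_j\otimes p)$. By \autoref{prp:HerSubalg}, $\Cu(\iota)$ is then an order-embedding whose image is an ideal of $\Cu(\prod_jB_j)$, while \autoref{prp:CuProdStable} gives an isomorphism $\Phi_B\colon\Cu(\prod_jB_j)\to\CatCuProd_j\Cu(B_j)$, which we identify with $\CatCuProd_j\Cu(A_j)$. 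Comparing compositions with the product projections $\sigma_i$ and invoking the universal property of the product shows $\Phi=\Phi_B\circ\Cu(\iota)$. Hence $\Phi$ is an order-embedding and its image is an \emph{ideal} of $\CatCuProd_j\Cu(A_j)$; this last fact is used repeatedly below.

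Next I match the image with $S$ and the scale with $\Sigma$, starting with the easy inclusions. For honest $\underline a=(a_j)_j\in(\prod_jA_j)_+$ the path values $\varphi_t(\underline a)=([(a_j+t)_+])_j$ lie in $\prod_j\Sigma_{A_j}$ for every $t<0$, since each component is the class of a positive element of $A_j$; thus $\Phi([\underline a])\in\Sigma$. Now take $w\in\Sigma_{\prod_jA_j}$ and write $w=\sup_nw_n$ with $w_n\ll w$ by \axiomO{2}. The definition of the natural scale furnishes $\underline a_n\in(\prod_jA_j)_+$ with $w_n\ll[\underline a_n]$, so $\Phi(w_n)\leq\Phi([\underline a_n])\in\Sigma$; as $\Sigma$ is downward hereditary and closed under suprema of increasing sequences, $\Phi(w)=\sup_n\Phi(w_n)\in\Sigma$. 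Therefore $\Phi(\Sigma_{\prod_jA_j})\subseteq\Sigma$, and since $\Sigma_{\prod_jA_j}$ generates $\Cu(\prod_jA_j)$ as an ideal, the image of $\Phi$ is contained in $\langle\Sigma\rangle=S$.

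The crux is the reverse inclusion $\Sigma\subseteq\Phi(\Cu(\prod_jA_j))$, and this is the step I expect to be the main obstacle. I exploit that the image is an ideal, hence downward hereditary and closed under increasing suprema: it suffices to show that for $z=[(\vect{x}_t)_t]\in\Sigma$ every $w\ll z$ lies \emph{below} the image of an honest element, for then $w$ is in the image, and by \axiomO{2} so is $z=\sup_nw_n$. Given $w\ll z$ with representing path $(\vect{w}_t)_t$, \cite[Lemma~3.16]{AntPerThi17arX:AbsBivariantCu} provides $s<0$ with $\vect{w}_0\llpw\vect{x}_s$. Since $x_{s,j}\in\Sigma_{A_j}$ and $w_{0,j}\ll x_{s,j}$, the scale yields $a_j\in(A_j)_+$ with $w_{0,j}\ll[a_j]$, and \autoref{prp:PhiDense} upgrades this to a positive contraction $b_j\in A_j$ with $w_{0,j}\ll[(b_j-\tfrac12)_+]$. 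The family $\underline b=(b_j)_j$ is contractive, hence lies in $(\prod_jA_j)_+$, and comparing the representing paths at level $-\tfrac12$ gives $w\leq\Phi([\underline b])$. The decisive point is that one needs only a dominating element and not a two-sided estimate: the overshoot $[b_j]=[a_j]$ is harmless, because downward heredity of the ideal absorbs it. This is exactly where the scale condition, \autoref{prp:PhiDense}, and the ideal structure must be combined just so. Together with the previous paragraph this yields $\Phi(\Cu(\prod_jA_j))=S$.

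Finally, for the remaining scale inclusion $\Sigma\subseteq\Phi(\Sigma_{\prod_jA_j})$, take $z\in\Sigma$ and write $z=\Phi(w)$ by injectivity of $\Phi$. Given $w'\ll w$, interpolate $w'\ll w''\ll w$; applying the construction above to $\Phi(w'')\ll z$ produces an honest $\underline b$ with $\Phi(w'')\leq\Phi([\underline b])$, whence $w''\leq[\underline b]$ and so $w'\ll[\underline b]$ with $\underline b\in(\prod_jA_j)_+$. Thus $w\in\Sigma_{\prod_jA_j}$. Altogether $\Phi$ is an order-embedding with image $S$ that restricts to a bijection $\Sigma_{\prod_jA_j}\to\Sigma$, which is precisely the asserted isomorphism $\CatCuScaled(\prod_jA_j)\cong(S,\Sigma)$.
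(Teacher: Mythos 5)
Your proof is correct and follows essentially the same route as the paper's: reduction to the stable case of \autoref{prp:CuProdStable} via the corner embedding $a\mapsto a\otimes p$ and \autoref{prp:HerSubalg}, followed by the density argument that combines \autoref{prp:PhiDense} and \cite[Lemma~3.16]{AntPerThi17arX:AbsBivariantCu} with the fact that the image of $\Phi$ is an ideal. The only organizational difference is that you first establish that the image equals $S$ using a one-sided bound $w\leq\Phi([\underline{b}])$ and then deduce the scale identification, whereas the paper identifies $\Phi(\Sigma_{\prod_j A_j})$ with $\Sigma$ directly by order-density; both work, and your explicit observation that downward heredity of the image renders a two-sided estimate unnecessary is accurate (and indeed slightly cleaner, since the upper bound $\Phi([\underline{b}])\leq y$ asserted in the paper is not needed for the conclusion).
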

\begin{proof}
Let $p\in\KK$ be a rank-one projection.
For each $j$, let $\iota_j\colon A_j\to A_j\otimes\KK$ be given by $\iota_j(a):=a\otimes p$.
This induces a natural map $\iota\colon \prod_j A_j \to \prod_j (A_j\otimes\KK)$, which identifies $\prod_j A_j$ with a hereditary sub-\ca{} of $\prod_j (A_j\otimes\KK)$.
By \autoref{prp:HerSubalg}, $\Cu(\iota)$ is an order-embedding whose image is an ideal.

Let $\Phi'$ be the natural map from \autoref{pgr:CuProductSetup} for the product $\prod_j(A_j\otimes\KK)$.
For each $j$, we have a \CuMor{} $\Cu(\iota_j)\colon\Cu(A_j)\to\Cu(A_j\otimes\KK)$, which induces a \CuMor{} $\alpha\colon\CatCuProd_j\Cu(A_j)\to\CatCuProd_j\Cu(A_j\otimes\KK)$.
Then
\[
\alpha\circ\Phi = \Phi'\circ\Cu(\iota),
\]
which means that the following diagram commutes:
\[
\xymatrix@R-10pt{
\Cu(\prod_{j\in J}A_j) \ar[d]^{\Cu(\iota)} \ar[r]^{\Phi}
& \CatCuProd_{j\in J} \Cu(A_j) \ar[d]^{\alpha}_{\cong} \\
\Cu(\prod_{j\in J}A_j\otimes\KK) \ar[r]_{\Phi'}^{\cong}
& \CatCuProd_{j\in J} \Cu(A_j\otimes\KK){~.}
}
\]
By \autoref{prp:CuProdStable}, $\Phi'$ is an order-isomorphism.
Further, each $\Cu(\iota_j)$ is an isomorphism, and consequently so is $\alpha$.
It follows that $\Phi$ is an order-embedding whose image is an ideal.
It remains to show that $\Phi$ maps $\Sigma_{\prod_j A_j}$ onto $\Sigma$.

First, let $x\in\Sigma_{\prod_j A_j}$.
By definition, for every $x'\ll x$, there exists a positive $\underline{a}\in \prod_j A_j$ with $x'\leq[\underline{a}]$.
Since $\Phi$ preserves suprema of increasing sequences, and since $\Sigma$ is downward hereditary, it is enough to verify that $\Phi([\underline{a}])\in\Sigma$ for every positive $\underline{a}\in\prod_j A_j$.
But this follows directly from the description of $\Phi$ in \autoref{pgr:CuProductSetup}.

Conversely, using that $\Phi$ is an order-embedding, and that $\Sigma_{\prod_j A_j}$ is closed under suprema of increasing sequences that moreover are preserved by $\Phi$, it is enough to verify that the image of $\Phi(\Sigma_{\prod_j A_j})$ is order-dense in $\Sigma$.
Thus, let $y',y\in\Sigma$ satisfy $y'\ll y$.
Choose an $\llpw$-increasing path $(\vect{y}_t)_{t\in(-\infty,0]}$ such that $y=[(\vect{y}_t)_{t\leq 0}]$, and choose $y_{t,j}\in\Cu(A_j)$ such that $\vect{y}_t=(y_{t,j})_j$.
Then choose $\varepsilon>0$ such that $y'\leq[(\vect{y}_{t-3\varepsilon})_{t\leq 0}]$.
By definition of $\Sigma$, we have $\vect{y}_t\in\CatPomProd_j\Sigma_{A_j}$ for each $t<0$.
In particular, $\vect{y}_{-\varepsilon}\in\CatPomProd_j\Sigma_{A_j}$.
For each $j\in J$, we have $y_{-2\varepsilon,j} \ll y_{-\varepsilon,j} \in \Sigma_{A_j}$.
By definition of $\Sigma_{A_j}$, we obtain $a_j\in (A_j)_+$ such that $y_{-2\varepsilon,j}\leq[a_j]$.

Applying \autoref{prp:PhiDense}, we obtain a positive, contractive $b_j\in A_j$ such that
\[
x_{-3\varepsilon,j} \ll [(b_j-\frac{1}{2})_+],\andSep
[b_j]=[a_j].
\]
Then $\underline{b}:=(b_j)_j$ is a positive element in $\prod_j A_j$.
Hence, the Cuntz class of $\underline{b}$ belongs to $\Sigma_{\prod_j A_j}$.
As in the proof of \autoref{prp:CuProdStable}, we obtain $y'\leq\Phi([\underline{b}])\leq y$.
\end{proof}

In the case of finitely many \ca{s}, the map $\Phi$ from \autoref{thm:CuProd} is easily seen to be surjective, whence we obtain the following (well-known) result.
We note that the product of finitely many \CuSgp{s} is simply their set-theoretic product, equipped with componentwise order and addition;
see \autoref{prp:FinProdInCu}.

\begin{cor}
\label{prp:CuProdFinite}
Let $(A_j)_{j\in J}$ be a \emph{finite} family of \ca{s}.
Then the map from \autoref{pgr:CuProductSetup} induces a natural isomorphism $\Cu(\prod_{j\in J}A_j)\cong\CatCuProd_{j\in J}\Cu(A_j)$.
\end{cor}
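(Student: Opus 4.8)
The plan is to invoke \autoref{thm:CuProd}, which already provides that $\Phi$ is an order-embedding (and in particular a \CuMor) whose image is the ideal $S$ of $\CatCuProd_{j\in J}\Cu(A_j)$ described in \autoref{pgr:scaledProd}. Since an order-embedding is injective, the only thing left to prove is that $\Phi$ is surjective; this is where finiteness of $J$ enters. By \autoref{prp:FinProdInCu}, for finite $J$ the product $\CatCuProd_{j\in J}\Cu(A_j)$ coincides with the set-theoretic product $\CatPomProd_{j}\Cu(A_j)$, equipped with componentwise order and addition, so each $\sigma_i$ is simply the $i$-th coordinate projection and an element of the product is determined by its coordinates.

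To establish surjectivity I would first record that, for a \emph{finite} index set, there is a natural isomorphism $(\prod_{j\in J}A_j)\otimes\KK\cong\prod_{j\in J}(A_j\otimes\KK)$, under which a positive element is a tuple $(c_j)_{j}$ with $c_j\in(A_j\otimes\KK)_+$ and the map $\pi_i\otimes\mathrm{id}_\KK$ becomes the $i$-th coordinate projection. Given an arbitrary element $(x_j)_{j\in J}$ of $\CatCuProd_{j}\Cu(A_j)$, choose representatives $c_j\in(A_j\otimes\KK)_+$ with $[c_j]=x_j$ and set $\underline{c}:=(c_j)_j$, a positive element of $(\prod_jA_j)\otimes\KK$. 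Then, for each $i\in J$, the defining relation $\tilde{\pi}_i=\sigma_i\circ\Phi$ from \autoref{pgr:CuProductSetup} yields
\[
\sigma_i\big( \Phi([\underline{c}]) \big)
= \tilde{\pi}_i([\underline{c}])
= \Cu(\pi_i)([\underline{c}])
= [(\pi_i\otimes\mathrm{id}_\KK)(\underline{c})]
= [c_i]
= x_i.
\]
Since the coordinate maps jointly determine an element of the finite product, this gives $\Phi([\underline{c}])=(x_j)_j$, so $\Phi$ is surjective.

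It then follows that $\Phi$ is a bijective order-embedding, hence an order-isomorphism; as the way-below relation and suprema of increasing sequences are purely order-theoretic, the inverse is automatically a \CuMor, and $\Phi$ is an isomorphism in $\CatCu$. Its naturality is inherited from the universal-property construction of $\Phi$ in \autoref{pgr:CuProductSetup}. I do not anticipate a genuine obstacle here: the whole point is that finiteness makes the tensor product with $\KK$, the Cuntz comparison, and the \CuSgp{} product all componentwise, so that a tuple of representatives lifts to a single positive element of the product algebra. The one step meriting care is the identification $(\prod_jA_j)\otimes\KK\cong\prod_j(A_j\otimes\KK)$, which is precisely what fails for infinite $J$ and is responsible for the extra machinery (the $\tau$-construction and property~(S)) needed in the general \autoref{thm:CuProd}.
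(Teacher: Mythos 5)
Your proposal is correct and follows essentially the same route as the paper: the text preceding the corollary simply invokes \autoref{thm:CuProd} for the order-embedding property and asserts that surjectivity is ``easily seen'' in the finite case, which is exactly the structure of your argument. The details you supply --- identifying $(\prod_jA_j)\otimes\KK$ with $\prod_j(A_j\otimes\KK)$ for finite $J$, lifting a tuple of Cuntz classes to a single positive element, and using \autoref{prp:FinProdInCu} to conclude that coordinates determine elements of the finite product --- are the natural way to fill in that omitted step.
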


\begin{exa}
\label{exa:CuProd}
Set $A_j:=\CC$ for each $j\in\NN$.
Set $A:=\prod_j A_j \cong \ell^\infty(\NN)$.
We have $\Cu(A_j)\cong\NNbar$ for each $j$, and the product $\CatCuProd_j\NNbar$ is defined as the equivalence classes of $\llpw$-increasing paths $(-\infty,0]\to \CatPomProd_j\NNbar$.
It follows that $\CatCuProd_j\NNbar$ may be identified with equivalence classes of componentwise increasing paths $(-\infty,0)\to \CatPomProd_j\NN$.
In particular, compact elements in $\CatCuProd_j\NNbar$ naturally corresponds to functions $\NN\to\NN$. 
(In \autoref{cor:cpcteltsinprods} we will generalize this to give a description of compact elements in arbitrary products.)

Let us see that the natural order-embedding $\Phi\colon \Cu(A) \to \CatCuProd_{j}\NNbar$ is not surjective. Recall that an element $x$ in a $\CatCu$-semigroup $S$ is said to be compact if $x\ll x$.
Note that $\Phi$ maps the Cuntz class of the unit of $A$ to the compact element in $\CatCuProd_{j}\NNbar$ that correspond to the function $f\colon \NN\to\NN$ with $f(j)=1$ for all $j$. 
Consider the compact element $x$ in $\CatCuProd_{j}\NNbar$ that corresponds to the function $g\colon\NN\to\NN$ with $g(j)=j$ for all $j$.
Since $g\nleq n f$ for every $n\in\NN$, it follows that $x\nleq \infty \Phi([1_A])$.
In particular, $\Phi([1_A])$ is not full, and $\Cu(\prod_j A_j)$ is isomorphic to a proper ideal of $\prod_j\Cu(A_j)$.
\end{exa}

It is natural to ask to what extent the Cuntz semigroup functor preserves arbitrary limits.
We are thankful to Eusebio Gardella for pointing out this example.

\begin{exa}
\label{exa:suzuki}
Let $\Gamma$ be a countable, discrete, exact group satisfying the approximation property.
(For instance, any countable, discrete, amenable group.)
In \cite[Theorem A]{Suz17GpAlgDecrIntersection}, Suzuki constructs a decreasing sequence of \ca{s} $(A_n)_{n\in\NN}$, each of which is isomorphic to $\mathcal{O}_2$, whose intersection is isomorphic to the reduced group \ca{} $C_r^*(\Gamma)$.

It follows that $C_r^*(\Gamma)\cong\varprojlim A_n$.
For each $n\in\NN$, we have $\Cu(A_n)\cong\{0,\infty\}$.
Since the maps in the inverse system given by the algebras $A_n$ are defined by the inclusions $A_n\subseteq A_{n-1}$, at the level of the Cuntz semigroup they induce the identity maps.
It follows that $\CatCuinvLim\limits\Cu(A_n)\cong\{0,\infty\}$.
However, $\Cu(C_r^*(\Gamma))\not\cong\{0,\infty\}$ since, for example, $C_r^*(\Gamma)$ has a normalized trace.
\end{exa}

\begin{rmk}
One can consider a product over an infinite index set $J$ as the inverse limit of the subproducts of finite subsets of $J$.
By \autoref{prp:CuProdStable}, the Cuntz semigroup functor preserves products of stable \ca{s}, and therefore it \emph{does} preserve this particular inverse limit.
One difference between \autoref{exa:suzuki} and a product of \ca{s} is that, in the latter, all connecting maps are surjective.
Therefore, the following question is pertinent:
\end{rmk}

\begin{qst}
\label{qst:invLimSurj}
Does the Cuntz semigroup functor preserve inverse limits with surjective connecting morphisms?
That is, do we have
\[
\Cu(\varprojlim A_n)\cong \CatCuinvLim\Cu(A_n)
\]
for every (sequential) inverse system $(A_n)_{n\in\NN}$ of (stable) \ca{s} where each map $A_{n+1}\to A_n$ is surjective?
\end{qst}

\section{Cuntz semigroups of direct sums and sequence algebras}
\label{sec:dirsum}

In this section we show that the Cuntz semigroup functor preserves direct sums.
It follows that the scaled Cuntz semigroup functor preserves sequence algebras;
see \autoref{prp:CuSequenceAlg}.

In $\CatCu$, product and coproduct of finite families are naturally isomorphic;
see Propositions~\ref{prp:FinProdInCu} and~\ref{prp:FinCoprodInCu}.
It follows that for any family $(S_j)_{j\in J}$ of \CuSgp{s}, the direct sum $\bigoplus_{j\in J}S_j$ is naturally isomorphic to the coproduct $\coprod_{j\in J} S_j$;
see \autoref{pgr:WfinCoproduct}.
The analogous statement holds in $\CatCuScaled$.
In the category of \ca{s}, it is not true that finite products and coproducts coincide (see \autoref{ex:coproductca}).
Nevertheless, there is a zero object and the categorical construction of the direct sum yields the usual definition of direct sums for \ca{s}.

\begin{thm}
\label{thm:coproduct}
Let $(A_j)_{j\in J}$ be a family of \ca{s}.
Then there is are natural isomorphisms
\[
\Cu\big( \bigoplus_{j\in J}A_j \big) \cong \CatCuCoprod_{j\in J}\Cu(A_j), \andSep
\CatCuScaled\big( \bigoplus_{j\in J}A_j \big) \cong \CatCuCoprod_{j\in J}\CatCuScaled(A_j).
\]
\end{thm}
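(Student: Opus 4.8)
The plan is to exhibit both sides as inductive limits over the upward-directed poset of finite subsets $F\Subset J$, and then to invoke that the (scaled) Cuntz semigroup functor preserves such limits and finite products.

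First I would record the underlying $C^*$-algebraic fact that the direct sum is an inductive limit of its finite subproducts. For $F\subseteq G\Subset J$, let $\prod_{j\in F}A_j\to\prod_{j\in G}A_j$ be the \stHom{} that extends a string by zero on $G\setminus F$; its image is an ideal, and the algebraic union of all these images is the dense $*$-subalgebra of finitely supported strings of $\prod_{j}A_j$, whose closure is precisely $\bigoplus_{j\in J}A_j$. Thus, as also follows from the general discussion in \autoref{pgr:WfinCoproduct} together with the discussion preceding the theorem,
\[
\bigoplus_{j\in J}A_j\cong\varinjlim_{F\Subset J}\prod_{j\in F}A_j
\]
in the category of \ca{s}, with the extend-by-zero maps as connecting \stHom{s}.

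Next I would apply the Cuntz semigroup functor. Since $\Cu(\freeVar)$ preserves inductive limits over directed sets (\cite[Corollary~3.2.9]{AntPerThi18TensorProdCu}) and finite products (\autoref{prp:CuProdFinite}), we obtain
\[
\Cu\big(\bigoplus_{j\in J}A_j\big)\cong\varinjlim_{F\Subset J}\Cu\big(\prod_{j\in F}A_j\big)\cong\varinjlim_{F\Subset J}\CatCuProd_{j\in F}\Cu(A_j).
\]
By Propositions~\ref{prp:FinProdInCu} and~\ref{prp:FinCoprodInCu}, finite products and finite coproducts coincide in $\CatCu$, so each term equals $\CatCuCoprod_{j\in F}\Cu(A_j)$; and, as recalled in \autoref{pgr:WfinCoproduct} (see also \autoref{cor:Cucoproductpushouts}), the coproduct over all of $J$ is exactly the inductive limit of these finite coproducts along the extend-by-zero morphisms. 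Hence the right-hand side is $\CatCuCoprod_{j\in J}\Cu(A_j)$, which yields the first isomorphism. The scaled statement follows the identical pattern: $\CatCuScaled(\freeVar)$ preserves inductive limits (\autoref{prp:CuScaledIndLim}) and products (\autoref{thm:CuProd}), finite products and coproducts again coincide in $\CatCuScaled$, and $\CatCuScaled$ has the relevant colimits (\autoref{prp:CuScaledComplete}), so
\[
\CatCuScaled\big(\bigoplus_{j\in J}A_j\big)\cong\varinjlim_{F\Subset J}\CatCuProd_{j\in F}\CatCuScaled(A_j)\cong\CatCuCoprod_{j\in J}\CatCuScaled(A_j).
\]

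The step requiring the most care --- and the main, if routine, obstacle --- is verifying that the finite-case isomorphisms of \autoref{prp:CuProdFinite} and \autoref{thm:CuProd} are \emph{natural} with respect to the connecting maps, so that the two inductive systems genuinely agree and the displayed chains are chains of natural isomorphisms of inductive systems. Concretely, one must check that under the identification $\Cu(\prod_{j\in F}A_j)\cong\CatCuProd_{j\in F}\Cu(A_j)$ the \CuMor{} induced by the extend-by-zero \stHom{} corresponds to the canonical extend-by-zero morphism of finite products in $\CatCu$ (and likewise in $\CatCuScaled$). This is immediate from the explicit descriptions: the induced \CuMor{} sends $([a_j])_{j\in F}$ to the string that equals $[a_j]$ on $F$ and $0$ on $G\setminus F$, which is exactly the coproduct inclusion. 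With this compatibility established, passing to the colimit preserves the isomorphism and the theorem follows.
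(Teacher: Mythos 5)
Your proposal is correct and follows essentially the same route as the paper: the paper's (much terser) proof likewise writes the direct sum as the inductive limit of finite products, invokes preservation of inductive limits (\cite[Corollary~3.2.9]{AntPerThi18TensorProdCu}, \autoref{prp:CuScaledIndLim}) and of finite products (\autoref{prp:CuProdFinite}, \autoref{thm:CuProd}), and uses that finite products and coproducts agree in $\CatCu$. Your explicit verification that the finite-stage isomorphisms intertwine the extend-by-zero connecting maps is a detail the paper leaves implicit, but it is the same argument.
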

\begin{proof}
We use that direct sums are defined as inductive limits of finite products.
Using that the Cuntz semigroup functor preserves inductive limits (\cite[Corollary~3.2.9, p.29]{AntPerThi18TensorProdCu}) and finite products (\autoref{prp:CuProdFinite}), we obtain the first isomorphism.
The second isomorphism follows analogously, using \autoref{prp:CuScaledIndLim}.
\end{proof}

\begin{exa}
\label{ex:coproductca}
The Cuntz semigroup functor does not preserve coproducts.
Given two \ca{s} $A$ and $B$, their coproduct in the category of \ca{s} is the (full) free product $A\ast B$.
Consider for example $A=B=\CC$.
It is known that
\[
\CC\ast\CC \cong \big\{ f\in C([0,1],M_2(\CC)) :
f(0) \text{ diagonal}, 
f(1)= \begin{psmallmatrix}
x & 0 \\ 0 & 0 \\
\end{psmallmatrix},
\text{ some } x\in\CC \big\},
\]
see \cite[Example~IV.1.4.2, p.330]{Bla06OpAlgs}.
We have $\Cu(\CC)\cong\NNbar$, and the coproduct of $\NNbar$ and $\NNbar$ in the category $\CatCu$ is simply $\NNbar\times\NNbar$ with coordinatewise order and addition.
On the other hand, it is easy to see that $\Cu(\CC\ast\CC) \ncong \NNbar\times\NNbar$.
(Using \cite[Corollary~3.5]{AntPerSan11PullbacksCu}, one can in fact explicitly compute $\Cu(\CC\ast\CC)$.)
\end{exa}

\begin{ntn}
\label{ntn:c0}
Let $(S_j)_{j\in J}$ be a family of \CuSgp{s}.
Given $\vect{x}=(x_j)_{j\in J}$ in $\CatPomProd_{j\in J}S_j$, we define its \emph{support} as $\supp(\vect{x}) := \{ j\in J : {x}_j\neq 0 \}$.
Set 
\[
\cc_0\big( (S_j)_{j} \big) :=\big\{ [(\vect{x}_t)_{t\leq 0}] \in \CatCuProd_jS_j :  \supp(\vect{x}_t) \text{ is finite, for every } t<0 \big\}.
\]
Given a \CuSgp{} $S$, we write $\cc_0(S)$ for $\cc_0\big( (S)_{n\in\NN} \big)$.
\end{ntn}

\begin{prp}
\label{lma:coproductinproduct}
Let $(S_j)_{j\in J}$ be a family of \CuSgp{s}.
Then $\cc_0\big( (S_j)_{j} \big)$ is an ideal in $\prod_{j}S_j$, and the canonical map $\CatCuCoprod_{j}S_j\to \prod_{j}S_j$ is an order-embedding whose image is $\cc_0\big( (S_j)_{j} \big)$.
\end{prp}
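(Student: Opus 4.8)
The plan is to realise $\CatCuCoprod_{j}S_j$ as the inductive limit $\varinjlim_{F\Subset J}\CatCuProd_{j\in F}S_j$ of the finite subproducts (see \autoref{pgr:WfinCoproduct}), where by \autoref{prp:FinProdInCu} each $\CatCuProd_{j\in F}S_j$ is simply the set-theoretic product $\CatPomProd_{j\in F}S_j$, and both the connecting maps and the legs of the cone are the zero-padding maps $\iota_F\colon\CatPomProd_{j\in F}S_j\to\CatPomProd_{j\in J}S_j$. The canonical map $\Psi\colon\CatCuCoprod_{j}S_j\to\prod_{j}S_j$ is then the morphism induced on the inductive limit by $\Psi_F:=\tau(\iota_F)$, using that $\prod_j S_j=\tau(\CatPomProd_j S_j,\llpw)$ by \autoref{cor:Cuproductinverselimitspullbacks}. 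Two of the three ideal conditions are immediate: $\cc_0\big((S_j)_j\big)$ is a submonoid since $\supp(\vect{x}_t+\vect{y}_t)\subseteq\supp(\vect{x}_t)\cup\supp(\vect{y}_t)$, and it is downward hereditary since $\vect{y}_t\llpw\vect{x}_{t'}$ forces $\supp(\vect{y}_t)\subseteq\supp(\vect{x}_{t'})$ (if a coordinate of $\vect{x}_{t'}$ vanishes, the corresponding way-below coordinate of $\vect{y}_t$ vanishes too). The same observation shows that the defining condition of $\cc_0$ is independent of the chosen representing path. The remaining closure under suprema of increasing sequences I will obtain for free at the end.

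Next I would show that $\Psi$ is an order-embedding; as it is a \CuMor{}, only order-reflection needs argument. For a \emph{finite} $F$ this is elementary: $\iota_F$ reflects $\llpw$ (off $F$ all coordinates vanish on both sides, on $F$ it is the identity), hence $\Psi_F=\tau(\iota_F)$ is an order-embedding with image $\{[(\vect{z}_t)_{t\leq0}]:\supp(\vect{z}_t)\subseteq F\text{ for all }t<0\}$. To pass to the inductive limit I would invoke the concrete description of inductive limits in $\CatCu$ from \cite{AntPerThi18TensorProdCu}: every $\xi\in\CatCuCoprod_j S_j$ is a supremum of a $\ll$-increasing sequence $\kappa_{\infty,F_n}(x_n)$ with $x_n\in\CatPomProd_{j\in F_n}S_j$, and $\Psi$ preserves such suprema. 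Given $\Psi(\xi)\leq\Psi(\xi')$, each $\Psi_{F_n}(x_n)\ll\Psi(\xi')$ lies below a single $\Psi_{G_m}(y_m)$ occurring in an analogous expression for $\xi'$; enlarging to $H:=F_n\cup G_m$ and applying the finite-case embedding $\Psi_H$ yields $\kappa_{\infty,F_n}(x_n)\leq\kappa_{\infty,G_m}(y_m)\leq\xi'$, and passing to suprema gives $\xi\leq\xi'$.

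The core of the argument is to identify $\operatorname{im}\Psi$ with $\cc_0\big((S_j)_j\big)$. For $\cc_0\subseteq\operatorname{im}\Psi$, given $[g]\in\cc_0$ I would fix $t_1<t_2<\cdots\to0$, set $F_n:=\supp(g(t_n))$ (finite and increasing) and let $x_n\in\CatPomProd_{j\in F_n}S_j$ be the restriction of $g(t_n)$; then $g(t_n)\llpw g(t_{n+1})$ makes $(\kappa_{\infty,F_n}(x_n))_n$ a $\ll$-increasing sequence whose supremum $\xi$ satisfies $\Psi(\xi)=[g]$, since $\Psi_{F_n}(x_n)$ is the class of a path with endpoint $g(t_n)$ and these exhaust $g$ as $t_n\to0$ (using the way-below characterisation $[f]\ll[h]\Leftrightarrow f(0)\prec h(t)$ for some $t<0$ from \cite{AntPerThi17arX:AbsBivariantCu}). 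The reverse inclusion $\operatorname{im}\Psi\subseteq\cc_0$ is where the real work lies, and is the step I expect to be the main obstacle: for $\xi=\sup_n\kappa_{\infty,F_n}(x_n)$ one has $\Psi(\xi)=\sup_n[h^{(n)}]$ with each $h^{(n)}$ supported in $F_n$, and one must exhibit a representative of this supremum whose value at every level $t<0$ still has finite support. Using that the sequence is $\ll$-increasing, together with the way-below characterisation, I would build the supremum as a single ``staircase'' path, glued from suitably reparametrised copies of the $h^{(n)}$ in a left-continuous, $\prec$-increasing manner; the key point is that each level $t<0$ is dominated by $h^{(N)}(0)$ for a \emph{finite} stage $N=N(t)$, so its support lies in $F_N$ and is finite. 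The delicate part is arranging the gluing so that left-continuity and strict increase hold at the junctions while the supports stay controlled.

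Finally, the ideal property is completed without further computation: by the previous two steps $\Psi$ is an order-embedding of the \CuSgp{} $\CatCuCoprod_j S_j$ onto $\cc_0\big((S_j)_j\big)$, so $\cc_0$ inherits closure under suprema of increasing sequences from axiom~\axiomO{1} in $\CatCuCoprod_j S_j$ (an increasing sequence in $\cc_0$ pulls back along the order-embedding to an increasing sequence, whose supremum maps to the supremum in $\prod_j S_j$). Combined with the submonoid and downward-hereditary properties established at the outset, this shows that $\cc_0\big((S_j)_j\big)$ is an ideal, and the identification of $\operatorname{im}\Psi$ with it finishes the proof.
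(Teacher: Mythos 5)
Your proposal is correct and follows essentially the same route as the paper: realise $\CatCuCoprod_j S_j$ as $\varinjlim_{F\Subset J}\CatPomProd_{j\in F}S_j$ with zero-padding maps, check that the induced map into $\prod_j S_j$ is an order-embedding, and prove surjectivity onto $\cc_0$ by cutting down a path $x\in\cc_0$ to $[(\vect{x}_{t-1/n})_{t\le 0}]$, which lives over the finite set $F_n=\supp(\vect{x}_{-1/n})$ and increases to $x$. The one place where you diverge is the inclusion $\operatorname{im}\Psi\subseteq\cc_0$, which you single out as the main obstacle and propose to handle by gluing a ``staircase'' representative of the supremum. This is more work than is needed. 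The paper first verifies directly that $\cc_0$ is an ideal (submonoid, downward hereditary, and closed under suprema of increasing sequences); since every element of $\operatorname{im}\Psi$ is a supremum of an increasing sequence lying in $\bigcup_F\operatorname{im}\Psi_F\subseteq\cc_0$, the inclusion is then automatic. Even on your own ordering of the steps no gluing is required: the observation you already make, that for any $t<0$ the cut-down $[(\vect{z}_{s+t/2})_{s\le 0}]$ is way below the supremum and hence dominated by a single finite stage $[h^{(N)}]$, gives $\vect{z}_t\llpw h^{(N)}(s')$ for some $s'<0$, and since $\llpw$ forces supports to be contained one in the other, $\supp(\vect{z}_t)\subseteq F_N$ is finite for \emph{any} representative path. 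So the ``delicate part'' you flag dissolves, and your deferral of the suprema-closure of $\cc_0$ to the end (deducing it from \axiomO{1} in the coproduct via the order-embedding) is a legitimate, mildly different bookkeeping of the same argument, with no circularity since the direct domination argument does not use that closure.
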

\begin{proof}
For each $F\Subset J$, we let $\varphi_{J,F}\colon\prod_{j\in F}S_j\to \prod_{j\in J} S_j$ be the natural morphism.
Since these maps are compatible with the inductive system defining $\CatCuCoprod_{j}S_j$, they induce a natural morphism $\varphi\colon \CatCuCoprod_{j}S_j = \varinjlim_{F\Subset J}\prod_{j\in F} S_j\to \prod_{j\in J}S_j$. 
Using that each $\varphi_{J,F}$ is an order-embedding, it follows that $\varphi$ is an order-embedding as well.
To simplify, we write $\cc_0$ for $\cc_0\big( (S_j)_{j} \big)$.
It is easy to verify that $\cc_0$ is an ideal in $\CatCuProd_jS_j$.
Since the image of each $\varphi_{J,F}$ is contained in $\cc_0$, so is the image of $\varphi$.

To show that the image of $\varphi$ is $\cc_0$, let $x\in\cc_0$.
Choose $\vect{x}_t=(x_{t,j})_j\in\CatPomProd_j S_j$ such that $x=[(\vect{x}_t)_{t\leq 0}]$.
For $n\geq 1$, set $F_n:=\supp(\vect{x}_{-1/n})$.
Since $x\in\cc_0$, we obtain that $F_n$ is finite.
For $t\leq 0$ set
\[
\vect{x}^{(n)}_t := ( x_{t-\tfrac{1}{n},j} )_{j\in F_n} \in \CatPomProd_{j\in F_n} S_j.
\]
Then $(\vect{x}^{(n)}_t)_{t\leq 0}$ is a path, and we let $x^{(n)}$ denote its class in $\CatCuProd_{j\in F_n} S_j$.
By construction, we have $\varphi_{J,F}(x^{(n)}) = [(\vect{x}_{t-1/n})_{t\leq 0}]$.
The images of $x^{(n)}$ in $\CatCuCoprod_{j}S_j$ form an increasing sequence, and we let $y$ denote their supremum.
Then
\[
\varphi(y) = \sup_n \varphi_{J,F}(x^{(n)}) 
= \sup_n [(\vect{x}_{t-1/n})_{t\leq 0}]
= [(\vect{x}_t)_{t\leq 0}]
= x. \qedhere
\]
\end{proof}

The following result is straightforward to prove.
We omit the details.

\begin{prp}
\label{prp:coproductScaled}
Let $(S_j,\Sigma_j)_{j\in J}$ be a family of scaled \CuSgp{s}, and let $(S,\Sigma)$ be their scaled product with $\Sigma\subseteq S\subseteq \prod_j S_j$ as in \autoref{pgr:scaledProd}. 
Then $\cc_0\big( (S_j)_{j} \big)\subseteq S$, and we have a canonical isomorphism:
\[
\bigoplus_{j\in J} (S_j,\Sigma_j) 
\cong \left( \cc_0\big( (S_j)_{j} \big), \cc_0\big( (S_j)_{j} \big)\cap \Sigma \right).
\]
\end{prp}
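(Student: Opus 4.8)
The plan is to identify the coproduct (equivalently, direct sum) of the family $(S_j,\Sigma_j)_{j\in J}$ in $\CatCuScaled$ with the subobject of the scaled product carved out by the ideal $\cc_0\big((S_j)_j\big)$. By \autoref{lma:coproductinproduct}, the canonical map $\varphi\colon\CatCuCoprod_j S_j\to\CatCuProd_j S_j$ is an order-embedding whose image is exactly $\cc_0\big((S_j)_j\big)$, and this ideal sits inside $S$ (the ideal generated by $\Sigma$) since each finite-support path is easily dominated by finitely many elements of $\prod_j\Sigma_j$. So as \CuSgp{s} the underlying semigroup of $\bigoplus_j(S_j,\Sigma_j)$ is already $\cc_0\big((S_j)_j\big)$. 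The only content is therefore to verify that, under this identification, the scale of the coproduct coincides with $\cc_0\big((S_j)_j\big)\cap\Sigma$.

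First I would recall, from the cocompleteness proof of $\CatCuScaled$ (\autoref{prp:CuScaledComplete}), how the scale on a colimit is built: it is the smallest downward-hereditary, supremum-closed subset of the colimit \CuSgp{} containing $\bigcup_j\sigma_j(\Sigma_j)$, where $\sigma_j\colon(S_j,\Sigma_j)\to\bigoplus_k(S_k,\Sigma_k)$ are the structure morphisms. The strategy is then a two-sided containment. For the inclusion of the coproduct scale into $\cc_0\big((S_j)_j\big)\cap\Sigma$, it suffices to check that each generator $\sigma_j(y)$ with $y\in\Sigma_j$ lands in $\Sigma$: but $\sigma_j(y)$ is represented by a path supported on $\{j\}$ with all entries in $\Sigma_j$, hence lies in $\Sigma$ by the description in \autoref{pgr:scaledProd}; since $\cc_0\big((S_j)_j\big)\cap\Sigma$ is downward hereditary and closed under suprema of increasing sequences, it contains the generated scale.

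For the reverse inclusion I would take $x\in\cc_0\big((S_j)_j\big)\cap\Sigma$ and show $x$ lies in the coproduct scale. Here the finite-support approximation from the proof of \autoref{lma:coproductinproduct} does the work: writing $x=[(\vect{x}_t)_{t\leq 0}]$ with each $\vect{x}_t\in\prod_j\Sigma_j$ of finite support, one produces the truncated classes $x^{(n)}$ over the finite sets $F_n=\supp(\vect{x}_{-1/n})$ with $x=\sup_n\varphi(x^{(n)})$. Each $x^{(n)}$ is a finite sum $\sum_{j\in F_n}\sigma_j(\cdot)$ of elements of the generating set $\bigcup_j\sigma_j(\Sigma_j)$—because its entries lie in the respective $\Sigma_j$—so each $x^{(n)}$ belongs to the coproduct scale, and since that scale is closed under suprema of increasing sequences, so does their supremum $x$. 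The two inclusions together give the equality of scales, and hence the claimed isomorphism $\bigoplus_{j}(S_j,\Sigma_j)\cong\big(\cc_0\big((S_j)_j\big),\cc_0\big((S_j)_j\big)\cap\Sigma\big)$.

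The main obstacle I anticipate is purely bookkeeping rather than conceptual: one must be careful that the finite-support truncations $x^{(n)}$ are genuinely expressible through the structure maps $\sigma_j$ (equivalently, through the finite coproduct-to-product comparison of \autoref{prp:FinCoprodInCu}) and that passing to the supremum stays inside the generated scale. Given that \autoref{lma:coproductinproduct} already packages the delicate path-truncation argument, the remaining verifications reduce to the definitions of the two scales, which is why the details may reasonably be omitted.
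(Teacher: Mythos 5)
Your identification of the underlying \CuSgp{} of the direct sum with $\cc_0\big((S_j)_j\big)$ via \autoref{lma:coproductinproduct}, and your verification that $\cc_0\big((S_j)_j\big)\subseteq S$, are fine. The gap is in the reverse inclusion for the scales. You describe the scale of $\bigoplus_j(S_j,\Sigma_j)$ as the smallest downward-hereditary, sup-closed subset containing $\bigcup_j\sigma_j(\Sigma_j)$, and you then place the truncations $x^{(n)}=\sum_{j\in F_n}\sigma_j(y_j)$ (with $y_j\in\Sigma_j$) inside it on the grounds that they are finite sums of generators. But scales are not required to be submonoids, and the downward-hereditary, sup-closed hull of a set is not closed under addition. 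Concretely, take $J=\{1,2\}$ and $(S_j,\Sigma_j)=(\NNbar,\{0,1\})=\CatCuScaled(\CC)$: the hull of $\sigma_1(\Sigma_1)\cup\sigma_2(\Sigma_2)=(\{0,1\}\times\{0\})\cup(\{0\}\times\{0,1\})$ is that three-element set itself and does not contain $(1,1)=\sigma_1(1)+\sigma_2(1)$, whereas $\cc_0\big((S_j)_j\big)\cap\Sigma=\{0,1\}^2$, which is the correct answer since $\Sigma_{\CC\oplus\CC}=\{0,1\}^2$. So the set you work with is in general \emph{strictly smaller} than $\cc_0\big((S_j)_j\big)\cap\Sigma$, and the inclusion ``$\supseteq$'' fails for it; the parenthetical ``coproduct (equivalently, direct sum)'' is exactly where the trouble hides, because at the level of scales the colimit-coproduct of \autoref{prp:CuScaledComplete} does not carry the same scale as the direct sum.

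The repair is to use the definition of $\bigoplus_{j\in J}(S_j,\Sigma_j)$ from \autoref{pgr:WfinCoproduct} as $\varinjlim_{F\Subset J}\prod_{j\in F}(S_j,\Sigma_j)$, where the finite product carries the scale $\prod_{j\in F}\Sigma_j$. By the colimit construction in \autoref{prp:CuScaledComplete}, the scale of this inductive limit is the downward-hereditary, sup-closed hull of $\bigcup_{F\Subset J}\sigma_F\big(\prod_{j\in F}\Sigma_j\big)$. With this generating set your argument goes through: given $x\in\cc_0\big((S_j)_j\big)\cap\Sigma$, each truncation $x^{(n)}$ from the proof of \autoref{lma:coproductinproduct} lies in $\sigma_{F_n}\big(\prod_{j\in F_n}\Sigma_j\big)$ \emph{directly} (its entries $x_{-1/n,j}$ belong to $\Sigma_j$ because $x$ admits a representative path with values in $\prod_j\Sigma_j$ and each $\Sigma_j$ is downward hereditary), so no summation of generators is needed, and $x=\sup_n x^{(n)}$ remains in the scale because scales are closed under suprema of increasing sequences. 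The forward inclusion is unaffected, since $\cc_0\big((S_j)_j\big)\cap\Sigma$ is downward hereditary, sup-closed, and contains the images of all the finite-product scales.
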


Recall that the \emph{sequence algebra} of a \ca{} $A$ is defined as 
\[
A_\infty := \ell^\infty (A)/c_0(A)
\cong \frac{\prod_{n\in \NN} A}{\bigoplus_{n\in \NN} A}.
\]

By combining Theorems~\ref{thm:coproduct} and~\ref{thm:CuProd}, \autoref{lma:coproductinproduct}, \cite[Theorem~5]{CiuRobSan10CuIdealsQuot} we can now describe the Cuntz semigroup of the sequence algebra:

\begin{cor}
\label{prp:CuSequenceAlg}
Let $A$ be a \ca.
Let $(S,\Sigma)$ be the scaled product of the family $(\Cu(A))_{n\in\NN}$, with $\Sigma\subseteq S\subseteq \prod_n \Cu(A)$ as in \autoref{pgr:scaledProd}.
Then 
\[
\Cu_{sc}(A_\infty)
\cong S / \cc_0(\Cu(A)).
\]
\end{cor}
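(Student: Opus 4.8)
The plan is to present $A_\infty$ as the quotient of $\prod_{n}A$ by the ideal $\bigoplus_n A$, and then to transfer the resulting short exact sequence to the level of (scaled) Cuntz semigroups using the three inputs anticipated in the statement: \autoref{thm:CuProd} computes the product, \autoref{thm:coproduct} together with \autoref{lma:coproductinproduct} computes the ideal, and \cite[Theorem~5]{CiuRobSan10CuIdealsQuot} computes the quotient.

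First I would note that, identifying $\ell^\infty(A)$ with $\prod_n A$ and $c_0(A)$ with $\bigoplus_n A$, the algebra $c_0(A)$ is a closed, two-sided ideal of $\ell^\infty(A)$ with quotient $A_\infty$. By \cite[Theorem~5]{CiuRobSan10CuIdealsQuot}, the inclusion embeds $\Cu(c_0(A))$ as an ideal of $\Cu(\prod_n A)$, and the quotient map induces a natural isomorphism $\Cu(A_\infty)\cong\Cu(\prod_n A)/\Cu(c_0(A))$. I would then identify the two outer terms: by \autoref{thm:CuProd} the map $\Phi$ is a scaled order-isomorphism $\CatCuScaled(\prod_n A)\cong(S,\Sigma)$, so in particular $\Cu(\prod_n A)\cong S$; and by \autoref{thm:coproduct} together with \autoref{lma:coproductinproduct} we have $\Cu(\bigoplus_n A)\cong\CatCuCoprod_n\Cu(A)$, whose canonical image inside $\CatCuProd_n\Cu(A)$ is the ideal $\cc_0(\Cu(A))$.

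The crucial point, which I expect to be the main obstacle, is to match these identifications: I must show that $\Phi$ carries the ideal $\Cu(c_0(A))\subseteq\Cu(\prod_n A)$ precisely onto $\cc_0(\Cu(A))\subseteq S$. For this I would use compatibility with the coordinate projections. Writing $\iota\colon\bigoplus_n A\to\prod_n A$ for the inclusion, we have that $\pi_i\circ\iota$ equals the $i$-th coordinate projection of $\bigoplus_n A$ for every $i$. Applying the functor $\Cu$ and composing with the $i$-th product projection $\sigma_i$ of $\CatCuProd_n\Cu(A)$, both $\Phi\circ\Cu(\iota)$ and the canonical map of \autoref{lma:coproductinproduct} (precomposed with the isomorphism $\Cu(\bigoplus_n A)\cong\CatCuCoprod_n\Cu(A)$) produce the same $\CatCu$-morphism into $\Cu(A)$ for each $i$. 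By the universal property of the product $\CatCuProd_n\Cu(A)$, the two maps coincide, so their images agree and $\Phi(\Cu(c_0(A)))=\cc_0(\Cu(A))$. This reduces to a diagram chase rather than any hard estimate, but it requires carefully threading the naturality of $\Phi$, of the coproduct map, and of the isomorphisms of Theorems~\ref{thm:CuProd} and~\ref{thm:coproduct}.

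Combining the three identifications yields $\Cu(A_\infty)\cong S/\cc_0(\Cu(A))$ at the level of $\CatCu$-semigroups. To upgrade this to scaled Cuntz semigroups, I would invoke \autoref{pgr:scalesIdealsQuotients}: if $\pi\colon\prod_n A\to A_\infty$ denotes the quotient map, then $\Sigma_{A_\infty}=\Cu(\pi)(\Sigma_{\prod_n A})$. Since $\Phi$ identifies $\Sigma_{\prod_n A}$ with $\Sigma$ and identifies $\Cu(\pi)$ with the quotient map $S\to S/\cc_0(\Cu(A))$, the scale $\Sigma_{A_\infty}$ corresponds to the image of $\Sigma$ in $S/\cc_0(\Cu(A))$, which is exactly the scale of the scaled quotient. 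Hence the isomorphism is scaled, giving $\Cu_{sc}(A_\infty)\cong S/\cc_0(\Cu(A))$.
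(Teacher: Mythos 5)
Your proposal is correct and follows exactly the route the paper intends: it combines \cite[Theorem~5]{CiuRobSan10CuIdealsQuot} for the quotient $A_\infty\cong\prod_n A/\bigoplus_n A$, \autoref{thm:CuProd} to identify $\Cu(\prod_n A)$ with $S$, and \autoref{thm:coproduct} with \autoref{lma:coproductinproduct} to identify the ideal with $\cc_0(\Cu(A))$, which is precisely the list of ingredients the paper cites. Your extra care in matching $\Phi(\Cu(c_0(A)))$ with $\cc_0(\Cu(A))$ via the universal property of the product, and in tracking the scale through \autoref{pgr:scalesIdealsQuotients}, fills in details the paper leaves implicit.
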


\section{Cuntz semigroups of ultraproducts}
\label{sec:ultraprod}

In this section, we show that the scaled Cuntz semigroup functor preserves ultraproducts;
see \autoref{thm:CuUltraprod}.
In particular, the Cuntz semigroup functor preserves ultraproducts of \emph{stable} \ca{s};
see \autoref{prp:CuUltraprodStable}.
We take the opportunity to offer a more categorical view on ultraproducts, which suits both \ca{s} and $\CatCu$-semigroups.

\subsection{Categorical ultraproducts}

We first recall the basic theory of categorical ultraproducts.

\begin{pgr}
\label{pgr:ultraprCat}
Let $\CatC$ be a category that has (small) products and inductive limits, let $J$ be a set, let $\filter$ be an ultrafilter on $J$, and let $(X_j)_{j\in J}$ be a family of objects in $\CatC$.
Let $G\subseteq F\subseteq J$.
By the universal property of the product we obtain a morphism
\[
\varphi_{G,F}\colon\prod_{j\in F}X_j \to \prod_{j\in G}X_j,
\]
such that $\pi_{j,F}=\pi_{j,G}\circ\varphi_{G,F}$ for each $j\in G$.

We order the elements of $\filter$ by reversed inclusion.
Then $\filter$ is upward directed, and we obtain an inductive system indexed over $\filter$, with objects $\prod_{j\in F}X_j$ for $F\in\filter$, and with morphism $\varphi_{G,F}$ for $F,G\in\filter$ with $F\supseteq G$.
The inductive limit of this system is called the (categorical) \emph{ultraproduct} of $(X_j)_{j\in J}$ along $\filter$:
\[
\prod_{\filter} X_j := \varinjlim_{F\in\filter} \prod_{j\in F}X_j.
\]
We let $\pi_\filter\colon \prod_{j\in J}X_j \to \prod_\filter X_j$ denote the natural morphism to the inductive limit.

The ultraproduct of a constant family of objects is called \emph{ultrapower}.
Given an object $X$, we denote its ultrapower by $X_\filter$, that is, $X_\filter := \prod_\filter X$.

The ultraproduct construction is functorial in the following sense:
Given morphisms $\alpha_j\colon X_j\to Y_j$, we obtain a natural morphism $\alpha_\filter\colon\prod_\filter X_j \to \prod_\filter Y_j$ as follows:
For every $F\in\filter$, the universal property of products implies that the morphisms $(\alpha_j)_{j\in F}$ induce a natural morphism $\alpha_F\colon\prod_{j\in F}X_j \to \prod_{j\in F}Y_j$.
These morphisms then induce a morphism between the ultraproducts, defined as respective inductive limits.
Similarly, a morphism $X\to Y$ induces a morphism $X_\filter\to Y_\filter$.
\end{pgr}

\begin{pgr}
\label{pgr:FunctorUltraproduct}
Let $\CatC$ and $\CatD$ be categories that have products and inductive limits (and hence categorical ultraproducts), and let $F\colon\CatC\to\CatD$ be a functor that preserves inductive limits.
Let $\filter$ be an ultrafilter on a set $J$, and let $(X_j)_{j\in J}$ be a family of objects in~$\CatC$.
Then there is a natural morphism
\[
\Phi_\filter\colon F(\prod_\filter X_j) \to \prod_\filter F(X_j).
\]
If the functor $F$ also preserves products, then $\Phi_\filter$ is a natural isomorphism.
\end{pgr}

\begin{pgr}
\label{pgr:CuUltraproductSetup}
Let $\filter$ be an ultrafilter on a set $J$, and let $(A_j)_{j\in J}$ be a family of \ca{s}.
The category of \ca{s}, and the categories $\CatCu$ and $\CatCuScaled$ of (scaled) \CuSgp{s} are complete and cocomplete, and thus each of the categories admit categorical ultraproducts in the sense of \autoref{pgr:ultraprCat}.
By \cite[Corollary~3.2.9, p.29]{AntPerThi18TensorProdCu} and \autoref{prp:CuScaledIndLim}, the (scaled) Cuntz semigroup functor preserves arbitrary inductive limits.
(It was shown in \cite{CowEllIva08CuInv} that the Cuntz semigroup functor preserves \emph{sequential} inductive limits;
for the application to ultraproducts it is however crucial that the functor also preserves inductive limits over non-sequential directed sets.)
As explained in \autoref{pgr:FunctorUltraproduct}, we obtain natural (scaled) \CuMor{s}
\[
\Phi_\filter\colon \Cu(\prod_\filter A_j) \to \prod_\filter \Cu(A_j), \andSep
\Phi_{\filter,\mathrm{sc}}\colon \CatCuScaled(\prod_\filter A_j) \to \prod_\filter \CatCuScaled(A_j).
\]
Applying \autoref{prp:CuProdStable} and \autoref{thm:CuProd}, we obtain the following results:
\end{pgr}

\begin{prp}
\label{prp:CuUltraprodStable}
Let $\filter$ be an ultrafilter on a set $J$, and let $(A_j)_{j\in J}$ be a family of stable \ca{s}.
Then, the map $\Phi_\filter\colon \Cu(\prod_\filter A_j) \to \prod_\filter \Cu(A_j)$ from \autoref{pgr:CuUltraproductSetup} is an isomorphism.
\end{prp}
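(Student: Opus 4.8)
The plan is to exhibit $\Phi_\filter$ as an inductive limit, over $\filter$, of the product-comparison maps attached to each $F\in\filter$, each of which is an isomorphism by \autoref{prp:CuProdStable}, and then to invoke the elementary fact that a natural transformation of inductive systems that is an isomorphism at every stage induces an isomorphism of the inductive limits.

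First I would recall, from \autoref{pgr:ultraprCat}, that the categorical ultraproduct is by definition the inductive limit $\prod_\filter A_j=\varinjlim_{F\in\filter}\prod_{j\in F}A_j$, where $\filter$ is ordered by reversed inclusion and the connecting maps are the canonical projections $\varphi_{G,F}\colon\prod_{j\in F}A_j\to\prod_{j\in G}A_j$ for $G\subseteq F$ in $\filter$. Since the Cuntz semigroup functor preserves arbitrary inductive limits (by \cite[Corollary~3.2.9]{AntPerThi18TensorProdCu}), applying $\Cu$ yields $\Cu(\prod_\filter A_j)\cong\varinjlim_{F\in\filter}\Cu(\prod_{j\in F}A_j)$. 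On the other side, by construction $\prod_\filter\Cu(A_j)=\varinjlim_{F\in\filter}\CatCuProd_{j\in F}\Cu(A_j)$.

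Next, for each $F\in\filter$ the family $(A_j)_{j\in F}$ consists of stable \ca{s}, so \autoref{prp:CuProdStable} applies and shows that the comparison map $\Phi_F\colon\Cu(\prod_{j\in F}A_j)\to\CatCuProd_{j\in F}\Cu(A_j)$ from \autoref{pgr:CuProductSetup} (for the family indexed by $F$) is an isomorphism. The construction of $\Phi_\filter$ in \autoref{pgr:FunctorUltraproduct} is precisely as the inductive limit of these maps $\Phi_F$; they are compatible with the connecting maps on both sides, and this compatibility is exactly the naturality that makes $\Phi_\filter$ well defined. Consequently $\Phi_\filter=\varinjlim_{F\in\filter}\Phi_F$ is an inductive limit of isomorphisms, hence is itself an isomorphism.

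The only point requiring care is the compatibility of the maps $\Phi_F$ with the connecting morphisms $\Cu(\varphi_{G,F})$ and their counterparts in $\CatCu$, which guarantees that the $\Phi_F$ assemble into a single map of inductive systems and that this map is $\Phi_\filter$. This naturality is built into the general construction of \autoref{pgr:FunctorUltraproduct}, so the one substantive input is \autoref{prp:CuProdStable}, applied separately to each $F\in\filter$. I do not expect a genuine obstacle beyond this bookkeeping, since stability is inherited by every subfamily $(A_j)_{j\in F}$, and that is exactly the hypothesis needed to make every $\Phi_F$ an isomorphism.
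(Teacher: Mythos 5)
Your argument is correct and is essentially the proof the paper intends: the paper derives this proposition directly from \autoref{pgr:CuUltraproductSetup} and \autoref{pgr:FunctorUltraproduct}, i.e.\ by writing $\Phi_\filter$ as the inductive limit over $F\in\filter$ of the comparison maps $\Phi_F$, each of which is an isomorphism by \autoref{prp:CuProdStable} since every subfamily $(A_j)_{j\in F}$ is again stable, and using that the Cuntz semigroup functor preserves arbitrary inductive limits. The compatibility of the $\Phi_F$ with the connecting morphisms that you flag is indeed the only bookkeeping point, and it is built into the construction in \autoref{pgr:FunctorUltraproduct} exactly as you say.
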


\begin{thm}
\label{thm:CuUltraprod}
The scaled Cuntz semigroup functor preserves ultraproducts:
Given an ultrafilter $\filter$ on a set $J$, and a family $(A_j)_{j\in J}$ of \ca{s}, the map $\Phi_{\filter,\mathrm{sc}}$ from \autoref{pgr:CuUltraproductSetup} is an isomorphism:
\[
\CatCuScaled(\prod_\filter A_j) \cong \prod_\filter \CatCuScaled(A_j).
\]
\end{thm}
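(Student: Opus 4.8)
The plan is to deduce this from the general categorical principle recorded in \autoref{pgr:FunctorUltraproduct}: a functor preserving both inductive limits and products automatically preserves ultraproducts. Both ingredients have already been established for the scaled Cuntz semigroup functor, since it preserves arbitrary inductive limits by \autoref{prp:CuScaledIndLim} and arbitrary products by \autoref{thm:CuProd}. It therefore suffices to trace how the canonical map $\Phi_{\filter,\mathrm{sc}}$ decomposes and to check that each constituent is an isomorphism of scaled \CuSgp{s}; working throughout inside $\CatCuScaled$ (which is bicomplete by \autoref{prp:CuScaledComplete}) means the scale bookkeeping is handled automatically.

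First I would unwind the definition of the ultraproduct from \autoref{pgr:ultraprCat}: in both the category of \ca{s} and in $\CatCuScaled$, the ultraproduct is the inductive limit over the directed set $(\filter,\supseteq)$ of the products $\prod_{j\in F}(\freeVar)$ for $F\in\filter$, and this set is genuinely upward directed because $\filter$ is closed under finite intersections. Applying \autoref{prp:CuScaledIndLim} to the inductive system $(\prod_{j\in F}A_j)_{F\in\filter}$ then yields a natural isomorphism
\[
\CatCuScaled\big( \prod_\filter A_j \big) = \CatCuScaled\big( \varinjlim_{F\in\filter}\prod_{j\in F}A_j \big) \cong \varinjlim_{F\in\filter}\CatCuScaled\big( \prod_{j\in F}A_j \big).
\]

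Next, for each $F\in\filter$, \autoref{thm:CuProd} supplies a natural isomorphism $\CatCuScaled(\prod_{j\in F}A_j)\cong\prod_{j\in F}\CatCuScaled(A_j)$. Because these isomorphisms are natural in $F$, they commute with the connecting morphisms $\varphi_{G,F}$ of the two inductive systems, and hence assemble into an isomorphism of the inductive limits
\[
\varinjlim_{F\in\filter}\CatCuScaled\big( \prod_{j\in F}A_j \big) \cong \varinjlim_{F\in\filter}\prod_{j\in F}\CatCuScaled(A_j) = \prod_\filter\CatCuScaled(A_j).
\]
Composing the two displayed isomorphisms gives the desired isomorphism, and by the construction of $\Phi_{\filter,\mathrm{sc}}$ in \autoref{pgr:FunctorUltraproduct} this composite is exactly $\Phi_{\filter,\mathrm{sc}}$.

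The argument is essentially formal once Theorems \ref{prp:CuScaledIndLim} and \ref{thm:CuProd} are in hand, so I expect no serious obstacle at this stage; the genuine work was already carried out in proving preservation of products (\autoref{thm:CuProd}). The only point demanding a little care is the compatibility of the product-isomorphisms with the connecting maps of the inductive systems, i.e.\ their naturality in $F$, which is precisely what guarantees that the assembled map coincides with the canonically defined $\Phi_{\filter,\mathrm{sc}}$ rather than with some other comparison morphism.
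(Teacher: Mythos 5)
Your proposal is correct and follows essentially the same route as the paper: the paper derives \autoref{thm:CuUltraprod} directly from the general principle in \autoref{pgr:FunctorUltraproduct}, combined with preservation of inductive limits (\autoref{prp:CuScaledIndLim}) and of products (\autoref{thm:CuProd}). Your more detailed unwinding of the inductive system over $(\filter,\supseteq)$ and the naturality check is exactly the content that the paper leaves implicit.
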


\subsection{A different picture of ultraproducts}

In many `everyday' categories, the (categorical) ultraproduct admits a more concrete description, which is the one used in practice.
We will first recall this for the category of \ca{s}, and then obtain a similar result for the category $\CatCu$.

\begin{pgr}
\label{pgr:ultraprCAlg}
Let $\filter$ be an ultrafilter on a set $J$, and let $(A_j)_{j\in J}$ be a family of \ca{s}. 
Set $A:=\prod_{j\in J}A_j$.
We define the support of an element $\underline{a}=(a_j)_j\in A$ as $\supp(\underline{a}) := \{ j\in J : a_j \neq 0 \}$.
Given $F\in\filter$, we set
\[
I_F := \big\{ a\in A : \supp(a)\cap F =\emptyset \big\}.
\]
Then $I_F$ is a (closed, two-sided) ideal in $A$ that is naturally isomorphic to $\prod_{j\in F^c}A_j$.
(We use the convention that the product over the empty set is the zero \ca{}.)
Moreover, the canonical map $\varphi_{F,J}\colon A \to \prod_{j\in F}A_j$ is a surjective \stHom{} with kernel $I_F$.
Thus, we obtain a natural isomorphism $\prod_{j\in F}A_j \cong A/I_F$.

Given $F\supseteq G$, we have $I_F\subseteq I_G$.
Under the isomorphisms $\prod_{j\in F}A_j\cong A/I_F$ and $\prod_{j\in G}A_j\cong A/I_G$, the connecting morphism $\varphi_{G,F}$ (defined as in \autoref{pgr:ultraprCat}) corresponds to the natural quotient map $A/I_F \to A/I_G$.
Let $I$ be the ideal of $A$ generated by the $I_F$, that is,
\[
I := \overline{\bigcup_{F\in\filter} I_F}^{\|\cdot\|} \subseteq A.
\] 

The inductive limit of the system $A/I_F$ is isomorphic to $A/I$.
It follows that the ultraproduct $\prod_{\filter} A_j$ is naturally isomorphic to $A/I$, as shown in the following commutative diagram:
\[
\xymatrix@R-10pt{
A/I_F \ar[d]^{\cong} \ar[r]
& A/I_G \ar[d]^{\cong} \ar[r]
& \ldots \ar[r]
& \varinjlim_{F\in\filter} A/I_F \ar[d]^{\cong} \ar@{}[r]|-{=}
& A/I \ar[d]^{\cong} \\
\prod_{j\in F}A_j \ar[r]^{\varphi_{G,F}}
& \prod_{j\in G}A_j \ar[r]
& \ldots \ar[r]
& \varinjlim_{F\in\filter} \prod_{j\in F}X_j \ar@{}[r]|-{=}
& \prod_{\filter} A_j.
}
\]

It remains to describe the ideal $I$.
Given $(a_j)_j\in A=\prod_j A_j$, we have a bounded function $J\to\RR$, $j\mapsto\|a_j\|$, which allows us to consider $\lim_{j\to\filter}\|a_j\|$.
For $a=(a_j)_j\in A$ we have $\lim_{j\to\filter}\|a_j\|=0$ if and only if $\{j\in J : \|a_j\|<\varepsilon\}\in\filter$ for every $\varepsilon>0$.
It follows that
\[
I = \big\{ (a_j)_j \in A : \lim_{j\to\filter} \|a_j\| =0 \big\}.
\]
This shows that the `category theoretic' ultraproduct of \ca{s} agrees with the notion of ultraproduct of \ca{s} considered in (continuous) model theory;
see for example \cite{GeHad01Ultraproducts}.
The ideal $I$ is also denoted by $c_\filter((A_j)_j)$.

For every positive element $a_j\in A_j$ and $\varepsilon>0$ we have $\| a_j \| \leq \varepsilon$ if and only if $(a_j-\varepsilon)_+=0$.
Thus,
\[
I_+ = \big\{ (a_j)_j\in A_+ : \supp ((a_j-\varepsilon)_+)_{j\in J})\notin\filter \text{ for every } \varepsilon>0 \big\}.
\]
\end{pgr}

\begin{pgr}
\label{pgr:ultraprCu}
Let $\filter$ be an ultrafilter on a set $J$, and let $(S_j)_{j\in J}$ be a family of \CuSgp{s}.
Set $P:=\CatPomProd_j S_j$, the set-theoretic product of the $S_j$.
The product $S:=\prod_{j\in J}S_j$ in $\CatCu$ is given by equivalence classes of $\llpw$-increasing paths $(-\infty,0]\to P$.
Given $F\in\filter$, we set
\[
I_F := \big\{ [(\vect{x}_t)_{t\leq0}] \in S \text{ such that }  \supp(\vect{x}_0)\cap F =\emptyset \big\}.
\]
Then $I_F$ is an ideal of $S$ that is canonically isomorphic to $\prod_{j\in F^c}S_j$.
As for \ca{s}, the map $\varphi_{F,J}\colon S \to \prod_{j\in F}S_j$ induces an isomorphism $\prod_{j\in F}S_j\cong S/I_F$.
Let $I$ be the ideal of $S$ generated by the $I_F$, that is,
\[
I :=  \overline{\bigcup_{F\in\filter} I_F}^{\sup} 
= \big\{ \sup_n x_n : (x_n)_n \text{ increasing sequence in } \bigcup_{F\in\filter}I_F \big\}
\subseteq S.
\]

We obtain natural isomorphisms
\[
\prod_{\filter} S_j := \varinjlim_{F\in\filter}\prod_{j\in F}S_j 
\cong \varinjlim_{F\in\filter} S/I_F
\cong S/I.
\]
Let $\pi_\filter\colon\prod_{j\in J}S_j\to\prod_\filter S_j$ denote the quotient map with kernel $I$.
The next result describes the ideal $I$.
\end{pgr}

\begin{lma}
\label{prp:ultraprodIdeal}
We retain the notation from \autoref{pgr:ultraprCu}.
Let $(\vect{x}_t)_{t\leq 0}$ be a path in $(\CatPomProd_j S_j,\llpw	)$.
Then the following are equivalent:
\begin{enumerate}
\item 
We have $[(\vect{x}_t)_{t\leq 0}] \in I$.
\item 
For every $\varepsilon>0$, there exists $F\in\filter$ such that the $\varepsilon$-cut-down
$[(\vect{x}_{t-\varepsilon})_{t\leq 0}]$ belongs to $I_F$.
\item 
For every $t<0$, we have $\supp( \vect{x}_t ) \notin \filter$.
\end{enumerate}
\end{lma}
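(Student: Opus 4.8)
The plan is to split the three-way equivalence into (1)$\Leftrightarrow$(2) and (2)$\Leftrightarrow$(3), since the first equivalence is really a statement about the ideal $I$ inside $\tau(\CatPomProd_j S_j,\llpw)$, whereas the second is a purely set-theoretic consequence of $\filter$ being an ultrafilter. Throughout I would use two standard features of the $\tau$-construction. First, the $\varepsilon$-cut-down of a path is way-below the path, that is, $[(\vect{x}_{t-\varepsilon})_{t\leq 0}]\ll[(\vect{x}_t)_{t\leq 0}]$ for every $\varepsilon>0$; this follows from the way-below characterization in \cite[Lemma~3.16]{AntPerThi17arX:AbsBivariantCu}, applied exactly as in the proof of \autoref{prp:CuProdStable}, using that $\vect{x}_{-\varepsilon}\llpw\vect{x}_{-\varepsilon/2}$ because the representing path is $\llpw$-increasing. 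Second, a path is the supremum of its cut-downs, $\sup_n[(\vect{x}_{t-1/n})_{t\leq 0}]=[(\vect{x}_t)_{t\leq 0}]$, which is how axiom \axiomO{2} is realized for $\tau(\CatPomProd_j S_j,\llpw)$.

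For (1)$\Rightarrow$(2), fix $\varepsilon>0$ and, using the description of $I$ in \autoref{pgr:ultraprCu}, write $[(\vect{x}_t)_{t\leq0}]=\sup_n y_n$ with $(y_n)_n$ increasing and each $y_n\in I_{F_n}$ for some $F_n\in\filter$. Since the cut-down is way-below $[(\vect{x}_t)_{t\leq0}]=\sup_n y_n$, there is $n$ with $[(\vect{x}_{t-\varepsilon})_{t\leq0}]\leq y_n$, and as $I_{F_n}$ is an ideal (hence downward hereditary) the cut-down lies in $I_{F_n}$; this yields (2) with $F=F_n$. For the converse (2)$\Rightarrow$(1), apply (2) with $\varepsilon=1/n$ to produce $F_n\in\filter$ with $[(\vect{x}_{t-1/n})_{t\leq0}]\in I_{F_n}$. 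These cut-downs form an increasing sequence in $\bigcup_{F\in\filter}I_F$ whose supremum is $[(\vect{x}_t)_{t\leq0}]$, so the latter belongs to $I$ by the very definition of $I$.

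The equivalence (2)$\Leftrightarrow$(3) reduces to a support computation after unwinding the definition of $I_F$: the cut-down $[(\vect{x}_{t-\varepsilon})_{t\leq0}]$ has endpoint $\vect{x}_{-\varepsilon}$, so its membership in $I_F$ is precisely the condition $\supp(\vect{x}_{-\varepsilon})\cap F=\emptyset$. Hence (2) says that for every $\varepsilon>0$ there is $F\in\filter$ with $\supp(\vect{x}_{-\varepsilon})\subseteq J\setminus F$, while (3) says $\supp(\vect{x}_{-\varepsilon})\notin\filter$ for every $\varepsilon>0$. For (2)$\Rightarrow$(3), if $\supp(\vect{x}_{-\varepsilon})\in\filter$ then upward closure of $\filter$ forces $J\setminus F\in\filter$; together with $F\in\filter$ this gives $\emptyset\in\filter$, contradicting properness. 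For (3)$\Rightarrow$(2), maximality of the ultrafilter yields $J\setminus\supp(\vect{x}_{-\varepsilon})\in\filter$, which is the required $F$.

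The only genuine subtlety, and the step I would treat most carefully, is the use of the two $\tau$-construction facts in (1)$\Leftrightarrow$(2): one must confirm that cut-downs are way-below the path and that the path is their supremum in the product $\CatCuProd_j S_j=\tau(\CatPomProd_j S_j,\llpw)$ specifically with the pointwise relation $\llpw$. Both are instances of the general theory of \cite{AntPerThi17arX:AbsBivariantCu}, but it is worth checking that the $\llpw$-increasing hypothesis on the representing path is exactly what makes $\vect{x}_{-\varepsilon}\llpw\vect{x}_{-\varepsilon/2}$ available, which is what feeds the way-below characterization. Everything else is bookkeeping with the definitions of $I$, $I_F$, and the ultrafilter axioms.
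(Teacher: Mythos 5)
Your proof is correct and follows essentially the same route as the paper's: the same two facts about the $\tau$-construction (cut-downs are way-below the path, and the path is the supremum of its cut-downs) combined with downward heredity of the $I_F$ and the ultrafilter axioms. The only cosmetic difference is that you prove (1)$\Leftrightarrow$(2) and (2)$\Leftrightarrow$(3) separately, whereas the paper runs the cycle (1)$\Rightarrow$(2)$\Rightarrow$(3)$\Rightarrow$(1); the content of your (2)$\Rightarrow$(1) and (3)$\Rightarrow$(2) steps is exactly that of the paper's (3)$\Rightarrow$(1).
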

\begin{proof}
(1)$\Rightarrow$(2):
Suppose $[(\vect{x}_t)_{t\leq 0}] \in I$.
Then there is an increasing sequence $(x_n)_n$ in $\bigcup_{F\in\filter} I_F$ such that $[(\vect{x}_t)_{t\leq 0}] =\sup x_n$.
Let $\varepsilon>0$.
Using that $[(\vect{x}_{t-\varepsilon})_{t\leq 0}]\ll [(\vect{x}_t)_{t\leq 0}]$, we obtain $n$ such that $[(\vect{x}_{t-\varepsilon})_{t\leq 0}]\leq x_n$.
Since $x_n\in\bigcup_{F\in\filter} I_F$ and since each $I_F$ is an ideal of $S$, there exists $F\in\filter$ such that $[(\vect{x}_{t-\varepsilon})_{t\leq 0}]$ belongs to $I_F$.

(2)$\Rightarrow$(3):
Let $t<0$.
Set $\varepsilon:=-t$.
By assumption, there exists $F\in\filter$ such that $[(\vect{x}_{s-\varepsilon})_{s\leq 0}]$ belongs to $I_F$.
By definition, this means $\supp(\vect{x}_{0-\varepsilon})\cap F=\emptyset$, and thus $\supp(\vect{x}_{t})\notin\filter$.

(3)$\Rightarrow$(1): Suppose that for every $t<0$, we have $\supp(\vect{x}_t)\notin\filter$. Now, write 
\[
[(\vect{x}_t)_{t\leq 0}]=\sup\limits_{n\geq 1} [(\vect{x}_{t-\frac{1}{n}})_{t\leq 0}],
\]
and put $F_n=\supp (\vect{x}_{-\frac{1}{n}})^c$. It is clear now that $F_n\in\filter$ and that $[(\vect{x}_{t-\frac{1}{n}})_{t\leq 0}]\in I_{F_n}$. Therefore $[(\vect{x}_t)_{t\leq 0}]\in I$, as desired.
\end{proof}

\begin{ntn}
\label{ntn:cfilter}
Let $\filter$ be an ultrafilter on a set $J$, and let $(S_j)_{j\in J}$ be a family of \CuSgp{s}.
Set
\[
\cc_\filter\big( (S_j)_j \big) := \big\{ [(\vect{x}_t)_{t\leq 0}] \in \CatCuProd_j S_j : \supp( \vect{x}_t ) \notin\filter \text{ for each } t<0  \big\}.
\]
\end{ntn}

\begin{prp}
\label{cor:cu}
We retain the notation from \autoref{pgr:ultraprCu}.
Then $\cc_\filter\big( (S_j)_j \big)$ is an ideal in $\CatCuProd_j S_j$, and we have a natural isomorphism 
\[
\prod_{\filter} S_j \cong \big(\prod_{j\in J}S_j\big) / c_\filter((S_j)_j).
\]
\end{prp}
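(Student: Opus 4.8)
The plan is to read off the result almost directly from \autoref{pgr:ultraprCu} and \autoref{prp:ultraprodIdeal}, which together have already done the substantive work. The key observation is that the set $\cc_\filter\big( (S_j)_j \big)$ of \autoref{ntn:cfilter} is, by its very definition, nothing but the set of classes $[(\vect{x}_t)_{t\leq 0}]$ satisfying condition~(3) of \autoref{prp:ultraprodIdeal}, namely that $\supp(\vect{x}_t)\notin\filter$ for each $t<0$. So the proposition amounts to repackaging the preceding lemma.

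First I would invoke the equivalence (1)$\Leftrightarrow$(3) of \autoref{prp:ultraprodIdeal} to conclude that $\cc_\filter\big( (S_j)_j \big)$ coincides, as a set, with the ideal $I$ introduced in \autoref{pgr:ultraprCu} as the ideal of $S=\prod_{j\in J}S_j$ generated by the subideals $I_F$ for $F\in\filter$. Since $I$ is an ideal of $\CatCuProd_j S_j$ by construction, this identification immediately upgrades $\cc_\filter\big( (S_j)_j \big)$ to an ideal, giving the first assertion without any separate verification.

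For the isomorphism, I would simply transport along the identification $I=\cc_\filter\big( (S_j)_j \big)$ the natural isomorphism $\prod_\filter S_j\cong S/I$ already exhibited in \autoref{pgr:ultraprCu} (through the chain $\prod_\filter S_j=\varinjlim_{F\in\filter}\prod_{j\in F}S_j\cong\varinjlim_{F\in\filter}S/I_F\cong S/I$). Substituting $\cc_\filter\big( (S_j)_j \big)$ for $I$ then yields
\[
\prod_{\filter} S_j \cong \big(\prod_{j\in J}S_j\big) \big/ \cc_\filter\big( (S_j)_j \big),
\]
as claimed, and naturality is inherited verbatim from the naturality of the isomorphism in \autoref{pgr:ultraprCu}.

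There is no serious obstacle here, as essentially all the content has been front-loaded into \autoref{prp:ultraprodIdeal}. The only point requiring a moment's care is matching the quantifier in the definition of $\cc_\filter$ (``for each $t<0$'') with the quantifier in condition~(3) of the lemma; once this is observed, the two sets are literally equal, so the ideal property and the isomorphism both follow automatically rather than needing independent arguments.
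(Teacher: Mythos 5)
Your proposal is correct and follows exactly the route the paper intends: the paper gives no separate proof of this proposition precisely because, as you observe, the equivalence (1)$\Leftrightarrow$(3) of \autoref{prp:ultraprodIdeal} identifies $\cc_\filter\big((S_j)_j\big)$ with the ideal $I$ of \autoref{pgr:ultraprCu}, and the isomorphism $\prod_\filter S_j\cong S/I$ has already been established there. Nothing further is needed.
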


\begin{thm}
\label{prp:CuUltraprodConcrete}
Let $\filter$ be an ultrafilter on a set $J$, and let $(A_j)_{j\in J}$ be a family of \ca{s}.
Let $(S,\Sigma)$ be the scaled product of $(\CatCuScaled(A_j))_{j}$ with $\Sigma\subseteq S\subseteq \prod_j\Cu(A_j)$ as in \autoref{pgr:scaledProd}.
Then we have a commutative diagram, where each row is a short exact sequence, where the upper vertical maps are isomorphisms, and where the lower vertical maps are inclusions as ideals:
\[
\xymatrix@R-10pt{
\Cu(\cc_\filter((A_j)_j)) \ar[d]_{\cong} \ar@{^{(}->}[r]
& \Cu( \prod_j A_j ) \ar[d]^{\Phi}_{\cong} \ar@{->>}[r]
& \Cu( \prod_\filter A_j ) \ar[d]^{\Phi_\filter}_{\cong} \\
S\cap \cc_\filter\big((\Cu(A_j))_{j\in J}\big) \ar[d] \ar@{^{(}->}[r]
& S \ar[d] \ar@{->>}[r]
& S/\left( S\cap \cc_\filter\big((\Cu(A_j))_{j\in J}\big) \right) \ar[d] \\
\cc_\filter\big((\Cu(A_j))_{j\in J}\big) \ar@{^{(}->}[r]
& \prod_j\Cu(A_j) \ar@{->>}[r]
& \prod_\filter \Cu(A_j).
}
\]
In particular, the map $\Phi_\filter$ from \autoref{pgr:CuUltraproductSetup} is an order-embedding that identifies $\Cu(\prod_\filter A_j)$ with the image of $S$ under the map $\pi_\filter$ from \autoref{pgr:ultraprCu}.
Further, $\Phi_\filter$ identifies the scale of $\Cu(\prod_\filter A_j)$ with $\pi_\filter(\Sigma)$.
\end{thm}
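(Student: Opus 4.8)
The plan is to build the diagram out of three extensions that are already at hand and then match up the relevant ideals under $\Phi$. Write $A:=\prod_{j\in J}A_j$ and $I:=\cc_\filter((A_j)_j)$, the ideal of strings vanishing along $\filter$; by \autoref{pgr:ultraprCAlg} we have $\prod_\filter A_j\cong A/I$. Applying \cite[Theorem~5]{CiuRobSan10CuIdealsQuot} to $I\hookrightarrow A\twoheadrightarrow A/I$ gives the exactness of the top row, so that $\Cu(I)$ is an ideal of $\Cu(A)$ and $\Cu(A/I)\cong\Cu(A)/\Cu(I)$. The bottom row is exact by \autoref{cor:cu}, which identifies $\prod_\filter\Cu(A_j)$ with $(\prod_j\Cu(A_j))/\cc_\filter((\Cu(A_j))_j)$. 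Finally, by \autoref{thm:CuProd} the middle vertical $\Phi$ is an order-embedding of $\Cu(A)$ onto the ideal $S$, carrying $\Sigma_A$ onto $\Sigma$. Granting all this, the whole statement reduces to the single claim
\[
\Phi\big(\Cu(I)\big)=S\cap\cc_\filter\big((\Cu(A_j))_j\big),
\]
for then $\Phi$ restricts to an order-isomorphism of the left-hand ideals and descends to one on the quotients, the latter being exactly $\Phi_\filter$ by naturality of the ultraproduct construction (the square displayed in the Introduction commutes).

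To prove the displayed claim I would reuse the stabilisation device from the proof of \autoref{thm:CuProd}. Let $\iota\colon A\to A':=\prod_j(A_j\otimes\KK)$ be the hereditary embedding $\iota((a_j)_j)=(a_j\otimes p)_j$, put $I':=\cc_\filter((A_j\otimes\KK)_j)$, and recall the commuting square $\alpha\circ\Phi=\Phi'\circ\Cu(\iota)$ with $\alpha$ and $\Phi'$ isomorphisms. The advantage of passing to $A'$ is that $I'$ \emph{has} property~(S): each $A_j\otimes\KK$ is stable, hence has property~(S) by \autoref{prp:propS}, and for $(a_j)_j\in I'_+$ the witnesses $x_j,b_j$ satisfy $\|x_j\|=\|a_j\|^{1/2}$ and $\|b_j\|=\|a_j\|$, so they again vanish along $\filter$ and lie in $I'$. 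By \autoref{prp:unstableCuPropS} every element of $\Cu(I')$ is therefore the Cuntz class of an unstabilised $d=(d_j)_j\in I'_+$. Combining the concrete description of $\Phi'$ from \autoref{pgr:CuProductSetup} (under which $\Phi'([d])$ is the class of the path $t\mapsto([(d_j+t)_+])_j$) with the support characterisation $I'_+=\{(d_j)_j:\supp((d_j-\varepsilon)_+)_j\notin\filter\text{ for all }\varepsilon>0\}$ from \autoref{pgr:ultraprCAlg}, one reads off that $\Phi'([d])\in\cc_\filter((\Cu(A_j\otimes\KK))_j)$ exactly when $d\in I'_+$; since $\Phi'$ is onto by \autoref{prp:CuProdStable}, this yields $\Phi'(\Cu(I'))=\cc_\filter((\Cu(A_j\otimes\KK))_j)$.

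It remains to transport this back along $\iota$. Because $\iota(I)=I'\cap\iota(A)$ (as $\|a_j\otimes p\|=\|a_j\|$), \autoref{prp:HerSubalg} and functoriality give $\Cu(\iota)(\Cu(I))=\Cu(I')\cap\Cu(\iota)(\Cu(A))$. Feeding this into $\alpha\circ\Phi=\Phi'\circ\Cu(\iota)$, and using that the isomorphism $\alpha$ (induced by the zero-preserving isomorphisms $\Cu(\iota_j)$) sends $\cc_\filter((\Cu(A_j))_j)$ onto $\cc_\filter((\Cu(A_j\otimes\KK))_j)$ while $\alpha(S)=\Phi'(\Cu(\iota)(\Cu(A)))$, a short diagram chase gives $\Phi(\Cu(I))=S\cap\cc_\filter((\Cu(A_j))_j)$. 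With the claim established, the right-hand vertical isomorphisms follow by passing to quotients; and the scale assertion follows from \autoref{pgr:scalesIdealsQuotients}, since the scale of $\Cu(A/I)$ equals $\Cu(\pi)(\Sigma_A)$ for the quotient map $\pi\colon A\to A/I$, which corresponds under the identifications to $\pi_\filter(\Phi(\Sigma_A))=\pi_\filter(\Sigma)$.

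The main obstacle is the displayed ideal-matching identity. Its delicate points are the property~(S)/support computation that pins down $\Phi'(\Cu(I'))$ in the stable case, and the naturality identity $\Cu(\iota)(\Cu(I))=\Cu(I')\cap\Cu(\iota)(\Cu(A))$, which asserts that the corner embedding $\iota$ detects $I$ as the preimage of $I'$ already at the level of Cuntz semigroups; everything else is a formal consequence of the three extensions and the isomorphism $\Phi$.
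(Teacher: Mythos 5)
Your proposal is correct and shares the paper's overall architecture: both arguments assemble the diagram from the three exact rows (top row via the Cuntz semigroup of an extension, bottom row via \autoref{cor:cu}), use \autoref{thm:CuProd} to identify $\Phi$ as an order-embedding onto $S$, and reduce everything to the single ideal-matching identity $\Phi\big(\Cu(\cc_\filter((A_j)_j))\big)=S\cap\cc_\filter\big((\Cu(A_j))_j\big)$, which in both cases rests on the same pointwise computation comparing the support description of $\cc_\filter((A_j)_j)_+$ from \autoref{pgr:ultraprCAlg} with the criterion of \autoref{prp:ultraprodIdeal}. Where you genuinely diverge is in how that pointwise computation, valid a priori only for Cuntz classes of positive elements of $\prod_j A_j$, is bootstrapped to the whole ideal $\Cu(\cc_\filter((A_j)_j))$, whose elements need not be represented by unstabilized positive elements. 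The paper does this with the scale machinery: it combines $\Sigma_{J}=\Cu(J)\cap\Sigma_A$ from \autoref{pgr:scalesIdealsQuotients} with $\Phi(\Sigma_{\prod_j A_j})=\Sigma$ and then passes to generated ideals on both sides. You instead stabilize, check that $\cc_\filter((A_j\otimes\KK)_j)$ inherits property~(S) (the witnesses have norms $\|a_j\|^{1/2}$ and $\|a_j\|$, so they remain in the ideal), invoke \autoref{prp:unstableCuPropS} to represent every class in $\Cu(I')$ by an element of $I'_+$, and transport back along the hereditary embedding $\iota$. Both devices work; the paper's route avoids the extra naturality bookkeeping in your identity $\Cu(\iota)(\Cu(I))=\Cu(I')\cap\Cu(\iota)(\Cu(A))$ --- whose nontrivial inclusion should be justified by observing that $(I'\otimes\KK)\cap(\iota(A)\otimes\KK)=\iota(I)\otimes\KK$ and that membership in the ideal $\Cu(I')\subseteq\Cu(A')$ is detected on positive elements --- while your route bypasses the scale argument for the ideal identification entirely. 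The final scale assertion is handled the same way in both proofs, via \autoref{pgr:scalesIdealsQuotients}.
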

\begin{proof}
In general, if $J$ is an ideal in a \ca{} $A$, then the inclusion $J\to A$ induces a map that identifies $\Cu(J)$ with an ideal of $\Cu(A)$, and such that $\Cu(A)/\Cu(J)$ is naturally isomorphism to $\Cu(A/J)$;
see \cite[Theorem~4.1]{CiuRobSan10CuIdealsQuot}, and note that the assumption that the ideal be $\sigma$-unital can be removed;
see also \cite[Section~5.1, p.37ff]{AntPerThi18TensorProdCu}.
This shows that the top row in the diagram is exact.

By \autoref{thm:CuProd}, $\Phi$ is an order-embedding with image $S$.
To simplify, we denote $\cc_\filter((A_j)_j)$ by $\cc_\filter^A$, and we denote $\cc_\filter\big((\Cu(A_j))_{j\in J}\big)$ by $\cc_\filter^S$. 

Claim:
Let $\underline{a} \in (\prod_j A_j)_+$.
Then $\Phi([\underline{a}])\in\cc_\filter^S$ if and only if $\underline{a}\in\cc_\filter^A$.

To prove the claim, let $a_j$ such that $\underline{a}=(a_j)_j$, and set $\vect{x}_t := ([(a_j+t)_+])_j$ for $t\leq 0$.
Then $\Phi([\underline{a}]) = [(\vect{x}_t)_{t\leq 0}]$;
see \autoref{pgr:CuProductSetup}.
As noted at the end of \autoref{pgr:ultraprCAlg}, we have $\underline{a}\in c_\filter^A$ if and only if $\supp (((a_j-\varepsilon)_+)_{j\in J})\notin\filter$ for every $\varepsilon>0$, which in turn is equivalent to $\supp(\vect{x}_t)\notin\filter$ for every $t<0$.
Now, the claim follows from \autoref{prp:ultraprodIdeal}.

It follows that for $x\in \Sigma_{\prod_j A_j}$ we have $x\in\Cu(\cc_\filter((A_j)_j))$ if and only if $\Phi(x)\in\cc_\filter^S$.
Since $\Phi(\Sigma_{\prod_j A_j})=\Sigma$, it follows that $\Phi( \Cu(\cc_\filter^A) \cap \Sigma_{\prod_j A_j} )=\cc_\filter^S \cap \Sigma$.
Using \autoref{pgr:scalesIdealsQuotients} at the third step, we obtain 
\begin{align*}
\Phi( \Cu(\cc_\filter^A) )
= \Phi( \langle \Sigma_{\cc_\filter^A} \rangle )
= \langle \Phi( \Sigma_{\cc_\filter^A} ) \rangle
= \langle \Phi( \Cu(\cc_\filter^A) \cap \Sigma_{\prod_j A_j} ) \rangle
= \langle \cc_\filter^S \cap \Sigma \rangle
= \cc_\filter^S \cap S,
\end{align*}
where $\langle\freeVar\rangle$ denotes the generated ideal.
It follows that the upper vertical maps in the diagram are isomorphisms.

Lastly, to show the statement about the scales, let $\psi\colon \Cu(\prod_j A_j)\to \Cu(\prod_\filter A_j)$ be the \CuMor{} induced by the quotient \stHom.
Using \autoref{pgr:scalesIdealsQuotients} at the last step, we get
\[
\pi_\filter(\Sigma)
= \pi_\filter(\Phi(\Sigma_{\prod_j A_j}))
= \Phi_\filter( \psi( \Sigma_{\prod_j A_j} ))
= \Phi_\filter( \Sigma_{\prod_\filter A_j} ). \qedhere
\]
\end{proof}

\begin{exa}
\label{prp:prodE0}
Let $J$ be an infinite set, and let $\filter$ be an ultrafilter on $J$.
We identify the Stone \Cech\ compactification $\beta(J)$ of $J$ with the set of ultrafilters on $J$, and we equip it with the \emph{Stone topology}, which has a basis of clopen sets given by 
\[
\mathcal{U}_A := \big\{ \mathcal{F}\in\beta(J) : A\in\mathcal{F} \big\},
\]
for each $A\subseteq J$.
Let $\mathrm{cpct}_\sigma(\beta(J))$ denote the family of $\sigma$-compact, open subsets of $\beta(J)$.
Note that $\mathrm{cpct}_\sigma(\beta(I))$ is a \pom{}, where addition of sets is given by their union, and order is defined by inclusion. 
Then we have isomorphisms of \CuSgp{s}:
\[
\CatCuProd_{j\in J} \{0,\infty\} 
\cong \mathrm{cpct}_\sigma(\beta(J)),\andSep
\CatCuProd_\filter\{0,\infty\} \cong \{0,\infty\}.
\]
\end{exa}
\begin{proof}
Given $A,B\subseteq J$, we have $\mathcal{U}_{A\cup B}=\mathcal{U}_A\cup\mathcal{U}_B$, and 
$\mathcal{U}_A\subseteq\mathcal{U}_B $ if and only if $A\subseteq B$.
We first compute $\CatCuProd_J \{0,\infty\}$.
Let $\mathcal{P}(J)$ denote the powerset of $J$ considered as a \pom{} with addition given by union and order given by inclusion.
It is clear that $\CatPomProd_j\{0,\infty\}\cong \mathcal{P}(J)$.
Under this identification, the auxiliary relation $\llpw$ becomes inclusion and it is easy to see that in this case the $\tau$-construction agrees with the (sequential) round ideal completion $\gamma$.
Hence,
\[
\CatCuProd\limits_{j\in J}\{0,\infty\}
= \tau\big( \CatPomProd_{j\in J}\{0,\infty\},\llpw \big)
\cong \tau\big( \mathcal{P}(J),\subseteq \big)
\cong \gamma\big( \mathcal{P}(J),\subseteq \big).
\]

The elements in $\gamma(\mathcal{P}(J),\subseteq)$ are equivalence classes of increasing sequences $\mathbf{A}=(A_n)_{n\in\NN}$ of subsets of $J$, under the antisymmetrization of the relation $(A_n)_n\precsim (B_n)_n$ if for every $n\in\NN$ there is $m\in\NN$ such that $A_n\subseteq B_m$. We use $[\mathbf{A}]$ to denote the class of $\mathbf{A}$, and recall that addition is given by $[(A_n)_n]+[(B_n)_n]=[(A_n\cup B_n)_n]$.
Define $\Phi\colon \gamma(\mathcal{P}(J),\subseteq)\to \mathrm{cpct}_\sigma(\beta(J))$ by
\[
\Phi([\mathbf{A}]) := \bigcup\limits_{n\in\NN}\mathcal{U}_{A_n},
\]
for every increasing sequence $\mathbf{A}=(A_n)_{n\in\NN}$. Using the properties of the clopen sets $\mathcal U_A$ mentioned above, it follows that $\Phi$ is well-defined, order-preserving and additive.

To see that $\Phi$ is surjective, let $U\subseteq \beta(J)$ be a $\sigma$-compact, open subset. Choose an increasing sequence $(K_n)_n$ of compact subsets such that $U=\bigcup_n K_n$. Since the $\mathcal U_A$ form a basis of the topology, and the $K_n$ are compact, we can find sets $D_n$ such that $K_n\subseteq \mathcal U_{D_n} \subseteq U$. Therefore, setting $A_n=\bigcup_{i=1}^n D_i$ to make them increasing, we have $U=\bigcup_n \mathcal U_{A_n}$ and  $U=\Phi([\vect A])$ where $\vect{A}=(A_n)_n$. 

Now suppose that $\Phi([\mathbf{A}])\leq\Phi([\mathbf{B}])$.
This implies that for each $n$,  $\mathcal{U}_{A_n}\subseteq\bigcup_{m\in\NN}\mathcal{U}_{B_m}$.
Since $\mathcal{U}_{A_n}$ is compact and the sequence $(B_m)_m$ is increasing, there is $m\in\NN$ such that $\mathcal{U}_{A_n}\subseteq\mathcal{U}_{B_m}$ which implies $A_n\subseteq B_m$.
Hence $\vect{A}\precsim \vect{B}$ proving that $\Phi$ is an order-embedding, and thus $\CatCuProd_{j} \{0,\infty\} 
\cong \mathrm{cpct}_\sigma(\beta(J))$.

We now compute the ultraproduct.
Let us write $c_\filter$ for $c_\filter((\{0,\infty\})_j)$.
It follows from \autoref{cor:cu} that $c_\filter$ corresponds to equivalence classes of $\mathbf{A}=(A_n)_n$ with all $A_n\not\in \filter$.
Let $[\mathbf{A}],[\mathbf{B}]\in\gamma(\mathcal{P}(J),\subseteq)$ with $[\mathbf{B}]\notin c_\filter$.
Then there is $n_0$ such that $B_{n_0}\in\filter$.
For each $n$, we have
\[
A_n = (A_n\cap B_{n_0}) \cup (A_n\cap B_{n_0}^c)
\subseteq B_{n_0} \cup (B_n\cap A_{n_0}^c).
\]
Set $\vect{C}:=(A_n\cap B_{n_0}^c)_n$.
Then $\vect{A}\precsim \vect{B}+\vect{C}$.
Since $(A_n\cap B_{n_0}^c)\notin\filter$, we have $\vect{C}\in c_\filter$.

By \autoref{cor:cu}, we have $\CatCuProd_\filter\{0,\infty\} \cong \CatCuProd_j\{0,\infty\} / c_\filter$.
It follows from the above argument that $\CatCuProd_\filter\{0,\infty\}$ has only two elements, $\{0,\infty\}$, as desired.
\end{proof}

\section{Applications}
\label{sec:appx}

\subsection{Compact elements and K-Theory}
Recall that, if $S$ is a $\CatCu$-semigroup, then an element $x\in S$ is \emph{compact} provided $x\ll x$.
If $S$ is only a $\CatQ$-semigroup, we shall also use this terminology for an element $x\in S$ such that $x\prec x$.
The subsemigroup of compact elements of $S$ will be denoted by $S_c$.
This semigroup is a positively ordered monoid, regardless of whether $S$ is a $\CatCu$-semigroup or a $\CatQ$-semigroup.
We next describe the compact elements of an ultraproduct of $\CatCu$-semigroups, for which we use the picture of categorical ultraproducts;
see \autoref{pgr:ultraprCat}.
This will be used later to describe the Murray-von Neumann semigroup of an ultraproduct of unital \ca{s}, thus recovering \cite[Theorem~4.1]{Li05Ultraproducts} (see also \cite[Proposition~2.1]{GeHad01Ultraproducts}). 

\begin{lma}
\label{lma:cpctelts}
Let $S$ be a $\CatQ$-semigroup.
Then, the endpoint map (\autoref{pgr:CuInQ}) induces a $\CatPom$-isomorphism $\tau(S)_c\cong S_c$.
\end{lma}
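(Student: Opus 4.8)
The plan is to produce an explicit inverse to the endpoint map on compact elements by means of \emph{constant paths}. Given $x\in S_c$, so that $x\prec x$, let $c_x$ denote the constant path $c_x(t):=x$ for all $t\le 0$. First I would check that $c_x$ is a genuine path in the sense of \autoref{pgr:tauconstr}: it is order-preserving, the equality $c_x(t)=\sup_{t'<t}c_x(t')$ is trivial, and the condition $c_x(t')\prec c_x(t)$ for $t'<t$ is exactly the hypothesis $x\prec x$. Hence $\sigma(x):=[c_x]$ defines a map $S_c\to\tau(S)$. Since $c_{x+y}=c_x+c_y$ pointwise and $c_0$ is the constant zero path, $\sigma$ preserves addition and the neutral element.

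The central tool is the concrete description of the way-below relation in $\tau(S)$: for paths $f,g$ one has $[f]\ll[g]$ in $\tau(S)$ if and only if there is some $t<0$ with $f(0)\prec g(t)$. This follows from \cite[Lemma~3.16]{AntPerThi17arX:AbsBivariantCu}, and both implications are already exploited in the proof of \autoref{prp:CuProdStable}. Applying the backward implication to $f=g=c_x$ gives $[c_x]\ll[c_x]$, because $c_x(0)=x\prec x=c_x(t)$; thus $\sigma$ takes values in $\tau(S)_c$. Applying the forward implication to a compact $[f]$ produces $t_0<0$ with $f(0)\prec f(t_0)$; since $f(t_0)\le f(0)$, axiom~(ii) of an auxiliary relation (\autoref{pgr:auxiliary}) yields $f(0)\prec f(0)$, so that $\varphi_S([f])=f(0)$ lies in $S_c$. (This last point also follows directly from the fact that $\varphi_S$ is a $\CatQ$-morphism and therefore preserves the auxiliary relation.) Consequently $\varphi_S$ restricts to a $\CatPom$-morphism $\tau(S)_c\to S_c$.

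Next I would verify that $\varphi_S$ and $\sigma$ are mutually inverse on these subsemigroups. One composite is immediate, since $\varphi_S(\sigma(x))=c_x(0)=x$. For the other, let $[f]\in\tau(S)_c$ and put $x:=f(0)$; I claim $[f]=[c_x]$. Indeed $c_x\precsim f$ because for every $t<0$ one may take $t'=t_0$ and use $x=f(0)\prec f(t_0)$, while $f\precsim c_x$ because for every $t<0$ the path property gives $f(t)\prec f(0)=x=c_x(t')$ for any $t'$. Thus $\sigma(\varphi_S([f]))=[c_{f(0)}]=[f]$, and $\varphi_S|_{\tau(S)_c}$ is a bijection with inverse $\sigma$.

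It remains to promote this bijection to a $\CatPom$-isomorphism, for which I only need that $\sigma$ is order-preserving (the map $\varphi_S$ already is, being a $\CatQ$-morphism). Here I would invoke the standard fact that a compact element is way-below every larger element: if $x\le y$ with $x\in S_c$, then $x\prec x\le y$ and axiom~(ii) give $x\prec y$, whence $c_x\precsim c_y$ and $\sigma(x)\le\sigma(y)$. The step I expect to require the most care is the interplay between the auxiliary relation $\prec$ of the $\CatQ$-semigroup $S$ and the genuine way-below relation of the $\CatCu$-semigroup $\tau(S)$; the characterization of $\ll$ in $\tau(S)$ recalled above is exactly the bridge between them, and constant paths are the device that realizes each self-way-below element of $S$ as a compact element of $\tau(S)$ with the prescribed endpoint.
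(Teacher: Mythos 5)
Your proposal is correct and follows essentially the same route as the paper: the restriction of the endpoint map together with constant paths over self-$\prec$-dominating elements, with \cite[Lemma~3.16]{AntPerThi17arX:AbsBivariantCu} supplying the link between compactness in $\tau(S)$ and the relation $f(0)\prec f(t_0)$. The only cosmetic difference is that you exhibit the inverse $x\mapsto[c_x]$ explicitly and check it is order-preserving, whereas the paper verifies directly that $\varphi_S$ restricted to $\tau(S)_c$ is an order-embedding; the underlying ingredients are identical.
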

\begin{proof}
Since the endpoint map $\varphi_S\colon\tau(S)\to S$ is a $\CatQ$-morphism, we see that its restriction to $\tau(S)_c$ is a $\CatPom$-morphism that maps compact elements to compact elements.
Given $a\in S_c$, the map $f_a\colon (-\infty,0]\to S$ given by $f_a(t)=a$ for every $t$ is clearly a path with $[f_a]\in\tau(S)_c$ and $\varphi_S([f_a])=a$.
Thus, $\varphi_S$ induces an additive, order-preserving bijection $\tau(S)_c\to S_c$.

Now, let $[f],[g]\in\tau(S)_c$ satisfy $\varphi_S([f])\leq\varphi_S([g])$.
By \cite[Lemma~3.16]{AntPerThi17arX:AbsBivariantCu} applied to $[g]\ll [g]$, there is $t_0<0$ such that $g(t)\prec g(t_0)$ for all $t<0$.
Therefore $\varphi_S([g])=\sup_{t<0}g(t)\leq g(t_0)$ and thus, for every $t<0$, we have $f(t)\prec g(t_0)$.
This implies $[f]\leq [g]$ and therefore $\varphi_S$ induces an order-embedding.
\end{proof}

\begin{cor}
\label{cor:cpcteltsinprods} Let $(S_j)_{j\in J}$ be a family of $\CatCu$-semigroups.
Then
\[
(\CatCuProd\limits_{j\in J}S_j)_c\cong\CatPomProd\limits_{j\in J}(S_j)_c\,.
\]
\end{cor}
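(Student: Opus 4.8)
The plan is to deduce this from \autoref{lma:cpctelts} together with the description of the product in $\CatCu$ as a $\tau$-completion. By \autoref{cor:Cuproductinverselimitspullbacks}, we have $\CatCuProd_{j\in J}S_j = \tau\big(\CatQProd_{j\in J}S_j\big) = \tau\big(\CatPomProd_{j\in J}S_j,\llpw\big)$, so if we write $S := \CatQProd_{j\in J}S_j$ for the product in $\CatQ$, then the left-hand side of the claim is exactly $(\tau(S))_c$.

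First I would apply \autoref{lma:cpctelts} to this $\CatQ$-semigroup $S$. Since the endpoint map is a $\CatQ$-morphism, the lemma provides a $\CatPom$-isomorphism $(\CatCuProd_{j\in J}S_j)_c = (\tau(S))_c \cong S_c$, where, following the convention fixed at the start of this subsection, $S_c$ denotes the set of $x\in S$ with $x\llpw x$.

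It then remains to identify $S_c$ explicitly. Recall from \autoref{dfn:auxPomLimit} that the auxiliary relation $\llpw$ on $S=\big(\CatPomProd_j S_j,\llpw\big)$ is defined componentwise: for $x=(x_j)_j$ and $y=(y_j)_j$ one has $x\llpw y$ if and only if $x_j\ll y_j$ in $S_j$ for every $j$. In particular $x\llpw x$ holds if and only if $x_j\ll x_j$ for all $j$, that is, if and only if each coordinate $x_j$ is compact in $S_j$. Hence $S_c = \CatPomProd_{j\in J}(S_j)_c$ as positively ordered monoids, the order and addition being componentwise on both sides. Composing this identity with the isomorphism of the previous step yields the desired $\CatPom$-isomorphism.

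I do not anticipate any genuine obstacle, since all of the substance is already carried by \autoref{lma:cpctelts}, and the only additional ingredient---that $\llpw$-self-domination is equivalent to componentwise compactness---is immediate from the componentwise definition of $\llpw$. The one point worth stating with care is that the bijection supplied by \autoref{lma:cpctelts} is an isomorphism of positively ordered monoids (it is additive and an order-embedding), so that the composite is genuinely a $\CatPom$-isomorphism and not merely a bijection of underlying sets.
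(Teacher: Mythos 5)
Your proof is correct and follows essentially the same route as the paper: identify $\CatCuProd_j S_j$ with $\tau(\CatQProd_j S_j)$ via \autoref{cor:Cuproductinverselimitspullbacks}, apply \autoref{lma:cpctelts} to get $(\tau(S))_c\cong S_c$, and then observe that compactness with respect to the componentwise relation $\llpw$ is exactly componentwise compactness. No gaps.
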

\begin{proof}
We have $\CatCuProd_j S_j=\tau(\CatQProd_j S_j)$, by \autoref{cor:Cuproductinverselimitspullbacks}.
Therefore, \autoref{lma:cpctelts} implies that
$(\CatCuProd_j S_j)_c\cong (\CatQProd_j S_j)_c$.
Since the auxiliary relation in $\CatQProd_J S_j$ is $\llpw$, it follows that $(\CatQProd_j S_j)_c$ is isomorphic to $\CatPomProd_j (S_j)_c$.
\end{proof}

\begin{prp}
\label{prp:ultracompact}
Let $\filter$ be an ultrafilter on a set $J$, and let $(S_j)_{j\in J}$ be a family of \CuSgp{s}.
Then there is a natural $\CatPom$-isomorphism
\[
(\prod_\filter S_j)_c\cong \prod_\filter (S_j)_c.
\]
In particular, for every compact $x\in\prod_\filter S_j$ there exists a compact $y\in\prod_j S_j$ with $x=\pi_\filter(y)$, where $\pi_\filter\colon \prod_{j\in J}S_j\to\prod_\filter S_j$ is the natural quotient map.
\end{prp}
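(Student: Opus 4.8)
The plan is to compare the two sides through their concrete models. By \autoref{cor:cu} we identify $\prod_\filter S_j$ with the quotient $\big(\prod_{j}S_j\big)/\cc_\filter\big((S_j)_j\big)$; write $P:=\prod_j S_j$, $I:=\cc_\filter\big((S_j)_j\big)$ and $q:=\pi_\filter\colon P\to P/I$. By \autoref{cor:cpcteltsinprods} the compact elements of $P$ are exactly the componentwise-compact families, $P_c\cong\CatPomProd_j (S_j)_c$. Finally, unwinding the definition of the categorical ultraproduct (\autoref{pgr:ultraprCat}) in $\CatPom$, the object $\prod_\filter (S_j)_c$ is the model-theoretic ultraproduct of the positively ordered monoids $(S_j)_c$; that is, $\prod_\filter (S_j)_c\cong P_c/\!\approx_\filter$, where $(y_j)_j\approx_\filter (y'_j)_j$ iff $\{j:y_j=y'_j\}\in\filter$ and the order is $(y_j)_j\le_\filter (y'_j)_j$ iff $\{j:y_j\le y'_j\}\in\filter$. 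Since $q$ is a $\CatCu$-morphism it preserves $\ll$, hence maps compacts to compacts and restricts to a $\CatPom$-morphism $q_c\colon P_c\to (P/I)_c$. The whole statement will follow once we show that, for $y,y'\in P_c$, one has $q(y)\le q(y')$ if and only if $y\le_\filter y'$, and that $q_c$ is surjective; together these exhibit $(\prod_\filter S_j)_c$ as $P_c/\!\approx_\filter=\prod_\filter (S_j)_c$, naturally and as positively ordered monoids.

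For the order characterization, recall that the quotient order reads $q(y)\le q(y')$ iff $y\le y'+e$ for some $e\in I$ (see \cite[Theorem~4.1]{CiuRobSan10CuIdealsQuot}). If $F:=\{j:y_j\le y'_j\}\in\filter$, take the componentwise-compact element $e$ with $e_j:=y_j$ for $j\notin F$ and $e_j:=0$ for $j\in F$; its support lies in $F^c\notin\filter$, so $e\in I$ by \autoref{ntn:cfilter}, and $y\le y'+e$ holds componentwise, giving $q(y)\le q(y')$. Conversely, if $y\le y'+e$ with $e=[(\vect e_t)_{t\le 0}]\in I$, then since $y$ and $y'$ are compact their constant paths give $\vect y\llpw \vect y'+\vect e_{t'}$ for some $t'<0$; on the complement of $\supp(\vect e_{t'})\notin\filter$ one has $y_j\ll y'_j$, whence $y_j\le y'_j$ there, so $\{j:y_j\le y'_j\}\in\filter$. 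In particular $q(y)=q(y')$ iff $y\approx_\filter y'$, so $q_c$ factors through an order-embedding $\prod_\filter (S_j)_c\hookrightarrow (\prod_\filter S_j)_c$.

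The remaining and main point is surjectivity of $q_c$, which is precisely the ``In particular'' assertion: every compact $x\in\prod_\filter S_j$ lifts to a componentwise-compact $y\in\prod_j S_j$. Represent $x=q([(\vect x_t)_{t\le0}])$ by an $\llpw$-increasing path. Since $x\ll x$ and $x=\sup_n q([(\vect x_{t-1/n})_t])$ with each summand way-below $x$ (the shifted paths are way-below $[(\vect x_t)_t]$, cf.\ \cite[Lemma~3.16]{AntPerThi17arX:AbsBivariantCu}), compactness yields $s_0<0$ with $x=q([(\vect x_{t+s_0})_t])$. Comparing the shifts $0$ and $s_0$ through $q$ gives $[(\vect x_t)_t]\le[(\vect x_{t+s_0})_t]+e$ for some $e\in I$; spelling out the order of paths (\autoref{pgr:tauconstr}) at the parameter $t:=s_0/2$, there is $t'<0$ with $x_{s_0/2,j}\ll x_{t'+s_0,j}+e_{t',j}$ for all $j$. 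Put $G:=\supp(\vect e_{t'})\notin\filter$. The crucial observation is that $t'+s_0<s_0<s_0/2$, so $x_{t'+s_0,j}\le x_{s_0/2,j}$; hence for $j\in G^c$ (where $e_{t',j}=0$) we obtain $x_{s_0/2,j}\ll x_{t'+s_0,j}\le x_{s_0/2,j}$, i.e.\ $x_{s_0/2,j}$ is compact. Defining $\vect y$ by $y_j:=x_{s_0/2,j}$ on $G^c\in\filter$ and $y_j:=0$ on $G$ gives a componentwise-compact element, and a short comparison (absorbing the discrepancy on the $\filter$-small set $G$ into $I$, exactly as in the order characterization) shows $q([c_{\vect y}])=x$. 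Together with the order-embedding above, this finishes the proof.

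The hard part is exactly this last step: from mere compactness in the ultraproduct one must manufacture genuine compactness componentwise on an $\filter$-large set. The obstruction is that way-below in a product is only ``uniform componentwise way-below,'' which by itself never produces compact components; the resolution is to use the ideal $\cc_\filter$ to discard an $\filter$-small set of bad indices and to exploit that, after a suitable shift, a value $x_{s_0/2,j}$ of the representing path becomes way-below an earlier---hence smaller---value of the same path, forcing $x_{s_0/2,j}\ll x_{s_0/2,j}$. Keeping the various path parameters consistent, so that the ``earlier value'' is genuinely dominated, is the only delicate bookkeeping.
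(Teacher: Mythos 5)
Your proof is correct, but it takes a genuinely different route from the paper's. The paper argues purely categorically: it writes $\prod_\filter S_j=\varinjlim_{F\in\filter}\prod_{j\in F}S_j$, applies \autoref{cor:cpcteltsinprods} at each stage to identify $(\prod_{j\in F}S_j)_c$ with $\CatPomProd_{j\in F}(S_j)_c$, and passes to the colimit, relying on the fact (asserted, not proved there) that taking compact elements commutes with directed colimits in $\CatCu$; the ``In particular'' clause is then absorbed into that commutation, since every element of the colimit comes from a finite stage $F\in\filter$ and extends by zero. You instead work in the concrete quotient picture $\prod_\filter S_j\cong\big(\prod_j S_j\big)/\cc_\filter\big((S_j)_j\big)$ of \autoref{cor:cu}: you characterize the quotient order on componentwise-compact elements as the $\filter$-almost-everywhere order, and you prove the lifting by hand, using compactness of $x$ to replace its representing path by a cut-down $[(\vect{x}_{t+s_0})_{t\le 0}]$ of itself and then observing that on the $\filter$-large set where the error term from $\cc_\filter$ vanishes one gets $x_{s_0/2,j}\ll x_{t'+s_0,j}\le x_{s_0/2,j}$, i.e.\ genuine componentwise compactness; the final comparison $q([c_{\vect{y}}])=x$ does check out as you sketch it. The only cosmetic point is your inequality $[(\vect{x}_t)_t]\le[(\vect{x}_{t+s_0})_t]+e$: if one insists on witnessing the quotient order only at way-below elements, this should be applied to one further cut-down of the left-hand side, which changes nothing since you only evaluate it at the parameter $s_0/2<0$ (and in any case the paper itself uses the stronger convention, e.g.\ in the proof of \autoref{prp:SimpleUltraProdGivesCn}). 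Each approach buys something: the paper's is shorter and uniform with the rest of \autoref{sec:ultraprod}, while yours makes the surjectivity of $\pi_\filter$ on compacts an explicit construction, isolates exactly where the ultrafilter property (complements of sets outside $\filter$ lie in $\filter$) is used, and avoids the unproved commutation of $(\freeVar)_c$ with directed colimits.
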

\begin{proof}
We have $\prod_\filter S_j=\varinjlim_{F\in\filter}\prod_{j\in F}S_j$, and thus $(\prod_\filter S_j)_c\cong \varinjlim_{F\in\filter}(\prod_{j\in F}S_j)_c$ with the restriction of the same connecting maps.
Now, by \autoref{cor:cpcteltsinprods}, for each $F\in\filter$ the end point map induces an isomorphism $(\prod_{j\in F}S_j)_c\cong \prod_{j\in F}(S_j)_c$, which is easily seen to be compatible with the connecting maps.
Therefore
\[
(\prod_\filter S_j)_c
\cong \varinjlim_{F\in\filter}\big( \prod_{j\in F}S_j \big)_c
\cong \varinjlim_{F\in\filter}\prod_{j\in F}(S_j)_c
= \prod_\filter (S_j)_c.
\qedhere
\]
\end{proof}

For a \ca{} $A$, we denote as customary by $V(A)$ the semigroup of Murray-von Neumann equivalence classes of projections in $A\otimes\KK$.
By \cite{BroCiu09IsoHilbModSF}, if $A$ is stably finite, then $V(A)$ can be identified with the semigroup of compact elements in $\Cu(A)$.
The following result for ultraproducts is implicit in \cite[Theorem~4.1]{Li05Ultraproducts}.

\begin{cor}
\label{cor:Mvnultraproducts}
Let $\filter$ be an ultrafilter on a set $J$, and let $(A_j)_{j\in J}$ be a family of stably finite, unital \ca{s}.
Put $u_j=[1_{A_j}]\in V(A_j)$ and $u=(u_j)_{j\in J}$. 
Then:
\begin{enumerate}
\item
$V(\prod_{j\in J}A_j) \cong \big\{ x\in\prod_{j\in J}V(A_j)\colon x\leq n u\text{ for some }n\in\NN \big\}$.
\item
$V(\prod_\filter A_j) \cong \big\{ x\in\prod_\filter V(A_j)\colon x\leq n\pi_\filter(u)\text{ for some }n\in\NN \big\}$. 
\end{enumerate}
\end{cor}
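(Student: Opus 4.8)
The plan is to reduce both statements to a computation of compact elements, leaning on the identification $V(A)\cong\Cu(A)_c$ for stably finite $A$ recalled above. First I would record the two stable finiteness inputs needed to apply this identification to the product and the ultraproduct. A product $\prod_{j\in J}A_j$ of stably finite \ca{s} is again stably finite, since a partial isometry in a matrix amplification $M_n(\prod_jA_j)=\prod_jM_n(A_j)$ witnessing infiniteness would witness it in some coordinate. An ultraproduct $\prod_\filter A_j$ of stably finite \ca{s} is also stably finite: lifting an isometry in $M_n(\prod_\filter A_j)$ to a representing sequence, it is close to invertible on a set in $\filter$, and finiteness of each $M_n(A_j)$ forces the defect projection to vanish along $\filter$. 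With these in hand we have $V(\prod_jA_j)\cong\Cu(\prod_jA_j)_c$ and $V(\prod_\filter A_j)\cong\Cu(\prod_\filter A_j)_c$.

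Next I would pin down the scales, which is where unitality enters. Since each $A_j$ is unital, every $a\in(A_j)_+$ satisfies $[a]\leq[\,\|a\|\,1_{A_j}]=u_j$, so the definition in \autoref{pgr:CuScaled} gives $\Sigma_{A_j}=\{x\in\Cu(A_j):x\leq u_j\}$. Writing $T:=\CatCuProd_j\Cu(A_j)$ and viewing $u=(u_j)_j$ as the compact element of $T$ given by the constant path at $(u_j)_j$, the concrete description of the scaled product in \autoref{pgr:scaledProd} then yields $\Sigma=\{y\in T:y\leq u\}$. Consequently the ideal $S$ generated by $\Sigma$ coincides with the ideal $\langle u\rangle$ generated by $u$; similarly $\pi_\filter(\Sigma)=\{z:z\leq\pi_\filter(u)\}$ and $\pi_\filter(S)=\langle\pi_\filter(u)\rangle$.

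For part~(1) I would then combine \autoref{thm:CuProd}, which identifies $\Cu(\prod_jA_j)$ via $\Phi$ with $S$ as a sub-\CuSgp{} of $T$, with \autoref{cor:cpcteltsinprods} together with $(\Cu(A_j))_c\cong V(A_j)$, giving $T_c\cong\CatPomProd_jV(A_j)$. Using that the inclusion of an ideal into a \CuSgp{} both preserves and reflects $\ll$ (a standard consequence of axiom~\axiomO{2}), the compact elements of $S$ are exactly $S\cap T_c$; and for a compact $x\in T_c$ one has $x\in\langle u\rangle$ if and only if $x\leq nu$ for some $n$. Chaining these isomorphisms gives $V(\prod_jA_j)\cong\{x\in\prod_jV(A_j):x\leq nu\text{ for some }n\}$. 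Part~(2) follows the identical template, now using \autoref{prp:CuUltraprodConcrete} to identify $\Cu(\prod_\filter A_j)$ with the ideal $\pi_\filter(S)=\langle\pi_\filter(u)\rangle$ of $\prod_\filter\Cu(A_j)$, and \autoref{prp:ultracompact} together with $(\Cu(A_j))_c\cong V(A_j)$ to identify the compact elements of $\prod_\filter\Cu(A_j)$ with $\prod_\filter V(A_j)$.

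I expect the main obstacle to be the bookkeeping around compactness in ideals: verifying that passing to the ideal $S$ (resp.\ $\pi_\filter(S)$) does not change which elements are way-below themselves, so that the compact elements computed inside the full product via \autoref{cor:cpcteltsinprods} and \autoref{prp:ultracompact} are exactly the ones relevant to $\Cu(\prod_jA_j)$ and $\Cu(\prod_\filter A_j)$. The stable finiteness of the ultraproduct is the other point requiring genuine, if routine, \ca{ic} input.
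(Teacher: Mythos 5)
Your proposal is correct and follows essentially the same route as the paper: both arguments reduce to stable finiteness of the (ultra)product, the identification $V\cong\Cu(\freeVar)_c$, the description of $\Cu(\prod_jA_j)$ as the ideal generated by $u$ (equivalently $\{x:x\leq\infty u\}$) via \autoref{thm:CuProd}, and then \autoref{cor:cpcteltsinprods} and \autoref{prp:ultracompact} to compute compact elements. The paper's proof is just a terser version of yours, leaving the stable finiteness of the (ultra)product, the computation of the scale in the unital case, and the compactness bookkeeping as observations.
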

\begin{proof}
First observe that both $\prod_i A_i$ and $\prod_\filter A_i$ are stably finite \ca{s}.
Therefore $V(\prod_jA_j)$ is identified with $\Cu(\prod_j A_j)_c$. 
By \autoref{thm:CuProd}, we have 
\[
\Cu(\prod_j A_j) \cong \big\{ x\in\CatCuProd_j\Cu(A_j) : x\leq \infty\, u \big\},
\]
and thus the desired conclusions follow from \autoref{cor:cpcteltsinprods}.
Statement~(2) follows analogously from \autoref{thm:CuUltraprod} and \autoref{prp:ultracompact}.
\end{proof}

\subsection{Simplicity}

\begin{pgr}
Recall that an ultrafilter $\filter$ is called \emph{countably complete} if it is closed under taking intersections of countably many elements.
If $\filter$ it not countably complete, then it is called \emph{countably incomplete}.
Every free ultrafilter on a countable set is countably incomplete.

It is well-known that the ultrapower of a separable \ca{} $A$ over a countably complete (free) ultrafilter is canonically isomorphic to $A$ itself;
see for example \cite[Proposition~6.1]{GeHad01Ultraproducts}.
Thus, with respect to a separable \ca{}, a countably complete, free ultrafilter behaves like a principal ultrafilter.
Let now $A$ be a simple, stable, separable, infinite-dimensional \ca{}, and let $\filter$ be a free ultrafilter.
If $\filter$ is countably incomplete, then $\prod_{\filter}A$ is neither stable (see \autoref{rmk:stableUltrapr}), nor separable, and usually not simple (see \autoref{prp:charSimpleUltraProdCa} for a characterization of simplicity of ultrapowers).
It follows that the behaviour of ultraproducts and ultrapowers is immensely different for countably complete and countably incomplete ultrafilters.

We remark that it is not obvious that countably complete, free utrafilters exist.
In fact, the existence of a countably complete, free ultrafilter is equivalent to the existence of a measurable cardinal, which puts this question outside of our usual axiomatic setting of ZFC and into the realm of set theory.
\end{pgr}

\begin{rmk}
\label{rmk:stableUltrapr}
In general, stability does not pass to ultraproducts or ultrapowers.
More precisely, let $A$ be a (stable) \ca{}, let $I$ be a set and let $\filter$ be a free ultrafilter on $I$. A result of Kania, \cite[Theorem~1.1]{Kan15CAlgNotTensProd}, shows that a \ca{} that is a Grothendieck space is not isomorphic to the tensor product of two infinite-dimensional \ca{s}.
Moreover, he showed that $\prod_{\filter}A$ is a Grothendieck space if $\filter$ is countably incomplete.
Thus, if $\filter$ is countably incomplete and $A$ is infinite-dimensional, then $\prod_{\filter}A$ is not stable. 
If $I$ is countable, this also follows from results of Farah and Hart, \cite{FarHar13CtblSatCorona}, and Ghasemi \cite{Gha15SAWNonFactorizable}.
\end{rmk}

\begin{dfn}
We say that a scaled \CuSgp{} $(S,\Sigma)$ has property $(C_n)$ if for all $x,y\in \Sigma$ with $y\neq 0$, we have $x\leq n y$.
\end{dfn}

\begin{lma}
\label{prp:UltraProdCn}
Let $\filter$ be a free ultrafilter on a set $J$, and let $(S_j,\Sigma_j)_{j\in J}$ be a family of scaled \CuSgp{s}.
Let $n\in\NN$ and $F\in\filter$ such that $(S_j,\Sigma_j)$ has $(C_n)$ for all $j\in F$.
Then $\prod_{\filter} (S_j,\Sigma_j)$ has $(C_n)$.
\end{lma}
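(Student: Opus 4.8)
The plan is to use the concrete picture of the ultraproduct as a quotient. By \autoref{cor:cu}, the underlying $\CatCu$-semigroup of $\prod_\filter(S_j,\Sigma_j)$ is $\big(\CatCuProd_{j\in J}S_j\big)/\cc_\filter\big((S_j)_j\big)$, with quotient map $\pi_\filter$, and its scale is $\Theta=\pi_\filter(\Sigma)$ (scales push forward along quotient maps, cf.\ \autoref{pgr:scalesIdealsQuotients} and the proof of \autoref{prp:CuUltraprodConcrete}), where $(S,\Sigma)$ is the scaled product of \autoref{pgr:scaledProd}. In particular, every element of $\Theta$ can be written as $\pi_\filter\big([(\vect x_t)_{t\leq 0}]\big)$ for a path with $\vect x_t\in\prod_{j\in J}\Sigma_j$ for all $t<0$. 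Fixing $x,y\in\Theta$ with $y\neq 0$, I would choose such representatives $x=\pi_\filter([(\vect x_t)_{t\leq0}])$ and $y=\pi_\filter([(\vect y_t)_{t\leq0}])$, writing $\vect x_t=(x_{t,j})_j$ and $\vect y_t=(y_{t,j})_j$, and aim to prove $x\leq ny$.

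First I would extract from $y\neq 0$ a level at which $\vect y$ is supported on a large set. Since $y\neq0$, the path $[(\vect y_t)_{t\leq 0}]$ does not lie in $\cc_\filter((S_j)_j)$, so \autoref{prp:ultraprodIdeal} provides $t_0<0$ with $G:=\supp(\vect y_{t_0})\in\filter$. Put $F':=F\cap G$, which lies in $\filter$ as $\filter$ is a filter. For each $j\in F'$ we then have $y_{t_0,j}\neq0$ and $(S_j,\Sigma_j)$ satisfies $(C_n)$; since $x_{s,j},y_{t_0,j}\in\Sigma_j$, property $(C_n)$ yields the coordinatewise bound $x_{s,j}\leq n\,y_{t_0,j}$ for every $s<0$ and every $j\in F'$.

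The key remaining step is to upgrade these pointwise inequalities to a comparison of whole paths, disposing of the coordinates $j\notin F'$ — on which $(C_n)$ gives no information — through an error term that is killed by $\pi_\filter$. To this end I would define a path $\vect z$ by $z_{t,j}:=x_{t,j}$ for $j\notin F'$ and $z_{t,j}:=0$ for $j\in F'$; it is $\llpw$-increasing and satisfies $\vect z_t=\sup_{s<t}\vect z_s$, and since $\supp(\vect z_t)\subseteq J\setminus F'\notin\filter$ for all $t<0$, \autoref{prp:ultraprodIdeal} gives $[(\vect z_t)_{t\leq0}]\in\cc_\filter((S_j)_j)$. Now, given $t<0$, choose $t'$ with $t<t'<0$ and $t'\geq t_0$ (possible since $t,t_0<0$). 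For $j\in F'$, combining $x_{t,j}\ll x_{t',j}$ (path) with $x_{t',j}\leq n\,y_{t_0,j}\leq n\,y_{t',j}$ (property $(C_n)$ and monotonicity of $\vect y$, as $t'\geq t_0$) gives $x_{t,j}\ll n\,y_{t',j}$; for $j\notin F'$ one has directly $x_{t,j}\ll x_{t',j}=z_{t',j}$. In both cases $x_{t,j}\ll n\,y_{t',j}+z_{t',j}$, hence $\vect x_t\llpw n\vect y_{t'}+\vect z_{t'}$. As $t<0$ was arbitrary, this is exactly $[(\vect x_t)_{t\leq0}]\leq n[(\vect y_t)_{t\leq0}]+[(\vect z_t)_{t\leq0}]$ in $\CatCuProd_{j\in J}S_j$; applying $\pi_\filter$ and using $\pi_\filter([(\vect z_t)_{t\leq0}])=0$ yields $x\leq ny$.

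The main obstacle is the parameter bookkeeping in the last step: property $(C_n)$ anchors the bound at the single level $t_0$, whereas an inequality of classes in the $\tau$-picture must be witnessed by one parameter $t'$ working simultaneously in all coordinates. Reconciling this forces one to feed the fixed-level $(C_n)$ bound through the way-below relation supplied by the path structure and, crucially, to discard the coordinates outside $F'$ via the $\cc_\filter$-error term; arranging a single $t'$ that serves both the $F'$ and the non-$F'$ coordinates at once is the delicate point.
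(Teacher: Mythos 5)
Your argument is correct and follows essentially the same route as the paper's: pass to the quotient picture $\prod_\filter S_j\cong(\prod_j S_j)/\cc_\filter$, use \autoref{prp:ultraprodIdeal} to find a level $t_0$ at which $\vect y$ is supported on a set $G\in\filter$, apply $(C_n)$ coordinatewise on $F\cap G$, and absorb the remaining coordinates into an error path $\vect z$ lying in $\cc_\filter$, finishing with a single intermediate parameter $t'$ witnessing $\vect x_t\llpw n\vect y_{t'}+\vect z_{t'}$. The only differences are notational (the paper calls your $t_0$ and $t'$ by $s$ and $\tilde s$), so there is nothing further to add.
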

\begin{proof}
Let $x,y\in\prod_{\filter} \Sigma_j$ with $y\neq 0$.
We need to show that $x\leq n y$.
Choose $\tilde{x},\tilde{y}\in\prod_j (S_j,\Sigma_j)$ with $\pi_\filter(\tilde{x})=x$ and $\pi_\filter(\tilde{y})=y$.
Then, choose $x_{t,j}, y_{t,j}\in S_j$ such that $\tilde{x}=[((x_{t,j})_j)_t]$ and $\tilde{y}_t=[((y_{t,j})_j)_t]$. 
Since $y\neq 0$, we have $\tilde{y}\notin c_\filter$ (see \autoref{cor:cu}).
Hence, we can fix $s<0$ such that $G:=\supp((y_{s,j})_j)$ belongs to~$\filter$.


For $t\leq 0$ and $j\in J$ set
\[
z_{t,j}
:=\begin{cases}
x_{t,j}, &\text{if }j\notin F\cap G \\
0, &\text{if }j\in F\cap G
\end{cases}.
\]
Then $z:=[((z_{t,j})_j)_t]$ belongs to $c_\filter \cap \CatCuProd_j (S_j,\Sigma_j)$.
We claim that $\tilde{x} \leq n \tilde{y} + z$.
To prove this, let $t<0$.
Choose $\tilde{s}$ with $t,s<\tilde{s}<0$ (with $s$ as chosen above).

Case~1:
Let $j\in F\cap G$.
Then $S_j$ has $(C_n)$, and we have $x_{\tilde{s},j}, y_{s,j}\in \Sigma_j$ with $y_{s,j}\neq 0$.
We obtain that $x_{\tilde{s},j}\leq n y_{s,j}$, and thus
\[
x_{t,j} \ll x_{\tilde{s},j} \leq  n y_{s,j} = n y_{\tilde{s},j} + u_{\tilde{s},j}.
\]

Case~2:
Let $j\notin F\cap G$.
Then
\[
x_{t,j} \ll x_{\tilde{s},j} = z_{\tilde{s},j} \leq n y_{\tilde{s},j}+z_{\tilde s,j}.
\]

Thus, $(x_{t,j})_j \llpw n (y_{\tilde{s},j})_j + (z_{\tilde{s},j})_j$.
Hence, $\tilde{x} \leq n \tilde{y} + z$ in $\prod_j S_j$, and thus $x\leq ny$ in $\prod_{\filter} (S_j,\Sigma_j)$.
\end{proof}

Recall that a $\CatCu$-semigroup $S$ is said to satisfy \axiomO{6} if for all $x',x,y,z\in S$ satisfying $x'\ll x\leq y+z$,
there exist $e,f\in S$ with $e\leq x,y$ and $f\leq x,z$, and such that $x'\leq e+f$. 
By \cite[Proposition~5.1.1]{Rob13Cone}, Cuntz semigroups of \ca{s} satisfy \axiomO{6}.

\begin{lma}
\label{prp:CnSimple}
Let $(S,\Sigma)$ be a scaled \CuSgp{} satisfying \axiomO{6}. 
Assume that $(S,\Sigma)$ has $(C_n)$ for some $n\geq 1$.
Then $S$ is simple.
\end{lma}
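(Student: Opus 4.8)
The plan is to show that every nonzero ideal $J\subseteq S$ must coincide with $S$. Since the scale $\Sigma$ generates $S$ as an ideal, it suffices to prove that $\Sigma\subseteq J$: once $J$ is an ideal containing $\Sigma$, minimality of the generated ideal forces $S=\langle\Sigma\rangle\subseteq J$. To obtain $\Sigma\subseteq J$ I would use property $(C_n)$ as the final step: if I can exhibit a \emph{single} nonzero element $y\in\Sigma\cap J$, then for every $x\in\Sigma$ we get $x\le ny$ by $(C_n)$, and since $ny\in J$ (as $J$ is a submonoid) and $J$ is downward hereditary, it follows that $x\in J$. Thus the whole problem reduces to producing one nonzero element of $\Sigma$ lying in $J$.

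To produce it, I would start from any nonzero $z\in J$ and use \axiomO{2} to pick a nonzero $z'\ll z$. Because $\Sigma$ generates $S$ as an ideal (by verifying that the set of all $x\in S$ for which every $x'\ll x$ lies below some finite sum of elements of $\Sigma$ is an ideal containing $\Sigma$, hence equals $S$), one gets $z'\le y_1+\cdots+y_m$ for some $y_1,\ldots,y_m\in\Sigma$. I then claim that $\Sigma\cap J$ contains a nonzero element, which I would prove by induction on $m$. For $m=1$ the element $z'$ works: $z'\le y_1\in\Sigma$ gives $z'\in\Sigma$, and $z'\le z\in J$ gives $z'\in J$. For $m\ge 2$, group $y_1+\cdots+y_m=y_1+(y_2+\cdots+y_m)$, pick a nonzero $z''\ll z'$ via \axiomO{2}, and apply \axiomO{6} to $z''\ll z'\le y_1+(y_2+\cdots+y_m)$ to obtain $e\le z',y_1$ and $f\le z',\,y_2+\cdots+y_m$ with $z''\le e+f$. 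Since $z''\ne 0$, at least one of $e,f$ is nonzero; if $e\ne 0$ then $e\in\Sigma\cap J$ and we are done, whereas if $f\ne 0$ then $f\in J$, $f\le y_2+\cdots+y_m$, and applying the inductive hypothesis to a nonzero $f'\ll f$ yields the desired element.

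The key point, and the only place where the hypothesis \axiomO{6} is genuinely used, is this last extraction of a nonzero common lower bound of $z'$ and one of the scale summands: a priori none of the pieces $y_i$ need lie in $J$, and \axiomO{6} is exactly what allows cutting the sum down to a nonzero piece that simultaneously sits below $z'\in J$ and below a single $y_i\in\Sigma$. The grouping-plus-recursion device is what makes the two-term decomposition in \axiomO{6} suffice for an arbitrary finite sum, so that no full multi-term Riesz refinement is needed; I expect this to be the main (though mild) obstacle. Everything else, namely the reduction through the generated ideal and the concluding application of $(C_n)$, is routine.
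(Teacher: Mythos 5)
Your proposal is correct and follows essentially the same route as the paper's proof: both arguments reduce to extracting a single nonzero element of $\Sigma$ in the ideal of a given nonzero element by repeatedly applying \axiomO{6} to a decomposition $z'\leq y_1+\cdots+y_m$ with $y_i\in\Sigma$, and then conclude with $(C_n)$. The only differences are presentational — you run the \axiomO{6} step as an induction on the number of summands that stops at the first nonzero piece, whereas the paper iterates through all summands and picks a nonzero $e_k$ at the end, and you phrase simplicity as ``every nonzero ideal is all of $S$'' rather than ``$x\leq\infty y$ for all $x$ and all nonzero $y$''.
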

\begin{proof}
Pick $n\geq 1$ such that $(S,\Sigma)$ has $(C_n)$.
Let $x,y\in S$ with $y\neq 0$.
We need to show that $x\leq\infty y$.

\emph{Claim: There exists $z\in\Sigma$ with $0\neq z\leq y$.}
To show this, choose $y'',y'\in S$ with $0\neq y'' \ll y' \ll y$.
Using that $\Sigma$ generates $S$ as an ideal, we can then choose $y_1,\ldots,y_K\in\Sigma$ such that
\[
y''\ll y' \leq y_1 + \ldots + y_K.
\]
Choose $y_1',\dots,y_K'$ such that $y''\ll y_K'\ll\dots\ll y_1'\ll y'$.
Applying \axiomO{6} for $y_1'\ll y'\leq y_1+(y_2+\ldots+y_K)$, we obtain $e_1$ such that $e_1\leq y_1,y'$ (in particular, $e_1\in\Sigma$) and
\[
y_1' \leq e_1 + y_2 + \ldots + y_K.
\]
In the next step, we apply \axiomO{6} for $y_2'\ll y_1'\leq y_2+(e_1+y_3+\ldots+y_K)$ to obtain $e_2\in\Sigma$ such that 
\[
y_2' \leq e_1 + e_2 + y_3 + \ldots + y_K.
\]
Iteratively, we obtain $e_1,\ldots,e_K\in\Sigma$ such that
\[
y_K' \leq e_1 + \ldots + e_K.
\]
Since $0\neq y''\leq y_K'$, there exists $k\in\{1,\ldots,K\}$ with $e_k\neq 0$.
Then $z:=e_k$ has the desired properties to verify the claim.

It is enough to verify that $x'\leq\infty y$ for every $x'\ll x$.
Thus, let $x'\ll x$.
Using again that $\Sigma$ generates $S$ as an ideal, we obtain $x_1,\ldots,x_L\in\Sigma$ such that
\[
x' \leq x_1 + \ldots + x_L.
\]
Since $(S,\Sigma)$ has $(C_n)$, we have $x_l \leq n z$ for every $l=1,\ldots,L$.
Hence, 
\[
x' 
\leq x_1 + \ldots + x_L 
\leq Ln z
\leq Ln y
\leq \infty y.\qedhere
\]
\end{proof}

The next examples shows that \autoref{prp:CnSimple} does not hold without assuming \axiomO{6}. 

\begin{exa}
Set $S=\{0,x,y,\infty\}$ with addition given by
\[
y+y=y, \quad 
x+x=x+y=x+\infty=y+\infty=\infty+\infty=\infty,
\]
and equipped with the algebraic order (for two elements $s,t$ we set $s\leq t$ if there exists $u$ with $s+u=t$).
Note that $x$ and $y$ are incomparable.
Then $S$ is a \CuSgp.
Further, $\Sigma:=\{0,x\}$ is a scale such that $(S,\Sigma)$ satisfies $(C_1)$.
We have $\infty\nleq\infty y$, which shows that $S$ is not simple.
Note that $S$ does not satisfy \axiomO{6}:
We have $y\ll y\leq x+x$, but no elements $e,f$ satisfying $y\leq e+f$ and $e,f\leq x,y$.
\end{exa}

\begin{lma}
\label{prp:SimpleUltraProdGivesCn}
Let $\filter$ be a countably incomplete, free ultrafilter on a set $J$, and let $(S_j,\Sigma_j)_{j\in J}$ be a family of scaled \CuSgp{s} such that $\prod_{\filter} (S_j,\Sigma_j)$ is simple.
Then there exist $n\in\NN$ and $F\in\filter$ such that $S_j$ has $(C_n)$ for all $j\in F$.
\end{lma}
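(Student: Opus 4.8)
The plan is to argue by contraposition: assuming the negation of the conclusion, I will construct a nonzero element $y$ and an element $x$ in $\prod_{\filter} S_j$ with $x\not\leq\infty y$, so that the ideal generated by $y$ is proper and nonzero, contradicting simplicity. Writing $B_n:=\{j\in J: (S_j,\Sigma_j)\text{ fails }(C_n)\}$, note that $(C_n)$ implies $(C_{n+1})$, so the $B_n$ form a decreasing sequence. The negation of the conclusion says that no $F\in\filter$ is contained in the complement of any $B_n$; since $\filter$ is an ultrafilter this is equivalent to $B_n\in\filter$ for every $n$. Using that $\filter$ is countably incomplete, I would first fix a decreasing sequence $(F_n)_n$ in $\filter$ with $\bigcap_n F_n=\emptyset$, and set $E_n:=B_n\cap F_n$; then $(E_n)_n$ is decreasing, lies in $\filter$, satisfies $E_n\subseteq B_n$, and $\bigcap_n E_n=\emptyset$. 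For each $j$ the set $\{n:j\in E_n\}$ is a finite initial segment $\{1,\dots,n(j)\}$, and $\{j:n(j)\geq m\}=E_m\in\filter$ for every $m$, so $n(j)\to\infty$ along $\filter$. For each $j$ with $n(j)\geq1$, since $S_j$ fails $(C_{n(j)})$, I would pick $x_j,y_j\in\Sigma_j$ with $y_j\neq0$ and $x_j\not\leq n(j)y_j$, and then, using \axiomO{2}, an $x_j'\ll x_j$ with $x_j'\not\leq n(j)y_j$; for $n(j)=0$ put $x_j=y_j=x_j'=0$.

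Next I would realize $x$ and $y$ as images under $\pi_\filter$ of suitable elements of $\CatCuProd_j S_j=\tau(\CatPomProd_j S_j,\llpw)$. Using that the $\tau$-construction is vacuous on \CuSgp{s} (\cite[Proposition~4.10]{AntPerThi17arX:AbsBivariantCu}), for each $j$ there is a path in $S_j$ with endpoint $y_j$, which after an order-preserving reparametrisation of $(-\infty,0]$ I may take to be nonzero already at $t=-1$ whenever $y_j\neq0$; call it $\rho_j$, and similarly a path $\sigma_j$ with endpoint $x_j$ which I may arrange to satisfy $\sigma_j(-3)\geq x_j'$. Setting $\vect{y}_t:=(\rho_j(t))_j$ and $\vect{x}_t:=(\sigma_j(t))_j$ gives $\llpw$-increasing paths, and I put $y:=\pi_\filter([(\vect{y}_t)_t])$ and $x:=\pi_\filter([(\vect{x}_t)_t])$. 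Since $\supp(\vect{y}_{-1})\supseteq E_1\in\filter$, \autoref{prp:ultraprodIdeal} (via \autoref{cor:cu}) shows $[(\vect{y}_t)_t]\notin\cc_\filter\big((S_j)_j\big)$, hence $y\neq0$.

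Finally, assume for contradiction that $\prod_{\filter} S_j$ is simple, so $x\leq\infty y=\sup_m my$. Applying $\pi_\filter$ (a \CuMor, hence $\ll$-preserving) to the way-below relation $[(\vect{x}_{t-1})_t]\ll[(\vect{x}_t)_t]$ of \cite[Lemma~3.16]{AntPerThi17arX:AbsBivariantCu}, I get an element $x''\ll x$, and from $x''\ll x\leq\sup_m my$ an $m$ with $x''\leq my$. Unwinding this inequality through the description of the order in the quotient \CuSgp{} $\prod_{\filter} S_j\cong(\prod_j S_j)/\cc_\filter((S_j)_j)$ (\cite[Theorem~4.1]{CiuRobSan10CuIdealsQuot}) and then through the definition of the order on $\tau(\CatPomProd_j S_j,\llpw)$, I would obtain, for the further-shifted representative $[(\vect{x}_{t-2})_t]\ll x''$, some $t'<0$ and some $w=[(\vect{w}_s)_s]\in\cc_\filter((S_j)_j)$ with $\vect{x}_{-3}\llpw m\vect{y}_{t'}+\vect{w}_{t'}$. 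Componentwise this yields $x_j'\leq\sigma_j(-3)\ll my_j+w_{t',j}$, so on the set $W:=\{j:w_{t',j}=0\}$, which lies in $\filter$ because $\supp(\vect{w}_{t'})\notin\filter$, we get $x_j'\leq my_j$. On the other hand $E_{m+1}\in\filter$, and for $j\in E_{m+1}$ we have $n(j)>m$ and $x_j'\not\leq n(j)y_j$, whence $x_j'\not\leq my_j$; thus $W$ and $E_{m+1}$ are disjoint members of $\filter$, which is absurd. Therefore $x\not\leq\infty y$, contradicting simplicity, and the conclusion follows. The main obstacle is the bookkeeping of the path parameters: I must arrange the reparametrisations of the $\rho_j$ and $\sigma_j$ so that, after the shifts forced by the way-below relation and by the two order descriptions (quotient and $\tau$), the witness $x_j'$ still sits below the relevant value $\vect{x}_{-3}$; keeping these time-shifts compatible is the delicate part.
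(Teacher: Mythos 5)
Your proposal is correct and follows essentially the same route as the paper's proof: use countable incompleteness to diagonalize the failures of $(C_n)$ so that the failure degree tends to infinity along $\filter$, build the paths $\vect{x}_t$, $\vect{y}_t$ from witnesses in the scales, and contradict $x\leq\infty y$ by producing two disjoint sets in $\filter$ (the paper instead picks a single index $j\in T\cap(F_N\setminus F_{N+1})$, which is the same contradiction). The only differences are cosmetic: you index the witnesses by $n(j)=\max\{n: j\in E_n\}$ rather than by the partition $F_n\setminus F_{n+1}$, and you track a fixed compact witness $x_j'\ll x_j$ instead of arranging the whole path $x_{t,j}$ to violate the inequality.
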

\begin{proof}
We argue by contradiction.
Thus, assume that for all $n\in\NN$, and for all $F\in\filter$, there is $j\in F$ such that $S_j$ does not have $(C_n)$.
Let us write $c_\filter$ for $c_\filter((S_j)_j)$.
By \autoref{cor:cu}, we have $\prod_{\filter} S_j \cong (\prod_j S_j)/c_\filter$, and we let $\pi_\filter\colon\prod_j S_j\to \prod_{\filter} S_j$ denote the quotient map.

Since $\filter$ is countably incomplete, we may find a decreasing sequence $(F_n)_n$ of elements in $\filter$ such that $\bigcap_n F_n=\emptyset$.
By our assumption the sets 
\[
G_ n:= \big\{ j\in F_n : S_j \text{ does not have } (C_n) \big\}
\]
also belong to $\filter$, and they also form a decreasing sequence with empty intersection.

Replacing $F_n$ by $G_n$ we may assume that for every $j\in F_n$, the semigroup $S_j$ does not have $(C_n)$.
For each $n\geq 1$, and each $j\in F_n\setminus F_{n+1}$, choose elements $x_j$, $y_j\in \Sigma_j$ with $y_j\neq 0$ and $x_j\not\leq n y_j$.

Further, choose paths $(x_{t,j})_{t\leq 0}$ in $S_j$ with $x_j=x_{0,j}$ such that $x_{t,j}\not\leq ny_j$ for all $t\leq 0$.
Likewise, choose paths $(y_{t,j})_{t\leq 0}$ in $S_j$ with $y_j=y_{0,j}$ and such that $y_{t,j}\neq 0$ for all $t\leq 0$.
For $t\leq 0$ set
\[
\vect{x}_t
:=\begin{cases}
x_{t,j}, &\text{if }j\in F_n\setminus F_{n+1} \\
0, &\text{otherwise};
\end{cases}
\andSep
\vect{y}_t
:=\begin{cases}
y_{t,j}, &\text{if }j\in F_n\setminus F_{n+1} \\
0, &\text{otherwise.}
\end{cases}
\]
Set $x=[(\vect{x}_t)_{t\leq 0}]$ and $y=[(\vect{y}_t)_{t\leq 0}]$ in $\prod_j S_j$.
By construction, we have $x,y\in \prod_j(S_j,\Sigma_j)$.
Since $\supp(\vect{y}_{-1})=F_1\in\filter$, we have $y\notin c_\filter$, and therefore $\pi_\filter(y)\neq 0$.
Using that $\prod_\filter (S_j,\Sigma_j)$ is simple, it follows that $\pi_\filter(x)\leq\infty\pi_\filter(y)$.

Let $\varepsilon>0$, and let $x_\varepsilon:=[(\vect{x}_{t-\varepsilon})_{t\leq 0}]$ be the $\varepsilon$-cut-down of $x$.
We have $x_\varepsilon \ll x$ in $\prod_j(S_j,\Sigma_j)$, and therefore
\[
\pi_\filter (x_\varepsilon)
\ll \pi_\filter(x)
\leq \infty \pi_\filter(y)
= \sup_m m\pi_\filter(y),
\]
in $\prod_\filter(S_j,\Sigma_j)$.
We obtain $m\in\NN$ such that $\pi_\filter(x_\varepsilon) \leq m\pi_\filter(y)$, and thus there is $z\in c_\filter$ such that $x_\varepsilon \leq m y +z$.
Choose $z_{t,j}\in S_j$ such that $z=[((z_{t,j})_{j\in J})_{t\leq 0}]$.

Fix any $t<0$.
Then there is $s<0$ such that $\vect{x}_{t-\varepsilon}\llpw m\vect{y}_s+\vect{z}_s$. 
Set $T:=(\supp (\vect{z}_s))^c$.
Since $z\in c_\filter$, we have $T\in\filter$, and thus there is $N\geq m$ with $T\cap (F_N\setminus F_{N+1})\neq\emptyset$.
Choose $j$ in $T\cap (F_N\setminus F_{N+1})$.
Then $z_{s,j}=0$, and thus
\[
(\vect{x}_{t-\varepsilon})_j=x_{t-\varepsilon,j}\leq my_{s,j}\leq my_j\leq N y_j,
\]
in contradiction with our choice.
\end{proof}

\begin{thm}
\label{prp:charSimpleGenUltraProd}
Let $\filter$ be a countably incomplete, free ultrafilter on a set $J$, and let $(S_j,\Sigma_j)_{j\in J}$ be a family of scaled \CuSgp{s} that satisfy \axiomO{6}.
Then the following are equivalent:
\begin{enumerate}
\item 
$\prod_{\filter} (S_j,\Sigma_j)$ is simple.
\item 
$\prod_{\filter} (S_j,\Sigma_j)$ has $(C_n)$ for some $n\geq 1$.
\item 
There exist $n\in\NN$ and $F\in\filter$ such that $S_j$ has $(C_n)$ for all $j\in F$.
\end{enumerate}

In particular, if $(S,\Sigma)$ is a scaled \CuSgp{} satisfying \axiomO{6}, then the ultrapower $\prod_{\filter}(S,\Sigma)$ is simple if and only if $(S,\Sigma)$ has $(C_n)$ for some $n$.
\end{thm}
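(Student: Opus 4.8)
The plan is to prove the equivalence of the three conditions through the cyclic chain $(1)\Rightarrow(3)\Rightarrow(2)\Rightarrow(1)$, each arrow being supplied by one of the preceding lemmas. The implication $(1)\Rightarrow(3)$ is precisely \autoref{prp:SimpleUltraProdGivesCn}, and this is the only place where the hypothesis that $\filter$ be countably incomplete enters. The implication $(3)\Rightarrow(2)$ is precisely \autoref{prp:UltraProdCn}. For $(2)\Rightarrow(1)$ the natural move is to apply \autoref{prp:CnSimple} to the scaled \CuSgp{} $\prod_\filter(S_j,\Sigma_j)$, which has $(C_n)$ by assumption.

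The subtlety is that \autoref{prp:CnSimple} also requires its input to satisfy \axiomO{6}, whereas our hypothesis only furnishes \axiomO{6} on each factor $S_j$. Establishing that \axiomO{6} is inherited by the ultraproduct is the main obstacle. One route is to prove that \axiomO{6} passes to arbitrary products and to quotients by ideals, and then to invoke the presentation $\prod_\filter S_j\cong(\CatCuProd_j S_j)/\cc_\filter\big((S_j)_j\big)$ from \autoref{cor:cu}. In the path model of \autoref{pgr:tauconstr}, verifying \axiomO{6} for $\CatCuProd_j S_j$ amounts to representing $x'\ll x\le y+z$ by $\llpw$-increasing paths, applying \axiomO{6} componentwise in each $S_j$, and reassembling the resulting witnesses into genuine paths; the reassembly is the delicate point, because the componentwise witnesses need not be monotone or continuous in the path parameter and must be coherently chosen, using the $\ll$-increasing sequences afforded by \axiomO{2}.

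To avoid proving \axiomO{6} for the whole ultraproduct, I would instead localize the proof of \autoref{prp:CnSimple} to the factors. Inspecting that proof, \axiomO{6} is used only in its internal \emph{Claim}, which produces a nonzero scale element beneath a given nonzero element; the remainder uses solely $(C_n)$ and the fact that the scale generates the semigroup as an ideal. Accordingly, given nonzero $\pi_\filter(y)$, I would lift $y$ to an $\llpw$-increasing path whose support at some level $s<0$ lies in $\filter$, run the Claim \emph{inside each factor $S_j$ with $j\in F$ and $y_{s,j}\neq0$} (a filter-large set of indices, where \axiomO{6} is available) to obtain nonzero $z_j\in\Sigma_j$ with $z_j\le y_{s,j}$, and assemble these into an $\llpw$-increasing path realising a nonzero $z\in\Sigma$ below $\pi_\filter(y)$. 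With $(C_n)$ on the ultraproduct provided by \autoref{prp:UltraProdCn}, the concluding estimate $x'\le x_1+\dots+x_L\le Lnz\le\infty\,\pi_\filter(y)$ then carries over verbatim. The one technical point here, as in the other route, is the assembly of the path representing $z$.

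Finally, the closing assertion is the special case of the constant family $S_j=(S,\Sigma)$: since $J\in\filter$, condition $(3)$ becomes simply that $(S,\Sigma)$ has $(C_n)$ for some $n$, so the main equivalence yields that the ultrapower $\prod_\filter(S,\Sigma)$ is simple exactly in that case.
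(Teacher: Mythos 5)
Your skeleton coincides with the paper's: the cycle $(1)\Rightarrow(3)\Rightarrow(2)\Rightarrow(1)$, with $(1)\Rightarrow(3)$ given by \autoref{prp:SimpleUltraProdGivesCn} and $(3)\Rightarrow(2)$ by \autoref{prp:UltraProdCn}. The divergence is in $(2)\Rightarrow(1)$: the paper simply records that \axiomO{6} passes to products and quotients in $\CatCu$ (asserted as straightforward, not proved), so that $\prod_\filter(S_j,\Sigma_j)$ itself satisfies \axiomO{6} and \autoref{prp:CnSimple} applies directly, whereas you keep \axiomO{6} on the factors and re-run the Claim of \autoref{prp:CnSimple} coordinatewise. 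Both routes are viable, and yours has the merit of not requiring the (somewhat fiddly) verification that \axiomO{6} survives the $\tau$-construction on products. One concrete caveat about your localized route: the step ``assemble these into an $\llpw$-increasing path realising a nonzero $z\in\Sigma$'' is exactly where it can fail if done naively. If the paths ending at the $z_j$ are chosen arbitrarily, the assembled path can have $\supp(\vect{z}_t)\notin\filter$ for every $t<0$, i.e.\ $z\in\cc_\filter\big((S_j)_j\big)$ and $\pi_\filter(z)=0$ (for instance on $J=\NN$ with the $j$-th path vanishing below $t=-1/j$). You must either choose, for each $j$ in the filter-large set, a nonzero $c_j\ll z_j$ and a path passing through $c_j$ at a \emph{uniform} parameter, say $t=-1$, so that $\supp(\vect{z}_{-1})\in\filter$; or avoid building $z$ globally at all by applying $(C_n)$ inside each factor to get $x_{l,t,j}\le n\,y_{s,j}$ for the filter-large coordinates and absorbing the remaining coordinates into an element of $\cc_\filter\big((S_j)_j\big)$, exactly as in the proof of \autoref{prp:UltraProdCn}. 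With that repair the argument closes, and your reduction of the ultrapower statement to the constant family is the same as the paper's.
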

\begin{proof}
By \autoref{prp:SimpleUltraProdGivesCn}, (1) implies~(3), and by \autoref{prp:UltraProdCn} (3) implies~(2).
It is straightforward to verify that \axiomO{6} passes to products and quotients in $\CatCu$.
Hence, $\prod_{\filter} (S_j,\Sigma_j)$ satisfies \axiomO{6}, whence we can apply \autoref{prp:CnSimple} to deduce that~(2) implies~(1).
\end{proof}

Let $n\in\NN$.
Adapting \cite[Definition~4.3]{KirRor02InfNonSimpleCalgAbsOInfty} from \ca{s} to \CuSgp{s}, let us say that a \CuSgp{} $S$ is \emph{pi-$n$} if $2nx=nx$ for every $x\in S$.
Further, we say that $S$ is \emph{weakly purely infinite} if it is pi-$n$ for some $n$.

Note that a \CuSgp{} $S$ is simple and pi-$n$ if and only if for all $x,y\in S$ with $y\neq 0$ we have $x\leq ny$.
Thus, $S$ is simple and pi-$n$ if and only if the trivially scaled \CuSgp{} $(S,S)$ has $(C_n)$.

\begin{cor}
Let $\filter$ be a countably incomplete, free ultrafilter on a set $J$, and let $(S_j)_{j\in J}$ be a family of \CuSgp{s}.
Then the following are equivalent:
\begin{enumerate}
\item 
$\prod_{\filter} S_j$ is simple.
\item 
$\prod_{\filter} S_j$ is simple and weakly purely infinite.
\item 
There exist $n\in\NN$ and $F\in\filter$ such that $S_j$ is simple and pi-$n$ for all $j\in F$.
\end{enumerate}

In particular, the ultrapower $\prod_{\filter}S$ of a \CuSgp{} $S$ is simple if and only if~$S$ is simple and weakly purely infinite.
\end{cor}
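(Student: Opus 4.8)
The plan is to reduce everything to the scaled statements already proved, by equipping each $S_j$ with its \emph{trivial} scale $\Sigma_j:=S_j$. The key dictionary is the observation recorded just before the statement: for any \CuSgp{} $T$, the trivially scaled pair $(T,T)$ satisfies $(C_n)$ if and only if $T$ is simple and pi-$n$. Under this dictionary, condition~(3) is precisely the assertion that $(S_j,S_j)$ has $(C_n)$ for all $j\in F$, and the conditions on $\prod_\filter S_j$ in~(1) and~(2) become conditions on the scaled ultraproduct $\prod_\filter(S_j,S_j)$ --- once we know that this scaled ultraproduct is again trivially scaled, i.e.\ equal to $(\prod_\filter S_j,\prod_\filter S_j)$.

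Establishing that identification is the main technical point. First, the product in $\CatCuScaled$ of trivially scaled semigroups is trivially scaled: in the product construction one has $\prod_j\Sigma_j=\CatPomProd_j S_j$, which forces the scale of $\CatCuProd_{j\in F}(S_j,S_j)$ to be all of $\CatCuProd_{j\in F}S_j$. The ultraproduct is the inductive limit over $\filter$ of these finite-support products, so by the cocompleteness construction (proof of \autoref{prp:CuScaledComplete}) its underlying semigroup is the $\CatCu$-ultraproduct $\prod_\filter S_j$, and its scale is the smallest downward-hereditary, sup-closed subset containing the images $\sigma_F(\CatCuProd_{j\in F}S_j)$. The hard part is to see this subset is everything; for this I would invoke the standard cofinality property of inductive limits in $\CatCu$, that for every $z\in\prod_\filter S_j$ and every $z'\ll z$ there are $F\in\filter$ and $w$ with $z'\le\sigma_F(w)$. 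Downward heredity then places each such $z'$ in the scale, and sup-closure places $z=\sup z'$ in it, so the scale is all of $\prod_\filter S_j$.

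With this identification in hand I would close the cycle $(1)\Rightarrow(3)\Rightarrow(2)\Rightarrow(1)$. For $(1)\Rightarrow(3)$: simplicity of $\prod_\filter S_j$ is simplicity of $\prod_\filter(S_j,S_j)$, so \autoref{prp:SimpleUltraProdGivesCn} yields $n$ and $F\in\filter$ with $(S_j,S_j)$ satisfying $(C_n)$ for all $j\in F$, i.e.\ $S_j$ simple and pi-$n$ for all $j\in F$. For $(3)\Rightarrow(2)$: the dictionary turns~(3) into the hypothesis of \autoref{prp:UltraProdCn}, whose conclusion is that $\prod_\filter(S_j,S_j)$ has $(C_n)$; since this pair is trivially scaled, the dictionary reads this as $\prod_\filter S_j$ being simple and pi-$n$, which is~(2). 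Finally $(2)\Rightarrow(1)$ is immediate. The crucial structural remark is that, in contrast to \autoref{prp:charSimpleGenUltraProd}, \emph{no} use of \axiomO{6} is needed: the only place it entered the theorem was \autoref{prp:CnSimple}, which recovers simplicity from $(C_n)$, and here that step is unnecessary because condition~(2) already carries simplicity. The ``in particular'' is the case of a constant family $S_j=S$: then~(3) collapses to ``$S$ is simple and pi-$n$ for some $n$'', i.e.\ $S$ is simple and weakly purely infinite, so $(1)\Leftrightarrow(3)$ gives the ultrapower dichotomy.
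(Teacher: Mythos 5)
Your proof is correct and follows essentially the same route as the paper: the paper's own argument is the one-line observation that (2)$\Rightarrow$(1) is clear and that (1)$\Rightarrow$(3) and (3)$\Rightarrow$(2) follow from Lemmas~\ref{prp:SimpleUltraProdGivesCn} and~\ref{prp:UltraProdCn} applied to the trivially scaled semigroups $(S_j,S_j)$, together with the remark that neither lemma needs \axiomO{6}. The only difference is that you spell out the (correct, and implicitly used) identification of $\prod_\filter(S_j,S_j)$ with the trivially scaled $\prod_\filter S_j$, which the paper leaves to the reader.
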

\begin{proof}
It is clear that~(2) implies~(1).
As in the proof of \autoref{prp:charSimpleGenUltraProd}, we obtain that~(1) implies~(3), and that~(3) implies~(2).
(Note that Lemmas~\ref{prp:SimpleUltraProdGivesCn} and~\ref{prp:UltraProdCn} do not require \axiomO{6}.)
\end{proof}

\begin{lma}
\label{prp:charCaCn}
Let $A$ be a \ca{}, and let $n\geq 1$.
Then $(\Cu(A),\Sigma_A)$ satisfies $(C_n)$ if and only if $A\cong M_k(\CC)$ for $k\leq n$, or $A$ is simple and purely infinite.
\end{lma}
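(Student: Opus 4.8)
The plan is to prove the two implications separately, translating the abstract scale condition $(C_n)$ into a Cuntz comparison between honest positive elements and then feeding it into the structure theory of simple \ca{s}.

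For the implication ``$\Leftarrow$'' I would simply compute the scale in each case. If $A\cong M_k(\CC)$ then $\Cu(A)\cong\NNbar$ and, since every positive element of $M_k(\CC)$ has rank at most $k$, one gets $\Sigma_A=\{0,1,\dots,k\}$; here $(C_n)$ says exactly that the largest scale element $k$ is dominated by $n$ times the least nonzero one, i.e.\ $k\leq n$. If $A$ is simple and purely infinite, then any two nonzero positive elements of $A\otimes\KK$ are Cuntz equivalent, so $\Cu(A)=\{0,\infty\}=\Sigma_A$ and $(C_1)$, hence $(C_n)$, holds trivially.

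For ``$\Rightarrow$'', assume $(C_n)$. Since $\Cu(A)$ satisfies \axiomO{6}, \autoref{prp:CnSimple} shows that $\Cu(A)$ is simple, and hence $A$ is simple. As $[a]\in\Sigma_A$ for every $a\in A_+$, the hypothesis specialises to the comparison
\[
[a]\leq n[b]\quad\text{for all nonzero } a,b\in A_+,
\]
which I will call $(\star)$. I would then split according to whether $A$ is stably finite. If it is, then $A$ carries a nonzero lower semicontinuous dimension function $d$ (Blackadar--R{\o}rdam), which is faithful because $A$ is simple. Were $A$ non-elementary, then for a nonzero $a\in A_+$ in the Pedersen ideal (so that $0<d([a])<\infty$) the hereditary subalgebra $\overline{aAa}$ is infinite-dimensional and therefore contains $n+1$ nonzero, pairwise orthogonal positive elements $c_1,\dots,c_{n+1}$, whence $\sum_i[c_i]\leq[a]$ and so $d([c_{i_0}])\leq d([a])/(n+1)$ for some $i_0$; but $(\star)$ gives $d([a])\leq n\,d([c_{i_0}])$, which is absurd. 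Thus $A$ is elementary, i.e.\ $A\cong\KK$ or $A\cong M_k(\CC)$; the first is excluded because $\Sigma_{\KK}=\NNbar$ violates $(C_n)$ (take $x=n+1$, $y=1$), and for the second $(C_n)$ forces $k\leq n$ as in the backward computation.

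The remaining, and genuinely harder, case is $A$ simple but not stably finite, where the goal is to prove that $A$ is purely infinite, equivalently $\Cu(A)=\{0,\infty\}$. The intended route is to reduce to the trivially scaled situation, in which $(C_n)$ together with proper infiniteness of the top element (coming from non-stable-finiteness) forces $ny=\infty$ for every nonzero $y$ and hence $2nx=nx$, i.e.\ weak pure infiniteness of $\Cu(A)$; one then invokes the theorem of Kirchberg--R{\o}rdam that a \emph{simple} weakly purely infinite \ca{} is purely infinite. I expect this step to be the main obstacle: it is precisely where one must rule out R{\o}rdam's simple \ca{s} possessing both a finite and an infinite projection, and the delicate point is to transfer the bounded comparison $(C_n)$—which a priori only constrains the scale $\Sigma_A$, and may be a proper subset of $\Cu(A)$ in the non-unital case—to a statement about all of $\Cu(A)$ (for instance by passing to a unital corner $p(A\otimes\KK)p$ at a properly infinite projection $p$), so that the Kirchberg--R{\o}rdam dichotomy can be applied.
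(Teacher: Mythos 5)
Your backward implication and the first step of the forward implication (deducing simplicity from $(C_n)$ via \axiomO{6} and \autoref{prp:CnSimple}) are correct and agree with the paper. The problems are in the remainder of the forward direction. Your stably finite case, resting on the existence of a faithful lower semicontinuous dimension function on a simple stably finite \ca{} (Cuntz, Blackadar--Handelman, with extra care in the non-unital setting), is probably workable but invokes heavy machinery. More seriously, your non-stably-finite case is only a plan: the obstacle you yourself identify --- upgrading the comparison $[a]\leq n[b]$, which a priori holds only for classes in the scale $\Sigma_A$, to weak pure infiniteness of all of $\Cu(A)$ so that a Kirchberg--R{\o}rdam type dichotomy can be applied --- is left unresolved, and no argument is given for the proposed reduction to a unital corner. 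As written, the proposal therefore has a genuine gap precisely in the case it flags as the hard one.

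The paper closes this gap with a different and much lighter case division: elementary versus non-elementary, rather than stably finite versus not. In the elementary case $\Cu(A)\cong\NNbar$ and $(C_n)$ forces $\Sigma_A\subseteq\{0,\dots,n\}$, so $A\cong M_k(\CC)$ with $k\leq n$. In the non-elementary case one applies the Glimm Halving Lemma entirely inside the scale: given any nonzero $x\in\Cu(A)$, the Claim in the proof of \autoref{prp:CnSimple} produces a nonzero $y\in\Sigma_A$ with $y\leq x$; iterated halving gives a nonzero $z$ with $2nz\leq y$, and since $\Sigma_A$ is downward hereditary we have $z\in\Sigma_A$, so $(C_n)$ yields $y\leq nz$ and hence $2y\leq 2nz\leq y$. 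Thus $y$ is infinite, so $y=\infty$, $x=\infty$, and $\Cu(A)\cong\{0,\infty\}$, i.e.\ $A$ is simple and purely infinite. This single argument simultaneously disposes of your stably finite non-elementary case (no dimension functions needed) and of the infinite case (no appeal to the question of whether weakly purely infinite implies purely infinite), and it never needs to leave $\Sigma_A$, which is exactly what resolves the transfer issue you were worried about. I would recommend replacing your two hard cases by this Glimm-halving step.
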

\begin{proof}
The backward implication is easy to show.
Thus, assume that $(\Cu(A),\Sigma_A)$ satisfies $(C_n)$.
Since Cuntz semigroups of \ca{s} satisfy \axiomO{6}, it follows from \autoref{prp:CnSimple} that $A$ is simple.
Then $A$ is either elementary (that is, $A$ contains a minimal nonzero projection), or not.
In the first case, we obtain $\Cu(A)\cong\NNbar$.
It is easy to verify that $(\NNbar,\Sigma)$ has $(C_n)$ if and only if $\Sigma\subseteq\{0,\ldots,n\}$.
It follows that $A$ contains only projections of rank at most $n$, and thus $A\cong M_k(\CC)$ for some $k\leq n$.

In the other case, it follows from the Glimm Halving Lemma that for every nonzero $x\in\Cu(A)$ there exists a nonzero $y$ with $2y\leq x$.
Let $x\in\Cu(A)$ be nonzero.
From the Claim in the proof of \autoref{prp:CnSimple} we obtain a nonzero $y\in\Sigma_A$ with $y\leq x$.
Applying the Glimm Halving Lemma several times, we obtain a nonzero $z$ with $2nz\leq y$.
By $(C_n)$, we also have $y\leq nz$, and thus $2y\leq 2nz\leq y$.
Therefore $y=\infty$, and then $x=\infty$.
It follows that $\Cu(A)\cong\{0,\infty\}$, which implies that $A$ is simple and purely infinite.
\end{proof}

Using \autoref{prp:charSimpleGenUltraProd}, we obtain a characterization for when an ultraproduct of \ca{s} is simple.
In the case that we have a purely infinite simple \ca{} $A$, and a free ultrafilter $\filter$ over $\NN$, it was proved by Kirchberg (see \cite[Proposition~6.2.6]{Ror02Classification}) that the ultrapower $A_\filter$ is again purely infinite simple.
The result that an ultraproduct of purely infinite, simple \ca{s} is again purely infinite simple can be obtained as a consequence of \cite[Theorem~2.5.1]{FarHarLupRobTikVigWin06arX:ModelThy}.

\begin{thm}
\label{prp:charSimpleUltraProdCa}
Let $\filter$ be a countably incomplete, free ultrafilter on a set $J$, and let $(A_j)_{j\in J}$ be a family of \ca{s}.
Then $\prod_{\filter}A_j$ is simple if and only if either $A_j$ is simple and purely infinite for almost all $j$ in $J$, or else there is $n\in\NN$ such that $A_j\cong M_n(\CC)$ for almost all $j$ in $J$.

In particular, the ultrapower $\prod_{\filter}A$ of a \ca{} $A$ is simple if and only if $A$ is either simple and purely infinite or $A\cong M_n(\CC)$ for some $n\in\NN$.
\end{thm}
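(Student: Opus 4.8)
The plan is to transport the entire question into the category $\CatCuScaled$, where \autoref{prp:charSimpleGenUltraProd} already characterizes simplicity of ultraproducts, and then to read the resulting condition back on the level of \ca{s} by means of \autoref{prp:charCaCn}. The first ingredient is the standard fact that a \ca{} $B$ is simple if and only if its Cuntz semigroup $\Cu(B)$ is simple as a \CuSgp{}, i.e.\ $x\leq\infty y$ for all $x,y\in\Cu(B)$ with $y\neq 0$; this follows from the order-isomorphism between the lattice of closed two-sided ideals of $B$ and the lattice of ideals of $\Cu(B)$, and is exactly the implication already used in the proof of \autoref{prp:charCaCn}. Applying this to $B=\prod_\filter A_j$, it suffices to decide when $\Cu(\prod_\filter A_j)$ is simple. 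By \autoref{thm:CuUltraprod} we have $\CatCuScaled(\prod_\filter A_j)\cong\prod_\filter(\Cu(A_j),\Sigma_{A_j})$, so the \CuSgp{} underlying this scaled ultraproduct is precisely $\Cu(\prod_\filter A_j)$.

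Since Cuntz semigroups of \ca{s} satisfy \axiomO{6}, I would then apply \autoref{prp:charSimpleGenUltraProd} to the family $(\Cu(A_j),\Sigma_{A_j})_{j\in J}$. Combined with the bridge above, this gives that $\prod_\filter A_j$ is simple if and only if there exist $n\in\NN$ and $F\in\filter$ such that $(\Cu(A_j),\Sigma_{A_j})$ has property $(C_n)$ for every $j\in F$. Now \autoref{prp:charCaCn} identifies the pointwise meaning of this: for each such $j$, the scaled semigroup $(\Cu(A_j),\Sigma_{A_j})$ has $(C_n)$ exactly when $A_j\cong M_k(\CC)$ for some $k\leq n$, or $A_j$ is simple and purely infinite.

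The remaining step, which is the only genuinely new part, is to reorganize this ``one of finitely many types on a set of $\filter$'' statement into the stated dichotomy, and here I would exploit that $\filter$ is an ultrafilter. Given $n$ and $F$ as above, split $F$ into the pairwise disjoint sets $P:=\{j\in F:A_j\text{ is simple and purely infinite}\}$ and $Q_k:=\{j\in F:A_j\cong M_k(\CC)\}$ for $k=1,\dots,n$; these are disjoint because a matrix algebra is finite-dimensional, hence not purely infinite, and because $M_k(\CC)\ncong M_{k'}(\CC)$ for $k\neq k'$. Since $F=P\cup Q_1\cup\dots\cup Q_n$ belongs to $\filter$, exactly one of these finitely many disjoint sets lies in $\filter$: if $P\in\filter$, then $A_j$ is simple and purely infinite for almost all $j$, and if $Q_k\in\filter$, then $A_j\cong M_k(\CC)$ for almost all $j$. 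The converse is immediate, since a simple, purely infinite \ca{} gives $(C_1)$ and $M_n(\CC)$ gives $(C_n)$, so either alternative supplies the third condition of \autoref{prp:charSimpleGenUltraProd} (with uniform $n$ on the relevant set of $\filter$). Finally, the ``in particular'' statement for the ultrapower is the special case $A_j=A$, where every index set under consideration is either empty or all of $J$, so ``for almost all $j$'' collapses to ``for all $j$''. I expect the main obstacle to be purely organizational: once \autoref{thm:CuUltraprod}, \autoref{prp:charSimpleGenUltraProd} and \autoref{prp:charCaCn} are invoked, no further analysis is required beyond the elementary ultrafilter argument.
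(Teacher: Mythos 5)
Your proposal is correct and follows essentially the same route as the paper: both reduce simplicity of $\prod_\filter A_j$ to simplicity of its Cuntz semigroup, invoke \autoref{thm:CuUltraprod} and \autoref{prp:charSimpleGenUltraProd} to get a uniform $(C_n)$ on a set of $\filter$, translate $(C_n)$ pointwise via \autoref{prp:charCaCn}, and finish with the same partition-of-$F$ ultrafilter argument. The only cosmetic difference is in the ``if'' direction, where the paper computes $\prod_\filter\{0,\infty\}\cong\{0,\infty\}$ directly via \autoref{prp:prodE0} while you reuse the implication (3)$\Rightarrow$(1) of \autoref{prp:charSimpleGenUltraProd}; both are valid.
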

\begin{proof}
If $A_j\cong M_n(\CC)$ for almost all $j\in J$, then $\prod_{\filter}A_j \cong M_n(\CC)$, which is simple. 
Note that a \ca{} $A$ is simple and purely infinite if and only if $\Cu(A)\cong\{0,\infty\}$, with the trivial scale.
Thus, if $A_j$ is simple and purely infinite for almost all $j\in J$, then using \autoref{thm:CuProd} and \autoref{prp:prodE0} we obtain
\[
\Cu(\prod_\filter A_j)
\cong \prod_\filter \Cu(A_j)
\cong \prod_\filter \{0,\infty\}
\cong\{0,\infty\},
\]
which implies that $\prod_\filter A_j$ is simple (and purely infinite).

To show the converse implication, assume that $\prod_\filter A_j$ is simple.
Since Cuntz semigroups of \ca{s} satisfy \axiomO{6}, and since the Cuntz semigroup of a \ca{} is simple if and only if the \ca{} is simple, we can apply \autoref{prp:charSimpleGenUltraProd} to deduce that there exist $n\geq 1$ and $F\in\filter$ such that the scaled \CuSgp{s} $\Cu(A_j)$ have $(C_n)$ for all $j\in F$.
Let $F_0$ be set of $j\in F$ such that $A_j$ is simple and purely infinite, and for $k=1,\ldots,n$ let $F_k$ be the set of $j\in F$ such that $A_j\cong M_k(\CC)$.
By \autoref{prp:charCaCn}, we have $F=F_0\cup\ldots\cup F_n$, and since $\filter$ is an ultrafilter, we have $F_k\in\filter$ for precisely one $k$, which proves the claim.
\end{proof}

It is worth mentioning that a similar characterization is obtained by Kirchberg \cite{Kir06CentralSeqPI} for central sequence \ca{s}, proving that $(A_{\mathcal U}\cap A')/\text{Ann}_A(A_{\mathcal U})$ is simple if and only if $A\otimes \mathcal K\cong \mathcal K$ or $A$ is simple purely infinite and nuclear.

\subsection{Comparability}

We show that several comparability properties pass to (ultra)products of \CuSgp{s}.
In particular, in \autoref{prp:ssaTotalOrder} we recover an unpublished result of A. Tikuisis, that was brought to our attention by L. Robert (private communication).
We refer to \cite[Section~5.6, p.72ff]{AntPerThi18TensorProdCu}, for definitions of the unperforation properties considered in the next results.

\begin{prp}
\label{thm:comparability}
The following properties pass to products, direct sums, and ultraproducts of \CuSgp{s}:
unperforation, almost (near) unperforation.
\end{prp}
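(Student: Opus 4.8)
The plan is to reduce all three constructions to the case of products, which carries the essential content. For the comparison axioms under consideration it is convenient to treat them uniformly: each is an instance of the assertion that, for a fixed pair of integers $p\ge q\ge 1$, the implication
\[
px\le qy \ \Longrightarrow\ x\le y
\]
holds for all $x,y\in S$. Unperforation is the conjunction of these assertions over all pairs $(n,n)$, and almost (near) unperforation the conjunction over all pairs $(n+1,n)$; so it suffices to show that each individual implication is inherited by products, direct sums and ultraproducts.

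First I would settle products. By \autoref{cor:Cuproductinverselimitspullbacks} we have $\CatCuProd_j S_j=\tau(\CatPomProd_j S_j,\llpw)$, so an element is represented by an $\llpw$-increasing path $f\colon(-\infty,0]\to\CatPomProd_j S_j$, written $f(t)=(f_j(t))_j$, with $p[f]=[pf]$ and $(pf)(t)=(pf_j(t))_j$. Suppose $p[f]\le q[g]$. Fixing $t<0$, choose $s$ with $t<s<0$; since $f$ is $\llpw$-increasing we have $f_j(t)\ll f_j(s)$ for every $j$. Applying $p[f]\le q[g]$ at the parameter $s$ produces $s'<0$ with $pf_j(s)\ll qg_j(s')$, hence $pf_j(s)\le qg_j(s')$, for all $j$. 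The hypothesis on each $S_j$ then gives $f_j(s)\le g_j(s')$, and composing with the way-below relation yields $f_j(t)\ll g_j(s')$ for all $j$, i.e.\ $f(t)\llpw g(s')$. As $t$ was arbitrary this shows $[f]\le[g]$, which is the desired implication in the product. The same computation, verbatim, handles every pair $(p,q)$.

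Next, direct sums and ultraproducts follow formally. By \autoref{lma:coproductinproduct} the canonical map $\CatCuCoprod_j S_j\to\CatCuProd_j S_j$ is an order-embedding and a monoid morphism; since the implication above refers only to $+$ and $\le$, it is inherited by any order-embedded sub-\CuSgp, so it passes from the product to the direct sum. By \autoref{cor:cu} we have $\prod_\filter S_j\cong(\CatCuProd_j S_j)/\cc_\filter((S_j)_j)$, so it remains to see that the implication passes to quotients by an ideal $I$. Writing $[\freeVar]$ for classes in the quotient, suppose $p[x]\le q[y]$; for $x'\ll x$ this yields $z\in I$ with $px'\le qy+z$. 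Using $z\le qz$ (valid in any \pom) we obtain
\[
px'\le qy+z\le qy+qz=q(y+z),
\]
and applying the implication in the product \CuSgp gives $x'\le y+z$ with $z\in I$, that is $[x']\le[y]$; letting $x'\ll x$ vary gives $[x]\le[y]$.

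The main obstacle is the product step, and specifically the mismatch between the relation appearing in the comparison axiom ($\le$) and the relation governing the order on $\tau(\CatPomProd_j S_j,\llpw)$ (the way-below relation $\llpw$ between path values at shifted parameters). The hypothesis only lets one descend from $pf_j(s)\le qg_j(s')$ to $f_j(s)\le g_j(s')$, which a priori is too weak to compare paths; the interpolation trick---inserting a parameter $s$ strictly between $t$ and $0$ and exploiting that paths are $\llpw$-increasing---is exactly what converts this $\le$ back into the $\ll$ required by the path order. Everything else (the behaviour under order-embeddings and the absorption $z\le qz$ in the quotient) is routine, and the argument is insensitive to which of the perforation-type axioms one considers.
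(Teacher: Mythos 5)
Your argument is correct and follows essentially the same route as the paper: the paper likewise reduces direct sums and ultraproducts to the (routine) stability of these properties under passing to ideals and quotients, and then verifies the comparison implication componentwise on representing paths for the product. The differences are cosmetic and, if anything, in your favour: you recover the way-below relation needed for the path order by interpolating the parameter ($t<s<0$), whereas the paper deduces $x_{t,j}\ll y_{s,j}$ directly from $nx_{t,j}\ll ny_{s,j}$ (which also works, via \axiomO{4} and unperforation, but is left implicit there); and you spell out the quotient and order-embedding steps that the paper dismisses as straightforward.

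One caveat: near unperforation, as defined in the reference the paper cites for these notions, is \emph{not} an implication of the form $px\le qy\Rightarrow x\le y$ for a single pair $(p,q)$; its hypothesis is a conjunction of two inequalities of the form $kx\le ky$ and $(k+1)x\le(k+1)y$ (it is almost unperforation that corresponds to the pairs $(n+1,n)$). So your uniform template literally covers only two of the three properties in the statement. The fix requires no new idea --- in the product step take for $s'$ the larger of the two parameters produced by the two hypotheses (paths are order-preserving, so both inequalities then hold against $g(s')$), and in the quotient step take for $z$ the sum of the two ideal elements --- but as written your proof does not quite cover the third property.
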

\begin{proof}
It is straightforward to verify that each of the considered properties passes to ideals and quotients.
Thus, it is enough to consider products.
We only prove the statement for unperforation, as the other situations are similar.

Let $(S_j)_{j\in J}$ be a family of unperforated \CuSgp{s}, and let $x,y\in\prod_jS_j$ and $n\geq 1$ satisfy $nx\leq ny$.
Choose $\vect{x}_t=(x_{t,j})_j$ and $\vect{y}_t=(y_{t,j})_j$ such that $x=[(\vect{x}_t)_{t\leq 0}]$ and $y=[(\vect{y}_t)_{t\leq 0}]$.
Given $t<0$, there is $s<0$ such that $n \vect{x}_{t}\llpw n \vect{y}_s$, that is, $nx_{t,j}\ll ny_{s,j}$ for all $j\in J$.
Therefore $x_{t,j}\ll y_{s,j}$ for all $j$, which implies $\vect{x}_t \llpw \vect{y}_s$.
Hence, $x\leq y$, as desired.
\end{proof}

\begin{cor}
Given a family of \ca{s} whose Cuntz semigroups are unperforated (almost unperforated, nearely unperforated), the Cuntz semigroups of their product, direct sum, and ultraproduct over some ultrafilter are also unperforated (almost unperforated, nearly unperforated).
\end{cor}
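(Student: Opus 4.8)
The plan is to deduce this corollary by transporting the abstract comparability result \autoref{thm:comparability} across the preservation theorems for the Cuntz semigroup functor. Concretely, \autoref{thm:comparability} establishes that unperforation and almost (near) unperforation pass to products, direct sums, and ultraproducts \emph{of $\CatCu$-semigroups}, while Theorems~\ref{thm:CuProd}, \ref{thm:coproduct}, and~\ref{thm:CuUltraprod} identify the Cuntz semigroup of the corresponding $C^*$-algebraic construction with (a subobject or quotient of) the same construction applied to the semigroups $\Cu(A_j)$. So the proof reduces to matching up each $C^*$-algebraic construction with its $\CatCu$-theoretic counterpart and then quoting \autoref{thm:comparability}.

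First I would dispose of direct sums, which is immediate: by \autoref{thm:coproduct} we have $\Cu(\bigoplus_j A_j)\cong\CatCuCoprod_j\Cu(A_j)$ as $\CatCu$-semigroups, so if every $\Cu(A_j)$ enjoys the given comparability property then \autoref{thm:comparability} hands the same property to $\CatCuCoprod_j\Cu(A_j)$, hence to $\Cu(\bigoplus_j A_j)$. For products I would use \autoref{thm:CuProd}: forgetting scales, it identifies $\Cu(\prod_j A_j)$ with the ideal $S$ of $\CatCuProd_j\Cu(A_j)$. By \autoref{thm:comparability} the full product $\CatCuProd_j\Cu(A_j)$ inherits the property from its factors, and since (as recorded in the proof of \autoref{thm:comparability}) the property also passes to ideals, the ideal $S\cong\Cu(\prod_j A_j)$ inherits it as well. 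For ultraproducts I would argue the same way via \autoref{thm:CuUltraprod} together with the concrete description in \autoref{prp:CuUltraprodConcrete}, where $\Cu(\prod_\filter A_j)$ appears as the quotient $S/(S\cap c_\filter)$ of the ideal $S$; since the property passes to both ideals and quotients and holds for $\CatCuProd_j\Cu(A_j)$, it holds for the ultraproduct. Alternatively one may read this directly off the ultraproduct clause of \autoref{thm:comparability} applied to the family $(\Cu(A_j))_j$.

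The only genuine care needed---and hence the point I would flag---is the bookkeeping around scales. The preservation statements in Theorems~\ref{thm:CuProd} and~\ref{thm:CuUltraprod} are phrased for the \emph{scaled} Cuntz semigroup functor, whereas unperforation and its variants are intrinsic to the underlying $\CatCu$-semigroup and ignore the scale. I would therefore make explicit that only the isomorphism of underlying $\CatCu$-semigroups (that is, after applying the forgetful functor $\forget\colon\CatCuScaled\to\CatCu$) is used. Relatedly, for products $\Cu(\prod_j A_j)$ is in general a proper ideal of the categorical product $\CatCuProd_j\Cu(A_j)$ rather than the whole of it (see \autoref{exa:CuProd}), so the stability of the comparability properties under passage to ideals is not a cosmetic remark but exactly what makes the product and ultraproduct cases go through.
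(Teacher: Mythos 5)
Your proposal is correct and follows essentially the same route as the paper's own proof: deduce the statement from \autoref{thm:comparability} via Theorems~\ref{thm:CuProd}, \ref{thm:coproduct} and~\ref{thm:CuUltraprod}, using that the comparability properties pass to ideals (and quotients) to get from the categorical (ultra)products down to the scaled ones. Your extra care about forgetting the scale and about $\Cu(\prod_j A_j)$ being a proper ideal of $\CatCuProd_j\Cu(A_j)$ is exactly the point the paper's one-line proof is implicitly relying on.
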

\begin{proof}
This is a direct consequence of \autoref{thm:comparability}, using Theorems~\ref{thm:CuProd}, \ref{thm:coproduct} or \ref{thm:CuUltraprod}, and using that each of the properties passes to ideals, and hence from (ultra)products to scaled (ultra)products.
\end{proof}

\begin{prp}
\label{prp:totalOrder}
Let $\filter$ be an ultrafilter on a set $J$, and let $(S_j)_{j\in J}$ be a family of totally ordered \CuSgp{s}.
Then $\CatCuProd_\filter S_j$ is totally ordered.
\end{prp}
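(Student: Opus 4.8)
The plan is to work with the concrete description of the ultraproduct from \autoref{cor:cu}, namely $\prod_\filter S_j\cong\big(\CatCuProd_j S_j\big)/\cc_\filter((S_j)_j)$, and to reduce total comparability to a componentwise statement that the ultrafilter can decide. Write $\pi_\filter\colon\CatCuProd_jS_j\to\prod_\filter S_j$ for the quotient map. Given $x,y\in\prod_\filter S_j$, I would choose lifts $\tilde x=[(\vect x_t)_{t\le0}]$ and $\tilde y=[(\vect y_t)_{t\le0}]$ in $\CatCuProd_jS_j$, where $\vect x_t=(x_{t,j})_j$ and $\vect y_t=(y_{t,j})_j$ lie in $\CatPomProd_jS_j$.

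The technical heart is the following order characterization: $\pi_\filter(\tilde x)\le\pi_\filter(\tilde y)$ if and only if for every $t<0$ there exist $s<0$ and $F\in\filter$ with $x_{t,j}\le y_{s,j}$ for all $j\in F$. For the forward direction I would use that, by the definition of the order on a quotient $\CatCu$-semigroup, $\pi_\filter(\tilde x)\le\pi_\filter(\tilde y)$ means $\tilde x\le\tilde y+z$ for some $z=[(\vect z_t)_{t\le0}]\in\cc_\filter((S_j)_j)$; comparing paths yields, for each $t<0$, an $s<0$ with $x_{t,j}\ll y_{s,j}+z_{s,j}$ for all $j$, and then $F:=\supp(\vect z_s)^c\in\filter$ works, since $z_{s,j}=0$ on $F$. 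For the converse I would approximate $\tilde x$ from below by its cut-downs $h_n:=[(\vect x_{t-1/n})_{t\le0}]$, so that the $h_n$ increase with $x=\sup_n\pi_\filter(h_n)$, and for each fixed $n$ apply the hypothesis at $t=-1/n$ to obtain a single set $F_n\in\filter$ and $s_n<0$ with $x_{-1/n,j}\le y_{s_n,j}$ on $F_n$; capping $\tilde x$ outside $F_n$ — that is, letting $z_n$ be the path whose $j$-th component is $x_{t-1/n,j}$ for $j\notin F_n$ and $0$ for $j\in F_n$ — produces a genuine path representing an element of $\cc_\filter((S_j)_j)$ (its support lies in $F_n^c\notin\filter$) with $h_n\le\tilde y+z_n$, whence $\pi_\filter(h_n)\le\pi_\filter(\tilde y)$ and, taking suprema, $x\le y$. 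I expect this converse to be the main obstacle: the subtlety is that the naive capping of the \emph{entire} path of $\tilde x$ against a family of filter sets varying with $t$ both fails the supremum/continuity axiom for paths and, worse, would force a countable intersection of members of $\filter$, which is unavailable for a general ultrafilter. Fixing a single $F_n$ at each approximation level $n$ and only then passing to the supremum circumvents both problems.

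With the characterization in hand, total order follows from the ultrafilter dichotomy that a subset of $J$ lies in $\filter$ precisely when it meets every member of $\filter$. Suppose $x\not\le y$. By the characterization there is $t_0<0$ such that for all $s<0$ and all $F\in\filter$ some $j\in F$ satisfies $x_{t_0,j}\not\le y_{s,j}$; equivalently the set $A_s:=\{j:x_{t_0,j}\not\le y_{s,j}\}$ meets every member of $\filter$, hence $A_s\in\filter$. Since each $S_j$ is totally ordered, $x_{t_0,j}\not\le y_{s,j}$ forces $y_{s,j}\le x_{t_0,j}$, so $B_s:=\{j:y_{s,j}\le x_{t_0,j}\}\supseteq A_s$ lies in $\filter$ for every $s<0$. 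Applying the characterization in the other direction, with the fixed value $t_0$ and the sets $B_s$, then yields $\pi_\filter(\tilde y)\le\pi_\filter(\tilde x)$, that is $y\le x$. This shows any two elements are comparable, so $\prod_\filter S_j$ is totally ordered.
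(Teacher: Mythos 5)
Your proof is correct and follows essentially the same route as the paper: both arguments compare cut-downs of $x$ against $y$ componentwise on a set of the ultrafilter, cap the relevant path outside that set to produce an element of $\cc_\filter((S_j)_j)$ (so that the cut-down is dominated by $y$ in the quotient), and then use the ultrafilter dichotomy together with the total order of each $S_j$ to obtain the reverse inequality when this fails. The only difference is that you package the comparison criterion as a two-sided characterization of the quotient order (your necessity direction, via $\tilde x\le\tilde y+z$ with $z\in\cc_\filter((S_j)_j)$, is also correct), whereas the paper only needs the sufficiency direction and runs the dichotomy directly on the sets $F_{\varepsilon,t}=\{j\in J: x_{-\varepsilon,j}\le y_{t,j}\}$.
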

\begin{proof}
Let $x,y\in\prod_jS_j$.
We need to show that $\pi_\filter(x)\leq \pi_\filter(y)$ or $\pi_\filter(y)\leq \pi_\filter(x)$.
Choose $\vect{x}_t=(x_{t,j})_j$ and $\vect{y}_t=(y_{t,j})_j$ such that $x=[(\vect{x}_t)_{t\leq 0}]$ and $y=[(\vect{y}_t)_{t\leq 0}]$.
For every $\varepsilon>0$ and $t<0$ let us define the following set:
\[
F_{\varepsilon,t} := \big\{ j\in J : x_{-\varepsilon,j}\leq y_{t,j} \big\}.
\]

If $F_{\varepsilon,t}\in\filter$ for some $\varepsilon>0$ and $t<0$, then $\pi_\filter([(\vect{x}_{t-\varepsilon})_{t\leq 0}]) \leq \pi_\filter([(\vect{y}_t)_{t\leq 0})$.
Hence, if for all $\varepsilon>0$ there is $t<0$ with $F_{\varepsilon, t}\in\filter$, then $\pi_\filter(x)\leq \pi_\filter(y)$.
Otherwise, there is $\varepsilon>0$ with $F_{\varepsilon,t}\notin\filter$ for all $t<0$.
Since the $S_j$ are totally ordered, we get
\[
F_{\varepsilon,t}^c = \big\{ j\in J : y_{t,j}\leq x_{-\varepsilon, j}\}\in \mathcal U,
\]
for all $t<0$, which then implies $\pi_\filter(y)\leq \pi_\filter(x)$.
\end{proof}

\begin{cor}
\label{prp:ssaTotalOrder}
Let $\filter$ be an ultrafilter over some set, and let $A$ be a strongly self-absorbing, stably finite \ca{} satisfying the UCT.
Then the Cuntz semigroup of the ultrapower $A_\filter$ is totally ordered.
\end{cor}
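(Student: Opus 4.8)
The plan is to reduce the assertion to the total-orderedness of $\Cu(A)$ itself and then to feed this into the ultraproduct machinery via \autoref{prp:totalOrder} and \autoref{thm:CuUltraprod}. So the first and only substantial step would be to show that $\Cu(A)$ is a totally ordered \CuSgp. Here I would use that a strongly self-absorbing \ca{} is simple, unital, nuclear, $\mathcal{Z}$-stable and has at most one tracial state; being stably finite, $A$ then carries a unique trace. By the classification of strongly self-absorbing \ca{s} satisfying the UCT, the stably finite examples are exactly the Jiang--Su algebra $\mathcal{Z}$ and the UHF algebras of infinite type. For both, the Cuntz semigroup is known to be totally ordered: the soft elements form a copy of $(0,\infty]$, the classes of projections are linearly ordered through the unique trace by strict comparison, and the two pieces sit together linearly, giving $\Cu(\mathcal{Z})\cong\NN\sqcup(0,\infty]$ and an analogous description for $\Cu(M_{\mathfrak n})$. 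Hence $\Cu(A)$ is totally ordered.

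With this in hand, I would apply \autoref{prp:totalOrder} to the constant family $(S_j)_{j}$ with $S_j:=\Cu(A)$, indexed by the set on which $\filter$ lives, to conclude that the $\CatCu$-ultrapower $\CatCuProd_\filter\Cu(A)$ is totally ordered. It then remains to transport this to the algebra. By \autoref{thm:CuUltraprod} the scaled Cuntz semigroup functor preserves ultraproducts, and by the order-embedding statement of \autoref{prp:CuUltraprodConcrete} the comparison map $\Phi_\filter\colon\Cu(A_\filter)\to\CatCuProd_\filter\Cu(A)$ is an order-embedding, with image an ideal. Since every sub-poset of a totally ordered set is again totally ordered, the existence of such an order-embedding forces $\Cu(A_\filter)$ to be totally ordered, which is the claim.

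The second paragraph is essentially formal, being a direct application of the results already proved in the paper. The real content --- and the one genuine obstacle --- lies entirely in the first step: establishing that $\Cu(A)$ is totally ordered. This does not follow from the categorical developments here, but depends on external structural input, namely the classification of strongly self-absorbing \ca{s} (to pin $A$ down as $\mathcal{Z}$ or a UHF algebra of infinite type) together with the explicit, well-known computations of the Cuntz semigroups of these two algebras.
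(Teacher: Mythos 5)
Your proposal is correct and follows essentially the same route as the paper: establish that $\Cu(A)$ is totally ordered, apply \autoref{prp:totalOrder} to the constant family, and transfer the conclusion to $\Cu(A_\filter)$ via the ideal embedding from \autoref{thm:CuUltraprod}. The only difference is that the paper simply cites \cite[Paragraph~7.6.1]{AntPerThi18TensorProdCu} for the total order on $\Cu(A)$, whereas you sketch the underlying classification argument explicitly.
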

\begin{proof}
By \cite[Paragraph~7.6.1, p.136]{AntPerThi18TensorProdCu}, the Cuntz semigroup $\Cu(A)$ is totally ordered.
By \autoref{prp:totalOrder}, $\prod_\filter\Cu(A)$ is totally ordered.
Now, the statement follows using that $\Cu(\prod_\filter A)$ is an ideal in $\prod_\filter\Cu(A)$;
see \autoref{thm:CuUltraprod}.
\end{proof}

\begin{rmk}
Recall that a functional on a $\CatCu$-semigroup $S$ is a $\CatPom$-morphism $S\to[0,\infty]$ that preserves suprema of increasing sequences.
Given a family $(S_j)_{j\in J}$ of $\CatCu$-semigroups and an ultrafilter $\filter$ on $J$, it is natural to study functionals on the ultraproduct $\prod_{\filter}S_j$.
Given a functional $\varphi_j\colon S_j\to[0,\infty]$, for each $j\in J$, there is a naturally induced functional on $\prod_{\filter}S_j$.
The construction of this functional and the question whether the set of such functionals is dense in the set of all functionals on $\prod_{\filter}S_j$ will be analysed in detail in forthcoming work, \cite{AntPerRobThi20pre:TracesUltra}.
\end{rmk}


\providecommand{\etalchar}[1]{$^{#1}$}
\providecommand{\bysame}{\leavevmode\hbox to3em{\hrulefill}\thinspace}
\providecommand{\noopsort}[1]{}
\providecommand{\mr}[1]{\href{http://www.ams.org/mathscinet-getitem?mr=#1}{MR~#1}}
\providecommand{\zbl}[1]{\href{http://www.zentralblatt-math.org/zmath/en/search/?q=an:#1}{Zbl~#1}}
\providecommand{\jfm}[1]{\href{http://www.emis.de/cgi-bin/JFM-item?#1}{JFM~#1}}
\providecommand{\arxiv}[1]{\href{http://www.arxiv.org/abs/#1}{arXiv~#1}}
\providecommand{\doi}[1]{\url{http://dx.doi.org/#1}}
\providecommand{\MR}{\relax\ifhmode\unskip\space\fi MR }
\providecommand{\MRhref}[2]{%
  \href{http://www.ams.org/mathscinet-getitem?mr=#1}{#2}
}
\providecommand{\href}[2]{#2}

\end{document}